\theoremstyle{plain} 
\newtheorem{theorem}{Theorem}[section]
\newtheorem{corollary}[theorem]{Corollary}
\newtheorem{lemma}[theorem]{Lemma}
\theoremstyle{definition}
\newtheorem{remark}[theorem]{Remark}
\DeclareSymbolFont{tipa}{T3}{cmr}{m}{n}
\DeclareMathAccent{\invbreve}{\mathalpha}{tipa}{16}
\newsavebox{\@brx}
\newcommand{\llangle}[1][]{\savebox{\@brx}{\(\m@th{#1\langle}\)}%
  \mathopen{\copy\@brx\mkern2mu\kern-0.9\wd\@brx\usebox{\@brx}}}
\newcommand{\rrangle}[1][]{\savebox{\@brx}{\(\m@th{#1\rangle}\)}%
  \mathclose{\copy\@brx\mkern2mu\kern-0.9\wd\@brx\usebox{\@brx}}}
\title{KBSM of lens spaces $L(p,2)$ and $L(4k,2k+1)$}
\author{Mieczyslaw K. Dabkowski}
\address{Department of Mathematical Sciences, The University of Texas at Dallas, Richardson, TX 75080}
\email{mdab@utdallas.edu}
\author{Cheyu Wu}
\address{Department of Mathematical Sciences, The University of Texas at Dallas, Richardson, TX 75080}
\email{cheyu.wu@utdallas.edu}
\begin{document}

\begin{abstract}
J. Hoste and J. H. Przytycki computed the Kauffman bracket skein module (KBSM) of lens spaces in their papers published in 1993 and 1995. Using a basis for the KBSM of a fibered torus, we construct new bases for the KBSMs of two families of lens spaces: $L(p,2)$ and $L(4k,2k+1)$ with $k\neq 0$. For KBSM of $L(0,1) = {\bf S}^{2}\times S^{1}$, we find a new generating set that yields its decomposition into a direct sum of cyclic modules.
\end{abstract}

\maketitle

\section{Introduction}
\label{s:intro}

The Kauffman bracket skein module\footnote{Skein modules were introduced by J.~H.~Przytycki \cite{Prz1991} in 1987, and independently by V.~G.~Turaev \cite{Tur1990} in 1988. The skein module based on the Kauffman bracket skein relation (see \cite{KLH1987}) is called the Kauffman bracket skein module.} (KBSM) of lens spaces was computed in \cite{HP1993} and \cite{HP1995}, with a new proof given for the special cases of $L(p,1)$ and $L(0,1)$ in \cite{M2011b}. This paper builds on the results of \cite{DW2025} to construct a new basis for the KBSM of two families of lens spaces: $L(p,2)$ and $L(4k,2k+1)$, where $k \in \mathbb{Z}$ and $k \neq 0$. For KBSM of $L(0,1)$ we construct a new generating set which leads to its natural decomposition into a direct sum of cyclic modules. 

\medskip

A framed link in an oriented $3$-manifold $M$ is a disjoint union of smoothly embedded circles, each equipped with a non-zero normal vector field. We fix an invertible element $A$ of a commutative ring $R$ with identity, and let $R\mathcal{L}^{fr}$ be the free $R$-module with basis $\mathcal{L}^{fr}$, where $\mathcal{L}^{fr}$ is the set of ambient isotopy classes of framed links in $M$ (including the empty set as a framed link). Let $S_{2,\infty}$ be the submodule of $R\mathcal{L}^{fr}$ generated by all $R$-linear combinations:
\begin{equation*}
L_{+} - AL_{0} - A^{-1}L_{\infty} \quad \text{and} \quad L \sqcup T_{1} + (A^{-2}+A^{2})L,
\end{equation*}
where framed links $L_{+},\, L_{0},\, L_{\infty}$ are identical outside of a $3$-ball and differ inside of it as on the left of Figure~\ref{fig:skeinrelation}; $L\sqcup T_{1}$ on the right of Figure~\ref{fig:skeinrelation} is the disjoint union of $L$ and the trivial framed knot $T_{1}$ (i.e., $T_{1}$ is in a $3$-ball disjoint from $L$). The \emph{Kauffman bracket skein module} of $M$ is defined as the quotient module of $R\mathcal{L}^{fr}$ by $S_{2,\infty}$, i.e.,
\begin{equation*}
\mathcal{S}_{2,\infty}(M;R,A) = R\mathcal{L}^{fr}/S_{2,\infty}.
\end{equation*}

\begin{figure}[ht]
\centering
\includegraphics[scale=0.6]{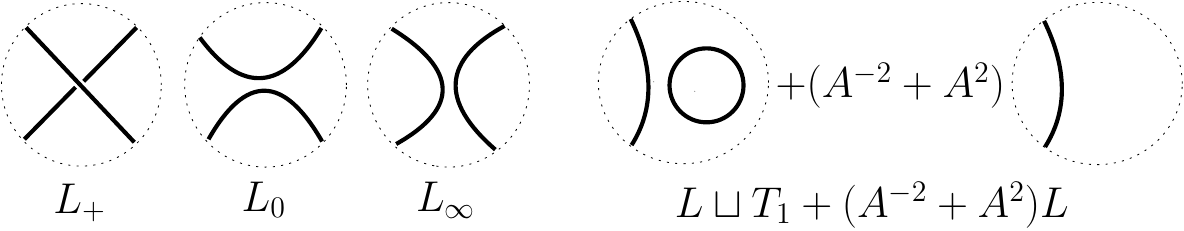}
\caption{Skein triple $L_{+}$, $L_{0}$, $L_{\infty}$ and $L\sqcup T_{1} + (A^{-2}+A^{2})L$}
\label{fig:skeinrelation}
\end{figure}

We organize this paper as follows. In Section~\ref{s:ambient_isotopy_In_Lens_Spaces}, we introduce a model for lens spaces that will be used throughout the paper. This model enables a representation of framed links and their ambient isotopy using arrow diagrams, and the arrow moves on ${\bf S}^{2}$ with two marked points (see Theorem~\ref{thm:AmbientIsotopiesInMBeta1Beta2}). In Section~\ref{s:Summary}, we provide a brief summary of the results of \cite{DW2025} that are relevant to this paper. In Section~\ref{s:LensSpaceLbeta2}, we construct a new basis for the KBSM of $L(\beta,2)$, where $\beta$ is an odd integer. In Section~\ref{s:CaseWithTwoFibers_Nu0DifferntThenMinus_1}, we find a new basis for the KBSM of $L(4k,2k+1)$, where $k \neq 0$. Finally, in Section~\ref{s:CaseWithTwoFibers_NuEqualMinus_1}, we construct a new generating set for the KBSM of $L(0,1) = {\bf S}^{2} \times S^{1}$.

\section{Ambient isotopy of framed links in \texorpdfstring{$M_{2}(\beta_{1})$ and $M_{2}(\beta_{1},\beta_{2})$}{M\unichar{8322}(\unichar{0946}\unichar{8321}) and M\unichar{8322}(\unichar{0946}\unichar{8321}, \unichar{0946}\unichar{8322})}}
\label{s:ambient_isotopy_In_Lens_Spaces}

Let $M(0;(\alpha_{1},\beta_{1}),(\alpha_{2},\beta_{2}))$ be a $3$-manifold obtained by $(\alpha_{i},\beta_{i})$-Dehn filling of boundary tori of a product ${\bf A}^{2} \times S^{1}$ of an annulus ${\bf A}^{2}$ and a circle $S^{1}$ along the curves $(\alpha_{i},\beta_{i})$, where $\alpha_{i} > 0$, $\gcd(\alpha_{i}, \beta_{i}) = 1$ for $i = 1,2$. In this paper, we consider two special cases: 
\begin{equation*}
M_{2}(\beta_{1}) = M(0;(2,\beta_{1}),(1,0))\,\, \text{and}\,\,M_{2}(\beta_{1}, \beta_{2}) = M(0;(2, \beta_{1}), (2, \beta_{2}))
\end{equation*}
From \cite{JN1983} (see Theorem~4.4), we know that for $p = \alpha_{1} \beta_{2} + \alpha_{2} \beta_{1}$ and $q = s \alpha_{1} + r \beta_{1}$, where $s \alpha_{2} - r \beta_{2} = 1$,
\begin{equation*}
M(0;(\alpha_{1},\beta_{1}),(\alpha_{2},\beta_{2})) \cong L(p,q).
\end{equation*}
For $\alpha_{i} = 2$ and $\nu_{i} = \lfloor \frac{\beta_{i}}{2}\rfloor$, $i = 1,2$, if $\nu_{0} = \nu_{1} + \nu_{2}$, then by Theorem~4.2 of \cite{JN1983},
\begin{equation*}
M_{2}(\beta_{1},\beta_{2}) \simeq L(4k,2k+1),
\end{equation*}
where $k = \nu_{0} + 1$. Thus, in the special case of $\nu_{0} = -1$, $M_{2}(\beta_{1},\beta_{2}) \simeq L(0,1) = {\bf S}^{2}\times S^{1}$.
\medskip

We define \emph{framed link} and \emph{generic framed link} in $M_{2}(\beta_{1})$ or $M_{2}(\beta_{1}, \beta_{2})$ as in \cite{DW2025}, and observe that generic framed links in $M_{2}(\beta_{1})$ or $M_{2}(\beta_{1}, \beta_{2})$ can be represented using arrow diagrams in ${\bf S}^{2}$ with two marked points $\beta_{1}$ and $\beta_{2}$ correspond to singular fibers. In this paper, we represent generic framed links on a $2$-disk ${\bf D}^{2}$ centered at $\beta_{1}$, with its boundary identified with the second marked point $\beta_{2}$. We will denote this disk by $\hat{{\bf S}}^{2}$ (see Figure~\ref{fig:S2withtwoconepoints}).

\begin{figure}[H]
\centering
\includegraphics[scale=0.7]{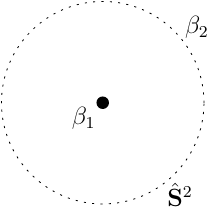}
\caption{Disk $\hat{\bf S}^{2}$ with marked points $\beta_{1}$ and $\beta_{2}$}
\label{fig:S2withtwoconepoints}
\end{figure}

It follows from Corollary~6.3 of \cite{Hud1969}, that every ambient isotopy of links (framed links) in $M_{2}(\beta_{1})$ or $M_{2}(\beta_{1},\beta_{2})$ are compositions of \emph{moves} either in a normal cylinder $N$ inside ${\bf A}^{2}\times S^{1}$ or a $2$-handle $H$ attached along $(2,\beta_{i})$-curves in its boundary called $2$-\emph{handle slides}. A move in $N$ corresponds to one of $\Omega_{1}-\Omega_{5}$-moves (see Figure~\ref{fig:ArrowMoves}) on $\hat{\bf S}^{2}$. Furthermore, it follows from Lemma~2.1 of \cite{DW2025} that a $2$-handle slide corresponds to an $S_{\beta_{i}}$-move on $\hat{{\bf S}}^{2}$ (see Figure~\ref{fig:SBeta1Beta2MovesOnhatS2}). When $\beta_{2} = 0$, $S_{\beta_{2}}$-move on $\hat{{\bf S}}^{2}$ is shown in Figure~\ref{fig:OmegaInfinityOnhatS} and we will denote it by $\Omega_{\infty}$.

\begin{figure}[ht]
\centering
\includegraphics[scale=0.8]{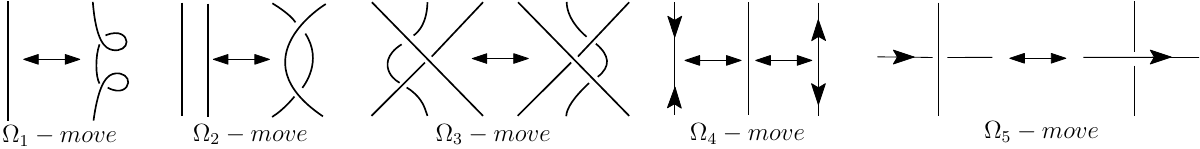}
\caption{Arrow moves $\Omega_{1}-\Omega_{5}$ on ${\bf A}^{2}$}
\label{fig:ArrowMoves}
\end{figure}

\begin{figure}[ht]
\centering
\includegraphics[scale=0.75]{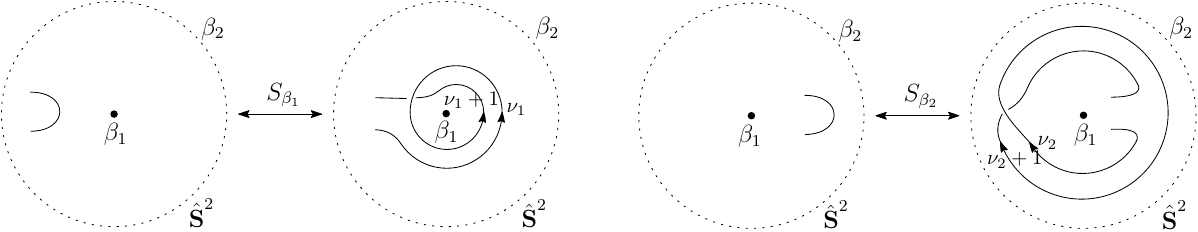}
\caption{$S_{\beta_{1}}$ and $S_{\beta_{2}}$-moves on $\hat{{\bf S}}^{2}$}
\label{fig:SBeta1Beta2MovesOnhatS2}
\end{figure}

\begin{figure}[ht]
\centering
\includegraphics[scale=0.6]{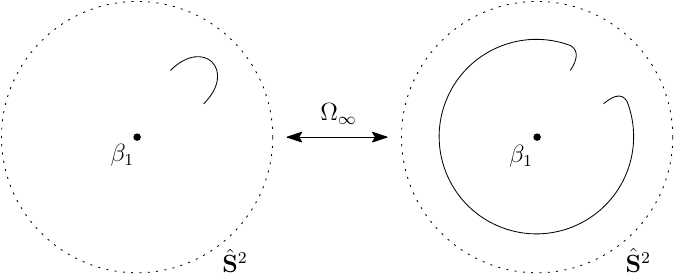}
\caption{$\Omega_{\infty}$-move on $\hat{\bf S}^{2}$}
\label{fig:OmegaInfinityOnhatS}
\end{figure}

\begin{theorem}
\label{thm:AmbientIsotopiesInMBeta1Beta2}
Let $L_{1}$ and $L_{2}$ be generic links either in $M_{2}(\beta_{1})$ or $M_{2}(\beta_{1},\beta_{2})$.
\begin{itemize}
\item[(i)] $L_{1}$ and $L_{2}$ are ambient isotopic in $M_{2}(\beta_{1})$ if and only if their arrow diagrams differ on $\hat{\bf S}^{2}$ by a finite sequence of $\Omega_{1}-\Omega_{5}$, $S_{\beta_{1}}$, and $\Omega_{\infty}$-moves.
\item[(ii)] $L_{1}$ and $L_{2}$ are ambient isotopic in $M_{2}(\beta_{1},\beta_{2})$ if and only if their arrow diagrams differ on $\hat{\bf S}^{2}$ by a sequence of $\Omega_{1}-\Omega_{5}$ and $S_{\beta_{i}}$-moves, $i = 1,2$.
\end{itemize}
\end{theorem}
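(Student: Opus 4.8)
The plan is to reduce the statement to the structural description of ambient isotopies already recorded in the excerpt: by Corollary~6.3 of \cite{Hud1969}, any ambient isotopy of (framed) links in $M_{2}(\beta_{1})$ or $M_{2}(\beta_{1},\beta_{2})$ is a finite composition of moves supported either in the normal cylinder $N \subset {\bf A}^{2}\times S^{1}$ or across one of the attached $2$-handles $H$ along the $(2,\beta_{i})$-curves. Thus it suffices to show (a) that a move in $N$ translates, under the chosen projection onto $\hat{\bf S}^{2}$, into a finite sequence of $\Omega_{1}$--$\Omega_{5}$-moves, and (b) that a $2$-handle slide along the $(2,\beta_{i})$-curve translates into an $S_{\beta_{i}}$-move (and, when $\beta_{2}=0$, into the $\Omega_{\infty}$-move). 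Both of these translations are the content of the referenced Lemma~2.1 of \cite{DW2025} together with the standard fact that link isotopy inside a thickened surface is generated by the Reidemeister-type arrow moves; the present theorem is then essentially a bookkeeping statement combining these ingredients in the two special topologies.

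Concretely, I would proceed as follows. First, fix the identification of a neighborhood of a generic link with its arrow-diagram data on $\hat{\bf S}^{2}$, i.e. the $2$-disk ${\bf D}^{2}$ centered at $\beta_{1}$ with $\partial{\bf D}^{2}$ collapsed to $\beta_{2}$ (Figure~\ref{fig:S2withtwoconepoints}), and note that a framed link in the fibered piece is determined up to isotopy in $N$ by its image diagram together with arrows recording the framing/fiber-winding. Second, for the ``only if'' direction, take a generic ambient isotopy between $L_{1}$ and $L_{2}$, decompose it via Hudson's corollary into moves in $N$ and $2$-handle slides, make each elementary move generic with respect to the projection (a small perturbation), and invoke: (i) for a move in $N$, the classification of link isotopies in ${\bf A}^{2}\times S^{1}$ projected to ${\bf A}^{2}$ as a finite sequence of $\Omega_{1}$--$\Omega_{5}$; (ii) for a $2$-handle slide across the $(2,\beta_{i})$-handle, Lemma~2.1 of \cite{DW2025}, which identifies it with an $S_{\beta_{i}}$-move on $\hat{\bf S}^{2}$, specializing to $\Omega_{\infty}$ when $\beta_{2}=0$ (Figure~\ref{fig:OmegaInfinityOnhatS}). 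Concatenating gives the desired finite sequence of moves. Third, for the ``if'' direction, one checks that each of $\Omega_{1}$--$\Omega_{5}$, $S_{\beta_{i}}$, and $\Omega_{\infty}$ is realized by an honest ambient isotopy: the $\Omega_{j}$ moves are local isotopies inside $N$, while $S_{\beta_{i}}$ is realized by sliding the relevant strand across the $2$-handle $H$ attached along the $(2,\beta_{i})$-curve, and $\Omega_{\infty}$ is the special case $\beta_{2}=0$ of the same slide. Items (i) and (ii) then differ only in that $M_{2}(\beta_{1})$ has a genuine solid-torus piece on the $\alpha_{2}=1$ side (so $2$-handle slides there degenerate to the $\Omega_{\infty}$-move, already listed among $\Omega_{1}$--$\Omega_{5}$-type local moves), whereas $M_{2}(\beta_{1},\beta_{2})$ has two nontrivial $(2,\beta_{i})$-handles and hence two nontrivial families $S_{\beta_{1}}$, $S_{\beta_{2}}$.

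The main obstacle I anticipate is the genericity/translation step (b): one must verify carefully that a $2$-handle slide along a $(2,\beta_{i})$-curve, when pushed to the projection surface $\hat{\bf S}^{2}$, produces exactly the $S_{\beta_{i}}$-move as drawn in Figure~\ref{fig:SBeta1Beta2MovesOnhatS2} and nothing more — in particular that the framing arrows transform correctly and that no extra crossing or arrow bookkeeping is lost when the handle co-core meets the marked point $\beta_{i}$. This is precisely what Lemma~2.1 of \cite{DW2025} is set up to handle, so the proof largely amounts to invoking that lemma once for each marked point and then assembling the pieces; the subtlety is only in confirming that the two special manifolds $M_{2}(\beta_{1})$ and $M_{2}(\beta_{1},\beta_{2})$ are obtained from the general $M(0;(\alpha_{1},\beta_{1}),(\alpha_{2},\beta_{2}))$ by exactly the handle attachments to which Lemma~2.1 applies, and that the $\beta_{2}=0$ degeneration of $S_{\beta_{2}}$ is genuinely the move $\Omega_{\infty}$ of Figure~\ref{fig:OmegaInfinityOnhatS}. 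Everything else is routine: local isotopies give local moves, and Hudson's corollary guarantees these generate all ambient isotopies.
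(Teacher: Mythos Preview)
Your proposal is correct and follows essentially the same route as the paper: the paper does not give a formal proof environment for this theorem but rather derives it in the preceding paragraph, invoking Corollary~6.3 of \cite{Hud1969} to decompose an ambient isotopy into moves in the normal cylinder $N$ (yielding $\Omega_{1}$--$\Omega_{5}$) and $2$-handle slides (yielding $S_{\beta_{i}}$ via Lemma~2.1 of \cite{DW2025}, with the $(1,0)$-filling on the second side giving $\Omega_{\infty}$). One small slip: you write that $\Omega_{\infty}$ is ``already listed among $\Omega_{1}$--$\Omega_{5}$-type local moves,'' but it is not --- it is a genuinely separate move coming from the $(1,0)$-handle slide on the second boundary component, which is why the statement of part~(i) lists it explicitly alongside $\Omega_{1}$--$\Omega_{5}$ and $S_{\beta_{1}}$.
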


\section{Preliminaries}
\label{s:Summary}

We begin this section with a brief summary of the relevant results of \cite{DW2025}. Let ${\bf D}^{2}$ be a $2$-disk, ${\bf A}^{2}$ be an annulus, and ${\bf D}^{2}_{\beta_{1}}$ be a $2$-disk with marked point $\beta_{1}$. Arrow diagrams in ${\bf D}^{2}$, ${\bf A}^{2}$, and ${\bf D}^{2}_{\beta_{1}}$ can naturally be regarded as arrow diagrams in $\hat{\bf S}^{2}$. Therefore, the curves $t_{m}$, $\lambda$, $\lambda^{n}$, $t_{m,n}$, $x_{m}$, and $(x_{m})^{n}$ introduced in \cite{DW2025} can also be viewed as the curves in $\hat{\bf S}^{2}$ shown in Figure~\ref{fig:CurvesLambdaXnOnS2}.
\begin{figure}[ht]
\centering
\includegraphics[scale=0.8]{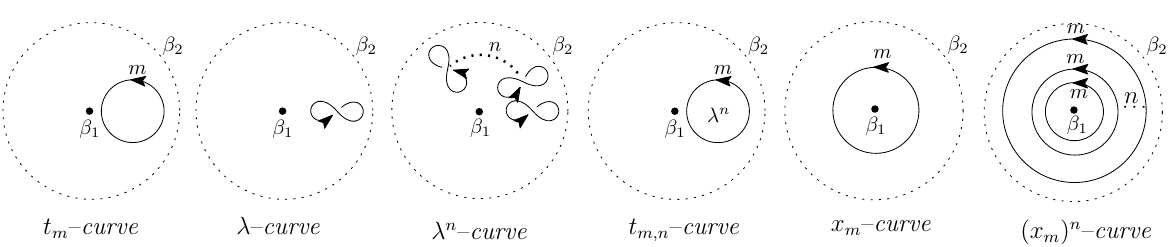}
\caption{Curves $t_{m}$, $\lambda$, $\lambda^{n}$, $t_{m,n}$ $x_{m}$, and $(x_{m})^n$ on $\hat{\bf S}^{2}$, $m\in \mathbb Z$, $n\geq 0$}
\label{fig:CurvesLambdaXnOnS2}
\end{figure}

We set $R = \mathbb{Z}[A^{\pm 1}]$ for the remainder of this paper. In \cite{DW2025}, we introduced three families of polynomials $\{P_{m}\}_{m \in \mathbb{Z}}$, $\{Q_{m}\}_{m \in \mathbb{Z}}$, and $\{ P_{m,k}\mid m \in \mathbb{Z},\, k \geq 0\}$. The first one (see \cite{DW2025}, p.5) is determined by the relation\footnote{This is a modified version of the relation defining $\{P_{m}\}_{m \in \mathbb{Z}}$ introduced in \cite{MD2009}.}
\begin{equation*} 
P_{m} - A \lambda P_{m-1} + A^{2} P_{m-2} = 0,
\end{equation*}
with $P_{0} = -A^{2}-A^{-2}$, $P_{1} = -A^{3} \lambda$. The second one (see Definition~3.3 of \cite{DW2025}), is determined by relation
\begin{equation*}
Q_{0} = 0, \quad Q_{1} = 1, \quad \text{and} \quad Q_{m+2} = \lambda Q_{m+1} - Q_{m}
\end{equation*}
for $m \geq 0$, and $Q_{m} = -Q_{-m}$ for $m < 0$. We note that for $m > 0$, the degree of $Q_{m}$ is $\deg (Q_{m}) = m-1$ and its leading coefficient is $1$. Moreover, as we showed in Lemma~3.4 of \cite{DW2025},
\begin{equation}
\label{eqn:rel_Pn}
P_{m} = -A^{m+2} Q_{m+1} + A^{m-2} Q_{m-1}
\end{equation}
for any $m \in \mathbb{Z}$. The third family\footnote{This is also a modified version of family $\{P_{m,k}\mid m \in \mathbb{Z},\, k \geq 0\}$ introduced in \cite{MD2009}.} is defined by $P_{m,0} = P_{m}$ and for $k \geq 1$,
\begin{equation*}
P_{m,k} = A P_{m+1,k-1} + A^{-1} P_{m-1,k-1}.
\end{equation*}

Let $\mathcal{D}({\hat{\bf S}^{2}})$ be the set of all equivalence classes of arrow diagrams (including empty arrow diagram) modulo $\Omega_{1}-\Omega_{5}$, $S_{\beta_{1}}$, and $\Omega_{\infty}$-moves, or $\Omega_{1}-\Omega_{5}$, $S_{\beta_{1}}$, and $S_{\beta_{2}}$-moves (this will be clear from the context). We denote by $R\mathcal{D}({\hat{\bf S}^{2}})$ the free $R$-module with basis $\mathcal{D}({\hat{\bf S}^{2}})$ and let $S_{2,\infty}(\hat{\bf S}^{2})$ be its free $R$-submodule generated by all $R$-linear combinations:
\begin{equation*}
D_{+}-AD_{0}-A^{-1}D_{\infty}\,\,\text{and}\,\,D\sqcup T_{1} + (A^{2}+A^{-2})D,    
\end{equation*}
where $D_{+}$, $D_{0}$, $D_{\infty}$, and $D\sqcup T_{1}$ are arrow diagrams in Figure~\ref{fig:SkeinTripleOfDiagrams}.

\begin{figure}[ht]
\centering
\includegraphics[scale=0.6]{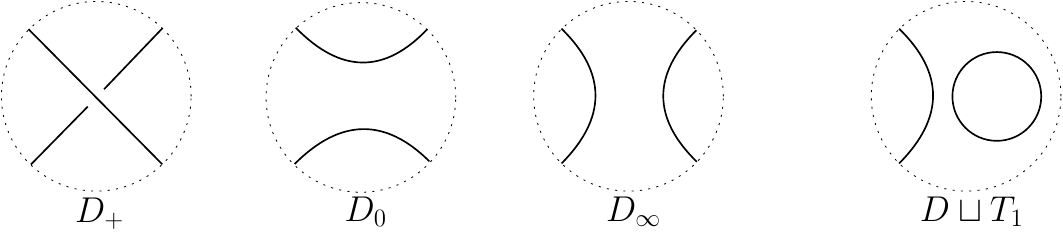}
\caption{Skein triple $D_{+}$, $D_{0}$, $D_{\infty}$ and disjoint union $D\sqcup T_{1}$}
\label{fig:SkeinTripleOfDiagrams}
\end{figure}

Therefore, we can define two corresponding quotient modules $S\mathcal{D}_{\nu_{1}}$ and $S\mathcal{D}_{\nu_{1},\nu_{2}}$ of $R\mathcal{D}({\hat{\bf S}^{2}})$ by $S_{2,\infty}(\hat{\bf S}^{2})$. We show that the first determines the KBSM of $M_{2}(\beta_{1})$ and the second one gives the KBSM of $M_{2}(\beta_{1},\beta_{2})$.

\medskip

An arrow diagram $D$ in $\hat{\bf S}^{2}$ contained in a $2$-disk ${\bf D}^{2}$ can be expressed in $S\mathcal{D}_{\nu_{1},\nu_{2}}$ (or $S\mathcal{D}_{\nu_{1}}$) as a $R$-linear combination of $\lambda^{k}$ ($k\geq 0$) using a modified version of the bracket $\langle\cdot \rangle_{r}$ (also denoted by $\langle\cdot \rangle_{r}$ in \cite{DW2025}) defined in \cite{MD2009} (see Definition~3.5). It follows from Proposition~3.7 of \cite{MD2009} that $\langle D \rangle_{r} = \langle D' \rangle_{r}$, whenever arrow diagrams $D$ and $D'$ are related by a finite sequence of $\Omega_{1}-\Omega_{5}$-moves on ${\bf D}^{2}$. Furthermore, as noted in \cite{DW2025}, $\langle t_{m} \rangle_{r} = P_{m}$ and $\langle t_{m,n} \rangle_{r} = P_{m,n}$.

Given an arrow diagram $D$ in $\hat{\bf S}^{2}$, we define $\langle D \rangle$ and $\llangle D \rrangle$ analogously to those defined for an arrow diagram in ${\bf A}^{2}$ (or ${\bf D}^{2}_{\beta}$) in \cite{DW2025}.

\begin{figure}[H]
\centering
\includegraphics[scale=0.7]{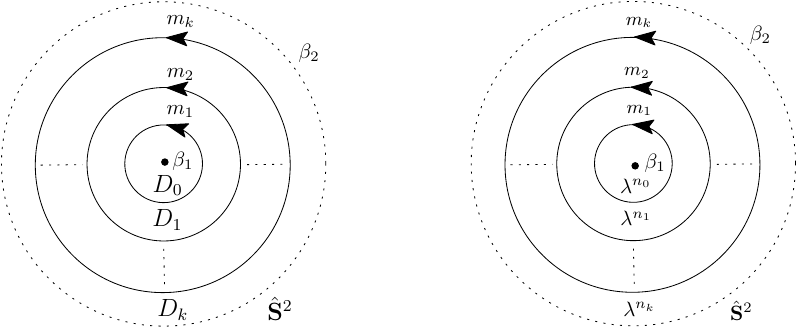}
\caption{Arrow diagram $D$ in $\hat{\bf S}^{2}$ without crossings and $\lambda^{n_{0}}x_{m_{1}}\lambda^{n_{1}} \cdots \lambda^{n_{k-1}}x_{m_{k}}\lambda^{n_{k}}$}
\label{fig:ArrowDiagramOnhatS2NoCrAndGamma}
\end{figure}

Let
\begin{equation*}
\Gamma = \{\lambda^{n_{0}}x_{m_{1}}\lambda^{n_{1}} \cdots \lambda^{n_{k-1}}x_{m_{k}}\lambda^{n_{k}} \mid n_{i} \geq 0, \ m_{i} \in \mathbb{Z}, \ \text{and} \ k \geq 0 \},
\end{equation*}
where $\lambda^{n_{0}}x_{m_{1}}\lambda^{n_{1}} \cdots \lambda^{n_{k-1}}x_{m_{k}}\lambda^{n_{k}}$ is an arrow diagram on the right of Figure~\ref{fig:ArrowDiagramOnhatS2NoCrAndGamma}. For an arrow diagram without crossings $D = D_{0}x_{m_{1}}D_{1}\ldots D_{k-1}x_{m_{k}}D_{k}$ in $\hat{\bf S}^{2}$ (see left of Figure~\ref{fig:ArrowDiagramOnhatS2NoCrAndGamma}) we define $\llangle D \rrangle_{\Gamma}$ as in \cite{DW2025}. Let
\begin{equation*}
\Sigma^{\prime}_{\nu_{1}} = \{\lambda^{n}, x_{\nu_{1}}\lambda^{n} \mid n \geq 0\}\subset \Gamma, \,\, \nu_{1} = \lfloor \frac{\beta_{1}}{2} \rfloor,
\end{equation*}
and, for each $w\in \Gamma$, we define $\llangle w \rrangle_{\Sigma^{\prime}_{\nu_{1}}}$ as in \cite{DW2025}. As we showed (see Theorem~4.9 of \cite{DW2025}), the KBSM of $(\beta,2)$-fibered torus $S\mathcal{D}({\bf D}^{2}_{\beta_{1}})$ is a free $R$-module with the basis $\Sigma^{\prime}_{\nu_{1}}$. 
In this paper, we will use the following properties of $\llangle \cdot \rrangle_{\Sigma^{\prime}_{\nu_{1}}}$.

\begin{lemma}[Lemma~4.3, \cite{DW2025}]
\label{lem:annulus_for_any_kn_bracket_2_beta0}
For any $w_{1}x_{m}w_{2} \in \Gamma$ with $m \in \mathbb{Z}$ and $k \in \mathbb{Z}$:
\begin{equation}
\label{eqn:rel_xm_to_xkQ}
\llangle w_{1}x_{m}w_{2} \rrangle_{\Sigma^{\prime}_{\nu_{1}}} = -A^{m-k}\llangle w_{1}x_{k}Q_{m-k-1}w_{2} \rrangle_{\Sigma^{\prime}_{\nu_{1}}} + A^{m-k-1}\llangle w_{1}x_{k+1}Q_{m-k}w_{2} \rrangle_{\Sigma^{\prime}_{\nu_{1}}},
\end{equation}
and
\begin{equation}
\label{eqn:rel_xm_to_Qxk}
\llangle w_{1}x_{m}w_{2} \rrangle_{\Sigma^{\prime}_{\nu_{1}}} = -A^{k-m}\llangle w_{1}Q_{m-k-1}x_{k}w_{2} \rrangle_{\Sigma^{\prime}_{\nu_{1}}} + A^{k-m+1}\llangle w_{1}Q_{m-k}x_{k+1}w_{2} \rrangle_{\Sigma^{\prime}_{\nu_{1}}}.
\end{equation}
\end{lemma}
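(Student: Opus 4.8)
The plan is to reduce both identities to a single elementary skein relation for the curves $x_{m}$ and then propagate it with the recursion defining $\{Q_{m}\}_{m\in\mathbb{Z}}$, carrying out the induction on $m$ with $w_{1},w_{2}$ universally quantified.

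\emph{Step 1: an elementary relation.} With $w_{1},w_{2}$ held fixed, realize $x_{m}$ as a curve winding $m$ times near the marked point $\beta_{1}$ and apply the Kauffman bracket relation at the crossing of this winding nearest the $w_{2}$-side. One resolution splits off a parallel copy of the core curve $\lambda$ next to $x_{m-1}$; the other produces $x_{m-2}$ carrying a single curl, whose removal by an $\Omega_{1}$-move contributes the framing factor $-A^{3}$. Keeping track of the $A^{\pm1}$ coming from the skein relation one obtains
\begin{equation*}
\llangle w_{1}x_{m}w_{2}\rrangle_{\Sigma^{\prime}_{\nu_{1}}} = A\,\llangle w_{1}x_{m-1}\lambda w_{2}\rrangle_{\Sigma^{\prime}_{\nu_{1}}} - A^{2}\,\llangle w_{1}x_{m-2}w_{2}\rrangle_{\Sigma^{\prime}_{\nu_{1}}},
\end{equation*}
the $x$-analogue of the relation $P_{m}=A\lambda P_{m-1}-A^{2}P_{m-2}$ used for $t_{m}$ in \cite{DW2025}. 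An entirely parallel computation at the $w_{1}$-side of the winding (a left--right mirror of the picture, so that the skein relation is applied with $A$ and $A^{-1}$ interchanged) gives the companion relation
\begin{equation*}
\llangle w_{1}x_{m}w_{2}\rrangle_{\Sigma^{\prime}_{\nu_{1}}} = A^{-1}\,\llangle w_{1}\lambda x_{m-1}w_{2}\rrangle_{\Sigma^{\prime}_{\nu_{1}}} - A^{-2}\,\llangle w_{1}x_{m-2}w_{2}\rrangle_{\Sigma^{\prime}_{\nu_{1}}}.
\end{equation*}
Solving either relation for $x_{m-2}$ shows it can be run in both directions of the index, so each holds for every $m\in\mathbb{Z}$.

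\emph{Step 2: induction for \eqref{eqn:rel_xm_to_xkQ} and \eqref{eqn:rel_xm_to_Qxk}.} Fix $k\in\mathbb{Z}$. Using $Q_{-1}=-1$, $Q_{0}=0$, $Q_{1}=1$, the identity \eqref{eqn:rel_xm_to_xkQ} is immediate for $m=k$ and $m=k+1$ (for all $w_{1},w_{2}$). Assuming it for all indices between $k$ and $m$ with $m\ge k+1$, apply the first relation of Step~1 to $\llangle w_{1}x_{m+1}w_{2}\rrangle_{\Sigma^{\prime}_{\nu_{1}}}$, then substitute the inductive formula for $\llangle w_{1}x_{m}\lambda w_{2}\rrangle_{\Sigma^{\prime}_{\nu_{1}}}$ (with $\lambda w_{2}$ in the role of $w_{2}$) and for $\llangle w_{1}x_{m-1}w_{2}\rrangle_{\Sigma^{\prime}_{\nu_{1}}}$. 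Collecting the $x_{k}$- and $x_{k+1}$-terms by $R$-linearity of $\llangle\cdot\rrangle_{\Sigma^{\prime}_{\nu_{1}}}$ in the inserted polynomial in $\lambda$, the two coefficients simplify via $\lambda Q_{j}-Q_{j-1}=Q_{j+1}$ to give exactly \eqref{eqn:rel_xm_to_xkQ} with $m+1$ in place of $m$. Decreasing $m$ below $k$ is symmetric, using the reversed form of the first Step~1 relation and $\lambda Q_{j}-Q_{j+1}=Q_{j-1}$; since $\{Q_{j}\}$ satisfies its recursion at every integer index there is no boundary case near $m=k$. Identity \eqref{eqn:rel_xm_to_Qxk} is obtained by the same induction fed with the companion relation of Step~1: the $\lambda$'s then accumulate on the left of $x_{k}$ and each step carries $A^{-1}$ instead of $A$, yielding the powers $A^{k-m}$ and $A^{k-m+1}$. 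Alternatively, \eqref{eqn:rel_xm_to_Qxk} follows from \eqref{eqn:rel_xm_to_xkQ} by the left--right reflection of $\hat{\bf S}^{2}$ fixing $\beta_{1}$, which reverses the order of a word, fixes each $\lambda$ and $x_{m}$ up to isotopy, leaves each $Q_{j}$ (a polynomial in $\lambda$ only) unchanged, and sends $A\mapsto A^{-1}$.

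\emph{Main obstacle.} Step~2 is routine bookkeeping governed entirely by the recursion for $\{Q_{m}\}$; the real content is Step~1. One must pin down the conventions for $x_{m}$, for the arrows, and for the $S_{\beta_{1}}$-move precisely enough to be certain that one resolution of the chosen crossing is exactly $x_{m-1}$ with a detached copy of $\lambda$ while the other is $x_{m-2}$ with a curl of a definite sign, so that the framing correction is $-A^{3}$ and the coefficients come out $A$ and $-A^{2}$ (respectively $A^{-1}$ and $-A^{-2}$ in the mirrored relation). Once this single local computation is verified, the lemma follows.
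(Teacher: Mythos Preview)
The paper does not give its own proof of this lemma: it is quoted verbatim from \cite{DW2025} (Lemma~4.3 there) and used as a black box. So there is nothing in the present paper to compare your argument against directly.

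That said, your approach is the natural one and is essentially correct. Your Step~1 relations are exactly the identities
\[
x_{m}\lambda = A\,x_{m-1} + A^{-1}x_{m+1},\qquad \lambda x_{m} = A^{-1}x_{m-1} + A\,x_{m+1},
\]
which the present paper uses elsewhere (see the list of identities in the proof of Lemma~\ref{lem:Relations_Bracket_Two_Fibers_v0Minus1_I}); solving each for $x_{m+1}$ and shifting the index gives precisely your two displayed relations. Your Step~2 induction, feeding the recursion $Q_{j+1}=\lambda Q_{j}-Q_{j-1}$ into these, is clean and correct, and the base cases $m=k,k+1$ are as you say. The mirror argument at the end is a legitimate shortcut, though the direct induction from the companion relation is already enough.

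The only point to tidy is the geometric narrative in Step~1. In this framework $x_{m}$ is not a curve that ``winds $m$ times'' with a self-crossing; it is a simple closed curve carrying $m$ arrows, and the relation comes from an $\Omega_{5}$-move (pushing an arrow across a strand, which creates a crossing) followed by the Kauffman bracket relation. Your description of ``the crossing of this winding'' and ``$x_{m-2}$ carrying a curl'' does not match the arrow-diagram picture, even though the algebraic output is right. Since you flag exactly this as the main obstacle, the fix is simply to replace the winding/curl story with the $\Omega_{5}$ picture (compare Figure~\ref{fig:PassingArrowsForXcurves} in the paper for the two-curve analogue); once that local computation is pinned down, the rest of your proof goes through unchanged.
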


\begin{lemma}[Lemma~4.4, \cite{DW2025}]
\label{lem:annulus_for_any_kn_bracket_2_beta}
Let $\Delta_{t}^{+},\Delta_{t}^{-},\Delta_{x}^{+},\Delta_{x}^{-}$ be finite subsets of $R \times \Gamma \times \Gamma \times \mathbb{Z}$, and define
\begin{equation*}
\Theta_{t}^{+}(k,n) = \sum_{(r,w_{1},w_{2},v) \in \Delta_{t}^{+}} r\llangle w_{1}P_{n+v,k}w_{2} \rrangle_{\Sigma^{\prime}_{\nu_{1}}}, \quad
\Theta_{t}^{-}(k,n) = \sum_{(r,w_{1},w_{2},v) \in \Delta_{t}^{-}} r\llangle w_{1}P_{-n+v}\lambda^{k}w_{2} \rrangle_{\Sigma^{\prime}_{\nu_{1}}},
\end{equation*}
\begin{equation*}
\Theta_{x}^{+}(k,n) = \sum_{(r,w_{1},w_{2},v) \in \Delta_{x}^{+}} r\llangle w_{1}\lambda^{k}x_{n+v}w_{2} \rrangle_{\Sigma^{\prime}_{\nu_{1}}}, \quad
\Theta_{x}^{-}(k,n) = \sum_{(r,w_{1},w_{2},v) \in \Delta_{x}^{-}} r\llangle w_{1}x_{-n+v}\lambda^{k}w_{2} \rrangle_{\Sigma^{\prime}_{\nu_{1}}},
\end{equation*}
and 
\begin{equation*}
\Theta_{t,x}(k,n) = \Theta_{t}^{+}(k,n) + \Theta_{t}^{-}(k,n) + \Theta_{x}^{+}(k,n)+ \Theta_{x}^{-}(k,n).
\end{equation*}
If either \textup{(1)} $\Theta_{t,x}(0,n) = 0$ for all $n \in \mathbb{Z}$ or \textup{(2)} $\Theta_{t,x}(k,n_{0}) = \Theta_{t,x}(k,n_{0}+1) = 0$ for all $k \geq 0$ and a fixed $n_{0} \in \mathbb{Z}$, then $\Theta_{t,x}(k,n) = 0$ for any $k \geq 0$ and $n \in \mathbb{Z}$.
\end{lemma}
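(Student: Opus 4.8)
The plan is to reduce both statements to a single two-term recurrence linking level $k$ to level $k-1$:
\begin{equation*}
\Theta_{t,x}(k,n) = A\,\Theta_{t,x}(k-1,n+1) + A^{-1}\,\Theta_{t,x}(k-1,n-1) \qquad (k \ge 1,\ n \in \mathbb{Z}).
\end{equation*}
Granting this, both conclusions are quick. Under hypothesis (1) one inducts on $k$: the case $k=0$ is the assumption, and the inductive step is a direct substitution into the recurrence. Under hypothesis (2), evaluating the recurrence at $(k,n_0+1)$ gives $0=A\,\Theta_{t,x}(k-1,n_0+2)+A^{-1}\cdot 0$, so the column $n=n_0+2$ vanishes for every $k$; evaluating at $(k,n_0)$ forces the column $n=n_0-1$ to vanish; iterating this outward in both directions kills every column, so $\Theta_{t,x}(k,n)=0$ for all $k\ge 0$ and all $n$.

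So the real work is the recurrence, which I would establish block by block, shifting each summand of $\Theta_t^{\pm},\Theta_x^{\pm}$ individually and re-summing. For $\Theta_t^{+}$ the recurrence is literally the defining relation $P_{m,k}=AP_{m+1,k-1}+A^{-1}P_{m-1,k-1}$ with $m=n+v$, inserted into $\llangle w_1\,\cdot\,w_2\rrangle$. For $\Theta_t^{-}$, the $\{P_m\}$-recurrence written as $\lambda P_m=A^{-1}P_{m+1}+AP_{m-1}$ gives $P_{-n+v}\lambda^{k}=A^{-1}P_{-(n-1)+v}\lambda^{k-1}+A\,P_{-(n+1)+v}\lambda^{k-1}$, again inserted between $w_1$ and $w_2$. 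For the two $x$-blocks one must slide a single $\lambda$ past an $x$-curve, and the side matters; here I would use Lemma~\ref{lem:annulus_for_any_kn_bracket_2_beta0}. Specializing the free index in \eqref{eqn:rel_xm_to_Qxk} so that the $Q$'s occurring are $Q_1=1$ and $Q_2=\lambda$, and solving for the term with $\lambda$ in front of $x$, yields
\begin{equation*}
\llangle w_1\,\lambda\,x_{n+v}\,w_2\rrangle_{\Sigma^{\prime}_{\nu_{1}}} = A\,\llangle w_1\,x_{(n+1)+v}\,w_2\rrangle_{\Sigma^{\prime}_{\nu_{1}}} + A^{-1}\,\llangle w_1\,x_{(n-1)+v}\,w_2\rrangle_{\Sigma^{\prime}_{\nu_{1}}};
\end{equation*}
replacing $w_1$ by $w_1\lambda^{k-1}$ (again a word in $\Gamma$) converts $\lambda^{k}x_{n+v}$ into the desired level-$(k-1)$ combination, so $\Theta_x^{+}(k,n)=A\,\Theta_x^{+}(k-1,n+1)+A^{-1}\,\Theta_x^{+}(k-1,n-1)$. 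The block $\Theta_x^{-}$ is handled symmetrically from \eqref{eqn:rel_xm_to_xkQ}, with $\lambda$ on the right of $x$ and $w_2$ replaced by $\lambda^{k-1}w_2$. Adding the four identities gives the recurrence for $\Theta_{t,x}$, the essential point being that all four attach exactly the coefficients $A$ and $A^{-1}$ to the $(n+1)$- and $(n-1)$-shifts.

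The main obstacle is this block-by-block verification, and within it the two $x$-blocks: since $\lambda$ and the $x_m$ do not commute, one must be careful to apply \eqref{eqn:rel_xm_to_Qxk} versus \eqref{eqn:rel_xm_to_xkQ} on the correct side, and to check that absorbing the leftover $\lambda^{k-1}$ into $w_1$ (respectively $w_2$) keeps the argument inside $\Gamma$ and preserves the identity. Once the coefficients are seen to agree across all four blocks, what remains is only the bookkeeping of the two inductions described above.
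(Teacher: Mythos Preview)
Your argument is correct. The paper itself does not prove this lemma---it is imported verbatim from \cite{DW2025} as Lemma~4.4---so there is no in-paper proof to compare against. That said, your approach is almost certainly the intended one: the four elementary relations you invoke,
\[
P_{m,k}=AP_{m+1,k-1}+A^{-1}P_{m-1,k-1},\quad \lambda P_m=A^{-1}P_{m+1}+AP_{m-1},\quad \lambda x_m=A x_{m+1}+A^{-1}x_{m-1},\quad x_m\lambda=A^{-1}x_{m+1}+A x_{m-1},
\]
are precisely the ones the authors list later (in the proof of Lemma~\ref{lem:Relations_Bracket_Two_Fibers_v0Minus1_I}) when they need the analogous reduction by hand. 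Your derivations of the last two from the appropriate sides of Lemma~\ref{lem:annulus_for_any_kn_bracket_2_beta0} are correct, the absorption of $\lambda^{k-1}$ into $w_1$ or $w_2$ keeps the argument in $\Gamma$, and the two inductions you sketch (on $k$ under hypothesis~(1), outward in $n$ under hypothesis~(2)) go through exactly as written.
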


For an arrow diagram $D$ in $\hat{\bf S}^{2}$ we also define as in \cite{DW2025},
\begin{equation*}
\phi_{\beta_{1}}(D) = \llangle \llangle \llangle D \rrangle \rrangle_{\Gamma} \rrangle_{\Sigma^{\prime}_{\nu_{1}}}
\end{equation*} 
and we note that by Lemma~4.2 and Lemma~4.8 of \cite{DW2025},
\begin{equation}
\label{eqn:Compositionh4h2h1}
\phi_{\beta_{1}}(D - D') = 0
\end{equation}
for any arrow diagrams $D,D'$ on $\hat{\bf S }^{2}$, which differ by $\Omega_{1}-\Omega_{5}$ and $S_{\beta_{1}}$-moves.

\medskip

Let $\{F_{m}\}_{m \in \mathbb{Z}}$ and $\{R_{m}\}_{m \in \mathbb{Z}}$ be families of polynomials in $R[\lambda]$ defined by
\begin{equation*}
F_{m} = A^{-m} Q_{m+1} + A^{-m+2} Q_{m} \quad \text{and} \quad R_{m} = A^{-1} P_{m-1} - A^{-2} P_{m}.
\end{equation*}

\begin{remark}
\label{rem:DegAndCoeffOf_F} 
One checks that $\deg (F_{m}) = \max\{m, -m-1\}$, the leading coefficient of $F_{m}$ is $A^{-m}$ if $m \geq 0$ and $-A^{-m+2}$ otherwise, and
\begin{equation}
\label{eqn:rel_F_to_P}
P_{m} = -A^{-2}F_{-m} + A^{-1}F_{-m-1}.
\end{equation}
One also verifies that $\deg (R_{m}) = \max\{m, 1-m\}$, the leading coefficient of $R_{m}$ is $A^{m}$ if $m \geq 1$ and $-A^{m-4}$ otherwise.
\end{remark}

\begin{lemma} 
\label{lem:kbsm_xm_xvxm}
In $S\mathcal{D}({\bf D}^{2}_{\beta_{1}})$, for all $m \in \mathbb{Z}$ and $w_{x} \in \Gamma$:
\begin{equation}
\label{eqn:kbsm_xm}
x_{m}w_{x} = x_{\nu_{1}} F_{\nu_{1}-m} w_{x}
\end{equation}
and
\begin{equation}
\label{eqn:kbsm_xvxm}
x_{\nu_{1}} x_{m} w_{x} = R_{m-\nu_{1}} w_{x}.
\end{equation}
\end{lemma}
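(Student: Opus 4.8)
\textit{Proposal.} The plan is to carry out everything inside $S\mathcal{D}({\bf D}^{2}_{\beta_{1}})$, which by Theorem~4.9 of \cite{DW2025} is the free $R$-module on $\Sigma^{\prime}_{\nu_{1}}=\{\lambda^{n},x_{\nu_{1}}\lambda^{n}\mid n\geq 0\}$, the class of a crossingless arrow diagram (an element of $\Gamma$) being computed by $\llangle\cdot\rrangle_{\Sigma^{\prime}_{\nu_{1}}}$; recall also from \eqref{eqn:Compositionh4h2h1} that $\phi_{\beta_{1}}$ is unchanged under $\Omega_{1}-\Omega_{5}$ and $S_{\beta_{1}}$-moves. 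Since both sides of \eqref{eqn:kbsm_xm} and \eqref{eqn:kbsm_xvxm} are $R$-linear combinations of elements of $\Gamma$, it is enough to verify each identity after applying $\llangle\cdot\rrangle_{\Sigma^{\prime}_{\nu_{1}}}$.

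For \eqref{eqn:kbsm_xm} I would first apply \eqref{eqn:rel_xm_to_xkQ} of Lemma~\ref{lem:annulus_for_any_kn_bracket_2_beta0} with $k=\nu_{1}$, which rewrites $\llangle x_{m}w_{x}\rrangle_{\Sigma^{\prime}_{\nu_{1}}}$ as an $R[\lambda]$-combination of $\llangle x_{\nu_{1}}w\rrangle_{\Sigma^{\prime}_{\nu_{1}}}$ and $\llangle x_{\nu_{1}+1}w\rrangle_{\Sigma^{\prime}_{\nu_{1}}}$. The first term already lies in the span of $\{x_{\nu_{1}}\lambda^{n}\}$; the second does not, and the essential ingredient is the boundary relation $\llangle x_{\nu_{1}+1}v\rrangle_{\Sigma^{\prime}_{\nu_{1}}}=-A^{3}\llangle x_{\nu_{1}}v\rrangle_{\Sigma^{\prime}_{\nu_{1}}}$ for all $v\in\Gamma$ (note $F_{-1}=-A^{3}$). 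This I would obtain from a single $S_{\beta_{1}}$-move (Lemma~2.1 of \cite{DW2025}), legitimate here because $x_{\nu_{1}+1}$ is the innermost curve: sliding its outermost arc over the $2$-handle glued along the $(2,\beta_{1})$-curve and simplifying by $\Omega_{1}-\Omega_{5}$-moves leaves $-A^{3}$ parallel copies of $x_{\nu_{1}}$. Substituting, \eqref{eqn:kbsm_xm} reduces to the identity $-A^{m-\nu_{1}}Q_{m-\nu_{1}-1}-A^{m-\nu_{1}+2}Q_{m-\nu_{1}}=F_{\nu_{1}-m}$, which is immediate from $F_{\ell}=A^{-\ell}Q_{\ell+1}+A^{-\ell+2}Q_{\ell}$ together with $Q_{-\ell}=-Q_{\ell}$. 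Since $k=\nu_{1}$ may be used for any $m$, this settles \eqref{eqn:kbsm_xm} for all $m\in\mathbb{Z}$.

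For \eqref{eqn:kbsm_xvxm} I would proceed along the same lines, but now with attention to nesting: in $x_{\nu_{1}}x_{m}w_{x}$ the curve $x_{m}$ is \emph{not} innermost, so the $S_{\beta_{1}}$-move cannot be applied to it directly. The plan is to first reduce the outer factor by \eqref{eqn:rel_xm_to_xkQ}, then manoeuvre the reduction so that a single $S_{\beta_{1}}$-move may be applied to an innermost copy of $x_{\nu_{1}}$ (this supplies the base relation $x_{\nu_{1}}x_{\nu_{1}}w_{x}=R_{0}w_{x}$, the ``fusion'' of two nested copies of the generator into a polynomial in $\lambda$ with framing correction fixed by the $2$-handle), and finally to collect terms; the remaining step is then a polynomial identity among $\{P_{m}\}$, $\{Q_{m}\}$, $\{R_{m}\}$ to be checked from their defining recursions together with \eqref{eqn:rel_Pn} and \eqref{eqn:rel_F_to_P}. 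Alternatively, once the base relations are known, both sides of \eqref{eqn:kbsm_xvxm} can be written in the form of the expressions $\Theta_{t,x}(k,n)$ of Lemma~\ref{lem:annulus_for_any_kn_bracket_2_beta}, reducing the verification to its vanishing criterion at two values of $n$.

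The step I expect to be the main obstacle is the combination of the nesting bookkeeping with the extraction of the scalar and framing corrections of the $2$-handle slide: one must pin down that $S_{\beta_{1}}$ may legitimately be applied only to an innermost $x$-curve, arrange the reductions so that exactly such a curve is produced, and then read off the correct data (the unit $-A^{3}$ in the boundary relation for $x_{\nu_{1}+1}$, and the polynomial $R_{0}$ in $x_{\nu_{1}}x_{\nu_{1}}w_{x}=R_{0}w_{x}$). Once these base relations are in hand, everything else is routine manipulation with the polynomial families and Lemma~\ref{lem:annulus_for_any_kn_bracket_2_beta0}.
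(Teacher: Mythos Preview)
Your argument for \eqref{eqn:kbsm_xm} is correct and coincides with the paper's: the $S_{\beta_{1}}$-move on the innermost $x$-curve gives $x_{\nu_{1}+1}v=-A^{3}x_{\nu_{1}}v$, and then \eqref{eqn:rel_xm_to_xkQ} with $k=\nu_{1}$ together with the definition of $F_{\ell}$ finishes.

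For \eqref{eqn:kbsm_xvxm} your plan is workable in principle but is both more roundabout than the paper's and imprecise at the decisive geometric step. The paper does \emph{not} reduce to a base case and then induct; instead it uses a second, different $S_{\beta_{1}}$-move, applied not to an $x$-curve but to the $t$-curve $t_{m-\nu_{1}}w_{x}$ (right half of Figure~\ref{fig:SBeta_1_Moves_On_X_LensSpece}). Resolving the single crossing produced by that slide gives
\[
t_{m-\nu_{1}}w_{x}=A\,t_{m-\nu_{1}-1}w_{x}+A^{-1}\,x_{\nu_{1}+1}x_{m}w_{x},
\]
and substituting $t_{j}=P_{j}$ together with the boundary relation $x_{\nu_{1}+1}=-A^{3}x_{\nu_{1}}$ immediately yields $x_{\nu_{1}}x_{m}w_{x}=(A^{-1}P_{m-\nu_{1}-1}-A^{-2}P_{m-\nu_{1}})w_{x}=R_{m-\nu_{1}}w_{x}$ for every $m$ at once. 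Your phrase ``apply $S_{\beta_{1}}$ to an innermost copy of $x_{\nu_{1}}$'' describes the \emph{first} move (the one you already used for \eqref{eqn:kbsm_xm}); by itself that move only toggles $x_{\nu_{1}}\leftrightarrow x_{\nu_{1}+1}$ on the innermost factor and cannot produce the fusion $x_{\nu_{1}}x_{\nu_{1}}w_{x}=R_{0}w_{x}$. The move that does fuse two nested $x$-curves into a polynomial in $\lambda$ is the slide on the $t$-curve. Once you have that move in hand there is no gain in restricting to $m=\nu_{1},\nu_{1}+1$ and then appealing to a recursion or to Lemma~\ref{lem:annulus_for_any_kn_bracket_2_beta}: it already delivers the general statement.
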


\begin{figure}[ht]
\centering
\includegraphics[scale=0.79]{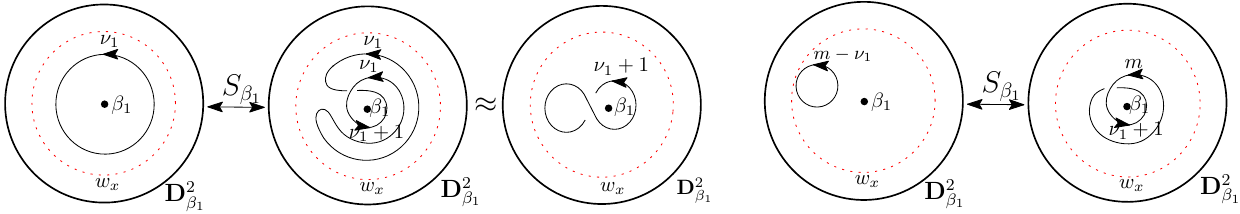}
\caption{$S_{\beta_{1}}$-moves on ${\bf D}^{2}_{\beta_{1}}$ for $x_{\nu_{1}}w_{x}$ and $t_{m-\nu_{1}}w_{x}$ curves}
\label{fig:SBeta_1_Moves_On_X_LensSpece}
\end{figure}

\begin{proof}
Since curves on the left of Figure~\ref{fig:SBeta_1_Moves_On_X_LensSpece} are related by $S_{\beta_{1}}$-move on ${\bf D}^{2}_{\beta_{1}}$, after applying Kauffman bracket skein relations, in $S\mathcal{D}({\bf D}^{2}_{\beta_{1}})$:
\begin{equation*}
x_{\nu_{1}} w_{x} = A x_{\nu_{1}+1} w_{x} + A^{-1} x_{\nu_{1}+1} t_{0} w_{x} = -A^{-3} x_{\nu_{1}+1} w_{x}
\end{equation*}
or equivalently,
\begin{equation}
\label{eqn:pf_kbsm_xm_xvxm}
x_{\nu_{1}+1} w_{x} = -A^{3} x_{\nu_{1}} w_{x}.
\end{equation}
Since \eqref{eqn:rel_xm_to_xkQ} holds for $\llangle \cdot \rrangle_{\Sigma^{\prime}_{\nu_{1}}}$, it is also true in $S\mathcal{D}({\bf D}^{2}_{\beta_{1}})$. Therefore,
\begin{eqnarray*}
x_{m} w_{x} &=& -A^{m-\nu_{1}} x_{\nu_{1}} Q_{m-\nu_{1}-1} w_{x} + A^{m-\nu_{1}-1} x_{\nu_{1}+1} Q_{m-\nu_{1}} w_{x} \\
&=& -A^{m-\nu_{1}} x_{\nu_{1}} Q_{m-\nu_{1}-1} w_{x} - A^{m-\nu_{1}+2} x_{\nu_{1}} Q_{m-\nu_{1}} w_{x} \\
&=& x_{\nu_{1}} F_{\nu_{1}-m} w_{x},
\end{eqnarray*}
where the second equality is due to \eqref{eqn:pf_kbsm_xm_xvxm}. 

The curves on the right of Figure~\ref{fig:SBeta_1_Moves_On_X_LensSpece} are related by $S_{\beta_{1}}$-move on ${\bf D}^{2}_{\beta_{1}}$. Therefore, after applying Kauffman bracket skein relation, in $S\mathcal{D}({\bf D}^{2}_{\beta_{1}})$:
\begin{equation*}
t_{m-\nu_{1}}w_{x} = A t_{m-\nu_{1}-1}w_{x} + A^{-1} x_{\nu_{1}+1} x_{m}w_{x} = A t_{m-\nu_{1}-1}w_{x} - A^{2} x_{\nu_{1}} x_{m}w_{x},
\end{equation*}
where the last equality is due to \eqref{eqn:pf_kbsm_xm_xvxm}. Since in $S\mathcal{D}({\bf D}^{2}_{\beta_{1}})$, $t_{m}w_{x} = P_{m}w_{x}$ for any $m$, using the definition of $R_{m}$, we see that equation \eqref{eqn:kbsm_xvxm} follows.
\end{proof}

\begin{remark}
\label{rem:Prop_Of_Bracket_Sigma}
We note that the statement of Lemma~\ref{lem:kbsm_xm_xvxm} also holds for $S\mathcal{D}_{\nu_{1}}$ and $S\mathcal{D}_{\nu_{1},\nu_{2}}$ in place of $S\mathcal{D}({\bf D}^{2}_{\beta_{1}})$. Furthermore, it follows from Lemma~\ref{lem:kbsm_xm_xvxm} and \eqref{eqn:Compositionh4h2h1} that
\begin{equation}
\label{eqn:kbsm_xm_Bracket_Sigma}
\llangle x_{m} w_{x} \rrangle_{\Sigma^{\prime}_{\nu_{1}}} = \llangle x_{\nu_{1}} F_{\nu_{1}-m} w_{x} \rrangle_{\Sigma^{\prime}_{\nu_{1}}}
\end{equation}
and
\begin{equation}
\label{eqn:kbsm_xvxm_Bracket_Sigma}
\llangle x_{\nu_{1}} x_{m} w_{x} \rrangle_{\Sigma^{\prime}_{\nu_{1}}} = \llangle R_{m-\nu_{1}} w_{x} \rrangle_{\Sigma^{\prime}_{\nu_{1}}}.
\end{equation}
\end{remark}

\section{Lens spaces \texorpdfstring{$L(\beta_{1},2)$}
{L(\unichar{0946}\unichar{8321}, 2)}}
\label{s:LensSpaceLbeta2}
As we noted in Section~\ref{s:ambient_isotopy_In_Lens_Spaces}, we can represent links in $M_{2}(\beta_{1}) = L(\beta_{1},2)$ by arrow diagrams in $\hat{\bf S}^{2}$ and, by Theorem~\ref{thm:AmbientIsotopiesInMBeta1Beta2}, their ambient isotopies by a finite sequence of $\Omega_{1}-\Omega_{5}$ (see Figure~\ref{fig:ArrowMoves}), $S_{\beta_{1}}$, and $\Omega_{\infty}$ moves (see Figure~\ref{fig:SBeta2Omega_inf_Moves}).
\begin{figure}[H]
\centering
\includegraphics[scale=0.7]{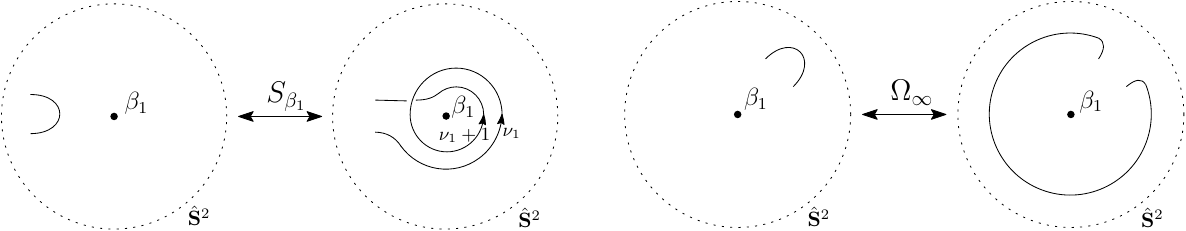}
\caption{$S_{\beta_{1}}$ and $\Omega_{\infty}$-moves on $\hat{\bf S}^{2}$}
\label{fig:SBeta2Omega_inf_Moves}
\end{figure}

Let $\kappa = \max\{\nu_{1}+1, -\nu_{1}\}$ and
\begin{equation*}
\Lambda_{\nu_{1}} = \{\lambda^{n} \mid 0\leq n\leq \kappa -1 \} \subset \Sigma^{\prime}_{\nu_{1}}.    
\end{equation*}
In this section, we show that:
\begin{equation*}
S\mathcal{D}_{\nu_{1}} \cong R\Lambda_{\nu_{1}}.
\end{equation*}

\begin{lemma}
\label{lem:rel_F_and_P}
In $S\mathcal{D}_{\nu_{1}}$, for all $m \in \mathbb{Z}$,
\begin{equation*}
x_{\nu_{1}}F_{\nu_{1}-m} = t_{-m}.
\end{equation*}
\end{lemma}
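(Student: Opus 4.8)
The plan is to compute both sides in $S\mathcal{D}_{\nu_1}$ directly and compare. We already know from Lemma~\ref{lem:kbsm_xm_xvxm} (applied with $w_x$ trivial, valid in $S\mathcal{D}_{\nu_1}$ by Remark~\ref{rem:Prop_Of_Bracket_Sigma}) that $x_m = x_{\nu_1}F_{\nu_1-m}$ in $S\mathcal{D}_{\nu_1}$, so the claim is equivalent to showing $x_m = t_{-m}$ in $S\mathcal{D}_{\nu_1}$ for all $m$. The natural mechanism is the $\Omega_\infty$-move, which is exactly the move available in $M_2(\beta_1)$ but not in the two-singular-fiber case: geometrically, $\Omega_\infty$ lets an arc be pushed across the (trivially filled) second marked point $\beta_2$, and under this move a curve of type $x_m$ (winding through the $\beta_2$ region) should be convertible into a curve of type $t_{-m}$ (or $t_m$, up to the sign conventions in the figures) supported in a disk. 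So first I would write down, from the definition of the $\Omega_\infty$-move in Figure~\ref{fig:SBeta2Omega_inf_Moves} and Figure~\ref{fig:OmegaInfinityOnhatS}, the skein-level identity it produces when applied to the arc of $x_{\nu_1}$ that passes near $\beta_2$.

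Concretely, I expect the $\Omega_\infty$-move applied to $x_{\nu_1}$ to give, after resolving the one crossing it introduces via the Kauffman bracket relation, a two-term identity of the form $x_{\nu_1} = A^{\pm 1} x_{\nu_1 \mp 1} + A^{\mp 1} x_{\nu_1 \mp 1} t_0$ analogous to equation~\eqref{eqn:pf_kbsm_xm_xvxm} but ``on the $\beta_2$ side,'' and since $t_0 = -A^2 - A^{-2}$ this collapses to a clean relation between $x_{\nu_1}$ and a curve that, after sliding, is a $t$-type curve. More precisely I anticipate the cleanest route is: use $\Omega_\infty$ to show $x_{\nu_1} = t_{-\nu_1}$ (the base case $m = \nu_1$), and then propagate to general $m$. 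For the propagation, note that by Lemma~\ref{lem:annulus_for_any_kn_bracket_2_beta0} / the relation~\eqref{eqn:rel_xm_to_xkQ} the family $\{x_m\}$ satisfies the same three-term recursion in $m$ as $\{t_m\}$ does (both governed by the $Q$-polynomials: $t_{-m} = P_{-m}$ and $x_m$ reduce via the same $Q_{m-k-1}, Q_{m-k}$ coefficients), so two consecutive equalities $x_{\nu_1} = t_{-\nu_1}$ and $x_{\nu_1+1} = t_{-\nu_1-1}$ force $x_m = t_{-m}$ for all $m$. Alternatively, and perhaps more cleanly, I can just expand $x_{\nu_1}F_{\nu_1-m}$ using $F_{\nu_1-m} = A^{-(\nu_1-m)}Q_{\nu_1-m+1} + A^{-(\nu_1-m)+2}Q_{\nu_1-m}$, use the already-known identity $x_{\nu_1} = t_{-\nu_1}$ together with $t_k = P_k$ and the relation $P_m = -A^{m+2}Q_{m+1} + A^{m-2}Q_{m-1}$ from~\eqref{eqn:rel_Pn}, and check the polynomial identity $P_{-\nu_1} \cdot F_{\nu_1 - m} \overset{?}{=} P_{-m}$ modulo the recursion — but this last step is not literally true as stated, since $P_{-\nu_1}$ is a polynomial in $\lambda$, not a unit, so the correct statement must be that $x_{\nu_1} F_{\nu_1-m}$, after applying the $S_{\beta_1}$-reductions encoded in $\llangle\cdot\rrangle_{\Sigma'_{\nu_1}}$, equals $t_{-m}$; this is really a statement about how $F$ was engineered in the first place (Remark~\ref{rem:DegAndCoeffOf_F}, equation~\eqref{eqn:rel_F_to_P}).

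So the actual argument I would commit to: (1) apply the $\Omega_\infty$-move to the arc of $x_m$ passing through the $\beta_2$-marked point, introducing one crossing; (2) resolve that crossing with $L_+ - AL_0 - A^{-1}L_\infty = 0$, obtaining two diagrams, one of which contains a trivial circle $T_1$ contributing $-A^2 - A^{-2}$; (3) identify the two resulting diagrams, after isotopy ($\Omega_1$–$\Omega_5$) on $\hat{\bf S}^2$, as $t$-type curves — I expect to get $x_m = A\,(\text{something}) + A^{-1}(\text{something})(-A^2-A^{-2})$ which should telescope, via the same computation as in the proof of Lemma~\ref{lem:kbsm_xm_xvxm}, into exactly $t_{-m}$; (4) combine with $x_m = x_{\nu_1}F_{\nu_1-m}$ from Lemma~\ref{lem:kbsm_xm_xvxm} to conclude.

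\textbf{Main obstacle.} The genuine difficulty is bookkeeping the orientation/sign and index conventions at the $\beta_2$ point: the figures (Figure~\ref{fig:OmegaInfinityOnhatS}, Figure~\ref{fig:SBeta2Omega_inf_Moves}) fix exactly which way the $\Omega_\infty$-move pushes an arc and hence whether $x_m$ becomes $t_{-m}$ or $t_m$, and getting the power of $A$ and the subscript shift right (why $\nu_1 - m$ in $F$ matches $-m$ in $t$) requires matching the $\Omega_\infty$ computation against the definition of $F_m$ precisely. I expect essentially no difficulty in the algebra once the geometric identity from $\Omega_\infty$ is pinned down; the whole proof should be short, mirroring the proof of Lemma~\ref{lem:kbsm_xm_xvxm} with $S_{\beta_1}$ replaced by $\Omega_\infty$.
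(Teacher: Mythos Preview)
Your core strategy is exactly the paper's: use the $\Omega_\infty$-move to identify $x_m$ with $t_{-m}$, then invoke $x_m = x_{\nu_1}F_{\nu_1-m}$ from Lemma~\ref{lem:kbsm_xm_xvxm}. However, you have one misconception that makes your plan much longer than necessary: the $\Omega_\infty$-move does \emph{not} introduce a crossing. Unlike the $S_{\beta_i}$-moves for the $(2,\beta_i)$-fillings, $\Omega_\infty$ is the handle-slide for the trivial $(1,0)$-filling at $\beta_2$, and it is a pure diagram move (compare Figures~\ref{fig:OmegaInfinityOnhatS} and~\ref{fig:Omega_inf_On_X}): the arrow diagram $x_m$ is taken directly to the arrow diagram $t_{-m}$, with no crossing to resolve.

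Consequently the paper's proof is one line: $t_{-m} = x_m = x_{\nu_1}F_{\nu_1-m}$, the first equality being the $\Omega_\infty$-move and the second being \eqref{eqn:kbsm_xm}. Your steps (2)--(3) (skein resolution, telescoping) and the base-case-plus-recursion alternative are unnecessary; once you look at the actual move you will see the identity $x_m = t_{-m}$ is immediate and holds for all $m$ simultaneously, so no induction is needed either.
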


\begin{proof}
Arrow diagrams on the left and the right of Figure~\ref{fig:Omega_inf_On_X} are related by $\Omega_{\infty}$-move, so by \eqref{eqn:kbsm_xm} in $S\mathcal{D}_{\nu_{1}}$
\begin{equation*}
t_{-m} = x_{m} = x_{\nu_{1}} F_{\nu_{1}-m}.
\end{equation*}
\end{proof}

\begin{figure}[H]
\centering
\includegraphics[scale=0.6]{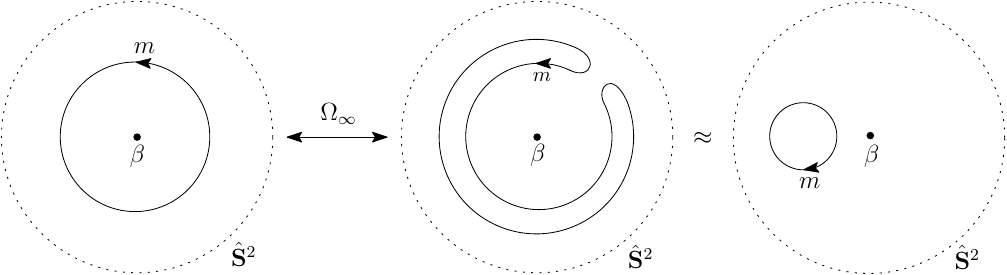}
\caption{$\Omega_{\infty}$-move on $x_{m}$-curve}
\label{fig:Omega_inf_On_X}
\end{figure}

Using Lemma~\ref{lem:rel_F_and_P}, we define a bracket $\langle \cdot \rangle_{\star}$ for $w \in R\Sigma^{\prime}_{\nu_{1}}$ as follows:
\begin{enumerate}[(a)]
\item for $w = \sum_{w' \in S} r_{w'}w'$, $S$ is a finite subset of $\Sigma^{\prime}_{\nu_{1}}$ with at least two elements and $r_{w'}\in R$, let 
\begin{equation*}
\langle w \rangle_{\star} = \sum_{w'\in S} r_{w'}\langle w' \rangle_{\star},
\end{equation*}
\item If $\nu_{1} \geq 0$, let
    \begin{enumerate}[(b1)]
    \item if $w = \lambda^{n}$ and $n < \nu_{1} +1$, then $\langle w \rangle_{\star} = w$,
    \item if $w = \lambda^{n}$, $n \geq \nu_{1}+1$ then 
    \begin{equation*}
     \langle w \rangle_{\star} = \langle \lambda^{n} + A^{n+2}P_{-n} \rangle_{\star} - A^{n+2} \langle x_{\nu_{1}}F_{\nu_{1}-n} \rangle_{\star};   
    \end{equation*}
    \item if $w = x_{\nu_{1}}\lambda^{n}$, then 
    \begin{equation*}
    \langle w \rangle_{\star} = \langle x_{\nu_{1}}(\lambda^{n} - A^{n}F_{n}) \rangle_{\star} + A^{n}\langle P_{n-\nu_{1}} \rangle_{\star};    
    \end{equation*}
    \end{enumerate}
\item If $\nu_{1} \leq -1$, let
    \begin{enumerate}[(c1)]
    \item if $w = \lambda^{n}$ and $n < -\nu_{1}$, then $\langle w \rangle_{\star} = w$,
    \item if $w = \lambda^{n}$, $n \geq -\nu_{1}$ then 
    \begin{equation*}
    \langle w \rangle_{\star} = \langle \lambda^{n} + A^{-n-2}P_{n} \rangle_{\star} - A^{-n-2} \langle x_{\nu_{1}}F_{\nu_{1}+n} \rangle_{\star};   
    \end{equation*}
    \item if $w = x_{\nu_{1}}\lambda^{n}$, then 
    \begin{equation*}
    \langle w \rangle_{\star} = \langle x_{\nu_{1}}(\lambda^{n} + A^{-n-3}F_{-n-1}) \rangle_{\star} - A^{-n-3}\langle P_{-n-1-\nu_{1}} \rangle_{\star}.   
    \end{equation*}
    \end{enumerate}
\end{enumerate}

Let $p(\lambda) \in R[\lambda]$, for $x_{\nu_{1}}p(\lambda)\in R\Sigma^{\prime}_{\nu_{1}}$, define
\begin{equation*}
\deg_{\lambda}(x_{\nu_{1}}p(\lambda)) = \deg(p(\lambda)).    
\end{equation*}
\begin{lemma}
\label{lem:lens_space_reduce}
For every $w \in \Sigma^{\prime}_{\nu_{1}}$, 
\begin{equation*}
\langle w \rangle_{\star}\in R\Lambda_{\nu_{1}}.    
\end{equation*}
\end{lemma}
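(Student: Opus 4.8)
The plan is to prove, by strong induction on $n\geq 0$, that both $\langle \lambda^{n}\rangle_{\star}$ and $\langle x_{\nu_{1}}\lambda^{n}\rangle_{\star}$ lie in $R\Lambda_{\nu_{1}}$; since every element of $\Sigma'_{\nu_{1}}$ has one of these two forms, this is exactly the lemma. The rules (b1)--(b3) for $\nu_{1}\geq 0$ and (c1)--(c3) for $\nu_{1}\leq -1$ are mirror images, so I would carry out $\nu_{1}\geq 0$ in full and merely indicate the sign changes for $\nu_{1}\leq -1$. Note $\langle\lambda^{n}\rangle_{\star}=\lambda^{n}\in\Lambda_{\nu_{1}}$ whenever $n\leq\kappa-1$ (recall $\kappa=\nu_{1}+1$ when $\nu_{1}\geq 0$, and $\kappa=-\nu_{1}$ when $\nu_{1}\leq -1$), which supplies the base of the induction and the terminal cases of the recursion.

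The computational core is a handful of leading-term cancellations which force each reduction step to lower the $\lambda$-degree. Reading off the degrees and leading coefficients from Remark~\ref{rem:DegAndCoeffOf_F}, and using \eqref{eqn:rel_Pn} and \eqref{eqn:rel_F_to_P} to pin down the missing ones for $\{P_{m}\}$, I would first record (for $\nu_{1}\geq 0$): (i) since $P_{-n}$ has degree $n$ with leading coefficient $-A^{-n-2}$, the polynomial $\lambda^{n}+A^{n+2}P_{-n}$ has degree $\leq n-1$ for $n\geq 1$; (ii) since $F_{n}$ has degree $n$ with leading coefficient $A^{-n}$, the polynomial $\lambda^{n}-A^{n}F_{n}$ has degree $\leq n-1$ for $n\geq 0$; (iii) $\deg(F_{\nu_{1}-n})=n-\nu_{1}-1<n$ for $n\geq\nu_{1}+1$; and (iv) $\deg(P_{n-\nu_{1}})=|n-\nu_{1}|\leq\max\{n,\nu_{1}\}$. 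The $\nu_{1}\leq -1$ analogues, obtained in the same way, replace these respectively by $\lambda^{n}+A^{-n-2}P_{n}$, $\lambda^{n}+A^{-n-3}F_{-n-1}$, $F_{\nu_{1}+n}$, and $P_{-n-1-\nu_{1}}$.

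Granting (i)--(iv), the inductive step is bookkeeping. For $n\geq\kappa$, rule (b2) combined with the linearity (a) and cancellation (i) rewrites $\langle\lambda^{n}\rangle_{\star}$ as an $R$-combination of $\langle\lambda^{j}\rangle_{\star}$ with $j\leq n-1$ and, by (iii), of $\langle x_{\nu_{1}}\lambda^{j}\rangle_{\star}$ with $j\leq n-\nu_{1}-1<n$, so the inductive hypothesis gives $\langle\lambda^{n}\rangle_{\star}\in R\Lambda_{\nu_{1}}$. Then rule (b3) with cancellation (ii) rewrites $\langle x_{\nu_{1}}\lambda^{n}\rangle_{\star}$ as an $R$-combination of $\langle x_{\nu_{1}}\lambda^{j}\rangle_{\star}$ with $j\leq n-1$ and, by (iv), of $\langle\lambda^{j}\rangle_{\star}$ with $j\leq\max\{n,\nu_{1}\}$: the terms with $j<n$ are covered by the inductive hypothesis, those with $j\leq\nu_{1}=\kappa-1$ are terminal, and the only remaining possibility $\langle\lambda^{n}\rangle_{\star}$ has just been shown to lie in $R\Lambda_{\nu_{1}}$ at the start of the same step. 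This observation — that the reduction of $\langle\lambda^{n}\rangle_{\star}$ never calls $\langle x_{\nu_{1}}\lambda^{n}\rangle_{\star}$ nor $\langle\lambda^{n'}\rangle_{\star}$ with $n'\geq n$ — also shows that $\langle\cdot\rangle_{\star}$ is well defined: ordering the symbols as $\langle\lambda^{0}\rangle_{\star}\prec\langle x_{\nu_{1}}\lambda^{0}\rangle_{\star}\prec\langle\lambda^{1}\rangle_{\star}\prec\langle x_{\nu_{1}}\lambda^{1}\rangle_{\star}\prec\cdots$, every rule rewrites a symbol through strictly $\prec$-smaller symbols and terminal ones, so there is no infinite regress.

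The one genuinely delicate point is establishing the cancellations (i)--(iv) and their $\nu_{1}\leq -1$ counterparts; once the relevant degrees and leading coefficients are extracted from Remark~\ref{rem:DegAndCoeffOf_F} and the identities tying together $P_{m}$, $Q_{m}$, and $F_{m}$, everything else is a two-paragraph induction. I would therefore present (i)--(iv) as a short preliminary computation and then run the induction exactly as above.
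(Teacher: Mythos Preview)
Your proposal is correct and follows essentially the same approach as the paper: both arguments rest on the same leading-term cancellations (your (i)--(iv) are exactly the degree bounds the paper records) and then run an induction showing each application of (b2)/(b3) (resp.\ (c2)/(c3)) strictly lowers the $\lambda$-degree. The only organizational difference is that the paper, when treating $\langle\lambda^{n}\rangle_{\star}$, immediately iterates (b3) on the resulting $\langle x_{\nu_{1}}\lambda^{k}\rangle_{\star}$ terms to reduce them to pure $\langle\lambda^{j}\rangle_{\star}$'s, whereas you fold everything into a single strong induction with the interleaved order $\lambda^{0}\prec x_{\nu_{1}}\lambda^{0}\prec\lambda^{1}\prec\cdots$; this is a cosmetic repackaging of the same reduction.
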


\begin{proof}
Let $w = (x_{\nu_{1}})^{\varepsilon}\lambda^{n}$. Assume that $\nu_{1} \geq 0$, $\varepsilon = 0$, and $n > \nu_{1}$, then 
\begin{equation*}
\deg(\lambda^{n} + A^{n+2}P_{-n}) \leq n-1,    
\end{equation*}
hence using b2) in the definition of $\langle \cdot \rangle_{\star}$, we see that $\langle \lambda^{n} \rangle_{\star}$ can be expressed as an $R$-linear combination of $\langle \lambda^{j} \rangle_{\star}$, with $j = 0,1,\ldots,n-1$ and $\langle x_{\nu_{1}}\lambda^{k} \rangle_{\star}$ with $0 \leq k \leq n-1-\nu_{1}$. Since 
\begin{equation*}
\deg_{\lambda}(x_{\nu_{1}}(\lambda^{k} - A^{k}F_{k})) \leq k-1 
\end{equation*}
and when $k = 0$ this term vanishes, applying the b3) inductively allows us to express $\langle x_{\nu_{1}}\lambda^{k} \rangle_{\star}$ as an $R$-linear combination of $\langle \lambda^{j} \rangle_{\star}$ with $0 \leq j \leq |k-\nu_{1}| \leq n-1$. Therefore, $\langle \lambda^{n} \rangle_{\star}$ is an $R$-linear combination of $\langle \lambda^{j} \rangle_{\star}$, where $0\leq j \leq n-1$. Consequently, $\langle \lambda^{n} \rangle_{\star}\in R\Lambda_{\nu_{1}}$, by induction on $n$.

For $\nu_{1} \geq 0$, $\varepsilon = 1$, and $n \geq 0$, since 
\begin{equation*}
\deg_{\lambda}(x_{\nu_{1}}(\lambda^{n} - A^{n}F_{n})) \leq n-1    
\end{equation*}
and this term vanishes when $n = 0$, applying the b3) inductively allows us to express $\langle x_{\nu_{1}}\lambda^{n} \rangle_{\star}$ as $R$-linear combination of $\langle \lambda^{j} \rangle_{\star}$ with $0 \leq j \leq |n-\nu_{1}|$. Since as we showed $\langle \lambda^{j} \rangle_{\star}\in R\Lambda_{\nu_{1}}$, it follows that $\langle x_{\nu_{1}}\lambda^{n} \rangle_{\star}\in R\Lambda_{\nu_{1}}$ by induction on $n$.

Assume that $\nu_{1} \leq -1$, $\varepsilon = 0$, and $n > \kappa - 1 = -\nu_{1} - 1$. Then 
\begin{equation*}
\deg_{\lambda}(\lambda^{n} + A^{-n-2}P_{n}) \leq n-1,   
\end{equation*}
and using c2) in the definition of $\langle \cdot \rangle_{\star}$, $\langle \lambda^{n} \rangle_{\star}$ is an $R$-linear combinations of $\langle \lambda^{j} \rangle_{\star}$, where $0\leq j \leq n-1$ and $\langle x_{\nu_{1}}\lambda^{k} \rangle_{\star}$ with $0 \leq k \leq n + \nu_{1}$. Since 
\begin{equation*}
\deg_{\lambda}(x_{\nu_{1}}(\lambda^{k} + A^{-k-3}F_{-k-1})) \leq k-1
\end{equation*}
and this term vanishes when $k = 0$, applying c3) inductively allows us to express $\langle x_{\nu_{1}}\lambda^{k} \rangle_{\star}$ as an $R$-linear combination of $\langle \lambda^{j} \rangle_{\star}$ with $0 \leq j \leq |k + 1 + \nu_{1}| \leq n-1$. Consequently, $\langle \lambda^{n} \rangle_{\star} \in R\Lambda_{\nu_{1}}$ by induction on $n$.

For $\nu_{1} \leq -1$, $\varepsilon = 1$, and $n \geq 0$, since 
\begin{equation*}
\deg_{\lambda}(x_{\nu_{1}}(\lambda^{n} + A^{-n-3}F_{-n-1})) \leq n-1
\end{equation*}
and this term vanishes when $n = 0$, applying c3) inductively allows us to express $\langle x_{\nu_{1}}\lambda^{n} \rangle_{\star}$ as an $R$-linear combination of $\langle \lambda^{j} \rangle_{\star}$ with $0 \leq j \leq |n+1+\nu_{1}|$. Since $\langle \lambda^{j} \rangle_{\star}\in R\Lambda_{\nu_{1}}$ it follows that $\langle x_{\nu_{1}}\lambda^{n} \rangle_{\star}\in R\Lambda_{\nu_{1}}$ by induction on $n$.
\end{proof}

Since $\Lambda_{\nu_{1}}\subset \Sigma^{\prime}_{\nu_{1}}$, $R\Lambda_{\nu_{1}}$ is a free submodule of $R\Sigma^{\prime}_{\nu_{1}}$. For $w\in R\Gamma$ define
\begin{equation*}
\llangle w \rrangle_{\star} = \langle \llangle w \rrangle_{\Sigma^{\prime}_{\nu_{1}}} \rangle_{\star}.    
\end{equation*}

\begin{lemma}
\label{lem:lens_space}
For all $\varepsilon \in \{0,1\}$, $n_{1},n_{2}\geq 0$, and $m\in \mathbb{Z}$,
\begin{equation*}
\llangle (x_{\nu_{1}})^{\varepsilon} \lambda^{n_{1}}x_{m}\lambda^{n_{2}} - (x_{\nu_{1}})^{\varepsilon} \lambda^{n_{1}}P_{-m,n_{2}}\rrangle_{\star} = 0.
\end{equation*}
\end{lemma}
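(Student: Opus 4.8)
The plan is to reduce the statement to the already-established relation $\langle t_m\rangle_r = P_m$ inside the disk, transported to $\hat{\bf S}^2$ via the brackets $\llangle\cdot\rrangle_{\Sigma'_{\nu_1}}$ and $\langle\cdot\rangle_\star$. The key observation is that $x_m$ and $t_{-m}$ are related: on $\hat{\bf S}^2$ the curve $x_m$ can be ``pushed through'' the marked point $\beta_2$ (where $\Omega_\infty$ acts), and more directly, equation~\eqref{eqn:kbsm_xm_Bracket_Sigma} together with Lemma~\ref{lem:rel_F_and_P} gives $\llangle x_m w_x\rrangle_{\Sigma'_{\nu_1}} = \llangle x_{\nu_1}F_{\nu_1-m}w_x\rrangle_{\Sigma'_{\nu_1}}$, and inside $S\mathcal D_{\nu_1}$ we have $x_{\nu_1}F_{\nu_1-m} = t_{-m}$. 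So morally ``$x_m$ becomes $t_{-m}$,'' and since $\langle t_m\rangle_r = P_m$ in the disk, a word $\lambda^{n_1}x_m\lambda^{n_2}$ should collapse to $\lambda^{n_1}P_{-m,n_2}$ after the $\lambda$'s on the right act as the $n_2$ extra trivial loops that turn $P_{-m}$ into $P_{-m,n_2}$ (recall $\langle t_{m,n}\rangle_r = P_{m,n}$ and the defining recursion $P_{m,k}=AP_{m+1,k-1}+A^{-1}P_{m-1,k-1}$).

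First I would handle the case $n_2 = 0$ and $\varepsilon = 0$, i.e.\ show $\llangle \lambda^{n_1}x_m - \lambda^{n_1}P_{-m}\rrangle_\star = 0$. Here I apply \eqref{eqn:kbsm_xm_Bracket_Sigma} to rewrite $\llangle\lambda^{n_1}x_m\rrangle_{\Sigma'_{\nu_1}} = \llangle\lambda^{n_1}x_{\nu_1}F_{\nu_1-m}\rrangle_{\Sigma'_{\nu_1}}$; then, since $\langle\cdot\rangle_\star$ was built precisely using Lemma~\ref{lem:rel_F_and_P} (clauses (b3)/(c3) trade $x_{\nu_1}\lambda^k$ against $P$'s coming from $x_{\nu_1}F$-type terms, and (b2)/(c2) do the dual), applying $\langle\cdot\rangle_\star$ converts $x_{\nu_1}F_{\nu_1-m}$ back to $P_{-m}$. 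The honest content is checking that the correction polynomials appearing in the definition of $\langle\cdot\rangle_\star$ are exactly such that $\langle x_{\nu_1}F_{\nu_1-m}\rangle_\star = \langle P_{-m}\rangle_\star$; this is a direct computation using the formulas $F_m = A^{-m}Q_{m+1}+A^{-m+2}Q_m$, $R_m = A^{-1}P_{m-1}-A^{-2}P_m$, \eqref{eqn:rel_Pn}, and \eqref{eqn:rel_F_to_P}, and amounts to verifying two polynomial identities (one for $\nu_1\ge 0$, one for $\nu_1\le -1$).

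Next I would install the $\lambda^{n_2}$ on the right by induction on $n_2$. The inductive step uses the bracket relation that lets one slide a $\lambda$ next to $x_m$ in $\Gamma$ (Lemma~\ref{lem:annulus_for_any_kn_bracket_2_beta0}, specifically \eqref{eqn:rel_xm_to_xkQ} run with appropriate $k$, or more simply the skein expansion that replaces $x_m\lambda$ by a combination of $x_{m\pm1}$ and $t$-terms), matching it against the recursion $P_{-m,n_2} = AP_{-m-1,n_2-1} + A^{-1}P_{-m+1,n_2-1}$ that defines the third family of polynomials. Here Lemma~\ref{lem:annulus_for_any_kn_bracket_2_beta} is the tool that promotes an equality proved for the ``anchor'' configuration to all $(k,n)$ — I expect to phrase the whole induction so that the hypothesis of that lemma (vanishing of the relevant $\Theta_{t,x}$) is met, which is the cleanest way to avoid re-deriving the combinatorics of $P_{m,k}$ by hand. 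Finally the case $\varepsilon = 1$ follows by prepending $x_{\nu_1}$ throughout: every relation used ($\eqref{eqn:kbsm_xm_Bracket_Sigma}$, the $\langle\cdot\rangle_\star$ clauses, and the $\lambda$-sliding) holds with an arbitrary prefix $w_1\in\Gamma$, so one sets $w_1 = x_{\nu_1}$ and repeats verbatim.

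The main obstacle is the bookkeeping in the second paragraph: verifying that the ad hoc correction terms in the definition of $\langle\cdot\rangle_\star$ (the $A^{n+2}P_{-n}$, $A^nF_n$, $A^{-n-2}P_n$, $A^{-n-3}F_{-n-1}$ summands in clauses (b2), (b3), (c2), (c3)) conspire to make $\langle x_{\nu_1}F_{\nu_1-m}\rangle_\star$ and $\langle P_{-m}\rangle_\star$ literally equal, rather than merely congruent modulo something. This is where the specific choices of $\kappa$ and of the polynomial families pay off, and it will require carefully tracking degrees in $\lambda$ (using Remark~\ref{rem:DegAndCoeffOf_F} and the degree of $Q_m$) to see that the inductions in Lemma~\ref{lem:lens_space_reduce} terminate with the right leading behavior. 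Once that single identity is nailed down, the rest is a routine induction driven by Lemma~\ref{lem:annulus_for_any_kn_bracket_2_beta}.
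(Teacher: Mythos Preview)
Your overall direction is right: Lemma~\ref{lem:annulus_for_any_kn_bracket_2_beta} is the key reduction, and for $\varepsilon=0$ the identity $\langle x_{\nu_1}F_{\nu_1-m}\rangle_\star=\langle P_{-m}\rangle_\star$ (which, as you note, is built directly into clauses (b2)/(b3) and (c2)/(c3) of the definition of $\langle\cdot\rangle_\star$) is exactly what is needed. But your handling of $\varepsilon=1$ has a genuine gap.

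The claim that ``every relation used holds with an arbitrary prefix $w_1\in\Gamma$'' is false for \eqref{eqn:kbsm_xm_Bracket_Sigma}. That identity requires $x_m$ to be the innermost $x$-curve; once $x_{\nu_1}$ sits inside, the $S_{\beta_1}$-move acts on $x_{\nu_1}$ instead, and the correct replacement is \eqref{eqn:kbsm_xvxm_Bracket_Sigma}, which sends $x_{\nu_1}x_m$ to $R_{m-\nu_1}$ (a polynomial in $\lambda$), not to $x_{\nu_1}x_{\nu_1}F_{\nu_1-m}$. Indeed $R_{m-\nu_1}\neq R_0\,F_{\nu_1-m}$ already for $m=\nu_1+1$, so the ``prepend and repeat verbatim'' step literally fails. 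The $\varepsilon=1$ argument therefore has a different shape: one applies \eqref{eqn:kbsm_xvxm_Bracket_Sigma} to land in the $\lambda^n$-part of $R\Sigma'_{\nu_1}$, then uses the $\langle\cdot\rangle_\star$ identity in the \emph{reverse} direction (converting $P_{-\nu_1-1},P_{-\nu_1}$ back to $x_{\nu_1}F_{-1},x_{\nu_1}F_0$), and finally checks a short polynomial identity such as $A^{-1}F_{-1}-A^{-2}F_0=P_0$. The paper does exactly this for the two anchor values $m=0,-1$; it is brief, but it is not the $\varepsilon=0$ computation with an extra symbol.

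A second, smaller point: the paper invokes Lemma~\ref{lem:annulus_for_any_kn_bracket_2_beta} \emph{first}, reducing at once to $n_1=n_2=0$ and $m\in\{0,-1\}$, so your separate induction on $n_2$ is unnecessary and the degree-tracking from Remark~\ref{rem:DegAndCoeffOf_F} never enters. After the reduction only four base cases remain, each a two- or three-line computation.
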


\begin{proof}
By Lemma~\ref{lem:annulus_for_any_kn_bracket_2_beta}, it suffices to show that $\llangle (x_{\nu_{1}})^{\varepsilon} \lambda^{n_{1}}x_{m}\lambda^{n_{2}} \rrangle_{\star} = \llangle (x_{\nu_{1}})^{\varepsilon} \lambda^{n_{1}}P_{-m,n_{2}} \rrangle_{\star}$ when $n_{1} = n_{2} = 0$ and $m = 0, -1$. For $\varepsilon = 0$ and $m \in \mathbb{Z}$, by \eqref{eqn:kbsm_xm_Bracket_Sigma} and the definition of $\langle \cdot \rangle_{\star}$,
\begin{equation*}
\llangle x_{m} \rrangle_{\star} = \llangle x_{\nu_{1}} F_{-m+\nu_{1}} \rrangle_{\star} = \llangle P_{-m} \rrangle_{\star}.
\end{equation*}
When $\varepsilon = 1$ and $m = 0$, by \eqref{eqn:kbsm_xvxm_Bracket_Sigma} and the definition of $\langle \cdot \rangle_{\star}$, 
\begin{eqnarray*}
\llangle x_{\nu_{1}} x_{0} \rrangle_{\star} 
&=& \llangle A^{-1} P_{-\nu_{1}-1} - A^{-2} P_{-\nu_{1}} \rrangle_{\star} = \llangle x_{\nu_{1}}(A^{-1} F_{-1} - A^{-2} F_{0}) \rrangle_{\star} \\
&=& \llangle x_{\nu_{1}}(-A^{2} - A^{-2}) \rrangle_{\star} = \llangle x_{\nu_{1}} P_{0} \rrangle_{\star}.
\end{eqnarray*}
Finally, for $\varepsilon = 1$ and $m = -1$, by \eqref{eqn:kbsm_xvxm_Bracket_Sigma} and the definition of $\langle \cdot \rangle_{\star}$, 
\begin{eqnarray*}
\llangle x_{\nu_{1}} x_{-1} \rrangle_{\star} 
&=& \llangle A^{-1} P_{-\nu_{1}-2} - A^{-2} P_{-\nu_{1}-1} \rrangle_{\star} = \llangle x_{\nu_{1}}(A^{-1} F_{-2} - A^{-2} F_{-1}) \rrangle_{\star} \\
&=& \llangle x_{\nu_{1}}(-A^{3}\lambda - A + A) \rrangle_{\star} = \llangle x_{\nu_{1}} P_{1} \rrangle_{\star}.
\end{eqnarray*}
We showed that
\begin{equation*}
\llangle (x_{\nu_{1}})^{\varepsilon} x_{m} \rrangle_{\star} = \llangle (x_{\nu_{1}})^{\varepsilon} P_{-m} \rrangle_{\star},    
\end{equation*}
for $\varepsilon\in \{0,1\}$ and $m\in \{0,-1\}$, which completes our proof.
\end{proof}

\begin{theorem}
\label{thm:lens_space}
KBSM of $M_{2}(\beta_{1}) = L(\beta_{1},2)$ is a free $R$-module with basis consisting of equivalence classes of generic framed links in $M_{2}(\beta_{1})$ with their arrow diagrams in $\Lambda_{\nu_{1}}$, i.e.,
\begin{equation*}
S_{2,\infty}(L(\beta_{1},2);R,A)\cong R\Lambda_{\nu_{1}}.    
\end{equation*}
\end{theorem}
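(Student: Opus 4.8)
The plan is to prove the isomorphism $S\mathcal{D}_{\nu_1} \cong R\Lambda_{\nu_1}$ by constructing a well-defined $R$-module homomorphism $\Phi\colon S\mathcal{D}_{\nu_1} \to R\Lambda_{\nu_1}$ that restricts to the identity on $R\Lambda_{\nu_1}$, and separately showing that $\Lambda_{\nu_1}$ generates $S\mathcal{D}_{\nu_1}$; together these force $\Phi$ to be an isomorphism. The map $\Phi$ will be induced by sending an arrow diagram $D$ on $\hat{\bf S}^2$ to $\llangle D \rrangle_\star = \langle \llangle \llangle \llangle D \rrangle\rrangle_\Gamma\rrangle_{\Sigma'_{\nu_1}}\rangle_\star$, i.e. composing the maps recalled from \cite{DW2025} with the newly-defined bracket $\langle\cdot\rangle_\star$; Lemma~\ref{lem:lens_space_reduce} guarantees the image lands in $R\Lambda_{\nu_1}$, and since $\langle\cdot\rangle_\star$ fixes each $\lambda^n$ with $0\le n\le\kappa-1$ (cases b1/c1) while the earlier composition already fixes elements of $\Sigma'_{\nu_1}$, the restriction to $R\Lambda_{\nu_1}$ is the identity.

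The core of the argument is well-definedness: I must check that $\llangle\cdot\rrangle_\star$ is invariant under all the moves listed in Theorem~\ref{thm:AmbientIsotopiesInMBeta1Beta2}(i) and kills the Kauffman bracket skein relations. Invariance under $\Omega_1$--$\Omega_5$ and $S_{\beta_1}$ follows immediately from \eqref{eqn:Compositionh4h2h1}, since $\phi_{\beta_1}(D-D')=0$ for such $D,D'$ and $\langle\cdot\rangle_\star$ is $R$-linear; compatibility with the skein relations is inherited the same way from the corresponding property of $\phi_{\beta_1}$. The substantive point is invariance under the $\Omega_\infty$-move. An $\Omega_\infty$-move replaces a strand running once over $\beta_2$ by a local picture that, after resolving, turns an $x_m$-type arc into a $t_{-m}$-type arc as in Figure~\ref{fig:Omega_inf_On_X}; so I need exactly that $\llangle w_1 x_m w_2\rrangle_\star = \llangle w_1 t_{-m} w_2\rrangle_\star$ for all $w_1 x_m w_2 \in \Gamma$, and more generally for diagrams with $x_{\nu_1}$ present. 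This is what Lemma~\ref{lem:lens_space} (together with $\langle t_{m,n}\rangle_r = P_{m,n}$, giving $t_{-m}$ its bracket value $P_{-m}$, and the inductive defining relations $P_{-m,n}$) is designed to supply: it states precisely $\llangle (x_{\nu_1})^\varepsilon\lambda^{n_1}x_m\lambda^{n_2} - (x_{\nu_1})^\varepsilon\lambda^{n_1}P_{-m,n_2}\rrangle_\star = 0$, and via the reduction Lemma~\ref{lem:annulus_for_any_kn_bracket_2_beta} this extends to an arbitrary word in $\Gamma$ surrounding the $x_m$ site. So the verification reduces to combining Lemma~\ref{lem:lens_space} with Lemma~\ref{lem:annulus_for_any_kn_bracket_2_beta} to handle $\Omega_\infty$ in full generality, after first using $\llangle\cdot\rrangle_\Gamma$ and $\phi_{\beta_1}$ to push any diagram into the span of $\Gamma$.

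For the generating statement: any arrow diagram on $\hat{\bf S}^2$ can, by $\Omega_1$--$\Omega_5$ and the skein relations, be reduced to an $R$-combination of crossingless diagrams of the form in Figure~\ref{fig:ArrowDiagramOnhatS2NoCrAndGamma}, hence (applying $\llangle\cdot\rrangle_\Gamma$, which is realized by legitimate moves/relations in the quotient) to an $R$-combination of elements of $\Gamma$; by \eqref{eqn:kbsm_xm} and \eqref{eqn:kbsm_xvxm} each such element collapses in $S\mathcal{D}_{\nu_1}$ to $R$-combinations of $\lambda^n$ and $x_{\nu_1}\lambda^n$, i.e. of $\Sigma'_{\nu_1}$; then the defining relations of $\langle\cdot\rangle_\star$ (b1/b2/b3 or c1/c2/c3), each of which is an honest relation in $S\mathcal{D}_{\nu_1}$ coming from Lemma~\ref{lem:rel_F_and_P} and the $\Omega_\infty$-move, let one lower the $\lambda$-degree until only $\lambda^n$ with $0\le n\le\kappa-1$ remain, exactly as in the proof of Lemma~\ref{lem:lens_space_reduce}. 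Thus $\Lambda_{\nu_1}$ spans. Finally, $\Phi$ maps this spanning set isomorphically onto the basis $\Lambda_{\nu_1}$ of the free module $R\Lambda_{\nu_1}$ (it is identity there), so $\Lambda_{\nu_1}$ is in fact a basis of $S\mathcal{D}_{\nu_1}$, and since $S\mathcal{D}_{\nu_1}$ computes the KBSM of $M_2(\beta_1)=L(\beta_1,2)$ the theorem follows. The main obstacle is the $\Omega_\infty$-invariance of $\llangle\cdot\rrangle_\star$; once Lemma~\ref{lem:lens_space} is in hand this is bookkeeping, but getting the bracket $\langle\cdot\rangle_\star$ defined so that it is simultaneously degree-reducing (Lemma~\ref{lem:lens_space_reduce}) and $\Omega_\infty$-invariant (Lemma~\ref{lem:lens_space}) is the delicate design choice, and the case split on the sign of $\nu_1$ must be carried through consistently.
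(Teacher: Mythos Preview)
Your proposal is correct and follows essentially the same route as the paper: define $\psi_{\nu_1}(D)=\langle\phi_{\beta_1}(D)\rangle_\star$, verify invariance under $\Omega_1$--$\Omega_5$ and $S_{\beta_1}$ via \eqref{eqn:Compositionh4h2h1}, reduce $\Omega_\infty$-invariance to Lemma~\ref{lem:lens_space}, and conclude by checking that $\hat\psi_{\nu_1}$ and the inclusion $R\Lambda_{\nu_1}\to S\mathcal{D}_{\nu_1}$ are mutually inverse. The paper makes the $\Omega_\infty$ step slightly more explicit by writing out the Kauffman-state bijection between $D$ and $D'$ (handling both the $x_m\mapsto t_{-m}$ and $t_m\mapsto x_{-m}$ cases), but this is exactly the ``bookkeeping'' you allude to.
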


\begin{proof}
For an arrow diagram $D$ on $\hat{\bf S}^{2}$, define
\begin{equation*}
\psi_{\nu_{1}}(D) = \langle \phi_{\beta_{1}}(D) \rangle_{\star}.
\end{equation*}
If arrow diagrams $D,D'$ on $\hat{\bf S}^{2}$ are related by $\Omega_{1}-\Omega_{5}$ and $S_{\beta_{1}}$-moves then, as we noted in Section~\ref{s:Summary},
\begin{equation*}
\psi_{\nu_{1}}(D-D') = \langle \phi_{\beta_{1}}(D - D') \rangle_{\star} = 0.    
\end{equation*}
Assume that arrow diagrams $D,D'$ on $\hat{\bf S}^{2}$ are related by $\Omega_{\infty}$-move. Let $\mathcal{K}(D)$ and $\mathcal{K}(D')$ be sets of all Kauffman states of $D$ and $D'$ respectively. Since $D$ and $D'$ have the same crossings inside ${\bf D}^{2}_{\beta_{1}} = \hat{\bf S}^{2}\smallsetminus {\bf D}^{2}_{\infty}$, there is a natural bijection between $\mathcal{K}(D)$ and $\mathcal{K}(D')$ which assigns to $s\in \mathcal{K}(D)$ the state $s'\in \mathcal{K}(D')$ with exactly the same markers for each crossing of $D'$. 
\begin{figure}[H]
\centering
\includegraphics[scale=0.7]{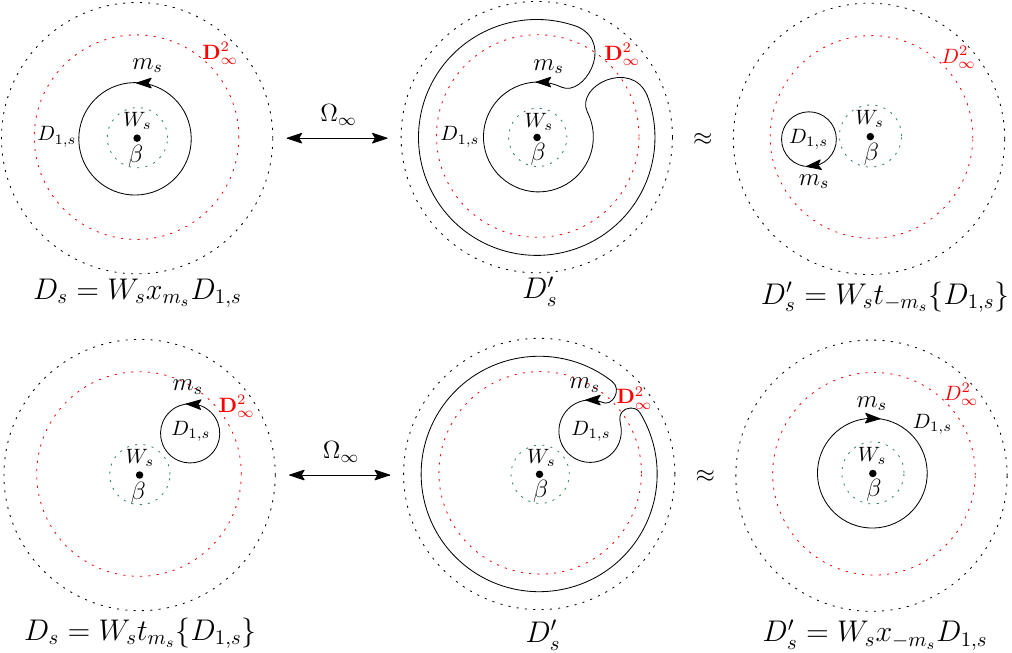}
\caption{Arrow diagrams $D_{1}'$ and $D_{2}'$ in $\hat{\bf S}^{2}$ related by $\Omega_{\infty}$-move}
\label{fig:Omega_inf_On_X_T_proof}
\end{figure}
Furthermore, arrow diagrams $D_{s}$ and $D'_{s}$ corresponding to $s\in \mathcal{K}(D) = \mathcal{K}_{a}(D) \cup \mathcal{K}_{b}(D)$ have one of two forms shown in Figure~\ref{fig:Omega_inf_On_X_T_proof}:
\begin{enumerate}[a)]
\item if $s \in \mathcal{K}_{a}(D)$ then $D_{s} = W_{s}x_{m_{s}}D_{1,s}$ and $D'_{s} = W_{s}t_{-m_{s}}\{D_{1,s}\}$ , or
\item if $s \in \mathcal{K}_{b}(D)$ then $D_{s} = W_{s}t_{m_{s}}\{D_{1,s}\}$ and $D'_{s} = W_{s}x_{-m_{s}}D_{1,s}$.
\end{enumerate}
Consequently,
\begin{equation*}
\llangle D - D' \rrangle =\sum_{s\in \mathcal{K}_{a}(D)} A^{p(s)-n(s)}\langle D_{s} - D'_{s}\rangle + \sum_{s\in \mathcal{K}_{b}(D)} A^{p(s)-n(s)} \langle D_{s} - D'_{s}\rangle.
\end{equation*}
Since
\begin{eqnarray*}
\llangle \langle D_{s} - D'_{s}\rangle \rrangle_{\Gamma}  &=& \llangle \llangle W_{s} \rrangle \rrangle_{\Gamma}(x_{m_{s}}\langle D_{1,s}\rangle_{r}- \langle t_{-m_{s}}\{\langle D_{1,s}\rangle_{r}\}\rangle_{r})\,\, \text{for}\, s\in \mathcal{K}_{a}(D),\,\, \text{and}\\
\llangle \langle D_{s} - D'_{s}\rangle \rrangle_{\Gamma}  &=& \llangle \llangle W_{s} \rrangle \rrangle_{\Gamma}(\langle t_{m_{s}}\{\langle D_{1,s}\rangle_{r}\}\rangle_{r} - x_{-m_{s}}\langle D_{1,s}\rangle_{r})\,\, \text{for}\, s\in \mathcal{K}_{b}(D),
\end{eqnarray*}
where
\begin{equation*}
\langle D_{1,s} \rangle_{r} = \sum_{i=0}^{n_{s}}r_{s,i}^{(1)}\lambda^{i}, \ \langle t_{-m_{s}}\{\langle D_{1,s}\rangle_{r}\}\rangle_{r} = \sum_{i=0}^{n_{s}}r_{s,i}^{(1)}P_{-m_{s},i} \  \text{and} \ \llangle \llangle W_{s} \rrangle \rrangle_{\Gamma} = \sum_{j = 0}^{k_{s}}r_{s,j}^{(2)}w_{j}(s).   
\end{equation*}
Therefore,
\begin{eqnarray*}
\llangle \llangle \langle D_{s} - D'_{s}\rangle \rrangle_{\Gamma} \rrangle_{\Sigma^{\prime}_{\nu_{1}}}  &=& \sum_{j=0}^{k_{s}}\sum_{i=0}^{n_{s}}r_{s,i}^{(1)}r_{s,j}^{(2)}\llangle w_{j}(s)(x_{m_{s}}\lambda^{i} - P_{-m_{s},i})\rrangle_{\Sigma^{\prime}_{\nu_{1}}}\,\, \text{for}\,\, s\in \mathcal{K}_{a}(D),\,\text{and}\\
\llangle \llangle \langle D_{s} - D'_{s}\rangle \rrangle_{\Gamma} \rrangle_{\Sigma^{\prime}_{\nu_{1}}}  &=& \sum_{j=0}^{k_{s}}\sum_{i=0}^{n_{s}}r_{s,i}^{(1)}r_{s,j}^{(2)} \llangle w_{j}(s)(P_{m_{s},i} - x_{-m_{s}}\lambda^{i})\rrangle_{\Sigma^{\prime}_{\nu_{1}}}\,\, \text{for}\,\, s\in \mathcal{K}_{b}(D),
\end{eqnarray*}
and furthermore, for $s\in \mathcal{K}_{a}(D)$ 
\begin{eqnarray*}
\llangle w_{j}(s)(x_{m_{s}}\lambda^{i} - P_{-m_{s},i})\rrangle_{\Sigma^{\prime}_{\nu_{1}}}& =& \llangle \llangle w_{j}(s)\rrangle_{\Sigma^{\prime}_{\nu_{1}}} (x_{m_{s}}\lambda^{i} - P_{-m_{s},i})\rrangle_{\Sigma^{\prime}_{\nu_{1}}}\\
&=& \sum_{\varepsilon \in \{0,1\}}\sum_{k=0}^{l_{s,j}} r_{s,j,\varepsilon,k}^{(3)}\llangle (x_{\nu_{1}})^{\varepsilon}\lambda^{k}(x_{m_{s}}\lambda^{i} - P_{-m_{s},i})\rrangle_{\Sigma^{\prime}_{\nu_{1}}},
\end{eqnarray*}
and for $s\in \mathcal{K}_{b}(D)$
\begin{eqnarray*}
\llangle w_{j}(s)(P_{m_{s},i} - x_{-m_{s}}\lambda^{i})\rrangle_{\Sigma^{\prime}_{\nu_{1}}}& =& \llangle \llangle w_{j}(s)\rrangle_{\Sigma^{\prime}_{\nu_{1}}} (P_{m_{s},i} - x_{-m_{s}}\lambda^{i})\rrangle_{\Sigma^{\prime}_{\nu_{1}}} \\
&=& \sum_{\varepsilon \in \{0,1\}}\sum_{k=0}^{l_{s,j}} r_{s,j,\varepsilon,k}^{(3)}\llangle (x_{\nu_{1}})^{\varepsilon}\lambda^{k}(P_{m_{s},i} - x_{-m_{s}}\lambda^{i})\rrangle_{\Sigma^{\prime}_{\nu_{1}}},
\end{eqnarray*}
where
\begin{equation*}
\llangle w_{j}(s)\rrangle_{\Sigma^{\prime}_{\nu_{1}}} = \sum_{\varepsilon \in \{0,1\}}\sum_{k=0}^{l_{s,j}} r_{s,j,\varepsilon,k}^{(3)}(x_{\nu_{1}})^{\varepsilon}\lambda^{k}.
\end{equation*}
Consequently, for arrow diagrams $D,D'$ on $\hat{\bf S}^{2}$ which differ by $\Omega_{\infty}$-move $\psi_{\nu_{1}}(D-D') = 0$ if and only if for all $\varepsilon \in \{0,1\}$, $k\geq 0$ and $m\in \mathbb{Z}$,
\begin{equation*}
\llangle (x_{\nu_{1}})^{\varepsilon}\lambda^{k}x_{m}\lambda^{i} - (x_{\nu_{1}})^{\varepsilon}\lambda^{k}P_{-m,i}\rrangle_{\star} = 0,
\end{equation*}
which we proved in Lemma~\ref{lem:lens_space}. It follows that 
$\psi_{\nu_{1}}$ is well-defined map on equivalence classes of arrow diagrams in $\hat{\bf S}^{2}$, modulo $\Omega_{1}-\Omega_{5}$, $S_{\beta_{1}}$, and $\Omega_{\infty}$-moves, which also extends to a surjective\footnote{Surjectivity of $\psi_{\nu_{1}}$ is clear since $\Lambda_{\nu_{1}}\subset \mathcal{D}(\hat{\bf S}^{2})$.} homomorphism of free $R$-modules
$\psi_{\nu_{1}}: R\mathcal{D}(\hat{\bf S}^{2}) \to R\Lambda_{\nu_{1}}$.
Let
\begin{equation*}
\varphi: R\Lambda_{\nu_{1}} \to  S\mathcal{D}_{\nu_{1}},\, \varphi(\lambda^{j}) = [\lambda^{j}],\,\,0\leq j \leq \kappa -1. 
\end{equation*}
Let $D$ be an arrow diagram in $\hat{\bf S}^{2}$ and $w = \psi_{\nu_{1}}(D)$. Then $\varphi(w) = [w] = [D]$ and consequently $\varphi$ is surjective. 

Furthermore, as it is easy to see, for a skein triple $D_{+}$, $D_{0}$, $D_{\infty}$ of arrow diagrams in $\hat{\bf S}^{2}$, and an arrow diagram $D$ in $\hat{\bf S}^{2}$,
\begin{equation*}
\psi_{\nu_{1}}(D_{+} - AD_{0} - A^{-1}D_{\infty}) = 0 \quad \text{and} \quad \psi_{\nu_{1}}(D\sqcup T_{1} +(A^{-2}+A^{2})D) = 0.
\end{equation*}
Therefore, $\psi_{\nu_{1}}$ descends to a surjective homomorphism of $R$-modules
\begin{equation*}
\hat{\psi}_{\nu_{1}}: S\mathcal{D}_{\nu_{1}} \to R\Lambda_{\nu_{1}},
\end{equation*}
which to a generator $D$ assigns $\psi_{\nu_{1}}(D)$. To show that $\varphi$ is also injective, we simply check that $\hat{\psi}_{\nu_{1}} \circ \varphi = Id$. It follows that $\varphi$ and $\hat{\psi}_{\nu_{1}}$ are isomorphisms of $R$-modules.

By Theorem~\ref{thm:AmbientIsotopiesInMBeta1Beta2}(i), there is a bijection between ambient isotopy classes of framed links in $M_{2}(\beta_{1})$ and equivalence classes of arrow diagrams in $\hat{\bf S}^{2}$ modulo $\Omega_{1}-\Omega_{5}$, $S_{\beta_{1}}$, and $\Omega_{\infty}$-moves. Therefore,
\begin{equation*}
S_{2,\infty}(M_{2}(\beta_{1});R,A) \cong S\mathcal{D}_{\nu_{1}}\underset{\hat{\psi}_{\nu_{1}}}{\cong} R\Lambda_{\nu_{1}},
\end{equation*}
which completes our proof.
\end{proof}

\section{Lens spaces \texorpdfstring{$L(4k,2k+1)$}
{L(4k, 2k+1)}}
\label{s:CaseWithTwoFibers}

As we noted in Section~\ref{s:ambient_isotopy_In_Lens_Spaces}, generic framed links in $M_{2}(\beta_{1},\beta_{2})$ can be represented by arrow diagrams in $\hat{\bf S}^{2}$ and, by Theorem~\ref{thm:AmbientIsotopiesInMBeta1Beta2}, such links are ambient isotopic if and only if their arrow diagrams are related by $\Omega_{1}-\Omega_{5}$, $S_{\beta_{1}}$, and $S_{\beta_{2}}$-moves on $\hat{\bf S}^{2}$ (see Figure~\ref{fig:SBeta1Beta2MovesOnhatS2}).

\begin{lemma}
\label{lem:rel_F_and_FR_TwoFibers}
In $S\mathcal{D}_{\nu_{1},\nu_{2}}$, for all $m \in \mathbb{Z}$,
\begin{equation*}
-A^{-3}F_{m}x_{-\nu_{2}-1} = F_{m}x_{-\nu_{2}} = x_{\nu_{1}}F_{\nu_{0}-m}
\end{equation*}
and
\begin{equation*}
-A^{-3}x_{\nu_{1}}F_{m}x_{-\nu_{2}-1} = x_{\nu_{1}}F_{m}x_{-\nu_{2}} = R_{m-\nu_{0}}.
\end{equation*}
\end{lemma}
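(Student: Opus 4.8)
The setting is $S\mathcal{D}_{\nu_1,\nu_2}$, where we now have *two* singular fibers, so besides the $S_{\beta_1}$-move on ${\bf D}^2_{\beta_1}$ (giving Lemma~\ref{lem:kbsm_xm_xvxm} via Remark~\ref{rem:Prop_Of_Bracket_Sigma}) there is a symmetric $S_{\beta_2}$-move acting near the second marked point, which after the Kauffman bracket relation produces a relation of the form $x_{-\nu_2-1}w = -A^{-3}x_{-\nu_2}w$ (reading off from Figure~\ref{fig:SBeta1Beta2MovesOnhatS2}), entirely analogous to \eqref{eqn:pf_kbsm_xm_xvxm}. Here I must be careful about orientation/sign conventions: the second fiber has parameter $\beta_2$ and $\nu_2 = \lfloor\beta_2/2\rfloor$, and the curves wrapping it appear as $x_{-\nu_2}$, $x_{-\nu_2-1}$ when viewed from the $\hat{\bf S}^2$ diagram centered at $\beta_1$. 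The plan is therefore to run the same computation as in Lemma~\ref{lem:kbsm_xm_xvxm}, but for the $\beta_2$-side, to obtain an identity expressing an arbitrary $F_m x_{-\nu_2-1}$ in terms of $F_m x_{-\nu_2}$ (the first equality), and then to combine this with the $\beta_1$-side identities \eqref{eqn:kbsm_xm} and \eqref{eqn:kbsm_xvxm} to collapse both $x$-type curves simultaneously down to a polynomial in $\lambda$ alone.

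\emph{Step 1.} Establish the $\beta_2$-analogue of \eqref{eqn:pf_kbsm_xm_xvxm}: from the $S_{\beta_2}$-move applied to a curve encircling the second fiber, in $S\mathcal{D}_{\nu_1,\nu_2}$ one gets $x_{-\nu_2-1}w_x = -A^{3}x_{-\nu_2}w_x$ (or $-A^{-3}$, depending on the sign of the framing change — I will fix this by matching against the $L(0,1)$ check below), and dually $F_m x_{-\nu_2-1} = -A^{-3}F_m x_{-\nu_2}$, which is the leftmost equality of the first display. Then, using the $\Omega_\infty$-type argument is *not* available here (there is no $\beta_2 = 0$ assumption), so instead I express $F_m x_{-\nu_2}$ by pushing the $x_{-\nu_2}$ factor through the diagram on $\hat{\bf S}^2$ and recognizing it, via the relation $t_{-m} = x_m$ on $\hat{\bf S}^2$ does *not* hold here either — rather I use the structural identity: on $\hat{\bf S}^2$ a curve $x_a$ near $\beta_1$ composed with a curve $x_b$ near $\beta_2$ can be slid around the sphere, which identifies $F_m x_{-\nu_2}$ with $x_{\nu_1} F_{\nu_0 - m}$ where $\nu_0 = \nu_1 + \nu_2$. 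This second equality is really the content of the lemma: it says the "total winding" $\nu_0$ around both fibers behaves like a single index shift.

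\emph{Step 2.} For the second display, left-multiply the first display by $x_{\nu_1}$ and apply \eqref{eqn:kbsm_xvxm_Bracket_Sigma} (equivalently \eqref{eqn:kbsm_xvxm} in $S\mathcal{D}_{\nu_1,\nu_2}$, valid by Remark~\ref{rem:Prop_Of_Bracket_Sigma}): from $x_{\nu_1} F_{\nu_0-m} w_x = x_{\nu_1} x_{m'} (\cdots)$ after rewriting $F_{\nu_0-m}$ back in terms of an $x$-curve via \eqref{eqn:kbsm_xm}, we collapse $x_{\nu_1}x_{\cdot}$ to $R_{\cdot - \nu_1}$, and a short polynomial manipulation using the definitions $F_m = A^{-m}Q_{m+1} + A^{-m+2}Q_m$, $R_m = A^{-1}P_{m-1} - A^{-2}P_m$ together with \eqref{eqn:rel_Pn} and \eqref{eqn:rel_F_to_P} should produce exactly $R_{m-\nu_0}$. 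The three middle equalities $-A^{-3}x_{\nu_1}F_m x_{-\nu_2-1} = x_{\nu_1}F_m x_{-\nu_2}$ follow immediately by left-multiplying Step~1's first equality by $x_{\nu_1}$.

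\emph{Main obstacle.} The hard part is pinning down the exact sign and exponent conventions for the $S_{\beta_2}$-move and the orientation of the $x_{-\nu_2}$ curves on $\hat{\bf S}^2$ (the disk is centered at $\beta_1$ with boundary $= \beta_2$, so the "second fiber" curves are the ones near $\partial\hat{\bf S}^2$, and their indexing is reversed relative to the $\beta_1$-curves). I expect to verify the final formula by specializing to $\nu_0 = -1$ (the $L(0,1)$ case of Section~\ref{s:CaseWithTwoFibers_NuEqualMinus_1}), where $R_{m-\nu_0} = R_{m+1}$ and the identity should reduce to something already known, thereby fixing all signs; the polynomial identity in Step~2 is then a routine induction on $m$ using the defining three-term recurrences for $P_m$, $Q_m$, and hence $F_m$, $R_m$.
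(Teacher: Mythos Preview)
Your overall approach matches the paper's: use the $S_{\beta_2}$-move to derive a $\beta_2$-side analogue of \eqref{eqn:pf_kbsm_xm_xvxm}, then combine with the $\beta_1$-side identities from Lemma~\ref{lem:kbsm_xm_xvxm}. However, your Step~1 is vague at the crucial point, and your Step~2 is more complicated than necessary.

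The gap is in your ``structural identity'' where you propose to ``slide curves around the sphere'' to identify $F_m x_{-\nu_2}$ with $x_{\nu_1} F_{\nu_0-m}$. No geometric sliding is needed or used in the paper. The paper's clean argument is: once you have the $\beta_2$-side relation $w_x x_{-\nu_2-1} = -A^{3} w_x x_{-\nu_2}$ (this fixes your sign question), you combine it with \eqref{eqn:rel_xm_to_Qxk} exactly as in the proof of \eqref{eqn:kbsm_xm} to obtain the \emph{right-sided} analogue
\[
w_x x_m = w_x F_{\nu_2+m} x_{-\nu_2}.
\]
Now simply apply both this identity and \eqref{eqn:kbsm_xm} to the \emph{same} element $x_{m-\nu_2}$: the right-sided identity (with $w_x$ empty) gives $x_{m-\nu_2} = F_m x_{-\nu_2}$, while \eqref{eqn:kbsm_xm} gives $x_{m-\nu_2} = x_{\nu_1} F_{\nu_1-(m-\nu_2)} = x_{\nu_1} F_{\nu_0-m}$. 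Chaining these yields the second equality of the first display immediately.

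Your Step~2 is then much simpler than you anticipate: there is no polynomial manipulation or induction. From $x_{\nu_1} F_m x_{-\nu_2} = x_{\nu_1} x_{m-\nu_2}$ (the right-sided identity again, with $w_x = x_{\nu_1}$), apply \eqref{eqn:kbsm_xvxm} directly to get $x_{\nu_1} x_{m-\nu_2} = R_{(m-\nu_2)-\nu_1} = R_{m-\nu_0}$. The left equalities in both displays follow trivially from $w_x x_{-\nu_2-1} = -A^{3} w_x x_{-\nu_2}$ with $w_x = F_m$ and $w_x = x_{\nu_1} F_m$ respectively. No specialization to $\nu_0=-1$ is needed to fix signs; the $S_{\beta_2}$-relation $w_x x_m = A w_x x_{m-1} + A^{-1} w_x P_{-\nu_2-m} x_{-\nu_2-1}$, evaluated at $m=-\nu_2$, determines them.
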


\begin{figure}[ht]
\centering
\includegraphics[scale=0.8]{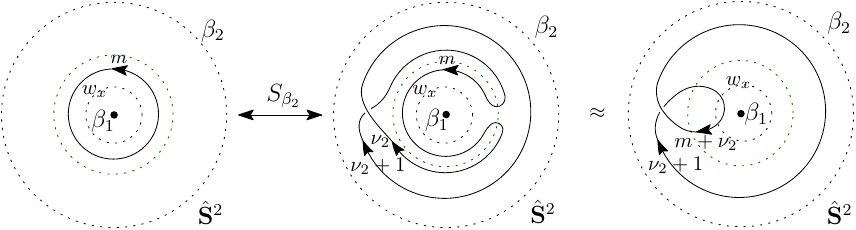}
\caption{$S_{\beta_{2}}$-move on arrow diagram $w_{x}x_{m}$}
\label{fig:SBeta2MovesOnXCurveTwoFibers}
\end{figure}

\begin{proof}
Arrow diagrams on the left and the right of Figure~\ref{fig:SBeta2MovesOnXCurveTwoFibers} differ by an $S_{\beta_{2}}$-move on $\hat{\bf S}^{2}$ hence in $S\mathcal{D}_{\nu_{1},\nu_{2}}$, where $w_{x} \in \Gamma$,
\begin{equation*}
w_{x}x_{m} = Aw_{x}x_{m-1} + A^{-1}w_{x}P_{-\nu_{2}-m}x_{-\nu_{2}-1}.  
\end{equation*}
Consequently, for $m = -\nu_{2}$, 
\begin{equation*}
w_{x}x_{-\nu_{2}} = Aw_{x}x_{-\nu_{2}-1} + A^{-1}w_{x}P_{0}x_{-\nu_{2}-1} = -A^{-3}w_{x}x_{-\nu_{2}-1}.  
\end{equation*}
Therefore,
\begin{equation}
\label{eqn:RelSBetaMoveTwoFiberCase}
w_{x}x_{-\nu_{2}-1} = - A^{3}w_{x}x_{-\nu_{2}}.   
\end{equation}
Furthermore, using \eqref{eqn:rel_xm_to_Qxk} and \eqref{eqn:RelSBetaMoveTwoFiberCase} with $k = \nu_{2} + m$, we see that
\begin{eqnarray}
w_{x}x_{m} &=& A^{-\nu_{2}-m} w_{x}Q_{\nu_{2}+m+1}x_{-\nu_{2}} - A^{-\nu_{2}-m-1} w_{x}Q_{\nu_{2}+m}x_{-\nu_{2}-1} \notag\\
&=& A^{-\nu_{2}-m} w_{x}Q_{\nu_{2}+m+1}x_{-\nu_{2}} + A^{-\nu_{2}-m+2} w_{x}Q_{\nu_{2}+m}x_{-\nu_{2}} = w_{x}F_{\nu_{2}+m}x_{-\nu_{2}}. \label{eqn:RelSBetaMoveTwoFiberCase2}
\end{eqnarray}
Since $x_{m-\nu_{2}} = x_{\nu_{1}}F_{\nu_{0}-m}$ by \eqref{eqn:kbsm_xm}, using the above identities \eqref{eqn:RelSBetaMoveTwoFiberCase} and \eqref{eqn:RelSBetaMoveTwoFiberCase2}, it follows that 
\begin{equation*}
-A^{-3}F_{m}x_{-\nu_{2}-1} = F_{m}x_{-\nu_{2}} = x_{m-\nu_{2}} = x_{\nu_{1}}F_{\nu_{0}-m}.
\end{equation*}
Finally, applying \eqref{eqn:RelSBetaMoveTwoFiberCase}, \eqref{eqn:RelSBetaMoveTwoFiberCase2}, and \eqref{eqn:kbsm_xvxm}, we also see that
\begin{equation*}
-A^{-3}x_{\nu_{1}}F_{m}x_{-\nu_{2}-1} = x_{\nu_{1}}F_{m}x_{-\nu_{2}} = x_{\nu_{1}}x_{m-\nu_{2}} = R_{m-\nu_{0}} 
\end{equation*}
which completes our proof.
\end{proof}

\begin{lemma}
\label{lem:IdentityTwoFiberCase}
In $S\mathcal{D}({\bf D}^{2}_{\beta_{1}})$, for all $m,n \in \mathbb{Z}$ and $k\geq 0$,
\begin{eqnarray}
x_{m}x_{n} &=& A^{-2k}x_{m+k}x_{n-k} + \sum_{i=0}^{k-1}A^{-2i}(P_{n-m-2-2i}-A^{-2}P_{n-m-2i}), \label{eqn:IdentityTwoFiberCase1} \\
x_{m}x_{n} &=& A^{2k}x_{m-k}x_{n+k} + \sum_{i=0}^{k-1}A^{2i}(P_{n-m+2+2i}-A^{2}P_{n-m+2i}). \label{eqn:IdentityTwoFiberCase2}
\end{eqnarray}
\end{lemma}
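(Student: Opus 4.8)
The plan is to prove \eqref{eqn:IdentityTwoFiberCase1} by induction on $k$, and then to deduce \eqref{eqn:IdentityTwoFiberCase2} from it by an index substitution, so that the whole lemma reduces to a single geometric input. The case $k=0$ of \eqref{eqn:IdentityTwoFiberCase1} is trivial (the sum is empty and $A^{0}=1$), so everything hinges on the ``one-step'' relation, i.e.\ the case $k=1$:
\begin{equation*}
x_{m}x_{n} = A^{-2}x_{m+1}x_{n-1} + P_{n-m-2} - A^{-2}P_{n-m}, \qquad m,n \in \mathbb{Z}.
\end{equation*}

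First I would establish this one-step relation by a direct arrow-diagram computation in $S\mathcal{D}({\bf D}^{2}_{\beta_{1}})$. The curves $x_{m}$ and $x_{n}$ in the diagram $x_{m}x_{n}$ are disjoint, but pushing a strand of one across the other by an $\Omega_{2}$-move creates a pair of crossings; resolving these with the Kauffman bracket skein relation, sliding arrows with $\Omega_{4}$ and cancelling opposite pairs with $\Omega_{5}$, splits the diagram into the term $x_{m+1}x_{n-1}$ together with two closed components, which one identifies as $t_{n-m-2}$ and $t_{n-m}$. Since $t_{j}$ equals $P_{j}$ in $S\mathcal{D}({\bf D}^{2}_{\beta_{1}})$ (recall $\langle t_{j}\rangle_{r}=P_{j}$), collecting the resulting powers of $A$ yields the coefficients $A^{-2}$, $1$, and $-A^{-2}$ as claimed. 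This step is the main obstacle: one must draw the correct local modification of $x_{m}x_{n}$, track the framing (curl) contributions carefully — the $-A^{-2}$ arises from a framing correction combined with one of the skein smoothings — and verify that the two closed components are exactly $t_{n-m-2}$ and $t_{n-m}$ with the right signs, so a supporting figure is essentially indispensable. Everything downstream is purely formal manipulation of the polynomials $P_{m}$.

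Granting the one-step relation, the inductive step is immediate: assuming \eqref{eqn:IdentityTwoFiberCase1} for some $k\geq 1$, apply the one-step relation to the term $x_{m+k}x_{n-k}$ (so with $m+k,\ n-k$ in place of $m,n$), multiply through by $A^{-2k}$, and observe that the newly produced term $A^{-2k}\bigl(P_{n-m-2k-2}-A^{-2}P_{n-m-2k}\bigr)$ is exactly the $i=k$ summand on the right-hand side of \eqref{eqn:IdentityTwoFiberCase1}; this promotes $\sum_{i=0}^{k-1}$ to $\sum_{i=0}^{k}$ and turns $A^{-2k}x_{m+k}x_{n-k}$ into $A^{-2(k+1)}x_{m+k+1}x_{n-k-1}$, giving the case $k+1$.

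Finally, for \eqref{eqn:IdentityTwoFiberCase2} I would avoid repeating the geometry: applying the already-proven \eqref{eqn:IdentityTwoFiberCase1} with $m-k$ and $n+k$ in place of $m$ and $n$, multiplying by $A^{2k}$, and solving for $x_{m}x_{n}$ gives $x_{m}x_{n}=A^{2k}x_{m-k}x_{n+k}-\sum_{i=0}^{k-1}A^{2k-2i}\bigl(P_{n-m+2k-2-2i}-A^{-2}P_{n-m+2k-2i}\bigr)$; the substitution $j=k-1-i$ rewrites the summand as $A^{2j}\bigl(P_{n-m+2+2j}-A^{2}P_{n-m+2j}\bigr)$, which is precisely \eqref{eqn:IdentityTwoFiberCase2}. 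Equivalently, one may first derive the mirror one-step relation $x_{m}x_{n}=A^{2}x_{m-1}x_{n+1}+P_{n-m+2}-A^{2}P_{n-m}$ by solving the one-step relation above for $x_{m+1}x_{n-1}$ and shifting indices, and then run the same induction.
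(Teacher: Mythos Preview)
Your plan is correct and matches the paper's structure: a one-step relation plus induction on $k$. The only substantive difference is in how the one-step relation is obtained. Rather than introducing an $\Omega_{2}$ pair of crossings and resolving both, the paper draws a diagram with a \emph{single} crossing between the two $x$-curves with an arrow adjacent to it, uses $\Omega_{5}$ to slide that arrow past the crossing, and then applies the skein relation to each side; this yields
\[
AP_{n-m-1}+A^{-1}x_{m+1}x_{n}=Ax_{m}x_{n+1}+A^{-1}P_{n-m+1}
\]
directly, from which both one-step recursions (for \eqref{eqn:IdentityTwoFiberCase1} and \eqref{eqn:IdentityTwoFiberCase2}) drop out simultaneously by solving for $x_{m}x_{n+1}$ or for $x_{m+1}x_{n}$. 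Your $\Omega_{2}$ description would need more care to track the four smoothings and the framing corrections, whereas the single-crossing $\Omega_{5}$ trick is shorter and avoids that bookkeeping. On the other hand, your derivation of \eqref{eqn:IdentityTwoFiberCase2} from \eqref{eqn:IdentityTwoFiberCase1} by the substitution $(m,n)\mapsto(m-k,n+k)$ and reindexing $j=k-1-i$ is a clean purely algebraic shortcut that the paper does not spell out; it is correct as written.
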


\begin{proof}
Arrow diagrams on the left and the right of Figure~\ref{fig:PassingArrowsForXcurves} are related by an $\Omega_{5}$-move on ${\bf D}^{2}_{\beta_{1}}$, so after applying Kauffman bracket skein relation to these diagrams gives in $S\mathcal{D}_{\nu_{1},\nu_{2}}$ ,
\begin{equation*}
AP_{n-m-1} + A^{-1}x_{m+1}x_{n} = Ax_{m}x_{n+1} + A^{-1}P_{n+1-m}
\end{equation*}
and hence
\begin{eqnarray*}
x_{m}x_{n+1} &=& A^{-2}x_{m+1}x_{n} + P_{n-m-1} - A^{-2}P_{n+1-m}\,\,\text{and} \\
x_{m+1}x_{n} &=& A^{2} x_{m}x_{n+1} + P_{n+1-m} - A^{2} P_{n-m-1}.
\end{eqnarray*}
Therefore, identities in the statement of our lemma follow by induction on $k\geq 0$.
\end{proof}

\begin{figure}[H]
\centering
\includegraphics[scale=0.8]{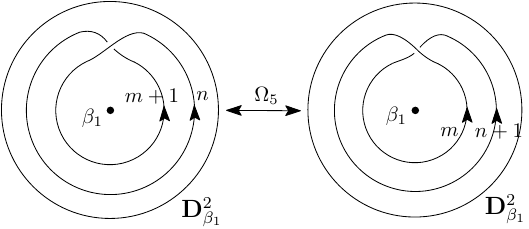}
\caption{Arrow diagrams in ${\bf D}^{2}_{\beta_{1}}$ related by $\Omega_{5}$-move}
\label{fig:PassingArrowsForXcurves}
\end{figure}

We show that, if $\nu_{0} \neq -1$, then KBSM of $M_{2}(\beta_{1},\beta_{2})$ is isomorphic to a free $R$-module $S\mathcal{D}_{\nu_{1},\nu_{2}}$ of rank $2|\nu_{0}+1|+1$, and for $\nu_{0} = -1$, KBSM of $M_{2}(\beta_{1},\beta_{2}) = L(0,1) = {\bf S}^{2}\times S^{1}$ is infinitely generated and it decomposes into a direct sum of cyclic modules. Since case $\nu_{0} \neq -1$ and $\nu_{0} = -1$ require a different approach, we address each in a separate subsection.

\subsection{KBSM of \texorpdfstring{$M_{2}(\beta_{1},\beta_{2})$ with $\nu_{0}\neq -1$}
{M\unichar{8322}(\unichar{0946}\unichar{8321}, \unichar{0946}\unichar{8322}) with \unichar{0957}\unichar{8320} \unichar{8800} -1}}
\label{s:CaseWithTwoFibers_Nu0DifferntThenMinus_1}

In this section we give a new proof of Theorem~4 of \cite{HP1993} for the family of lens spaces $L(4k,2k+1)$, where $k \neq 0$. Theorem~$4$ of \cite{HP1993} gives the rank (i.e., $\lfloor p/2\rfloor + 1$) and a basis for KBSM of $L(p,q)$ over $R$, where $p \geq 1$, $q\in\mathbb{Z}$, and $\gcd(p,q) = 1$. In this paper, using our model $M_{2}(\beta_{1},\beta_{2})$ for $L(4k,2k+1)$, we construct a new basis for its KBSM and develop computational tools which allow us to express any framed link in terms of this basis.\\
\indent Let $\Sigma''_{\nu_{1},\nu_{2}}$ be the subset of $\Sigma^{\prime}_{\nu_{1}}$ defined by
\begin{equation*}
\Sigma''_{\nu_{1},\nu_{2}} = 
\begin{cases}
\{\lambda^{n},x_{\nu_{1}}\lambda^{k} \mid 0 \leq n \leq \nu_{0} + 1,\, 0 \leq k \leq \nu_{0}\}, & \text{if}\ \nu_{0}\geq 0, \\
\{\lambda^{n},x_{\nu_{1}}\lambda^{k} \mid 0 \leq n \leq -\nu_{0} - 1,\, 0 \leq k \leq -\nu_{0} - 2\},    & \text{if}\ \nu_{0} \leq -2.
\end{cases}
\end{equation*}
In this section, we show that
\begin{equation*}
S\mathcal{D}_{\nu_{1},\nu_{2}} \cong R\Sigma''_{\nu_{1},\nu_{2}}.
\end{equation*}

Using Lemma~\ref{lem:rel_F_and_FR_TwoFibers}, we define bracket $\langle w \rangle_{\star\star}$ for $w \in R\Sigma^{\prime}_{\nu_{1}}$ as follows:
\begin{enumerate}[(a)]
\item For $w = \sum_{w' \in S} r_{w'}w'$, $S$ is a finite subset of $\Sigma^{\prime}_{\nu_{1}}$ with at least two elements and $r_{w'}\in R$, let 
\begin{equation*}
\langle w \rangle_{\star\star} = \sum_{w'\in S} r_{w'}\langle w' \rangle_{\star\star},
\end{equation*}

\item If $\nu_{0} \geq 0$, let
\begin{enumerate}[(b1)]
\item if $w \in \Sigma''_{\nu_{1},\nu_{2}}$, then $\langle w \rangle_{\star\star} = w$;
\item if $w = \lambda^{n}$ with $n \geq \nu_{0}+2$, then 
\begin{equation*}
    \langle w \rangle_{\star\star} = \langle \lambda^{n} + A^{n+3} R_{-n+1} \rangle_{\star\star} - A^{n+3} \langle \llangle x_{\nu_{1}} F_{-n+\nu_{0}+1}x_{-\nu_{2}} \rrangle_{\Sigma^{\prime}_{\nu_{1}}} \rangle_{\star\star};   
\end{equation*}
    \item if $w = x_{\nu_{1}}\lambda^{n}$ with $n \geq \nu_{0}+1$, then 
    \begin{equation*}
    \langle w \rangle_{\star\star} = \langle x_{\nu_{1}}(\lambda^{n} - A^{n}F_{n}) \rangle_{\star\star} + A^{n}\langle \llangle F_{\nu_{0}-n} x_{-\nu_{2}} \rrangle_{\Sigma^{\prime}_{\nu_{1}}} \rangle_{\star\star}.   
    \end{equation*}
\end{enumerate}
\item If $\nu_{0} \leq -2$, let
    \begin{enumerate}[(c1)]
    \item if $w \in \Sigma''_{\nu_{1},\nu_{2}}$, then $\langle w \rangle_{\star\star} = w$;
    \item if $w = \lambda^{n}$ with $n \geq -\nu_{0}$, then 
    \begin{equation*}
    \langle w \rangle_{\star\star} = \langle \lambda^{n} - A^{-n} R_{n} \rangle_{\star\star} - A^{-n-3} \langle \llangle x_{\nu_{1}}F_{n+\nu_{0}}x_{-\nu_{2}-1} \rrangle_{\Sigma^{\prime}_{\nu_{1}}} \rangle_{\star\star};
    \end{equation*}
    \item if $w = x_{\nu_{1}}\lambda^{n}$ with $n \geq -\nu_{0}-1$, then
    \begin{equation*}
    \langle w \rangle_{\star\star} = \langle x_{\nu_{1}}(\lambda^{n} + A^{-n-3} F_{-n-1}) \rangle_{\star\star} + A^{-n-6}\langle \llangle F_{n+\nu_{0}+1} x_{-\nu_{2}-1}  \rrangle_{\Sigma^{\prime}_{\nu_{1}}} \rangle_{\star\star}.   
    \end{equation*}
    \end{enumerate}
\end{enumerate}

\begin{lemma}
\label{lem:lens_space_two_fiber_reduce}
For every $w \in \Sigma^{\prime}_{\nu_{1}}$, 
\begin{equation*}
\langle w \rangle_{\star\star}\in R\Sigma''_{\nu_{1},\nu_{2}}.    
\end{equation*}
\end{lemma}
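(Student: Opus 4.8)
The plan is to mimic the structure of the proof of Lemma~\ref{lem:lens_space_reduce}, replacing the single reduction relation $\langle\lambda^n\rangle_\star$-recursion by the two cases of the $\langle\cdot\rangle_{\star\star}$-definition. The statement is essentially the assertion that the rewriting rules (b2),(b3) (resp.\ (c2),(c3)) are \emph{terminating}: every $w\in\Sigma'_{\nu_1}$ gets rewritten, after finitely many steps, as an $R$-linear combination of elements of $\Sigma''_{\nu_1,\nu_2}$. So the proof is a termination/induction argument on a suitable degree, and there is no real content beyond degree bookkeeping, provided one has the degree data of $F_m$ and $R_m$ from Remark~\ref{rem:DegAndCoeffOf_F} and the degree behaviour of $\llangle\cdot\rrangle_{\Sigma'_{\nu_1}}$ encoded in Lemma~\ref{lem:annulus_for_any_kn_bracket_2_beta0} and Lemma~\ref{lem:kbsm_xm_xvxm}.

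\medskip

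Concretely, I would split into the four cases $\nu_0\ge 0$ vs.\ $\nu_0\le -2$, each with $w=\lambda^n$ and $w=x_{\nu_1}\lambda^n$, exactly as in Lemma~\ref{lem:lens_space_reduce}. Take $\nu_0\ge 0$, $w=\lambda^n$ with $n\ge\nu_0+2$ first. By Remark~\ref{rem:DegAndCoeffOf_F}, $\deg(R_{-n+1})=\max\{-n+1,n\}=n$ with leading coefficient $-A^{-n-3}$ (since $-n+1\le 0$), so $\deg_\lambda(\lambda^n+A^{n+3}R_{-n+1})\le n-1$; thus rule (b2) rewrites $\langle\lambda^n\rangle_{\star\star}$ as an $R$-combination of $\langle\lambda^j\rangle_{\star\star}$ with $j\le n-1$ together with $\langle\llangle x_{\nu_1}F_{-n+\nu_0+1}x_{-\nu_2}\rrangle_{\Sigma'_{\nu_1}}\rangle_{\star\star}$. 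Here I must unwind the inner bracket: by Lemma~\ref{lem:rel_F_and_FR_TwoFibers}, $x_{\nu_1}F_{m}x_{-\nu_2}=R_{m-\nu_0}$, so $\llangle x_{\nu_1}F_{-n+\nu_0+1}x_{-\nu_2}\rrangle_{\Sigma'_{\nu_1}}=\llangle R_{-n+1}\rrangle_{\Sigma'_{\nu_1}}$, a polynomial in $\lambda$ of degree $n$, hence an $R$-combination of $\lambda^j$, $j\le n$. (One checks the $\lambda^n$-term cancels against the explicit $A^{n+3}R_{-n+1}$ summand, or simply observes the combination on the right of (b2) has $\lambda$-degree $<n$.) So $\langle\lambda^n\rangle_{\star\star}$ is an $R$-combination of $\langle\lambda^j\rangle_{\star\star}$ with $j<n$, and induction on $n$ gives $\langle\lambda^n\rangle_{\star\star}\in R\Sigma''_{\nu_1,\nu_2}$ once the base cases $0\le n\le\nu_0+1$ are handled by (b1). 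For $w=x_{\nu_1}\lambda^n$ with $n\ge\nu_0+1$: by Remark~\ref{rem:DegAndCoeffOf_F}, $\deg(F_n)=\max\{n,-n-1\}=n$ with leading coefficient $A^{-n}$, so $\deg_\lambda\big(x_{\nu_1}(\lambda^n-A^nF_n)\big)\le n-1$ and this term vanishes when the relevant correction is exact; the inner bracket $\llangle F_{\nu_0-n}x_{-\nu_2}\rrangle_{\Sigma'_{\nu_1}}=\llangle x_{\nu_1}F_{\nu_0-n}x_{-\nu_2}\rrangle$ unwound via $F_m x_{-\nu_2}=x_{\nu_1}F_{\nu_0-m}$ and then $x_{\nu_1}F_{\nu_0-m}\mapsto$ a $\lambda$-polynomial, so rule (b3) reduces $\langle x_{\nu_1}\lambda^n\rangle_{\star\star}$ to lower-degree $\langle x_{\nu_1}\lambda^k\rangle_{\star\star}$ ($k\le n-1$) plus $\langle\lambda^j\rangle_{\star\star}$ terms already known to lie in $R\Sigma''_{\nu_1,\nu_2}$; induction on $n$ finishes. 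The two cases under $\nu_0\le -2$ are identical after substituting the degree data of $R_n$ ($\deg=\max\{n,1-n\}=1-n$ for $n\le 0$... — here $n\ge -\nu_0\ge 2$ so $\deg(R_n)=n$, leading coeff $A^n$) and of $F_{-n-1}$, and reading off that each (c2),(c3) rewrite strictly decreases the $\lambda$-degree.

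\medskip

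Organizationally I would state once, at the start, the two elementary facts $x_{\nu_1}F_m x_{-\nu_2}=R_{m-\nu_0}$ and $F_m x_{-\nu_2}=x_{\nu_1}F_{\nu_0-m}$ from Lemma~\ref{lem:rel_F_and_FR_TwoFibers} (used inside $\llangle\cdot\rrangle_{\Sigma'_{\nu_1}}$ via Remark~\ref{rem:Prop_Of_Bracket_Sigma}), note that $\llangle R_{j}\rrangle_{\Sigma'_{\nu_1}}$ and $\llangle x_{\nu_1}F_j\rrangle_{\Sigma'_{\nu_1}}$ are, respectively, a polynomial in $\lambda$ and $x_{\nu_1}$ times a polynomial in $\lambda$, with $\lambda$-degrees controlled by Remark~\ref{rem:DegAndCoeffOf_F}, and then run the four parallel inductions. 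The main obstacle — and it is the only place that needs genuine care rather than bookkeeping — is verifying that in each rewrite rule the correction term genuinely kills the top-degree monomial, i.e.\ that the leading coefficients match: e.g.\ in (b2) the $\lambda^n$-coefficient of $A^{n+3}R_{-n+1}$ must be $-1$, which is exactly the ``leading coefficient of $R_m$ is $-A^{m-4}$ for $m\le 0$'' clause of Remark~\ref{rem:DegAndCoeffOf_F} (with $m=-n+1\le 0$, giving $A^{n+3}\cdot(-A^{-n-3})=-1$); and similarly in (b3), (c2), (c3) one checks $A^n F_n$, $A^{-n}R_n$, $A^{-n-3}F_{-n-1}$ have leading $\lambda$-coefficient $1$ (resp.\ the appropriate sign) against $\lambda^n$. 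Once those four leading-coefficient identities are confirmed from Remark~\ref{rem:DegAndCoeffOf_F} and the defining relations $P_m=-A^{-2}F_{-m}+A^{-1}F_{-m-1}$, the induction on $n$ is automatic and the lemma follows.
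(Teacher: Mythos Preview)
There is a genuine gap. You repeatedly invoke Lemma~\ref{lem:rel_F_and_FR_TwoFibers} \emph{inside} the bracket $\llangle\cdot\rrangle_{\Sigma'_{\nu_1}}$, writing e.g.\ $\llangle x_{\nu_1}F_{-n+\nu_0+1}x_{-\nu_2}\rrangle_{\Sigma'_{\nu_1}}=\llangle R_{-n+1}\rrangle_{\Sigma'_{\nu_1}}$. But Lemma~\ref{lem:rel_F_and_FR_TwoFibers} is proved in $S\mathcal{D}_{\nu_1,\nu_2}$: its identities come from the $S_{\beta_2}$-move. The bracket $\llangle\cdot\rrangle_{\Sigma'_{\nu_1}}$ lives in $R\Sigma'_{\nu_1}\cong S\mathcal{D}(\mathbf{D}^2_{\beta_1})$, where only $\Omega_1$--$\Omega_5$ and $S_{\beta_1}$ are available; Remark~\ref{rem:Prop_Of_Bracket_Sigma} gives you \eqref{eqn:kbsm_xm_Bracket_Sigma} and \eqref{eqn:kbsm_xvxm_Bracket_Sigma} only, not the Lemma~\ref{lem:rel_F_and_FR_TwoFibers} relations. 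Indeed, if your claimed identity held then rule (b2) would be vacuous: the two summands on the right combine to $\langle\lambda^n+A^{n+3}R_{-n+1}\rangle_{\star\star}-A^{n+3}\langle R_{-n+1}\rangle_{\star\star}=\langle\lambda^n\rangle_{\star\star}$, and the recursion never moves.

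The paper's proof evaluates $\llangle x_{\nu_1}F_{-n+\nu_0+1}x_{-\nu_2}\rrangle_{\Sigma'_{\nu_1}}$ using only relations valid in $S\mathcal{D}(\mathbf{D}^2_{\beta_1})$: first \eqref{eqn:kbsm_xm_Bracket_Sigma} to rewrite $x_{\nu_1}F_{-n+\nu_0+1}$ as $x_{(n-\nu_0-1)+\nu_1}$, then Lemma~\ref{lem:IdentityTwoFiberCase} (which comes from an $\Omega_5$-move, not $S_{\beta_2}$) to shift the pair $x_{\cdot}x_{-\nu_2}$ back to $x_{\nu_1}x_{\cdot}$ plus a sum of $P$'s, and finally \eqref{eqn:kbsm_xvxm_Bracket_Sigma}. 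The outcome is $A^{2(n-\nu_0-1)}R_{n-2\nu_0-1}$ plus lower-degree $P$-terms, and one then checks $\deg R_{n-2\nu_0-1}\le n-1$. The cases (b3), (c2), (c3) are handled analogously, with (b3) and (c3) using iterated \eqref{eqn:rel_xm_to_Qxk} to expand $\lambda^k x_{-\nu_2}$ (resp.\ $\lambda^k x_{-\nu_2-1}$) as a sum of $x_j$'s before applying \eqref{eqn:kbsm_xm_Bracket_Sigma}. Your overall induction scheme and the leading-coefficient checks for the ``explicit'' correction terms are fine; what is missing is the correct unwinding of the $\llangle\cdot\rrangle_{\Sigma'_{\nu_1}}$-terms, which requires Lemma~\ref{lem:IdentityTwoFiberCase} rather than Lemma~\ref{lem:rel_F_and_FR_TwoFibers}.
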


\begin{proof}
Assume that $\nu_{0} \geq 0$ and $w = \lambda^{n}$ with $n \geq \nu_{0}+2$.
Clearly,
\begin{equation*}
\deg(\lambda^{n} + A^{n+3} R_{-n+1}) = n-1    
\end{equation*}
and, by \eqref{eqn:kbsm_xm_Bracket_Sigma}, \eqref{eqn:IdentityTwoFiberCase2}, and \eqref{eqn:kbsm_xvxm_Bracket_Sigma}
\begin{eqnarray*}
\llangle x_{\nu_{1}} F_{-n+\nu_{0}+1}x_{-\nu_{2}} \rrangle_{\Sigma^{\prime}_{\nu_{1}}} 
&=& \llangle x_{(n-\nu_{0}-1) + \nu_{1}}x_{-\nu_{2}} \rrangle_{\Sigma^{\prime}_{\nu_{1}}} \\
&=& A^{2(n-\nu_{0}-1)} \llangle x_{\nu_{1}}x_{-\nu_{2}+(n-\nu_{0}-1)} \rrangle_{\Sigma^{\prime}_{\nu_{1}}} + \sum_{i=0}^{n-\nu_{0}-2} A^{2i} (P_{2i-n+3}- A^{2} P_{2i-n+1}) \\
&=& A^{2(n-\nu_{0}-1)} R_{n-2\nu_{0}-1} + \sum_{i=0}^{n-\nu_{0}-2} A^{2i} (P_{2i-n+3} - A^{2} P_{2i-n+1}).  
\end{eqnarray*}
Moreover, as one may check,
\begin{equation*}
\deg R_{n-2\nu_{0}-1} = \max\{n-2\nu_{0}-1, 2+2\nu_{0}-n\} \leq n-1,\,\deg P_{-n+1} = n-1,\,\deg P_{n-2\nu_{0}-1} = |n-2\nu_{0}-1| \leq n-1.   
\end{equation*}
Therefore, b2) in the definition of $\langle \cdot \rangle_{\star\star}$ allows us to express $\langle \lambda^{n} \rangle_{\star\star}$ as an $R$-linear combination of $\langle \lambda^{k} \rangle_{\star\star}$ with $0\leq k \leq n-1$. It follows that $\langle \lambda^{n} \rangle_{\star\star}\in R\Sigma''_{\nu_{1},\nu_{2}}$ by induction on $n$.

Assume that $\nu_{0} \geq 0$ and $w = x_{\nu_{1}}\lambda^{n}$ with $n \geq \nu_{0}+1$. Clearly,
\begin{equation*}
\deg_{\lambda} (x_{\nu_{1}}(\lambda^{n} - A^{n}F_{n})) = n-1,
\end{equation*}
and applying both, \eqref{eqn:rel_xm_to_Qxk} inductively and then \eqref{eqn:kbsm_xm_Bracket_Sigma}, we see that
\begin{eqnarray*}
\llangle \lambda^{n-\nu_{0}-1}x_{-\nu_{2}} \rrangle_{\Sigma^{\prime}_{\nu_{1}}} &=& \sum_{i=0}^{n-\nu_{0}-1} A^{n-\nu_{0}-1-2i} \binom{n-\nu_{0}-1}{i} \llangle x_{-\nu_{2}+n-\nu_{0}-1-2i} \rrangle_{\Sigma^{\prime}_{\nu_{1}}} \\
&=& \sum_{i=0}^{n-\nu_{0}-1} A^{n-\nu_{0}-1-2i} \binom{n-\nu_{0}-1}{i} x_{\nu_{1}} F_{2\nu_{0}+1-n+2i}.
\end{eqnarray*}
Moreover, 
\begin{equation*}
\deg(F_{2\nu_{0}+1-n}) = \max\{2\nu_{0}+1-n, n-2\nu_{0}-2\} \leq n-1\,\text{and}\, \deg (F_{n-1}) = n-1.  
\end{equation*}
Since $\llangle F_{\nu_{0}-n} x_{-\nu_{2}} \rrangle_{\Sigma^{\prime}_{\nu_{1}}}$ is an $R$-linear combination of $\llangle \lambda^{k} x_{-\nu_{2}} \rrangle_{\Sigma^{\prime}_{\nu_{1}}}$ with $0\leq k \leq n-\nu_{0}-1$, it follows that $\llangle F_{\nu_{0}-n} x_{-\nu_{2}} \rrangle_{\Sigma^{\prime}_{\nu_{1}}}$ is a linear combination of $x_{\nu_{1}}\lambda^{k}$ with $0\leq k\leq n-1$. Therefore, applying b3) in the definition of $\langle \cdot \rangle_{\star\star}$ allows us to represent $\langle x_{\nu_{1}}\lambda^{n} \rangle_{\star\star}$ as an $R$-linear combination of $\langle x_{\nu_{1}}\lambda^{k} \rangle_{\star\star}$ with $0\leq k \leq n-1$. It follows by induction on $n$ that $\langle x_{\nu_{1}}\lambda^{n} \rangle_{\star\star}\in R\Sigma''_{\nu_{1},\nu_{2}}$.

Assume that $\nu_{0} \leq -2$ and let $w = \lambda^{n}$ with $n \geq -\nu_{0}$. Using \eqref{eqn:kbsm_xm_Bracket_Sigma}, \eqref{eqn:IdentityTwoFiberCase1}, and \eqref{eqn:kbsm_xvxm_Bracket_Sigma}, we see that
\begin{eqnarray*}
\llangle x_{\nu_{1}}F_{n+\nu_{0}}x_{-\nu_{2}-1} \rrangle_{\Sigma^{\prime}_{\nu_{1}}}
&=& \llangle x_{\nu_{1}-n-\nu_{0}}x_{-\nu_{2}-1} \rrangle_{\Sigma^{\prime}_{\nu_{1}}} \\
&=& A^{-2(n+\nu_{0})} \llangle x_{\nu_{1}}x_{-\nu_{2}-1-n-\nu_{0}} \rrangle_{\Sigma^{\prime}_{\nu_{1}}} + \sum_{i=0}^{n+\nu_{0}-1} A^{-2i}(P_{n-3-2i} - A^{-2}P_{n-1-2i}) \\
&=& A^{-2(n+\nu_{0})} R_{-n-2\nu_{0}-1} + \sum_{i=0}^{n+\nu_{0}-1} A^{-2i}(P_{n-3-2i} - A^{-2}P_{n-1-2i}). 
\end{eqnarray*}
Furthermore, since 
\begin{equation*}
\deg(R_{-n-2\nu_{0}-1}) = \max\{-n-2\nu_{0}-1,n+2\nu_{0}+2\} \leq n-1,\, \deg(P_{n-1}) = n-1,\,\text{and}\, \deg (P_{-n-2\nu_{0}-1}) \leq n-1,
\end{equation*}
it follows from relation c2) in the definition of $\langle \cdot \rangle_{\star\star}$ that $\langle \lambda^{n} \rangle_{\star\star}$ can be written as an $R$-linear combination of $\langle \lambda^{k} \rangle_{\star\star}$ with $0\leq k \leq n-1$. Thus, $\langle \lambda^{n} \rangle_{\star\star}\in R\Sigma''_{\nu_{1},\nu_{2}}$.

Assume that $\nu_{0} < -1$ and $w = x_{\nu_{1}}\lambda^{n}$, where $n \geq -\nu_{0}-1$. Clearly, 
\begin{equation*}
\deg_{\lambda}(x_{\nu_{1}}(\lambda^{n} + A^{-n-3} F_{-n-1})) = n-1,    
\end{equation*}
and using both, \eqref{eqn:rel_xm_to_Qxk} inductively and then \eqref{eqn:kbsm_xm}, we see that
\begin{eqnarray*}
\llangle \lambda^{n+\nu_{0}+1} x_{-\nu_{2}-1} \rrangle_{\Sigma^{\prime}_{\nu_{1}}} 
&=& \sum_{i=0}^{n+\nu_{0}+1} A^{n+\nu_{0}+1-2i} \binom{n+\nu_{0}+1}{i} \llangle x_{n+\nu_{1}-2i} \rrangle_{\Sigma^{\prime}_{\nu_{1}}} \\
&=& \sum_{i=0}^{n+\nu_{0}+1} A^{n+\nu_{0}+1-2i} \binom{n+\nu_{0}+1}{i} x_{\nu_{1}} F_{2i-n}. 
\end{eqnarray*}
Furthermore, 
\begin{equation*}
\deg (F_{n+2\nu_{0}+2}) = \max\{n+2\nu_{0}+2, -n-2\nu_{0}-3\} \leq n-1\,\text{and}\,\deg (F_{-n}) = n-1.
\end{equation*}
Since $\llangle F_{n+\nu_{0}+1} x_{-\nu_{2}-1} \rrangle_{\Sigma^{\prime}_{\nu_{1}}}$ is a linear combination of $\llangle \lambda^{k} x_{-\nu_{2}-1} \rrangle_{\Sigma^{\prime}_{\nu_{1}}}$ with $0\leq k \leq n+\nu_{0}+1$, it follows that $\llangle F_{n+\nu_{0}+1} x_{-\nu_{2}-1} \rrangle_{\Sigma^{\prime}_{\nu_{1}}}$ is an $R$-linear combination of $x_{\nu_{1}}\lambda^{k}$ with $0\leq k \leq n-1$. Therefore, c3) given in the definition of $\langle \cdot \rangle_{\star\star}$ allows us to write $\langle x_{\nu_{1}}\lambda^{n} \rangle_{\star\star}$ as an $R$-linear combination of $\langle x_{\nu_{1}}\lambda^{k} \rangle_{\star\star}$ with $0\leq k\leq n-1$. Consequently, $\langle x_{\nu_{1}}\lambda^{n} \rangle_{\star\star}\in R\Sigma''_{\nu_{1},\nu_{2}}$ by induction on $n$.
\end{proof}

Since $\Sigma''_{\nu_{1},\nu_{2}}\subset \Sigma^{\prime}_{\nu_{1}}$, $R\Sigma''_{\nu_{1},\nu_{2}}$ is a free $R$-submodule of $R\Sigma^{\prime}_{\nu_{1}}$. For $w\in R\Gamma$ define
\begin{equation*}
\llangle w \rrangle_{\star\star} = \langle \llangle w \rrangle_{\Sigma^{\prime}_{\nu_{1}}} \rangle_{\star\star}.    
\end{equation*}

\begin{remark}
\label{rem:NoteOnGeneratingSetForKBSM}
Using induction on $n\geq 0$ and \eqref{eqn:rel_Pn}, we can show that $\lambda^{n}$ is an $R$-linear combination of polynomials $P_{k}$ with $0\leq k\leq n$. This observation will be used in proofs of Lemma~\ref{lem:Relations_Bracket_Two_Fibers_Case_1a} and Lemma~\ref{lem:Relations_Bracket_Two_Fibers_Case_2a}.
\end{remark}

\begin{lemma}
\label{lem:Relations_Bracket_Two_Fibers_Case_1a}
Let $\nu_{0} \geq 0$, then for any $\varepsilon \in \{0,1\}$ and $n \geq 0$,
\begin{equation}
\label{eqn:lem_Relations_Bracket_Two_Fibers_Case_1a}
\llangle (x_{\nu_{1}})^{\varepsilon}\lambda^{n}x_{-\nu_{2}-1} \rrangle_{\star\star} = -A^{3} \llangle (x_{\nu_{1}})^{\varepsilon}\lambda^{n}x_{-\nu_{2}} \rrangle_{\star\star}.
\end{equation}
\end{lemma}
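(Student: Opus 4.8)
The plan is to run the same strategy as in the proof of Lemma~\ref{lem:lens_space}: unwind $\llangle\cdot\rrangle_{\star\star}=\langle\llangle\cdot\rrangle_{\Sigma'_{\nu_1}}\rangle_{\star\star}$ so that \eqref{eqn:lem_Relations_Bracket_Two_Fibers_Case_1a} becomes an assertion about the $R$-linear map $\langle\cdot\rangle_{\star\star}$ applied to explicit elements of $R\Sigma'_{\nu_1}$; then cut the infinitely many cases ($n\ge0$) down to a handful of base cases via Lemma~\ref{lem:annulus_for_any_kn_bracket_2_beta}; and finally verify the base cases by hand from clauses (b2),(b3).

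First I would unwind. Peeling the block $\lambda^{n}$ through the $x$-curve using \eqref{eqn:rel_xm_to_Qxk} (i.e.\ $\lambda x_{m}=Ax_{m+1}+A^{-1}x_{m-1}$) gives, for $j\in\{-\nu_2-1,-\nu_2\}$,
\[
\llangle(x_{\nu_1})^{\varepsilon}\lambda^{n}x_{j}\rrangle_{\Sigma'_{\nu_1}}=\sum_{i=0}^{n}\binom{n}{i}A^{\,n-2i}\,\llangle(x_{\nu_1})^{\varepsilon}x_{j+n-2i}\rrangle_{\Sigma'_{\nu_1}},
\]
and then \eqref{eqn:kbsm_xm_Bracket_Sigma} (for $\varepsilon=0$) or \eqref{eqn:kbsm_xvxm_Bracket_Sigma} (for $\varepsilon=1$) turns each $\llangle(x_{\nu_1})^{\varepsilon}x_{\ell}\rrangle_{\Sigma'_{\nu_1}}$ into $\llangle x_{\nu_1}F_{\nu_1-\ell}\rrangle_{\Sigma'_{\nu_1}}$, respectively $\llangle R_{\ell-\nu_1}\rrangle_{\Sigma'_{\nu_1}}$. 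Since $\langle\cdot\rangle_{\star\star}$ is $R$-linear, \eqref{eqn:lem_Relations_Bracket_Two_Fibers_Case_1a} thus becomes a pair of binomial-weighted identities among the elements $\langle x_{\nu_1}F_{\nu_0-n+2i+\delta}\rangle_{\star\star}$ ($\delta\in\{0,1\}$; case $\varepsilon=0$), respectively among $\langle R_{n-2i-\nu_0-\delta}\rangle_{\star\star}$ (case $\varepsilon=1$), the two members of each pair differing only by the index shift $\delta=0\leftrightarrow1$ and the factor $-A^{3}$. Because $\{F_{m}\}_{m\ge0}$ (and likewise $\{F_{m}\}_{m\le-1}$) is a degreewise $R$-basis of $R[\lambda]$ by Remark~\ref{rem:DegAndCoeffOf_F}, this is in turn equivalent to the ``$F$-form'' $\llangle(x_{\nu_1})^{\varepsilon}F_{m}x_{-\nu_2-1}\rrangle_{\star\star}=-A^{3}\llangle(x_{\nu_1})^{\varepsilon}F_{m}x_{-\nu_2}\rrangle_{\star\star}$, $m\in\mathbb{Z}$, which is the shape that directly engages clauses (b2),(b3) and Lemma~\ref{lem:rel_F_and_FR_TwoFibers}.

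For the reduction step the point is that the recursion underlying Lemma~\ref{lem:annulus_for_any_kn_bracket_2_beta}, $\Theta_{t,x}(k,n)=A\,\Theta_{t,x}(k-1,n+1)+A^{-1}\Theta_{t,x}(k-1,n-1)$ (obtained by peeling off one $\lambda$ via $\lambda x_{m}=Ax_{m+1}+A^{-1}x_{m-1}$, together with $P_{m,k}=AP_{m+1,k-1}+A^{-1}P_{m-1,k-1}$ and the companion identity for a trailing $\lambda^{k}$), is an identity in $R\Sigma'_{\nu_1}$ and hence survives $\langle\cdot\rangle_{\star\star}$. Packaging the two sides of \eqref{eqn:lem_Relations_Bracket_Two_Fibers_Case_1a} as a single $\Theta_{t,x}=\Theta_x^{+}$ with the two-element datum $\{(1,(x_{\nu_1})^{\varepsilon},\lambda^{0},-\nu_2-1),(A^{3},(x_{\nu_1})^{\varepsilon},\lambda^{0},-\nu_2)\}$, the claim for all $n$ is exactly $\langle\Theta_{t,x}(n,0)\rangle_{\star\star}=0$, and running the bookkeeping of the proof of Lemma~\ref{lem:lens_space} (a nested use of Lemma~\ref{lem:annulus_for_any_kn_bracket_2_beta}, exploiting the parity split in hypothesis (2)) reduces everything to the base cases $\varepsilon\in\{0,1\}$, $n\in\{0,1\}$. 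These I would then check directly: for $\varepsilon=1$, $n=0$ the claim is $\langle R_{-\nu_0-1}\rangle_{\star\star}=-A^{3}\langle R_{-\nu_0}\rangle_{\star\star}$ by \eqref{eqn:kbsm_xvxm_Bracket_Sigma}, and rearranging clause (b2) with $n=\nu_0+2$ (using $R$-linearity, $F_{-1}=-A^{3}$, and $\llangle x_{\nu_1}x_{-\nu_2}\rrangle_{\Sigma'_{\nu_1}}=\llangle R_{-\nu_0}\rrangle_{\Sigma'_{\nu_1}}$) yields it; for $\varepsilon=0$, $n=0$ the claim reads $\langle x_{\nu_1}F_{\nu_0+1}\rangle_{\star\star}=-A^{3}\langle x_{\nu_1}F_{\nu_0}\rangle_{\star\star}$, which one obtains by isolating the single out-of-range monomial $x_{\nu_1}\lambda^{\nu_0+1}$, applying clause (b3) with $n=\nu_0+1$ (whose correction term, via $F_{\nu_0-(\nu_0+1)}=F_{-1}=-A^{3}$ and $\llangle x_{-\nu_2}\rrangle_{\Sigma'_{\nu_1}}=\llangle x_{\nu_1}F_{\nu_0}\rrangle_{\Sigma'_{\nu_1}}$, equals $-A^{\nu_0+4}x_{\nu_1}F_{\nu_0}$), and noting that the two $x_{\nu_1}\lambda^{\nu_0+1}$-contributions and the two copies of $x_{\nu_1}F_{\nu_0+1}$ cancel; the cases $n=1$ are analogous, now also using $F_{-2}=-A^{2}-A^{4}\lambda$ and \eqref{eqn:IdentityTwoFiberCase2}.

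The hard part will be twofold. First, the factor $-A^{3}$ is \emph{not} produced termwise: $\langle\cdot\rangle_{\star\star}$ encodes the $S_{\beta_2}$-slide \eqref{eqn:RelSBetaMoveTwoFiberCase} only in the $F$-polynomial disguise of Lemma~\ref{lem:rel_F_and_FR_TwoFibers} (this is exactly what clauses (b2),(b3) are), so the identity appears only after the binomial sums of the unwinding step collapse — which is why one cannot simply iterate a one-step relation and must go through the reduction of Lemma~\ref{lem:annulus_for_any_kn_bracket_2_beta}. Second, making that reduction precise requires verifying carefully that the recursion of Lemma~\ref{lem:annulus_for_any_kn_bracket_2_beta} commutes with $\langle\cdot\rangle_{\star\star}$ and that the nested application lands on genuinely base cases (rather than a tautology at $n=0$); the base-case computations themselves, though finite, are where the constant $-A^{3}$ is actually generated, and are the computational heart of the argument.
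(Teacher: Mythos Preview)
Your base cases $n\in\{0,1\}$ are right and match the paper's computations, but the reduction step via Lemma~\ref{lem:annulus_for_any_kn_bracket_2_beta} does not go through. With your packaging $\Theta_x^{+}(k,m)=\llangle(x_{\nu_1})^{\varepsilon}\lambda^{k}x_{m-\nu_2-1}\rrangle_{\Sigma'_{\nu_1}}+A^{3}\llangle(x_{\nu_1})^{\varepsilon}\lambda^{k}x_{m-\nu_2}\rrangle_{\Sigma'_{\nu_1}}$, the claim is $\langle\Theta_x^{+}(n,0)\rangle_{\star\star}=0$ for all $n$. Hypothesis~(2) would require knowing $\langle\Theta_x^{+}(k,0)\rangle_{\star\star}=\langle\Theta_x^{+}(k,1)\rangle_{\star\star}=0$ for \emph{all} $k$, which is the full statement again; and hypothesis~(1) would require $\langle\Theta_x^{+}(0,m)\rangle_{\star\star}=0$ for all $m$, but this is simply false. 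For $\varepsilon=0$ one gets via \eqref{eqn:kbsm_xm_Bracket_Sigma}
\[
\langle\Theta_x^{+}(0,m)\rangle_{\star\star}=\llangle x_{\nu_1}\bigl(F_{\nu_0-m+1}+A^{3}F_{\nu_0-m}\bigr)\rrangle_{\star\star},
\]
and for $m=\nu_0$ (with $\nu_0\ge 1$) this is $x_{\nu_1}(F_{1}+A^{3}F_{0})=x_{\nu_1}(A^{-1}\lambda+A+A^{3})\in R\Sigma''_{\nu_1,\nu_2}$, which is nonzero. So no ``nested use'' of Lemma~\ref{lem:annulus_for_any_kn_bracket_2_beta} can bridge the gap from $n\in\{0,1\}$ to all $n$; the recursion alone does not see the $S_{\beta_2}$-relation, and that relation is precisely what you need here.

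The paper supplies the missing input differently: after treating $n=0,1$ (exactly your base cases), it proves the $P_{n}$-version $\llangle(x_{\nu_1})^{\varepsilon}P_{n}x_{-\nu_2-1}\rrangle_{\star\star}=-A^{3}\llangle(x_{\nu_1})^{\varepsilon}P_{n}x_{-\nu_2}\rrangle_{\star\star}$ for each $n\ge 2$ by applying the $\Omega_5$-move of Figure~\ref{fig:Omega_5ForTwoFibers}, which in $\llangle\cdot\rrangle_{\Sigma'_{\nu_1}}$ yields
\[
A\llangle x_{-n-\nu_2-1}\rrangle+A^{-1}\llangle P_{n}x_{-\nu_2-1}\rrangle=A\llangle P_{n-1}x_{-\nu_2}\rrangle+A^{-1}\llangle x_{-\nu_2-n+1}\rrangle,
\]
and then simplifying with \eqref{eqn:rel_F_to_P}, \eqref{eqn:kbsm_xm_Bracket_Sigma} (resp.\ \eqref{eqn:kbsm_xvxm_Bracket_Sigma}) and clause~(b3) (resp.\ (b2)). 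Since $\lambda^{n}$ is an $R$-linear combination of $P_0,\dots,P_n$ (Remark~\ref{rem:NoteOnGeneratingSetForKBSM}), the $\lambda^{n}$-version follows. Thus the key idea you are missing is this additional $\Omega_5$-identity; Lemma~\ref{lem:annulus_for_any_kn_bracket_2_beta} is not the right tool at this step.
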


\begin{figure}[H]
\centering
\includegraphics[scale=0.8]{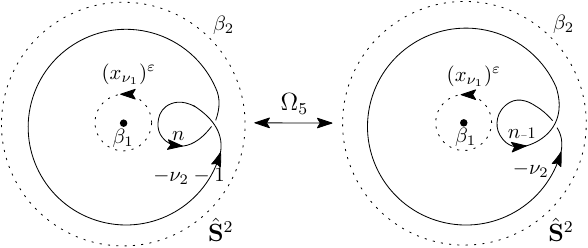}
\caption{Arrow diagrams in $\hat{\bf S}^{2}$ related by $\Omega_{5}$-move}
\label{fig:Omega_5ForTwoFibers}
\end{figure}

\begin{proof}
Assume that $\varepsilon = 0$. Using b3) in the definition of $\langle \cdot \rangle_{\star \star}$ we see that, after using \eqref{eqn:kbsm_xm_Bracket_Sigma} and since $F_{-1} = -A^{3}$,
\begin{equation*}
\llangle x_{-\nu_{2}-1} \rrangle_{\star\star} = \llangle x_{\nu_{1}} F_{\nu_{0}+1} \rrangle_{\star\star} = \llangle F_{-1}x_{-\nu_{2}} \rrangle_{\star\star} = -A^{3} \llangle x_{-\nu_{2}} \rrangle_{\star\star}.
\end{equation*}
Therefore \eqref{eqn:lem_Relations_Bracket_Two_Fibers_Case_1a} holds when $n = 0$.

Using b3) in the definition of $\langle \cdot \rangle_{\star \star}$, we see that 
\begin{equation*}
\llangle x_{\nu_{1}} F_{\nu_{0}+2} \rrangle_{\star\star} = \llangle F_{-2}x_{-\nu_{2}} \rrangle_{\star\star}.
\end{equation*}
By \eqref{eqn:kbsm_xm_Bracket_Sigma} and \eqref{eqn:rel_xm_to_Qxk}
\begin{equation*}
\llangle x_{\nu_{1}} F_{\nu_{0}+2} \rrangle_{\star\star} = \llangle x_{-\nu_{2}-2} \rrangle_{\star\star} = A \llangle \lambda x_{-\nu_{2}-1} \rrangle_{\star\star} - A^{2} \llangle x_{-\nu_{2}} \rrangle_{\star\star},
\end{equation*}
on the other hand, since $F_{-2} = -A^{2}-A^{4}\lambda$,
\begin{equation*}
\llangle F_{-2} x_{-\nu_{2}} \rrangle_{\star\star} = -A^{2} \llangle x_{-\nu_{2}} \rrangle_{\star\star} - A^{4} \llangle \lambda x_{-\nu_{2}} \rrangle_{\star\star},
\end{equation*}
it follows that
\begin{equation*}
A\llangle \lambda x_{-\nu_{2}-1} \rrangle_{\star\star} = -A^{4} \llangle \lambda x_{-\nu_{2}} \rrangle_{\star\star},
\end{equation*}
which proves \eqref{eqn:lem_Relations_Bracket_Two_Fibers_Case_1a} for $n = 1$.

As we noted in Remark~\ref{rem:NoteOnGeneratingSetForKBSM}, $\lambda^{n}$ is $R$-linear combination of $P_{k}$, $0\leq k \leq n$, it suffices to show that
\begin{equation*}
\llangle P_{n}x_{-\nu_{2}-1} \rrangle_{\star\star} = -A^{3} \llangle P_{n}x_{-\nu_{2}} \rrangle_{\star\star}
\end{equation*}
for any $n \geq 2$. Since arrow diagrams $D$ and $D'$ in Figure~\ref{fig:Omega_5ForTwoFibers} are related by $\Omega_{5}$-move, by \eqref{eqn:Compositionh4h2h1}, $\phi_{\beta_{1}}(D) = \phi_{\beta_{1}}(D')$ or
\begin{equation*}
A\llangle x_{-n-\nu_{2}-1} \rrangle_{\Sigma^{\prime}_{\nu_{1}}} + A^{-1}\llangle P_{n}x_{-\nu_{2}-1} \rrangle_{\Sigma^{\prime}_{\nu_{1}}} = A\llangle P_{n-1}x_{-\nu_{2}} \rrangle_{\Sigma^{\prime}_{\nu_{1}}} + A^{-1}\llangle x_{-\nu_{2}-n+1} \rrangle_{\Sigma^{\prime}_{\nu_{1}}}.
\end{equation*}
Thus, by \eqref{eqn:rel_F_to_P}, \eqref{eqn:kbsm_xm_Bracket_Sigma}, and part b3) of the definition of $\langle \cdot \rangle_{\star \star}$,
\begin{eqnarray*}
\llangle P_{n} x_{-\nu_{2}-1} \rrangle_{\star\star} 
&=& A^{2} \llangle P_{n-1} x_{-\nu_{2}} \rrangle_{\star\star} + \llangle x_{-n-\nu_{2}+1} \rrangle_{\star\star} - A^{2} \llangle x_{-n-\nu_{2}-1} \rrangle_{\star\star} \\
&=& A^{2} \llangle (-A^{-2}F_{-n+1} + A^{-1}F_{-n}) x_{-\nu_{2}} \rrangle_{\star\star} + \llangle x_{\nu_{1}}F_{\nu_{0}+n-1} \rrangle_{\star\star} - A^{2} \llangle x_{\nu_{1}}F_{\nu_{0}+n+1} \rrangle_{\star\star} \\
&=& A^{2} \llangle (-A^{-2}F_{-n+1} + A^{-1}F_{-n}) x_{-\nu_{2}} \rrangle_{\star\star} + \llangle F_{-n+1}x_{-\nu_{2}} \rrangle_{\star\star} - A^{2} \llangle F_{-n-1}x_{-\nu_{2}} \rrangle_{\star\star} \\
&=& \llangle (AF_{-n} - A^{2}F_{-n-1}) x_{-\nu_{2}} \rrangle_{\star\star} = -A^{3} \llangle P_{n} x_{-\nu_{2}} \rrangle_{\star\star},
\end{eqnarray*}
which proves \eqref{eqn:lem_Relations_Bracket_Two_Fibers_Case_1a} for $n \geq 2$. 

Assume $\varepsilon = 1$. Using part b2) in the definition of $\langle \cdot \rangle_{\star \star}$, \eqref{eqn:kbsm_xvxm_Bracket_Sigma} and $F_{-1} = -A^{3}$, we see that
\begin{equation*}
\llangle x_{\nu_{1}}x_{-\nu_{2}-1} \rrangle_{\star\star} = \llangle R_{-\nu_{0}-1} \rrangle_{\star\star} = \llangle x_{\nu_{1}}F_{-1}x_{-\nu_{2}} \rrangle_{\star\star} = -A^{3} \llangle x_{\nu_{1}}x_{-\nu_{2}} \rrangle_{\star\star},
\end{equation*}
which proves \eqref{eqn:lem_Relations_Bracket_Two_Fibers_Case_1a} for $n = 0$. By part b2) in the definition of $\langle \cdot \rangle_{\star \star}$ we see that, 
\begin{equation*}
\llangle R_{-\nu_{0}-2} \rrangle_{\star\star} = \llangle x_{\nu_{1}}F_{-2}x_{-\nu_{2}} \rrangle_{\star\star}.
\end{equation*}
By \eqref{eqn:kbsm_xvxm_Bracket_Sigma} and \eqref{eqn:rel_xm_to_Qxk}
\begin{equation*}
\llangle R_{-\nu_{0}-2} \rrangle_{\star\star} = \llangle x_{\nu_{1}}x_{-\nu_{2}-2} \rrangle_{\star\star} = A \llangle x_{\nu_{1}}\lambda x_{-\nu_{2}-1} \rrangle_{\star\star} - A^{2} \llangle x_{\nu_{1}}x_{-\nu_{2}} \rrangle_{\star\star},
\end{equation*}
and, on the other hand, since $F_{-2} = -A^{2}-A^{4}\lambda$,
\begin{equation*}
\llangle x_{\nu_{1}} F_{-2} x_{-\nu_{2}} \rrangle_{\star\star} = -A^{2} \llangle x_{\nu_{1}} x_{-\nu_{2}} \rrangle_{\star\star} - A^{4} \llangle x_{\nu_{1}} \lambda x_{-\nu_{2}} \rrangle_{\star\star},
\end{equation*}
it follows that
\begin{equation*}
A\llangle x_{\nu_{1}} \lambda x_{-\nu_{2}-1} \rrangle_{\star\star} = -A^{4} \llangle x_{\nu_{1}} \lambda x_{-\nu_{2}} \rrangle_{\star\star}.
\end{equation*}
Therefore, \eqref{eqn:lem_Relations_Bracket_Two_Fibers_Case_1a} holds for $n = 1$.

We show that for any $n \geq 2$,
\begin{equation*}
\llangle x_{\nu_{1}} P_{n}x_{-\nu_{2}-1} \rrangle_{\star\star} = -A^{3} \llangle x_{\nu_{1}} P_{n}x_{-\nu_{2}} \rrangle_{\star\star}.
\end{equation*}
Since arrow diagrams $D$ and $D'$ in Figure~\ref{fig:Omega_5ForTwoFibers} are related by $\Omega_{5}$-move, by \eqref{eqn:Compositionh4h2h1}, $\phi_{\beta_{1}}(D) = \phi_{\beta_{1}}(D')$ or
\begin{equation*}
A\llangle x_{\nu_{1}}x_{-n-\nu_{2}-1} \rrangle_{\Sigma^{\prime}_{\nu_{1}}} + A^{-1}\llangle x_{\nu_{1}}P_{n}x_{-\nu_{2}-1} \rrangle_{\Sigma^{\prime}_{\nu_{1}}} = A\llangle x_{\nu_{1}}P_{n-1}x_{-\nu_{2}} \rrangle_{\Sigma^{\prime}_{\nu_{1}}} + A^{-1}\llangle x_{\nu_{1}}x_{-\nu_{2}-n+1} \rrangle_{\Sigma^{\prime}_{\nu_{1}}}.
\end{equation*}
Thus, by \eqref{eqn:rel_F_to_P}, \eqref{eqn:kbsm_xvxm_Bracket_Sigma}, and part b2) in the definition of $\langle \cdot \rangle_{\star \star}$ gives
\begin{eqnarray*}
& &\llangle x_{\nu_{1}} P_{n} x_{-\nu_{2}-1} \rrangle_{\star\star} 
= A^{2} \llangle x_{\nu_{1}} P_{n-1} x_{-\nu_{2}} \rrangle_{\star\star} + \llangle x_{\nu_{1}} x_{-n-\nu_{2}+1} \rrangle_{\star\star} - A^{2} \llangle x_{\nu_{1}} x_{-n-\nu_{2}-1} \rrangle_{\star\star} \\
&=& A^{2} \llangle x_{\nu_{1}} (-A^{-2}F_{-n+1} + A^{-1}F_{-n}) x_{-\nu_{2}} \rrangle_{\star\star} + \llangle R_{-\nu_{0}-n+1} \rrangle_{\star\star} - A^{2} \llangle R_{-\nu_{0}-n-1} \rrangle_{\star\star} \\
&=& A^{2} \llangle x_{\nu_{1}} (-A^{-2}F_{-n+1} + A^{-1}F_{-n}) x_{-\nu_{2}} \rrangle_{\star\star} + \llangle x_{\nu_{1}} F_{-n+1}x_{-\nu_{2}} \rrangle_{\star\star} - A^{2} \llangle x_{\nu_{1}} F_{-n-1}x_{-\nu_{2}} \rrangle_{\star\star} \\
&=& \llangle x_{\nu_{1}} (AF_{-n} - A^{2}F_{-n-1}) x_{-\nu_{2}} \rrangle_{\star\star} = -A^{3} \llangle x_{\nu_{1}} P_{n} x_{-\nu_{2}} \rrangle_{\star\star}.
\end{eqnarray*}
Thus, using Remark~\ref{rem:NoteOnGeneratingSetForKBSM} we see that \eqref{eqn:lem_Relations_Bracket_Two_Fibers_Case_1a} holds for $n \geq 2$. 
\end{proof}
 
\begin{lemma}
\label{lem:Relations_Bracket_Two_Fibers_Case_1}
Let $\nu_{0}\geq 0$, then for all $m \in \mathbb{Z}$,
\begin{equation}
\label{eqn:Bracket_Two_Fibers_Fx_xF_Case_1}
\llangle F_{m}x_{-\nu_{2}} \rrangle_{\star\star} = \llangle x_{\nu_{1}}F_{\nu_{0}-m} \rrangle_{\star\star}
\end{equation}
and
\begin{equation}      
\label{eqn:Bracket_Two_Fibers_xFx_R_Case_1}
\llangle x_{\nu_{1}} F_{m}x_{-\nu_{2}} \rrangle_{\star\star}  = \llangle R_{m-\nu_{0}} \rrangle_{\star\star}.
\end{equation}
\end{lemma}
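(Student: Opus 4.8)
The plan is to prove both identities by the same strategy used in Lemma~\ref{lem:Relations_Bracket_Two_Fibers_Case_1a}: verify the identities on a small generating family and then propagate them. For \eqref{eqn:Bracket_Two_Fibers_Fx_xF_Case_1}, since $\{F_{m}\}_{m\in\mathbb Z}$ is a family of polynomials in $R[\lambda]$ with $\deg F_{m}=\max\{m,-m-1\}$ and leading coefficients recorded in Remark~\ref{rem:DegAndCoeffOf_F}, every $\lambda^{n}$ is an $R$-linear combination of the $F_{m}$'s, so by $R$-linearity of $\llangle\cdot\rrangle_{\star\star}$ it suffices to check the identity term by term. The point is that for $m$ in the ``small'' range (roughly $-\nu_{0}-1\le m\le \nu_{0}+1$ when $\nu_{0}\ge 0$, together with the two neighboring indices) both sides are literally one of the defining cases (b1), (b2), (b3) of $\langle\cdot\rangle_{\star\star}$, so the identity is either immediate from (b3) --- which reads $\langle x_{\nu_{1}}\lambda^{n}\rangle_{\star\star}=\langle x_{\nu_{1}}(\lambda^{n}-A^{n}F_{n})\rangle_{\star\star}+A^{n}\langle\llangle F_{\nu_{0}-n}x_{-\nu_{2}}\rrangle_{\Sigma'_{\nu_{1}}}\rangle_{\star\star}$, essentially already \eqref{eqn:Bracket_Two_Fibers_Fx_xF_Case_1} in disguise --- or follows from \eqref{eqn:kbsm_xm_Bracket_Sigma} together with Lemma~\ref{lem:rel_F_and_FR_TwoFibers} (which gives $F_{m}x_{-\nu_{2}}=x_{\nu_{1}}F_{\nu_{0}-m}$ in $S\mathcal D_{\nu_{1},\nu_{2}}$, hence at the level of $\llangle\cdot\rrangle_{\Sigma'_{\nu_{1}}}$).

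For the inductive step pushing \eqref{eqn:Bracket_Two_Fibers_Fx_xF_Case_1} to all $m$, I would use the defining three-term recursions. Expanding $x_{m-\nu_{2}}$ via \eqref{eqn:kbsm_xm_Bracket_Sigma} on one side and $x_{\nu_{1}}F_{\nu_{0}-m}$ via Lemma~\ref{lem:rel_F_and_FR_TwoFibers} on the other, the difference of the two sides can be rewritten, using \eqref{eqn:rel_xm_to_Qxk} or the $\Omega_{5}$-relation from Lemma~\ref{lem:IdentityTwoFiberCase} (equation \eqref{eqn:IdentityTwoFiberCase1}--\eqref{eqn:IdentityTwoFiberCase2}) and equation \eqref{eqn:RelSBetaMoveTwoFiberCase}, in terms of the same expression with $m$ replaced by $m\pm1$ plus terms of the form $\llangle P_{j}\,x_{-\nu_{2}}\rrangle_{\star\star}$ (or $\llangle x_{\nu_{1}}P_{j}\rrangle_{\star\star}$) of strictly smaller $\lambda$-degree, which are already known to equal $R$-linear combinations of basis elements of $\Sigma''_{\nu_{1},\nu_{2}}$ by Lemma~\ref{lem:lens_space_two_fiber_reduce}. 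The crucial bookkeeping is that the ``error terms'' generated at each step lie in the span on which the identity has already been established, so a single induction on $|m|$ (in both directions from the base range) closes. Equation \eqref{eqn:Bracket_Two_Fibers_xFx_R_Case_1} is handled by exactly the same argument with $x_{\nu_{1}}$ prepended throughout: replace \eqref{eqn:kbsm_xm_Bracket_Sigma} by \eqref{eqn:kbsm_xvxm_Bracket_Sigma}, so that $x_{\nu_{1}}x_{m-\nu_{2}}=R_{m-\nu_{0}}$ and the left side collapses to $\llangle R_{m-\nu_{0}}\rrangle_{\star\star}$, and use part (b2) of the definition rather than (b3).

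The main obstacle I expect is purely a matter of careful degree accounting: one must check that when \eqref{eqn:kbsm_xm_Bracket_Sigma} or \eqref{eqn:IdentityTwoFiberCase2} is applied, the auxiliary $P_{j}$-terms that appear genuinely have $\lambda$-degree below the current induction level (so Lemma~\ref{lem:lens_space_two_fiber_reduce} and the induction hypothesis apply to them), and that the arithmetic of the exponents of $A$ coming from \eqref{eqn:RelSBetaMoveTwoFiberCase} and the binomial expansions matches on both sides --- this is where sign errors and off-by-one shifts in $\nu_{0}$ are most likely to creep in. Everything else is a routine unwinding of the definition of $\langle\cdot\rangle_{\star\star}$ and the identities already recorded in Lemma~\ref{lem:rel_F_and_FR_TwoFibers}, Lemma~\ref{lem:kbsm_xm_xvxm}, and Lemma~\ref{lem:IdentityTwoFiberCase}.
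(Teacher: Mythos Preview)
Your overall shape is right --- establish base cases from the definition of $\langle\cdot\rangle_{\star\star}$ and then propagate by an $\Omega_{5}$-type relation --- but there is a genuine gap in the way you plan to run the induction.

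The identities you plan to invoke at the inductive step, namely Lemma~\ref{lem:rel_F_and_FR_TwoFibers} and equation~\eqref{eqn:RelSBetaMoveTwoFiberCase}, hold only in the quotient $S\mathcal{D}_{\nu_{1},\nu_{2}}$, i.e.\ \emph{after} imposing the $S_{\beta_{2}}$-relation. They do \emph{not} hold at the level of $\llangle\cdot\rrangle_{\Sigma'_{\nu_{1}}}$, which takes values in $R\Sigma'_{\nu_{1}}\cong S\mathcal{D}(\mathbf{D}^{2}_{\beta_{1}})$; and the bracket $\llangle\cdot\rrangle_{\star\star}$ is defined through $\llangle\cdot\rrangle_{\Sigma'_{\nu_{1}}}$. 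Saying that Lemma~\ref{lem:rel_F_and_FR_TwoFibers} ``hence'' holds at the level of $\llangle\cdot\rrangle_{\Sigma'_{\nu_{1}}}$ is exactly what this lemma is supposed to establish for $\llangle\cdot\rrangle_{\star\star}$, so that appeal is circular. In particular, when you apply the $\Omega_{5}$-relation (Figure~\ref{fig:Omega_5LemmaForTwoFibers}), one side produces a term $P_{-m+1}x_{-\nu_{2}-1}$, and you have no way to turn $x_{-\nu_{2}-1}$ into $x_{-\nu_{2}}$ using only facts available in $R\Sigma'_{\nu_{1}}$.

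What the paper actually does is use the \emph{result} of Lemma~\ref{lem:Relations_Bracket_Two_Fibers_Case_1a}, not merely its strategy: equation~\eqref{eqn:lem_Relations_Bracket_Two_Fibers_Case_1a} says precisely that $\llangle\cdot\rrangle_{\star\star}$ already satisfies the relation $(x_{\nu_{1}})^{\varepsilon}\lambda^{n}x_{-\nu_{2}-1}\mapsto -A^{3}(x_{\nu_{1}})^{\varepsilon}\lambda^{n}x_{-\nu_{2}}$. Feeding this, together with \eqref{eqn:rel_F_to_P} and \eqref{eqn:kbsm_xm_Bracket_Sigma} (respectively \eqref{eqn:kbsm_xvxm_Bracket_Sigma} for the second identity), into the $\Omega_{5}$-relation yields the clean two-step recursion
\[
A^{-1}\bigl(\llangle x_{\nu_{1}}F_{\nu_{0}-m}\rrangle_{\star\star}-\llangle F_{m}x_{-\nu_{2}}\rrangle_{\star\star}\bigr)
= A\bigl(\llangle x_{\nu_{1}}F_{\nu_{0}-m+2}\rrangle_{\star\star}-\llangle F_{m-2}x_{-\nu_{2}}\rrangle_{\star\star}\bigr),
\]
with no residual ``error terms''. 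The base case is simply $m\le -1$ (read straight off (b3) and (b2)), not a symmetric range around $0$; together with the recursion above this covers all $m$. So the fix is: drop the appeals to Lemma~\ref{lem:rel_F_and_FR_TwoFibers} and \eqref{eqn:RelSBetaMoveTwoFiberCase}, and instead use Lemma~\ref{lem:Relations_Bracket_Two_Fibers_Case_1a} as an input.
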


\begin{figure}[H]
\centering
\includegraphics[scale=0.8]{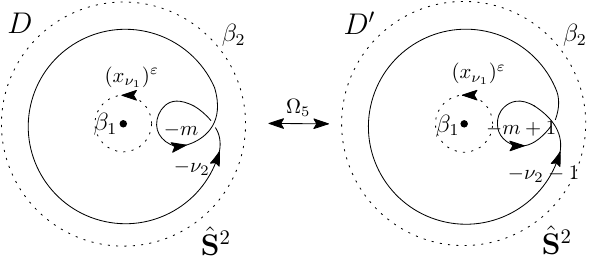}
\caption{Arrow diagrams $D$ and $D'$ related by $\Omega_{5}$-move}
\label{fig:Omega_5LemmaForTwoFibers}
\end{figure}

\begin{proof}
By the definition of $\llangle \cdot \rrangle_{\star\star}$, \eqref{eqn:Bracket_Two_Fibers_Fx_xF_Case_1} and \eqref{eqn:Bracket_Two_Fibers_xFx_R_Case_1} hold for $m \leq -1$. 

Since arrow diagrams $D$ and $D'$ in Figure~\ref{fig:Omega_5LemmaForTwoFibers} are related by $\Omega_{5}$-move, by \eqref{eqn:Compositionh4h2h1}, $\phi_{\beta_{1}}(D) = \phi_{\beta_{1}}(D')$ or
\begin{equation*}
A \llangle P_{-m}x_{-\nu_{2}} \rrangle_{\Sigma^{\prime}_{\nu_{1}}} + A^{-1} \llangle x_{m-\nu_{2}} \rrangle_{\Sigma^{\prime}_{\nu_{1}}} = A \llangle x_{m-\nu_{2}-2} \rrangle_{\Sigma^{\prime}_{\nu_{1}}} + A^{-1} \llangle P_{-m+1}x_{-\nu_{2}-1} \rrangle_{\Sigma^{\prime}_{\nu_{1}}}.
\end{equation*}
Moreover, by \eqref{eqn:rel_F_to_P} and \eqref{eqn:lem_Relations_Bracket_Two_Fibers_Case_1a}, the above equation becomes
\begin{equation*}
A \llangle (A^{-1}F_{m-1} - A^{-2}F_{m})x_{-\nu_{2}} \rrangle_{\star\star} + A^{-1} \llangle x_{m-\nu_{2}} \rrangle_{\star\star} = A \llangle x_{m-\nu_{2}-2} \rrangle_{\star\star} - A^{2} \llangle (A^{-1}F_{m-2} - A^{-2}F_{m-1})x_{-\nu_{2}} \rrangle_{\star\star},
\end{equation*}
which by \eqref{eqn:kbsm_xm_Bracket_Sigma} can be written as
\begin{equation*}
A^{-1} (\llangle x_{\nu_{1}}F_{\nu_{0}-m} \rrangle_{\star\star} - \llangle F_{m}x_{-\nu_{2}} \rrangle_{\star\star}) = A (\llangle x_{\nu_{1}}F_{\nu_{0}-m+2} \rrangle_{\star\star} - \llangle F_{m-2}x_{-\nu_{2}} \rrangle_{\star\star}).
\end{equation*}
Therefore, using induction on $m$ we can see that \eqref{eqn:Bracket_Two_Fibers_Fx_xF_Case_1} holds for all $m \in \mathbb{Z}$.

Since arrow diagrams $D$ and $D'$ in Figure~\ref{fig:Omega_5LemmaForTwoFibers} are related by $\Omega_{5}$-move, by \eqref{eqn:Compositionh4h2h1}, $\phi_{\beta_{1}}(D) = \phi_{\beta_{1}}(D')$ or
\begin{equation*}
A \llangle x_{\nu_{1}}P_{-m}x_{-\nu_{2}} \rrangle_{\Sigma^{\prime}_{\nu_{1}}} + A^{-1} \llangle x_{\nu_{1}}x_{m-\nu_{2}} \rrangle_{\Sigma^{\prime}_{\nu_{1}}} = A \llangle x_{\nu_{1}}x_{m-\nu_{2}-2} \rrangle_{\Sigma^{\prime}_{\nu_{1}}} + A^{-1} \llangle x_{\nu_{1}}P_{-m+1}x_{-\nu_{2}-1} \rrangle_{\Sigma^{\prime}_{\nu_{1}}}.
\end{equation*}
Moreover, by \eqref{eqn:rel_F_to_P} and \eqref{eqn:lem_Relations_Bracket_Two_Fibers_Case_1a}, the above equation becomes
\begin{eqnarray*}
& & A \llangle x_{\nu_{1}}(A^{-1}F_{m-1} - A^{-2}F_{m})x_{-\nu_{2}} \rrangle_{\star\star} + A^{-1} \llangle x_{\nu_{1}}x_{m-\nu_{2}} \rrangle_{\star\star} \\
&=& A \llangle x_{\nu_{1}}x_{m-\nu_{2}-2} \rrangle_{\star\star} - A^{2} \llangle x_{\nu_{1}}(A^{-1}F_{m-2} - A^{-2}F_{m-1})x_{-\nu_{2}} \rrangle_{\star\star},
\end{eqnarray*}
which by \eqref{eqn:kbsm_xvxm_Bracket_Sigma} can be written as 
\begin{equation*}
A^{-1} (\llangle R_{m-\nu_{0}} \rrangle_{\star\star} - \llangle x_{\nu_{1}}F_{m}x_{-\nu_{2}} \rrangle_{\star\star}) = A (\llangle R_{m-\nu_{0}-2} \rrangle_{\star\star} - \llangle x_{\nu_{1}}F_{m-2}x_{-\nu_{2}} \rrangle_{\star\star}).
\end{equation*}
Therefore, using induction on $m$ we see that \eqref{eqn:Bracket_Two_Fibers_xFx_R_Case_1} holds for all $m \in \mathbb{Z}$.
\end{proof}

\begin{lemma}
\label{lem:Relations_Bracket_Two_Fibers_Case_2a}
Let $\nu_{0} \leq -2$, then for any $\varepsilon \in \{0,1\}$ and $n \geq 0$,
\begin{equation}
\label{eqn:lem_Relations_Bracket_Two_Fibers_Case_1b}
\llangle (x_{\nu_{1}})^{\varepsilon}\lambda^{n}x_{-\nu_{2}-1} \rrangle_{\star\star} = -A^{3} 
\llangle (x_{\nu_{1}})^{\varepsilon}\lambda^{n}x_{-\nu_{2}} \rrangle_{\star\star}.
\end{equation}
\end{lemma}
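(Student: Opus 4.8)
The plan is to run the argument completely parallel to the proof of Lemma~\ref{lem:Relations_Bracket_Two_Fibers_Case_1a}, replacing everywhere the defining relations (b1)--(b3) of $\langle\cdot\rangle_{\star\star}$ by their $\nu_0\le-2$ counterparts (c1)--(c3). I would split on $\varepsilon\in\{0,1\}$ and, within each case, settle $n=0$ first, then $n=1$, then $n\ge 2$. The one structural difference from the $\nu_0\ge0$ setting is that (c2) and (c3) are anchored at $x_{-\nu_2-1}$ rather than at $x_{-\nu_2}$, so here it is the $x_{-\nu_2-1}$ side of the identity that the bracket ``knows'' how to normalize.

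For $\varepsilon=0$, $n=0$: rewrite $\llangle x_{-\nu_2-1}\rrangle_{\Sigma^{\prime}_{\nu_1}}=\llangle x_{\nu_1}F_{\nu_0+1}\rrangle_{\Sigma^{\prime}_{\nu_1}}$ and $\llangle x_{-\nu_2}\rrangle_{\Sigma^{\prime}_{\nu_1}}=\llangle x_{\nu_1}F_{\nu_0}\rrangle_{\Sigma^{\prime}_{\nu_1}}$ via \eqref{eqn:kbsm_xm_Bracket_Sigma}. Since $\nu_0\le-2$, Remark~\ref{rem:DegAndCoeffOf_F} gives $\deg F_{\nu_0+1}=-\nu_0-2$, so $x_{\nu_1}F_{\nu_0+1}$ already lies in $R\Sigma''_{\nu_1,\nu_2}$ and is fixed by (c1), whereas $x_{\nu_1}F_{\nu_0}$ has $\lambda$-degree $-\nu_0-1$; its leading monomial $x_{\nu_1}\lambda^{-\nu_0-1}$ is the only element occurring here that lies outside $\Sigma''_{\nu_1,\nu_2}$, and rewriting it by (c3) with $n=-\nu_0-1$ --- where, as in the $\nu_0\ge0$ proof, the leading-coefficient cancellation annihilates the $x_{\nu_1}$-polynomial remainder --- leaves a scalar multiple of $\llangle F_0 x_{-\nu_2-1}\rrangle_{\star\star}$, and since $F_0=1$ this yields $\llangle x_{-\nu_2}\rrangle_{\star\star}=-A^{-3}\llangle x_{-\nu_2-1}\rrangle_{\star\star}$. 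The case $\varepsilon=1$, $n=0$ is the identical computation with $x_{\nu_1}$ prepended: via \eqref{eqn:kbsm_xvxm_Bracket_Sigma} one works with $R_{-\nu_0}$ and $R_{-\nu_0-1}$ in place of $F_{\nu_0}$ and $F_{\nu_0+1}$, and reducing the leading monomial $\lambda^{-\nu_0}$ of $R_{-\nu_0}$ by (c2) again produces $-A^{-3}\llangle x_{\nu_1}x_{-\nu_2-1}\rrangle_{\star\star}$. For $n=1$ (both values of $\varepsilon$) I would imitate the corresponding step of Lemma~\ref{lem:Relations_Bracket_Two_Fibers_Case_1a}: expand the bracket of a suitable $x_{\nu_1}F_m$ (resp.\ an $x_{\nu_1}R_m$-type term) in two ways --- once through (c3) (resp.\ (c2)), once through \eqref{eqn:kbsm_xm_Bracket_Sigma} (resp.\ \eqref{eqn:kbsm_xvxm_Bracket_Sigma}) together with \eqref{eqn:rel_xm_to_Qxk} --- and compare, using the explicit values $F_{-1}=-A^3$, $F_{-2}=-A^2-A^4\lambda$.

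For $n\ge 2$, Remark~\ref{rem:NoteOnGeneratingSetForKBSM} reduces the claim to $\llangle (x_{\nu_1})^{\varepsilon}P_n x_{-\nu_2-1}\rrangle_{\star\star}=-A^3\llangle (x_{\nu_1})^{\varepsilon}P_n x_{-\nu_2}\rrangle_{\star\star}$. Applying an $\Omega_5$-move as in Figure~\ref{fig:Omega_5ForTwoFibers} (with $x_{\nu_1}$ prepended to the diagrams when $\varepsilon=1$) and \eqref{eqn:Compositionh4h2h1} to $\phi_{\beta_1}$ produces an identity among $\llangle\cdot\rrangle_{\Sigma^{\prime}_{\nu_1}}$-brackets of $(x_{\nu_1})^{\varepsilon}P_n x_{-\nu_2-1}$, $(x_{\nu_1})^{\varepsilon}P_{n-1}x_{-\nu_2}$ and two single-$x$ terms $(x_{\nu_1})^{\varepsilon}x_{-n-\nu_2\pm1}$; one then rewrites the single-$x$ terms with \eqref{eqn:kbsm_xm_Bracket_Sigma} (resp.\ \eqref{eqn:kbsm_xvxm_Bracket_Sigma}) and \eqref{eqn:rel_F_to_P}, collapses the resulting $x_{\nu_1}F_{\nu_0+n\pm1}$ (resp.\ $R_{-\nu_0-n\pm1}$) brackets into $F_{-n\pm1}x_{-\nu_2}$ (resp.\ $x_{\nu_1}F_{-n\pm1}x_{-\nu_2}$) brackets --- using (c3)/(c2), the already-proved cases $n=0,1$, and the fact that $\llangle F_m x_{-\nu_2}\rrangle_{\star\star}$ and $\llangle x_{\nu_1}F_m x_{-\nu_2}\rrangle_{\star\star}$ are handled directly by the definition of the bracket when $m\le-1$ --- and finally recombines via \eqref{eqn:rel_F_to_P} to reach $-A^3\llangle (x_{\nu_1})^{\varepsilon}P_n x_{-\nu_2}\rrangle_{\star\star}$.

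I expect the main obstacle to be the range bookkeeping in this last collapsing step. Because (c2) and (c3) have a \emph{smaller} domain of applicability than (b2), (b3) did --- only $n\ge-\nu_0$, resp.\ $n\ge-\nu_0-1$ --- for small $n$ several of the terms $x_{\nu_1}F_{\nu_0+n\pm1}$ (resp.\ $R_{-\nu_0-n\pm1}$) that get collapsed in the $\nu_0\ge0$ proof already lie in $R\Sigma''_{\nu_1,\nu_2}$ and must instead be left untouched, so the induction on $n$ in the $\Omega_5$-step has to be arranged so that those boundary terms cancel in matched pairs rather than being rewritten individually; tracking the single exceptional monomial $x_{\nu_1}\lambda^{-\nu_0-1}$ through each use of (c3) is the point that needs the most care. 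Once established, this lemma plays for $\nu_0\le-2$ exactly the role that Lemma~\ref{lem:Relations_Bracket_Two_Fibers_Case_1a} plays for $\nu_0\ge0$: it feeds into the $\nu_0\le-2$ analogue of Lemma~\ref{lem:Relations_Bracket_Two_Fibers_Case_1}.
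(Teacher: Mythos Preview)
Your base cases $n=0,1$ are fine and match the paper's argument in spirit. The gap is in the $n\ge 2$ step, and it is precisely the point you yourself flagged: the anchoring of (c2), (c3) at $x_{-\nu_2-1}$.

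You propose to use the $\Omega_5$-identity from Figure~\ref{fig:Omega_5ForTwoFibers}, which produces the terms $(x_{\nu_1})^{\varepsilon}P_n x_{-\nu_2-1}$, $(x_{\nu_1})^{\varepsilon}P_{n-1}x_{-\nu_2}$, and $(x_{\nu_1})^{\varepsilon}x_{-n-\nu_2\pm 1}$. After \eqref{eqn:kbsm_xm_Bracket_Sigma} the single-$x$ terms become $x_{\nu_1}F_{\nu_0+n\pm 1}$, and you then claim these collapse via (c3) to $F_{-n\pm 1}x_{-\nu_2}$. But (c3) does two things differently from (b3): it rewrites $x_{\nu_1}F_{m}$ only for $m\le \nu_0$ (equivalently, $x_{\nu_1}\lambda^k$ only for $k\ge -\nu_0-1$), and the output carries $x_{-\nu_2-1}$, not $x_{-\nu_2}$. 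For $n\ge 2$ one has $\nu_0+n\pm 1 > \nu_0$, so neither term is in the range where (c3) gives the clean substitution you need, and there is no definitional rule sending anything to $F_{?}x_{-\nu_2}$. Your sentence ``$\llangle F_m x_{-\nu_2}\rrangle_{\star\star}$ \ldots\ are handled directly by the definition of the bracket when $m\le -1$'' is true for (b2)/(b3) but false for (c2)/(c3); this is exactly the structural difference you correctly identified in your first paragraph, and it is what breaks the step.

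The paper's fix is to run the $\Omega_5$-argument with $P_{-n}$ rather than $P_n$ (using the mirror figure, Figure~\ref{fig:Omega_5ForTwoFibersNegativeNu}). This yields single-$x$ terms $x_{n-\nu_2}$ and $x_{n-\nu_2-2}$, which become $x_{\nu_1}F_{\nu_0-n}$ and $x_{\nu_1}F_{\nu_0-n+2}$; for $n\ge 2$ both subscripts are $\le \nu_0$, so (c3) applies directly and converts them to $F_{n}x_{-\nu_2-1}$ and $F_{n-2}x_{-\nu_2-1}$. The recombination via \eqref{eqn:rel_F_to_P} then lands on $-A^{-3}\llangle (x_{\nu_1})^{\varepsilon}P_{-n}x_{-\nu_2-1}\rrangle_{\star\star}$, exactly as your own anchoring observation predicts. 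Since $\{P_{-k}:0\le k\le n\}$ spans the same space as $\{P_k:0\le k\le n\}$, Remark~\ref{rem:NoteOnGeneratingSetForKBSM} finishes the argument just as well with $P_{-n}$.
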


\begin{proof}
Assume that $\varepsilon = 0$. Using part c3) in the definition of $\langle \cdot \rangle_{\star \star}$, we see that using \eqref{eqn:kbsm_xm_Bracket_Sigma} and since $F_{0} = 1$,
\begin{equation*}
\llangle x_{-\nu_{2}-1} \rrangle_{\star\star} = \llangle F_{0}x_{-\nu_{2}-1} \rrangle_{\star\star} = -A^{3}\llangle x_{\nu_{1}}F_{\nu_{0}} \rrangle_{\star\star} = -A^{3} \llangle x_{-\nu_{2}} \rrangle_{\star\star},
\end{equation*}
which proves \eqref{eqn:lem_Relations_Bracket_Two_Fibers_Case_1b} for $n = 0$. Using part c3) in the definition of $\langle \cdot \rangle_{\star\star}$, we see that 
\begin{equation*}
\llangle x_{\nu_{1}}F_{\nu_{0}-1} \rrangle_{\star\star} = -A^{-3}\llangle F_{1} x_{-\nu_{2}-1}\rrangle_{\star\star},
\end{equation*}
By \eqref{eqn:kbsm_xm_Bracket_Sigma} and \eqref{eqn:rel_xm_to_Qxk}
\begin{equation*}
\llangle x_{\nu_{1}}F_{\nu_{0}-1} \rrangle_{\star\star} = \llangle x_{-\nu_{2}+1} \rrangle_{\star\star} = A^{-1}\llangle \lambda x_{-\nu_{2}} \rrangle_{\star\star} - A^{-2}\llangle x_{-\nu_{2}-1} \rrangle_{\star\star},
\end{equation*}
on the other hand, since $F_{1} = A^{-1}\lambda + A$,
\begin{equation*}
 -A^{-3}\llangle F_{1} x_{-\nu_{2}-1}\rrangle_{\star\star} =  -A^{-4}\llangle \lambda x_{-\nu_{2}-1} \rrangle_{\star\star} - A^{-2}\llangle x_{-\nu_{2}-1} \rrangle_{\star\star},
\end{equation*}
it follows that
\begin{equation*}
-A^{-4}\llangle \lambda x_{-\nu_{2}-1} \rrangle_{\star\star} = A^{-1} \llangle \lambda x_{-\nu_{2}} \rrangle_{\star\star},
\end{equation*}
which proves \eqref{eqn:lem_Relations_Bracket_Two_Fibers_Case_1b} for $n = 1$.

\begin{figure}[H]
\centering
\includegraphics[scale=0.7]{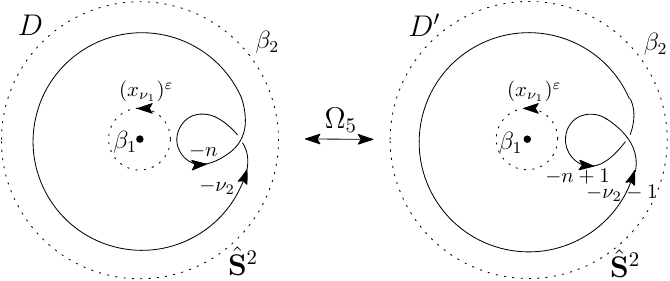}
\caption{Arrow diagrams $D$ and $D'$ related by $\Omega_{5}$-move}
\label{fig:Omega_5ForTwoFibersNegativeNu}
\end{figure}

We prove that for any $n \geq 2$,
\begin{equation*}
\llangle P_{-n}x_{-\nu_{2}} \rrangle_{\star\star} = -A^{-3} \llangle P_{-n}x_{-\nu_{2}-1} \rrangle_{\star\star}.
\end{equation*}
Since arrow diagrams $D$ and $D'$ in Figure~\ref{fig:Omega_5ForTwoFibersNegativeNu} are related by $\Omega_{5}$-move, by \eqref{eqn:Compositionh4h2h1}, $\phi_{\beta_{1}}(D) = \phi_{\beta_{1}}(D')$ or
\begin{equation*}
A\llangle P_{-n}x_{-\nu_{2}} \rrangle_{\Sigma^{\prime}_{\nu_{1}}} + A^{-1} \llangle x_{n-\nu_{2}} \rrangle_{\Sigma^{\prime}_{\nu_{1}}} = A\llangle x_{n-\nu_{2}-2} \rrangle_{\Sigma^{\prime}_{\nu_{1}}} + A^{-1} \llangle P_{-n+1}x_{-\nu_{2}-1} \rrangle_{\Sigma^{\prime}_{\nu_{1}}}.    
\end{equation*}
Therefore, by \eqref{eqn:rel_F_to_P}, \eqref{eqn:kbsm_xm_Bracket_Sigma}, and part c3) in the definition of $\langle \cdot \rangle_{\star\star}$.
\begin{eqnarray*}
\llangle P_{-n}x_{-\nu_{2}} \rrangle_{\star\star} &=& A^{-2} \llangle P_{-n+1}x_{-\nu_{2}-1} \rrangle_{\star\star} + \llangle x_{n-\nu_{2}-2} \rrangle_{\star\star} - A^{-2} \llangle x_{n-\nu_{2}} \rrangle_{\star\star}\\ 
&=& A^{-2}\llangle (-A^{-2}F_{n-1} + A^{-1}F_{n-2})x_{-\nu_{2}-1} \rrangle_{\star\star} + \llangle x_{\nu_{1}}F_{\nu_{0} -n+2} \rrangle_{\star\star} - A^{-2}\llangle x_{\nu_{1}}F_{\nu_{0} - n} \rrangle_{\star\star}\\
&=& A^{-2}\llangle (-A^{-2}F_{n-1} + A^{-1}F_{n-2})x_{-\nu_{2}-1} \rrangle_{\star\star} - A^{-3}\llangle F_{n-2} x_{-\nu_{2}-1}\rrangle_{\star\star} + A^{-5}\llangle F_{n} x_{-\nu_{2}-1}\rrangle_{\star\star}\\
&=& - A^{-3}\llangle (- A^{-2} F_{n} + A^{-1}F_{n-1}) x_{-\nu_{2}-1} \rrangle_{\star\star} = -A^{-3} \llangle P_{-n} x_{-\nu_{2}-1} \rrangle_{\star\star}.
\end{eqnarray*}
Consequently, \eqref{eqn:lem_Relations_Bracket_Two_Fibers_Case_1b} holds for $n \geq 2$ by Remark~\ref{rem:NoteOnGeneratingSetForKBSM}.

Assume $\varepsilon = 1$. Using part c2) in the definition of $\langle \cdot \rangle_{\star\star}$, we see that using \eqref{eqn:kbsm_xvxm_Bracket_Sigma} and since $F_{0} = 1$,
\begin{equation*}
-A^{-3} \llangle x_{\nu_{1}}x_{-\nu_{2}-1} \rrangle_{\star\star} = -A^{-3} \llangle x_{\nu_{1}}F_{0}x_{-\nu_{2}-1} \rrangle_{\star\star} = \llangle R_{-\nu_{0}} \rrangle_{\star\star} = \llangle x_{\nu_{1}}x_{-\nu_{2}} \rrangle_{\star\star},
\end{equation*}
which proves \eqref{eqn:lem_Relations_Bracket_Two_Fibers_Case_1a} for $n = 0$. Using part c2) in the definition of $\langle \cdot \rangle_{\star\star}$ we see that
\begin{equation*}
-A^{-3} \llangle x_{\nu_{1}}F_{1}x_{-\nu_{2}-1} \rrangle_{\star\star} = \llangle R_{1-\nu_{0}} \rrangle_{\star\star}.
\end{equation*}
Since $F_{1} = A^{-1}\lambda + A$, the left hand side of the above equation becomes
\begin{equation*}
-A^{-3} \llangle x_{\nu_{1}}F_{1}x_{-\nu_{2}-1} \rrangle_{\star\star} = -A^{-4} \llangle x_{\nu_{1}} \lambda x_{-\nu_{2}-1} \rrangle_{\star\star} - A^{-2} \llangle x_{\nu_{1}}x_{-\nu_{2}-1} \rrangle_{\star\star},
\end{equation*}
on the other hand, by \eqref{eqn:kbsm_xvxm_Bracket_Sigma} and \eqref{eqn:rel_xm_to_Qxk}
\begin{equation*}
\llangle R_{1-\nu_{0}} \rrangle_{\star\star} = \llangle x_{\nu_{1}}x_{-\nu_{2}+1} \rrangle_{\star\star} = A^{-1} \llangle x_{\nu_{1}} \lambda x_{-\nu_{2}} \rrangle_{\star\star} -A^{-2} \llangle x_{\nu_{1}}x_{-\nu_{2}-1} \rrangle_{\star\star},
\end{equation*}
it follows that $-A^{-4} \llangle x_{\nu_{1}} \lambda x_{-\nu_{2}-1} \rrangle_{\star\star} = A^{-1} \llangle x_{\nu_{1}} \lambda x_{-\nu_{2}} \rrangle_{\star\star}$, which proves the case $n = 1$ of \eqref{eqn:lem_Relations_Bracket_Two_Fibers_Case_1b}. 

Now we prove that
\begin{equation*}
\llangle x_{\nu_{1}}P_{-n}x_{-\nu_{2}} \rrangle_{\star\star} = -A^{-3} \llangle x_{\nu_{1}}P_{-n}x_{-\nu_{2}-1} \rrangle_{\star\star}
\end{equation*}
Since arrow diagrams $D$ and $D'$ in Figure~\ref{fig:Omega_5ForTwoFibers} are related by $\Omega_{5}$-move, by \eqref{eqn:Compositionh4h2h1}, $\phi_{\beta_{1}}(D) = \phi_{\beta_{1}}(D')$ or
\begin{equation*}
A \llangle x_{\nu_{1}}P_{-n}x_{-\nu_{2}} \rrangle_{\Sigma^{\prime}_{\nu_{1}}}
+ A^{-1} \llangle x_{\nu_{1}}x_{n-\nu_{2}} \rrangle_{\Sigma^{\prime}_{\nu_{1}}} = A \llangle x_{\nu_{1}}x_{n-\nu_{2}-2} \rrangle_{\Sigma^{\prime}_{\nu_{1}}} + A^{-1} \llangle x_{\nu_{1}}P_{-n+1}x_{-\nu_{2}-1} \rrangle_{\Sigma^{\prime}_{\nu_{1}}}. 
\end{equation*}
Moreover, by \eqref{eqn:kbsm_xvxm_Bracket_Sigma}, \eqref{eqn:rel_F_to_P}, and part c2) in the definition of $\langle \cdot \rangle_{\star\star}$, we see that
\begin{eqnarray*}
& & \llangle x_{\nu_{1}}P_{-n}x_{-\nu_{2}} \rrangle_{\star\star}
= A^{-2} \llangle x_{\nu_{1}}P_{-n+1}x_{-\nu_{2}-1} \rrangle_{\star\star} + \llangle R_{-\nu_{0}+n-2} \rrangle_{\star\star} - A^{-2} \llangle R_{-\nu_{0}+n} \rrangle_{\star\star} \\
&=& A^{-2} \llangle x_{\nu_{1}}(A^{-1}F_{n-2} - A^{-2}F_{n-1})x_{-\nu_{2}-1} \rrangle_{\star\star} - A^{-3} \llangle x_{\nu_{1}}F_{n-2}x_{-\nu_{2}-1} \rrangle_{\star\star} + A^{-5} \llangle x_{\nu_{1}}F_{n}x_{-\nu_{2}-1} \rrangle_{\star\star} \\
&=& -A^{-3} \llangle x_{\nu_{1}}(A^{-1}F_{n-1} - A^{-2}F_{n})x_{-\nu_{2}-1} \rrangle_{\star\star} = -A^{-3} \llangle x_{\nu_{1}}P_{-n}x_{-\nu_{2}-1} \rrangle_{\star\star}.
\end{eqnarray*}
Therefore, \eqref{eqn:lem_Relations_Bracket_Two_Fibers_Case_1b} holds for $n\geq 2$ by Remark~\ref{rem:NoteOnGeneratingSetForKBSM}.
\end{proof}

\begin{lemma}
\label{lem:Relations_Bracket_Two_Fibers_Case_2}
Let $\nu_{0} \leq -2$, then for all $m \in \mathbb{Z}$,
\begin{equation}
\label{eqn:Bracket_Two_Fibers_Fx_xF_Case_2}
-A^{-3}\llangle F_{m}x_{-\nu_{2}-1} \rrangle_{\star\star} = \llangle x_{\nu_{1}}F_{\nu_{0}-m} \rrangle_{\star\star}
\end{equation}
and
\begin{equation}      
\label{eqn:Bracket_Two_Fibers_xFx_R_Case_2}
-A^{-3}\llangle x_{\nu_{1}} F_{m}x_{-\nu_{2}-1} \rrangle_{\star\star}  = \llangle R_{m-\nu_{0}} \rrangle_{\star\star}.
\end{equation}
\end{lemma}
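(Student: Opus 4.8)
The plan is to mirror the proof of Lemma~\ref{lem:Relations_Bracket_Two_Fibers_Case_1}, now with parts (c2)--(c3) of the definition of $\langle\cdot\rangle_{\star\star}$ playing the role of (b2)--(b3) and with \eqref{eqn:lem_Relations_Bracket_Two_Fibers_Case_1b} replacing \eqref{eqn:lem_Relations_Bracket_Two_Fibers_Case_1a}. First, since $\llangle\cdot\rrangle_{\Sigma^{\prime}_{\nu_{1}}}$ and $\langle\cdot\rangle_{\star\star}$ are $R$-linear and $F_{m}\in R[\lambda]$, relation \eqref{eqn:lem_Relations_Bracket_Two_Fibers_Case_1b} extends to $\llangle F_{k}x_{-\nu_{2}-1}\rrangle_{\star\star} = -A^{3}\llangle F_{k}x_{-\nu_{2}}\rrangle_{\star\star}$ and $\llangle x_{\nu_{1}}F_{k}x_{-\nu_{2}-1}\rrangle_{\star\star} = -A^{3}\llangle x_{\nu_{1}}F_{k}x_{-\nu_{2}}\rrangle_{\star\star}$ for all $k\in\mathbb{Z}$, so \eqref{eqn:Bracket_Two_Fibers_Fx_xF_Case_2} and \eqref{eqn:Bracket_Two_Fibers_xFx_R_Case_2} are equivalent to $\llangle F_{m}x_{-\nu_{2}}\rrangle_{\star\star} = \llangle x_{\nu_{1}}F_{\nu_{0}-m}\rrangle_{\star\star}$ and $\llangle x_{\nu_{1}}F_{m}x_{-\nu_{2}}\rrangle_{\star\star} = \llangle R_{m-\nu_{0}}\rrangle_{\star\star}$; these are the analogues of \eqref{eqn:Bracket_Two_Fibers_Fx_xF_Case_1} and \eqref{eqn:Bracket_Two_Fibers_xFx_R_Case_1} for the $\nu_{0}\leq-2$ bracket.

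The base case now runs in the range $m\geq 0$ rather than $m\leq -1$, because when $\nu_{0}\leq -2$ the bracket $\langle\cdot\rangle_{\star\star}$ reduces $\lambda^{n}$ and $x_{\nu_{1}}\lambda^{n}$ from the upper end. Concretely, applying (c3) to $w = x_{\nu_{1}}\lambda^{n}$ with $n = m-\nu_{0}-1$ (so the constraint $n\geq -\nu_{0}-1$ reads $m\geq 0$), cancelling $\langle x_{\nu_{1}}\lambda^{n}\rangle_{\star\star}$ from both sides, and using $-n-1 = \nu_{0}-m$ together with $A^{-n-6} = A^{-3}\cdot A^{-n-3}$, one reads off $-A^{-3}\llangle F_{m}x_{-\nu_{2}-1}\rrangle_{\star\star} = \llangle x_{\nu_{1}}F_{\nu_{0}-m}\rrangle_{\star\star}$. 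Likewise, applying (c2) to $w = \lambda^{n}$ with $n = m-\nu_{0}$ (so $n\geq -\nu_{0}$ reads $m\geq 0$) and cancelling $\langle\lambda^{n}\rangle_{\star\star}$ gives $-A^{-3}\llangle x_{\nu_{1}}F_{m}x_{-\nu_{2}-1}\rrangle_{\star\star} = \llangle R_{m-\nu_{0}}\rrangle_{\star\star}$. Thus both identities hold for all $m\geq 0$ directly from the definition of $\langle\cdot\rangle_{\star\star}$ and $R$-linearity.

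For the inductive step I carry over the argument of Lemma~\ref{lem:Relations_Bracket_Two_Fibers_Case_1} essentially verbatim: the $\Omega_{5}$-moves of Figure~\ref{fig:Omega_5LemmaForTwoFibers} together with \eqref{eqn:Compositionh4h2h1} yield the same identities in $R\Sigma^{\prime}_{\nu_{1}}$ (these are independent of the sign of $\nu_{0}$), and after applying $\langle\cdot\rangle_{\star\star}$, \eqref{eqn:rel_F_to_P}, \eqref{eqn:lem_Relations_Bracket_Two_Fibers_Case_1b}, and \eqref{eqn:kbsm_xm_Bracket_Sigma} (respectively \eqref{eqn:kbsm_xvxm_Bracket_Sigma}), one obtains
\begin{equation*}
A^{-1}\bigl(\llangle x_{\nu_{1}}F_{\nu_{0}-m}\rrangle_{\star\star} - \llangle F_{m}x_{-\nu_{2}}\rrangle_{\star\star}\bigr) = A\bigl(\llangle x_{\nu_{1}}F_{\nu_{0}-m+2}\rrangle_{\star\star} - \llangle F_{m-2}x_{-\nu_{2}}\rrangle_{\star\star}\bigr)
\end{equation*}
and the analogous recursion relating $\llangle x_{\nu_{1}}F_{m}x_{-\nu_{2}}\rrangle_{\star\star}$ to $\llangle R_{m-\nu_{0}}\rrangle_{\star\star}$. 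Writing $g(m)$ for either discrepancy, this says $g(m-2) = A^{-2}g(m)$; since $g$ vanishes for $m\geq 0$ by the base case and $A$ is invertible, downward induction on $m$ forces $g\equiv 0$, proving \eqref{eqn:Bracket_Two_Fibers_Fx_xF_Case_2} and \eqref{eqn:Bracket_Two_Fibers_xFx_R_Case_2} for all $m\in\mathbb{Z}$.

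The only real difficulty is bookkeeping: one must carry the asymmetric $A$-weights of the $\nu_{0}\leq -2$ bracket (the powers $A^{-n-3}$, $A^{-n-6}$ in (c3) and $A^{-n}$, $A^{-n-3}$ in (c2)) through the cancellations, and correctly identify the $\Sigma^{\prime}_{\nu_{1}}$-reductions $\llangle F_{m}x_{-\nu_{2}-1}\rrangle_{\Sigma^{\prime}_{\nu_{1}}}$ and $\llangle x_{\nu_{1}}F_{m}x_{-\nu_{2}-1}\rrangle_{\Sigma^{\prime}_{\nu_{1}}}$ with the right-hand sides using \eqref{eqn:kbsm_xm_Bracket_Sigma}, \eqref{eqn:kbsm_xvxm_Bracket_Sigma} and the identity $\nu_{0} = \nu_{1}+\nu_{2}$; no new idea beyond the proof of Lemma~\ref{lem:Relations_Bracket_Two_Fibers_Case_1} is required.
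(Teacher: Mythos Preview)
Your proposal is correct and follows essentially the same approach as the paper's own proof: the base case $m\geq 0$ is read off directly from parts (c2)--(c3) of the definition of $\langle\cdot\rangle_{\star\star}$ (the paper states this in one sentence, you spell out the cancellation), and the inductive step uses the $\Omega_{5}$-move identity of Figure~\ref{fig:Omega_5LemmaForTwoFibers} together with \eqref{eqn:rel_F_to_P}, \eqref{eqn:lem_Relations_Bracket_Two_Fibers_Case_1b}, and \eqref{eqn:kbsm_xm_Bracket_Sigma}/\eqref{eqn:kbsm_xvxm_Bracket_Sigma} to obtain the two-step recursion, exactly as in the paper. Your preliminary reformulation in terms of $x_{-\nu_{2}}$ rather than $x_{-\nu_{2}-1}$ via \eqref{eqn:lem_Relations_Bracket_Two_Fibers_Case_1b} is a cosmetic repackaging and does not change the argument.
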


\begin{proof}
By the definition of $\llangle \cdot \rrangle_{\star\star}$, \eqref{eqn:Bracket_Two_Fibers_Fx_xF_Case_2} and \eqref{eqn:Bracket_Two_Fibers_xFx_R_Case_2} hold for $m \geq 0$. Since arrow diagrams $D$ and $D'$ in Figure~\ref{fig:Omega_5LemmaForTwoFibers} are related by $\Omega_{5}$-move, by \eqref{eqn:Compositionh4h2h1}, $\phi_{\beta_{1}}(D) = \phi_{\beta_{1}}(D')$ or
\begin{equation*}
A \llangle P_{-m}x_{-\nu_{2}} \rrangle_{\Sigma^{\prime}_{\nu_{1}}} + A^{-1} \llangle x_{m-\nu_{2}} \rrangle_{\Sigma^{\prime}_{\nu_{1}}} = A \llangle x_{m-\nu_{2}-2} \rrangle_{\Sigma^{\prime}_{\nu_{1}}} + A^{-1} \llangle P_{-m+1}x_{-\nu_{2}-1} \rrangle_{\Sigma^{\prime}_{\nu_{1}}}.
\end{equation*}
By \eqref{eqn:rel_F_to_P} and \eqref{eqn:lem_Relations_Bracket_Two_Fibers_Case_1b}, above equation becomes
\begin{eqnarray*}
& &-A^{-2} \llangle (A^{-1}F_{m-1} - A^{-2}F_{m})x_{-\nu_{2}-1} \rrangle_{\star\star} + A^{-1} \llangle x_{m-\nu_{2}} \rrangle_{\star\star} \\
&=& A \llangle x_{m-\nu_{2}-2} \rrangle_{\star\star} + A^{-1} \llangle (A^{-1}F_{m-2} - A^{-2}F_{m-1})x_{-\nu_{2}-1} \rrangle_{\star\star},
\end{eqnarray*}
which by \eqref{eqn:kbsm_xm} we can write as
\begin{equation*}
A^{-1} (\llangle x_{\nu_{1}}F_{\nu_{0}-m} \rrangle_{\star\star} + A^{-3} \llangle F_{m}x_{-\nu_{2}-1} \rrangle_{\star\star}) = A (\llangle x_{\nu_{1}}F_{\nu_{0}-m+2} \rrangle_{\star\star} + A^{-3} \llangle F_{m-2}x_{-\nu_{2}-1} \rrangle_{\star\star}).
\end{equation*}
Therefore, by induction on $m$, \eqref{eqn:Bracket_Two_Fibers_Fx_xF_Case_2} holds for all $m \in \mathbb{Z}$.

Since arrow diagrams $D$ and $D'$ in Figure~\ref{fig:Omega_5LemmaForTwoFibers} are related by $\Omega_{5}$-move, by \eqref{eqn:Compositionh4h2h1}, $\phi_{\beta_{1}}(D) = \phi_{\beta_{1}}(D')$ or
\begin{equation*}
A \llangle x_{\nu_{1}}P_{-m}x_{-\nu_{2}} \rrangle_{\Sigma^{\prime}_{\nu_{1}}} + A^{-1} \llangle x_{\nu_{1}}x_{m-\nu_{2}} \rrangle_{\Sigma^{\prime}_{\nu_{1}}} = A \llangle x_{\nu_{1}}x_{m-\nu_{2}-2} \rrangle_{\Sigma^{\prime}_{\nu_{1}}} + A^{-1} \llangle x_{\nu_{1}}P_{-m+1}x_{-\nu_{2}-1} \rrangle_{\Sigma^{\prime}_{\nu_{1}}}.
\end{equation*}
By \eqref{eqn:rel_F_to_P} and \eqref{eqn:lem_Relations_Bracket_Two_Fibers_Case_1b}, the above equation becomes
\begin{eqnarray*}
& & -A^{-2} \llangle x_{\nu_{1}}(A^{-1}F_{m-1} - A^{-2}F_{m})x_{-\nu_{2}-1} \rrangle_{\star\star} + A^{-1} \llangle x_{\nu_{1}}x_{m-\nu_{2}} \rrangle_{\star\star} \\
&=& A \llangle x_{\nu_{1}}x_{m-\nu_{2}-2} \rrangle_{\star\star} + A^{-1} \llangle x_{\nu_{1}}(A^{-1}F_{m-2} - A^{-2}F_{m-1})x_{-\nu_{2}-1} \rrangle_{\star\star},
\end{eqnarray*}
which by \eqref{eqn:kbsm_xvxm} can be written as 
\begin{equation*}
A^{-1} (\llangle R_{m-\nu_{0}} \rrangle_{\star\star} + A^{-3} \llangle x_{\nu_{1}}F_{m}x_{-\nu_{2}-1} \rrangle_{\star\star}) = A (\llangle R_{m-\nu_{2}-2} \rrangle_{\star\star} + A^{-3} \llangle x_{\nu_{1}}F_{m-2}x_{-\nu_{2}-1} \rrangle_{\star\star}).
\end{equation*}
Therefore, using induction on $m$, \eqref{eqn:Bracket_Two_Fibers_xFx_R_Case_2} holds for all $m \in \mathbb{Z}$.
\end{proof}

We summarize results of Lemma~\ref{lem:Relations_Bracket_Two_Fibers_Case_1a}--Lemma~\ref{lem:Relations_Bracket_Two_Fibers_Case_2} as the following corollary.

\begin{corollary}
\label{cor:SummaryOfLemmasTwoFiberCase}
For $\nu_{0} \neq -1$, $m \in \mathbb{Z}$, $\varepsilon \in \{0,1\}$, and $n \geq 0$,
\begin{equation}
\label{eqn:Bracket_Two_Fibers_Fx_xF}
\llangle F_{m}x_{-\nu_{2}} \rrangle_{\star\star} = \llangle x_{\nu_{1}}F_{\nu_{0}-m} \rrangle_{\star\star},
\end{equation}
\begin{equation}      
\label{eqn:Bracket_Two_Fibers_xFx_R}
\llangle x_{\nu_{1}} F_{m}x_{-\nu_{2}} \rrangle_{\star\star}  = \llangle R_{m-\nu_{0}} \rrangle_{\star\star},
\end{equation}
and
\begin{equation}
\label{eqn:lem_Relations_Bracket_Two_Fibers}
\llangle (x_{\nu_{1}})^{\varepsilon}\lambda^{n}x_{-\nu_{2}-1} \rrangle_{\star\star} = -A^{3} 
\llangle (x_{\nu_{1}})^{\varepsilon}\lambda^{n}x_{-\nu_{2}} \rrangle_{\star\star}.
\end{equation}
\end{corollary}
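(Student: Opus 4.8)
The plan is to prove Corollary~\ref{cor:SummaryOfLemmasTwoFiberCase} by a straightforward case split on the sign of $\nu_{0}$, using that the hypothesis $\nu_{0}\neq -1$ is equivalent to "$\nu_{0}\geq 0$ or $\nu_{0}\leq -2$", and then invoking the three lemmas that precede it. First I would handle $\nu_{0}\geq 0$: here \eqref{eqn:lem_Relations_Bracket_Two_Fibers} is literally \eqref{eqn:lem_Relations_Bracket_Two_Fibers_Case_1a} of Lemma~\ref{lem:Relations_Bracket_Two_Fibers_Case_1a}, and \eqref{eqn:Bracket_Two_Fibers_Fx_xF}, \eqref{eqn:Bracket_Two_Fibers_xFx_R} are exactly \eqref{eqn:Bracket_Two_Fibers_Fx_xF_Case_1} and \eqref{eqn:Bracket_Two_Fibers_xFx_R_Case_1} of Lemma~\ref{lem:Relations_Bracket_Two_Fibers_Case_1}, so there is nothing to do.

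For $\nu_{0}\leq -2$, the statement \eqref{eqn:lem_Relations_Bracket_Two_Fibers} is again immediate, being \eqref{eqn:lem_Relations_Bracket_Two_Fibers_Case_1b} of Lemma~\ref{lem:Relations_Bracket_Two_Fibers_Case_2a}. The only work is to reconcile the normalizations: Lemma~\ref{lem:Relations_Bracket_Two_Fibers_Case_2} is phrased in terms of $x_{-\nu_{2}-1}$ with an extra factor $-A^{-3}$, whereas the corollary is phrased in terms of $x_{-\nu_{2}}$ with no such factor. Since $F_{m}\in R[\lambda]$ and $\llangle\cdot\rrangle_{\star\star}$ is $R$-linear, applying \eqref{eqn:lem_Relations_Bracket_Two_Fibers_Case_1b} with $\varepsilon = 0$ termwise to the $\lambda$-expansion of $F_{m}$ gives $\llangle F_{m}x_{-\nu_{2}-1}\rrangle_{\star\star} = -A^{3}\llangle F_{m}x_{-\nu_{2}}\rrangle_{\star\star}$; substituting this into \eqref{eqn:Bracket_Two_Fibers_Fx_xF_Case_2} cancels the $-A^{-3}$ and yields \eqref{eqn:Bracket_Two_Fibers_Fx_xF}. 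Likewise, applying \eqref{eqn:lem_Relations_Bracket_Two_Fibers_Case_1b} with $\varepsilon = 1$ termwise to $x_{\nu_{1}}F_{m}$ gives $\llangle x_{\nu_{1}}F_{m}x_{-\nu_{2}-1}\rrangle_{\star\star} = -A^{3}\llangle x_{\nu_{1}}F_{m}x_{-\nu_{2}}\rrangle_{\star\star}$, and feeding this into \eqref{eqn:Bracket_Two_Fibers_xFx_R_Case_2} gives \eqref{eqn:Bracket_Two_Fibers_xFx_R}.

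This argument is pure bookkeeping, so I do not anticipate a genuine obstacle. The one point to be careful about is precisely this normalization mismatch: the two regimes carry the relation between $x_{-\nu_{2}}$ and $x_{-\nu_{2}-1}$ with the factor $-A^{\pm 3}$ in opposite positions (compare \eqref{eqn:Bracket_Two_Fibers_Fx_xF_Case_1} with \eqref{eqn:Bracket_Two_Fibers_Fx_xF_Case_2}), so one must invoke \eqref{eqn:lem_Relations_Bracket_Two_Fibers_Case_1b} in the correct direction when passing to the uniform statement, and rely on the $R$-linearity of $\llangle\cdot\rrangle_{\star\star}$ to push that relation through the polynomials $F_{m}$ and $x_{\nu_{1}}F_{m}$.
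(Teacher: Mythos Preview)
Your proposal is correct and matches the paper's approach: the corollary is presented there without proof, merely as a summary of Lemmas~\ref{lem:Relations_Bracket_Two_Fibers_Case_1a}--\ref{lem:Relations_Bracket_Two_Fibers_Case_2}, and your case split on the sign of $\nu_{0}$ together with the termwise application of \eqref{eqn:lem_Relations_Bracket_Two_Fibers_Case_1b} to pass from $x_{-\nu_{2}-1}$ to $x_{-\nu_{2}}$ in the $\nu_{0}\leq -2$ regime is exactly the bookkeeping the paper leaves implicit.
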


\medskip

For arrow diagrams $D$, $D'$ in Figure~\ref{fig:SBeta2MovesOnTCurveTwoFibersLemma}, we see that $D = (x_{\nu_{1}})^{\varepsilon} \lambda^{n_{1}} t_{m,n_{2}}$ and $D' = (x_{\nu_{1}})^{\varepsilon} \lambda^{n_{1}} W$. Thus, $D'_{+} = (x_{\nu_{1}})^{\varepsilon} \lambda^{n_{1}} t_{m-1,n_{2}}$ and $D'_{-} = (x_{\nu_{1}})^{\varepsilon} \lambda^{n_{1}} x_{-m-\nu_{2}} \lambda^{n_{2}} x_{-\nu_{2}-1}$ are obtained by smoothing crossing of $W$ according to positive and negative markers.

\begin{figure}[ht]
\centering
\includegraphics[scale=0.95]{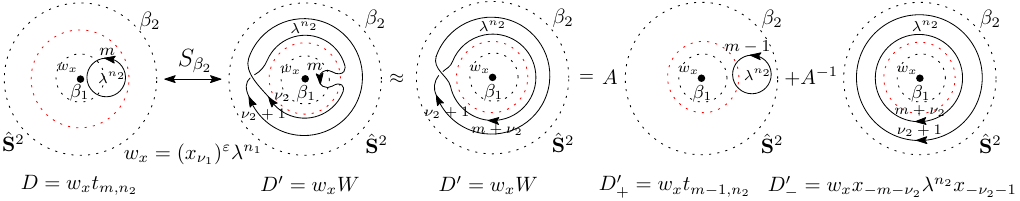}
\caption{Arrow diagrams $D$ and $D'$ related by $S_{\beta_{2}}$-move}
\label{fig:SBeta2MovesOnTCurveTwoFibersLemma}
\end{figure}

\begin{lemma}
\label{lem:Relations_Bracket_Two_Fibers_I}
Assume that $\nu_{0}\neq -1$, then for any $\varepsilon \in \{0,1\}$, $m \in \mathbb{Z}$, and $n_{1},n_{2} \geq 0$,
\begin{equation*}
\llangle (x_{\nu_{1}})^{\varepsilon} \lambda^{n_{1}} P_{m,n_{2}} - A (x_{\nu_{1}})^{\varepsilon} \lambda^{n_{1}} P_{m-1,n_{2}} - A^{-1} (x_{\nu_{1}})^{\varepsilon} \lambda^{n_{1}} x_{-m-\nu_{2}} \lambda^{n_{2}} x_{-\nu_{2}-1} \rrangle_{\star\star} = 0.
\end{equation*}
\end{lemma}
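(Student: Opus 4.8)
The plan is to reduce the claimed identity, which reflects the $S_{\beta_2}$-move relating $D=(x_{\nu_1})^{\varepsilon}\lambda^{n_1}t_{m,n_2}$ to $D'=(x_{\nu_1})^{\varepsilon}\lambda^{n_1}W$ in Figure~\ref{fig:SBeta2MovesOnTCurveTwoFibersLemma}, to a base case via Lemma~\ref{lem:annulus_for_any_kn_bracket_2_beta}. Observe first that $\langle t_{m,n_2}\rangle_r=P_{m,n_2}$, while smoothing the single crossing of $W$ with positive and negative markers yields $D'_{+}=(x_{\nu_1})^{\varepsilon}\lambda^{n_1}t_{m-1,n_2}$ and $D'_{-}=(x_{\nu_1})^{\varepsilon}\lambda^{n_1}x_{-m-\nu_2}\lambda^{n_2}x_{-\nu_2-1}$; applying the Kauffman bracket skein relation to $D'$ and using $\phi_{\beta_1}(D)=\phi_{\beta_1}(D')$ (valid by \eqref{eqn:Compositionh4h2h1} since the move is an $S_{\beta_1}$- or $\Omega$-type identity inside $\hat{\bf S}^2$), together with $\langle D\rangle_r$-computations, gives exactly the bracketed expression in the statement with $\llangle\cdot\rrangle_{\Sigma'_{\nu_1}}$ in place of $\llangle\cdot\rrangle_{\star\star}$. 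So the content is to promote that $\Sigma'_{\nu_1}$-identity to the $\star\star$-bracket.

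The key device is Lemma~\ref{lem:annulus_for_any_kn_bracket_2_beta}, applied with $w_1=(x_{\nu_1})^{\varepsilon}\lambda^{n_1}$, $w_2$ trivial (or $w_2=x_{-\nu_2-1}$ in the $x$-terms), and the appropriate finite data $\Delta^{\pm}_{t},\Delta^{\pm}_{x}$ encoding the three summands. Since $\langle t_{m,n_2}\rangle_r=P_{m,n_2}$ depends polynomially (in the relevant Chebyshev-recursive sense) on $m$ and $n_2$, and since $x_{-m-\nu_2}$ depends on $m$ through $Q$-recursion via \eqref{eqn:rel_xm_to_xkQ}, the whole expression $\Theta_{t,x}(k,n)$ built from it satisfies the hypotheses of that lemma; hence it suffices to check $\llangle\cdot\rrangle_{\star\star}$ vanishes for the two base values, say $m=0$ and $m=-1$, with $n_1=n_2=0$. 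For those base cases I would expand $P_{m,0}=P_m$ using \eqref{eqn:rel_Pn} and \eqref{eqn:rel_F_to_P}, expand the $x$-terms $\llangle(x_{\nu_1})^{\varepsilon}x_{-m-\nu_2}x_{-\nu_2-1}\rrangle_{\star\star}$ via \eqref{eqn:kbsm_xm_Bracket_Sigma}/\eqref{eqn:kbsm_xvxm_Bracket_Sigma}, and then collapse everything using the three identities \eqref{eqn:Bracket_Two_Fibers_Fx_xF}, \eqref{eqn:Bracket_Two_Fibers_xFx_R}, and \eqref{eqn:lem_Relations_Bracket_Two_Fibers} from Corollary~\ref{cor:SummaryOfLemmasTwoFiberCase}; the relation \eqref{eqn:lem_Relations_Bracket_Two_Fibers} is precisely what lets the $x_{-\nu_2-1}$ appearing in $D'_{-}$ be traded for $x_{-\nu_2}$ so the pieces match.

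The main obstacle I anticipate is twofold. First, packaging the right-hand side $D'_{-}=(x_{\nu_1})^{\varepsilon}\lambda^{n_1}x_{-m-\nu_2}\lambda^{n_2}x_{-\nu_2-1}$ so that Lemma~\ref{lem:annulus_for_any_kn_bracket_2_beta} genuinely applies: one must verify that after reducing $x_{-m-\nu_2}$ to $x_{\nu_1}$-form via \eqref{eqn:kbsm_xm_Bracket_Sigma}, the result is of the form $\Theta_x^{\pm}$ (a finite $R$-combination of brackets $\llangle w_1\lambda^{k}x_{n+v}w_2\rrangle$ or $\llangle w_1 x_{-n+v}\lambda^{k}w_2\rrangle$) as the lemma requires, and that the two extra $\lambda^{n_2}$ and the trailing $x_{-\nu_2-1}$ can be absorbed into $w_2$ — this is a bookkeeping point but needs care since $n_2$ is a free parameter. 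Second, the base-case algebra for $m=0,-1$ involves a genuine cancellation among $F$-polynomials of small index; one must track the $\varepsilon=0$ and $\varepsilon=1$ cases separately (the $\varepsilon=1$ case using \eqref{eqn:kbsm_xvxm_Bracket_Sigma} and $R_m$) and confirm that the factor $-A^3$ produced by \eqref{eqn:lem_Relations_Bracket_Two_Fibers} exactly matches the $A^{-1}$-weighted smoothing term. Everything else is routine substitution.
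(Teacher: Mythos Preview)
Your second paragraph identifies the right strategy---reduce via Lemma~\ref{lem:annulus_for_any_kn_bracket_2_beta} and then verify the residual identity using the relations of Corollary~\ref{cor:SummaryOfLemmasTwoFiberCase}---and this is exactly what the paper does. However, your opening paragraph contains a genuine misconception that would derail the argument if taken literally: you assert $\phi_{\beta_1}(D)=\phi_{\beta_1}(D')$ on the grounds of \eqref{eqn:Compositionh4h2h1}, but $D$ and $D'$ here differ by an $S_{\beta_2}$-move, and \eqref{eqn:Compositionh4h2h1} only guarantees invariance under $\Omega_1$--$\Omega_5$ and $S_{\beta_1}$. The identity in the lemma is \emph{false} at the level of $\llangle\cdot\rrangle_{\Sigma'_{\nu_1}}$; the entire purpose of $\langle\cdot\rangle_{\star\star}$ is to impose the additional $S_{\beta_2}$-relations (encoded in Lemma~\ref{lem:rel_F_and_FR_TwoFibers}), and the present lemma is precisely the verification that $\langle\cdot\rangle_{\star\star}$ does so. There is nothing to ``promote''---all of the work genuinely lives in the $\star\star$-bracket.

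A smaller point: your proposed reduction to only $m\in\{0,-1\}$ with $n_1=n_2=0$ is not what Lemma~\ref{lem:annulus_for_any_kn_bracket_2_beta} delivers. Two applications of condition~(1) (once in $(n_2,m)$, once in $(n_1,m)$) reduce to $n_1=n_2=0$ but leave $m$ arbitrary, and that is what the paper verifies. This is no harder than checking two particular values: for $\varepsilon=0$ one uses \eqref{eqn:kbsm_xm_Bracket_Sigma}, \eqref{eqn:lem_Relations_Bracket_Two_Fibers}, and \eqref{eqn:Bracket_Two_Fibers_xFx_R} to get $A^{-1}\llangle x_{-m-\nu_2}x_{-\nu_2-1}\rrangle_{\star\star}=-A^{2}\llangle R_m\rrangle_{\star\star}$, and then $A\llangle P_{m-1}\rrangle_{\star\star}-A^{2}\llangle R_m\rrangle_{\star\star}=\llangle P_m\rrangle_{\star\star}$ directly from the definition of $R_m$. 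The $\varepsilon=1$ case is parallel via \eqref{eqn:kbsm_xvxm_Bracket_Sigma}, \eqref{eqn:rel_F_to_P}, and \eqref{eqn:Bracket_Two_Fibers_Fx_xF}.
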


\begin{proof}
By Lemma~\ref{lem:annulus_for_any_kn_bracket_2_beta}, it suffices to show the case $n_{1} = n_{2} = 0$, i.e., we show that for all $m\in \mathbb{Z}$,
\begin{equation*}
\llangle (x_{\nu_{1}})^{\varepsilon} P_{m} \rrangle_{\star\star} = A \llangle (x_{\nu_{1}})^{\varepsilon} P_{m-1} \rrangle_{\star\star} + A^{-1} \llangle (x_{\nu_{1}})^{\varepsilon} x_{-m-\nu_{2}} x_{-\nu_{2}-1} \rrangle_{\star\star}.
\end{equation*}

By \eqref{eqn:kbsm_xm_Bracket_Sigma}, \eqref{eqn:lem_Relations_Bracket_Two_Fibers}, and \eqref{eqn:Bracket_Two_Fibers_xFx_R},
\begin{eqnarray*}
A \llangle P_{m-1} \rrangle_{\star\star} + A^{-1} \llangle x_{-m-\nu_{2}}x_{-\nu_{2}-1} \rrangle_{\star\star} 
&=& A \llangle P_{m-1} \rrangle_{\star\star} - A^{2} \llangle x_{\nu_{1}}F_{\nu_{0}+m}x_{-\nu_{2}} \rrangle_{\star\star} \\
&=& A \llangle P_{m-1} \rrangle_{\star\star} - A^{2} \llangle R_{m} \rrangle_{\star\star} = \llangle P_{m} \rrangle_{\star\star},
\end{eqnarray*}
which proves the case $\varepsilon = 0$.

By \eqref{eqn:kbsm_xvxm_Bracket_Sigma}, \eqref{eqn:lem_Relations_Bracket_Two_Fibers}, \eqref{eqn:rel_F_to_P}, and \eqref{eqn:Bracket_Two_Fibers_Fx_xF},
\begin{eqnarray*}
& & A \llangle x_{\nu_{1}}P_{m-1} \rrangle_{\star\star} + A^{-1} \llangle x_{\nu_{1}}x_{-m-\nu_{2}}x_{-\nu_{2}-1} \rrangle_{\star\star} \\
&=& A \llangle x_{\nu_{1}}P_{m-1} \rrangle_{\star\star} + A^{-1} \llangle R_{-m-\nu_{0}}x_{-\nu_{2}-1} \rrangle_{\star\star} \\
&=& A \llangle x_{\nu_{1}}P_{m-1} \rrangle_{\star\star} - A^{2} \llangle (A^{-1}P_{-m-\nu_{0}-1} - A^{-2}P_{-m-\nu_{0}})x_{-\nu_{2}} \rrangle_{\star\star} \\
&=& A \llangle x_{\nu_{1}}P_{m-1} \rrangle_{\star\star} - A^{2} \llangle (-A^{-3}F_{m+\nu_{0}+1} + A^{-2}F_{m+\nu_{0}} + A^{-4}F_{m+\nu_{0}} - A^{-3}F_{m+\nu_{0}-1})x_{-\nu_{2}} \rrangle_{\star\star} \\
&=& A \llangle x_{\nu_{1}}(-A^{-2}F_{-m+1} + A^{-1}F_{-m}) \rrangle_{\star\star} - A^{2} \llangle x_{\nu_{1}}(-A^{-3}F_{-m-1} + A^{-2}F_{-m} + A^{-4}F_{-m} - A^{-3}F_{-m+1}) \rrangle_{\star\star} \\
&=& \llangle x_{\nu_{1}}(A^{-1}F_{-m-1} - A^{-2}F_{-m}) \rrangle_{\star\star} = \llangle x_{\nu_{1}}P_{m} \rrangle_{\star\star}
\end{eqnarray*}
which proves the case $\varepsilon = 1$.
\end{proof}
For arrow diagrams $D$, $D'$ in Figure~\ref{fig:SBeta2MovesOnXCurveTwoFibersLemma}, we see that $D = (x_{\nu_{1}})^{\varepsilon} \lambda^{n_{1}} x_{m} \lambda^{n_{2}}$ and $D' = (x_{\nu_{1}})^{\varepsilon} \lambda^{n_{1}} W$. Thus, $D'_{+} = (x_{\nu_{1}})^{\varepsilon} \lambda^{n_{1}}x_{m-1}\lambda^{n_{2}}$ and $D'_{-} = (x_{\nu_{1}})^{\varepsilon} \lambda^{n_{1}} t_{-\nu_{2}-m,n_{2}}x_{-\nu_{2}-1}$ are obtained by smoothing crossing of $W$ according to positive and negative markers.
\begin{figure}[ht]
\centering
\includegraphics[scale=0.94]{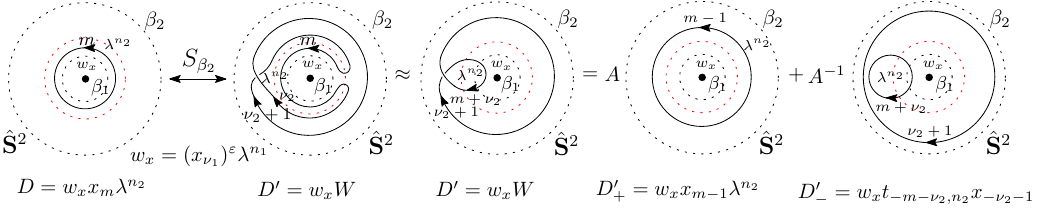}
\caption{Arrow diagrams $D$ and $D'$ related by $S_{\beta_{2}}$-move}
\label{fig:SBeta2MovesOnXCurveTwoFibersLemma}
\end{figure}

\begin{lemma}
\label{lem:Relations_Bracket_Two_Fibers_II}
Assume that $\nu_{0}\neq -1$, then for any $\varepsilon \in \{0,1\}$, $m \in \mathbb{Z}$, and $n_{1},n_{2} \geq 0$,
\begin{equation*}
\llangle (x_{\nu_{1}})^{\varepsilon} \lambda^{n_{1}} x_{m} \lambda^{n_{2}} - A (x_{\nu_{1}})^{\varepsilon} \lambda^{n_{1}} x_{m-1} \lambda^{n_{2}} - A^{-1} (x_{\nu_{1}})^{\varepsilon} \lambda^{n_{1}} P_{-m-\nu_{2},n_{2}} x_{-\nu_{2}-1} \rrangle_{\star\star} = 0.
\end{equation*}
\end{lemma}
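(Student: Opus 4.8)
The approach mirrors the proof of Lemma~\ref{lem:Relations_Bracket_Two_Fibers_I}. By Lemma~\ref{lem:annulus_for_any_kn_bracket_2_beta} (applied in the form where $\Theta_{t,x}(0,n)=0$ for all $n$ suffices), it is enough to treat the case $n_{1}=n_{2}=0$, i.e.\ to show for all $m\in\mathbb{Z}$ and $\varepsilon\in\{0,1\}$ that
\begin{equation*}
\llangle (x_{\nu_{1}})^{\varepsilon} x_{m} \rrangle_{\star\star} = A \llangle (x_{\nu_{1}})^{\varepsilon} x_{m-1} \rrangle_{\star\star} + A^{-1} \llangle (x_{\nu_{1}})^{\varepsilon} P_{-m-\nu_{2}} x_{-\nu_{2}-1} \rrangle_{\star\star}.
\end{equation*}
Here the arrow diagrams $D$ and $D'$ of Figure~\ref{fig:SBeta2MovesOnXCurveTwoFibersLemma} are related by an $S_{\beta_{2}}$-move, and after smoothing the single crossing of $W$ with positive/negative markers one gets exactly $D'_{+}=(x_{\nu_{1}})^{\varepsilon}x_{m-1}$ and $D'_{-}=(x_{\nu_{1}})^{\varepsilon}t_{-\nu_{2}-m}\,x_{-\nu_{2}-1}$ (in the $n_{1}=n_{2}=0$ case), so that $\llangle D\rrangle_{\Sigma'_{\nu_{1}}} = A\llangle D'_{+}\rrangle_{\Sigma'_{\nu_{1}}} + A^{-1}\llangle D'_{-}\rrangle_{\Sigma'_{\nu_{1}}}$ holds after the Kauffman bracket skein relation, and $t_{-\nu_{2}-m}w_{x}=P_{-\nu_{2}-m}w_{x}$.

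\textbf{Case $\varepsilon=0$.} Starting from the right-hand side, I would use $\llangle P_{-m-\nu_{2}}x_{-\nu_{2}-1}\rrangle_{\star\star}$, rewrite $P_{-m-\nu_{2}}$ via \eqref{eqn:rel_F_to_P} as an $R[\lambda]$-combination of $F_{m+\nu_{2}}$ and $F_{m+\nu_{2}-1}$, apply \eqref{eqn:lem_Relations_Bracket_Two_Fibers} of Corollary~\ref{cor:SummaryOfLemmasTwoFiberCase} to replace $x_{-\nu_{2}-1}$ by $-A^{3}x_{-\nu_{2}}$, then apply \eqref{eqn:Bracket_Two_Fibers_xFx_R} (with $\varepsilon=0$: \eqref{eqn:Bracket_Two_Fibers_Fx_xF}) to convert $F_{\bullet}x_{-\nu_{2}}$ into $x_{\nu_{1}}F_{\nu_{0}-\bullet}$, and finally \eqref{eqn:kbsm_xm_Bracket_Sigma} together with the relation $P_{m}=-A^{-2}F_{-m}+A^{-1}F_{-m-1}$ from Remark~\ref{rem:DegAndCoeffOf_F} to collapse everything back to $\llangle P_{m}\rrangle_{\star\star}$ and $\llangle P_{m-1}\rrangle_{\star\star}$; the bookkeeping of powers of $A$ should match exactly as in Lemma~\ref{lem:Relations_Bracket_Two_Fibers_I}.

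\textbf{Case $\varepsilon=1$.} The same chain of substitutions is applied with a leading $x_{\nu_{1}}$: use \eqref{eqn:kbsm_xvxm_Bracket_Sigma} to turn $x_{\nu_{1}}t_{-\nu_{2}-m}x_{-\nu_{2}-1}$ into $x_{\nu_{1}}P_{-m-\nu_{2}}x_{-\nu_{2}-1}$, then \eqref{eqn:lem_Relations_Bracket_Two_Fibers} to swap $x_{-\nu_{2}-1}\mapsto -A^{3}x_{-\nu_{2}}$, \eqref{eqn:rel_F_to_P} to expand $P_{-m-\nu_{2}}$ in terms of $F$'s, \eqref{eqn:Bracket_Two_Fibers_xFx_R} to reduce $x_{\nu_{1}}F_{\bullet}x_{-\nu_{2}}$ to $R_{\bullet-\nu_{0}}$, and finally the definition of $R_{m}=A^{-1}P_{m-1}-A^{-2}P_{m}$ together with \eqref{eqn:kbsm_xvxm_Bracket_Sigma} and \eqref{eqn:rel_F_to_P} to recover $\llangle x_{\nu_{1}}P_{m}\rrangle_{\star\star}$ and $\llangle x_{\nu_{1}}P_{m-1}\rrangle_{\star\star}$. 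The main obstacle is purely computational: keeping the exponents of $A$ and the indices of $F$, $P$, $R$ aligned through this fairly long sequence of substitutions, since each of \eqref{eqn:rel_F_to_P}, \eqref{eqn:lem_Relations_Bracket_Two_Fibers}, \eqref{eqn:Bracket_Two_Fibers_Fx_xF}, \eqref{eqn:Bracket_Two_Fibers_xFx_R} shifts indices by small amounts and it is easy to be off by one; there is no conceptual difficulty, and the reduction to $n_{1}=n_{2}=0$ via Lemma~\ref{lem:annulus_for_any_kn_bracket_2_beta} is exactly as before.
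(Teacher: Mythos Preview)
Your approach is essentially identical to the paper's: reduce to $n_{1}=n_{2}=0$ via Lemma~\ref{lem:annulus_for_any_kn_bracket_2_beta}, then combine \eqref{eqn:rel_F_to_P}, \eqref{eqn:lem_Relations_Bracket_Two_Fibers}, \eqref{eqn:Bracket_Two_Fibers_Fx_xF}/\eqref{eqn:Bracket_Two_Fibers_xFx_R}, and \eqref{eqn:kbsm_xm_Bracket_Sigma}/\eqref{eqn:kbsm_xvxm_Bracket_Sigma}. One small slip to watch: the target you must reach is $\llangle x_{m}\rrangle_{\star\star}=\llangle x_{\nu_{1}}F_{\nu_{1}-m}\rrangle_{\star\star}$ for $\varepsilon=0$ and $\llangle x_{\nu_{1}}x_{m}\rrangle_{\star\star}=\llangle R_{m-\nu_{1}}\rrangle_{\star\star}$ for $\varepsilon=1$, not $\llangle P_{m}\rrangle_{\star\star}$ or $\llangle x_{\nu_{1}}P_{m}\rrangle_{\star\star}$ --- you have carried over the endpoints from Lemma~\ref{lem:Relations_Bracket_Two_Fibers_I}, where the roles of $x$ and $P$ are reversed.
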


\begin{proof}
By Lemma~\ref{lem:annulus_for_any_kn_bracket_2_beta}, it suffices to show the case $n_{1} = n_{2} = 0$, i.e., we show that for all $m\in \mathbb{Z}$,
\begin{equation*}
\llangle (x_{\nu_{1}})^{\varepsilon} x_{m} \rrangle_{\star\star} = A \llangle (x_{\nu_{1}})^{\varepsilon} x_{m-1} \rrangle_{\star\star} + A^{-1} \llangle (x_{\nu_{1}})^{\varepsilon} P_{-m-\nu_{2}} x_{-\nu_{2}-1} \rrangle_{\star\star}.
\end{equation*}

By \eqref{eqn:rel_F_to_P}, \eqref{eqn:lem_Relations_Bracket_Two_Fibers}, \eqref{eqn:kbsm_xm_Bracket_Sigma}, and \eqref{eqn:Bracket_Two_Fibers_Fx_xF},
\begin{eqnarray*}
& & A \llangle x_{m-1} \rrangle_{\star\star} + A^{-1} \llangle P_{-m-\nu_{2}}x_{-\nu_{2}-1} \rrangle_{\star\star} \\
&=& A \llangle x_{m-1} \rrangle_{\star\star} - A^{2} \llangle (A^{-1}F_{m+\nu_{2}-1} - A^{-2}F_{m+\nu_{2}})x_{-\nu_{2}} \rrangle_{\star\star} \\
&=& A \llangle x_{\nu_{1}}F_{\nu_{1}-m+1} \rrangle_{\star\star} - A^{2} \llangle x_{\nu_{1}}(A^{-1}F_{-m+\nu_{1}+1} - A^{-2}F_{-m+\nu_{1}}) \rrangle_{\star\star} \\
&=& \llangle x_{\nu_{1}}F_{\nu_{1}-m} \rrangle_{\star\star} = \llangle x_{m} \rrangle_{\star\star},
\end{eqnarray*}
which proves the case $\varepsilon = 0$.

By \eqref{eqn:rel_F_to_P}, \eqref{eqn:lem_Relations_Bracket_Two_Fibers}, \eqref{eqn:kbsm_xvxm_Bracket_Sigma}, and \eqref{eqn:Bracket_Two_Fibers_xFx_R},
\begin{eqnarray*}
& & A \llangle x_{\nu_{1}}x_{m-1} \rrangle_{\star\star} + A^{-1} \llangle x_{\nu_{1}}P_{-m-\nu_{2}}x_{-\nu_{2}-1} \rrangle_{\star\star} \\
&=& A \llangle x_{\nu_{1}}x_{m-1} \rrangle_{\star\star} - A^{2} \llangle x_{\nu_{1}}(A^{-1}F_{m+\nu_{2}-1} - A^{-2}F_{m+\nu_{2}})x_{-\nu_{2}} \rrangle_{\star\star} \\
&=& A \llangle R_{m-1-\nu_{1}} \rrangle_{\star\star} - A^{2} (A^{-1} \llangle R_{m-1-\nu_{1}} \rrangle_{\star\star} - A^{-2} \llangle R_{m-\nu_{1}} \rrangle_{\star\star}) \\
&=& \llangle R_{m-\nu_{1}} \rrangle_{\star\star} = \llangle x_{\nu_{1}}x_{m} \rrangle_{\star\star}
\end{eqnarray*}
which proves the case $\varepsilon = 1$.
\end{proof}

Let $D$ be an arrow diagram on $\hat{\bf S}^{2}$, define
\begin{equation*}
\phi_{\nu_{1},\nu_{2}}(D) = \llangle \llangle \llangle D \rrangle \rrangle_{\Gamma} \rrangle_{\star\star} = \langle \phi_{\beta_{1}}(D)\rangle_{\star\star}.  
\end{equation*}

\begin{lemma}
\label{lem:lemma_for_h6}
If $\nu_{0} \neq -1$, then
\begin{equation*}
\phi_{\nu_{1},\nu_{2}}(D-D') = 0
\end{equation*}
whenever arrow diagrams $D,D'$ in ${\bf S}^{2}$ are related by $\Omega_{1} - \Omega_{5}$, $S_{\beta_{1}}$, and $S_{\beta_{2}}$-moves, i.e., $\phi_{\nu_{1},\nu_{2}}$ is a well-defined homomorphism of free $R$-modules $R\mathcal{D}(\hat{\bf S}^{2})$ and $R\Sigma''_{\nu_{1},\nu_{2}}$.
\end{lemma}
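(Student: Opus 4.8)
The plan is to bootstrap off the move-invariance already recorded in \eqref{eqn:Compositionh4h2h1} together with the local identities in Lemma~\ref{lem:Relations_Bracket_Two_Fibers_I} and Lemma~\ref{lem:Relations_Bracket_Two_Fibers_II}. Since $\phi_{\nu_{1},\nu_{2}}(D)=\langle\phi_{\beta_{1}}(D)\rangle_{\star\star}$ and $\langle\cdot\rangle_{\star\star}$ is $R$-linear, for arrow diagrams $D,D'$ related by a sequence of $\Omega_{1}-\Omega_{5}$ and $S_{\beta_{1}}$-moves we have $\phi_{\beta_{1}}(D-D')=0$ by \eqref{eqn:Compositionh4h2h1}, hence $\phi_{\nu_{1},\nu_{2}}(D-D')=\langle\phi_{\beta_{1}}(D-D')\rangle_{\star\star}=0$. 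So it remains only to treat the case where $D$ and $D'$ differ by a single $S_{\beta_{2}}$-move, and the argument there is modeled on the treatment of the $\Omega_{\infty}$-move in the proof of Theorem~\ref{thm:lens_space}.

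For the $S_{\beta_{2}}$-move, $D$ and $D'$ agree outside a disk neighborhood of the marked point $\beta_{2}$, so there is a natural bijection between the Kauffman states $\mathcal{K}(D)$ and $\mathcal{K}(D')$ sending $s$ to the state $s'$ with the same markers at the common crossings. Resolving those crossings produces crossingless arrow diagrams $D_{s},D'_{s}$ which, up to ambient isotopy on $\hat{\bf S}^{2}$, take one of the two forms displayed in Figure~\ref{fig:SBeta2MovesOnTCurveTwoFibersLemma} and Figure~\ref{fig:SBeta2MovesOnXCurveTwoFibersLemma}: a $t$-type state gives $D_{s}=W_{s}\,t_{m_{s},n_{2,s}}$ and $D'_{s}=W_{s}\,W$, an $x$-type state gives $D_{s}=W_{s}\,x_{m_{s}}\lambda^{n_{2,s}}$ and $D'_{s}=W_{s}\,W$, where in each case $W$ carries the single crossing created by the move and $W_{s}$ is the common ambient part. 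Expanding $\llangle D-D'\rrangle$ over the states and then applying $\llangle\cdot\rrangle_{\Gamma}$ (which invokes $\langle\cdot\rangle_{r}$ on the inner part, so that $t_{m_{s},n_{2,s}}$ becomes $P_{m_{s},n_{2,s}}$) and $\llangle\cdot\rrangle_{\Sigma^{\prime}_{\nu_{1}}}$, exactly as in the proof of Theorem~\ref{thm:lens_space}, reduces each $W_{s}$ to an $R$-linear combination of words $(x_{\nu_{1}})^{\varepsilon}\lambda^{n_{1}}$ with $\varepsilon\in\{0,1\}$ and $n_{1}\geq 0$.

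Thus $\phi_{\nu_{1},\nu_{2}}(D-D')$ is an $R$-linear combination of expressions of the form
\begin{equation*}
\llangle (x_{\nu_{1}})^{\varepsilon}\lambda^{n_{1}}\bigl(P_{m,n_{2}}-AP_{m-1,n_{2}}-A^{-1}x_{-m-\nu_{2}}\lambda^{n_{2}}x_{-\nu_{2}-1}\bigr)\rrangle_{\star\star},
\end{equation*}
arising from the $t$-type states, and
\begin{equation*}
\llangle (x_{\nu_{1}})^{\varepsilon}\lambda^{n_{1}}\bigl(x_{m}\lambda^{n_{2}}-Ax_{m-1}\lambda^{n_{2}}-A^{-1}P_{-m-\nu_{2},n_{2}}x_{-\nu_{2}-1}\bigr)\rrangle_{\star\star},
\end{equation*}
arising from the $x$-type states. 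Since $\nu_{0}\neq -1$, every expression of the first kind vanishes by Lemma~\ref{lem:Relations_Bracket_Two_Fibers_I} and every one of the second kind vanishes by Lemma~\ref{lem:Relations_Bracket_Two_Fibers_II}, whence $\phi_{\nu_{1},\nu_{2}}(D-D')=0$. Together with the first paragraph, this shows that $\phi_{\nu_{1},\nu_{2}}$ is constant on equivalence classes of arrow diagrams modulo $\Omega_{1}-\Omega_{5}$, $S_{\beta_{1}}$, and $S_{\beta_{2}}$-moves; since by Lemma~\ref{lem:lens_space_two_fiber_reduce} its values lie in $R\Sigma''_{\nu_{1},\nu_{2}}$, it descends to a homomorphism of free $R$-modules $R\mathcal{D}(\hat{\bf S}^{2})\to R\Sigma''_{\nu_{1},\nu_{2}}$.

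The main obstacle is the Kauffman-state bookkeeping in the second paragraph: one must check that after the nested brackets are applied the ambient factor $W_{s}$ is genuinely absorbed into a combination of the words $(x_{\nu_{1}})^{\varepsilon}\lambda^{n_{1}}$, while the factor near $\beta_{2}$ keeps exactly the shape $t_{m,n_{2}}$ or $x_{m}\lambda^{n_{2}}$ whose two $S_{\beta_{2}}$-smoothings are precisely the ones appearing in Lemma~\ref{lem:Relations_Bracket_Two_Fibers_I} and Lemma~\ref{lem:Relations_Bracket_Two_Fibers_II}. This is the same mechanism as in Theorem~\ref{thm:lens_space}, the only genuinely new point being the need to distinguish the two local configurations of the $S_{\beta_{2}}$-move and match each to the appropriate lemma; the compatibility of $\phi_{\nu_{1},\nu_{2}}$ with the Kauffman bracket skein relations themselves, needed only later when passing to $S\mathcal{D}_{\nu_{1},\nu_{2}}$, is routine and not required here.
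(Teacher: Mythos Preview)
Your proposal is correct and follows essentially the same approach as the paper's proof: reduce to the $S_{\beta_{2}}$-move, expand over Kauffman states of the common crossings, separate the two local configurations ($t$-type and $x$-type) near $\beta_{2}$, absorb the ambient factor $W_{s}$ into $R$-linear combinations of $(x_{\nu_{1}})^{\varepsilon}\lambda^{n_{1}}$ via $\llangle\cdot\rrangle_{\Sigma'_{\nu_{1}}}$, and then invoke Lemma~\ref{lem:Relations_Bracket_Two_Fibers_I} and Lemma~\ref{lem:Relations_Bracket_Two_Fibers_II}. The only cosmetic difference is that the paper phrases the state correspondence as $s\mapsto(s_{+},s_{-})$ (fully resolving the extra crossing created by the move), whereas you keep that crossing in $W$ and resolve it afterward; the resulting expressions are identical.
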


\begin{proof}
As it was mentioned in Section~\ref{s:Summary}, for arrow diagrams $D$ and $D'$ which are related by $\Omega_{1} - \Omega_{5}$ and $S_{\beta_{1}}$-moves on $\hat{\bf S}^{2}$, 
\begin{equation*}
\phi_{\nu_{1},\nu_{2}}(D-D') = \langle \phi_{\beta_{1}}(D-D') \rangle_{\star\star} = 0.
\end{equation*}
Therefore, it suffices to show that $\phi_{\nu_{1},\nu_{2}}(D-D') = 0$ when $D,D'$ are related by $S_{\beta_{2}}$-move. Let $D$ and $D'$ be arrow diagrams in $\hat{\bf S}^{2}$ related by an $S_{\beta_{2}}$-move in a $2$-disk $\hat{\bf S}^{2}$ centered at $\beta_{2}$ (see right of Figure~\ref{fig:SBeta1Beta2MovesOnhatS2}). We denote by $\mathcal{K}(D)$ and $\mathcal{K}(D')$ their corresponding sets of Kauffman states. As shown in Figure~\ref{fig:CasesDiagramsBySBetaMoveOnHatS2} Kauffman states $s\in \mathcal{K}(D)$ are in bijection with pairs of Kauffman states $s_{+},s_{-}\in \mathcal{K}(D')$. Moreover, $s$ and $s_{+},s_{-}$ are related as follows
\begin{equation*}
p(s_{+})-n(s_{+}) = p(s)-n(s)+1 \quad \text{and} \quad p(s_{-})-n(s_{-}) = p(s)-n(s)-1,
\end{equation*}
and we denote by $D_{s}$, $D_{s_{+}}$, and $D_{s_{-}}$ the arrow diagrams corresponding $s$ and $s_{+},s_{-}$, respectively.
\begin{figure}[ht]
\centering
\includegraphics[scale=0.85]{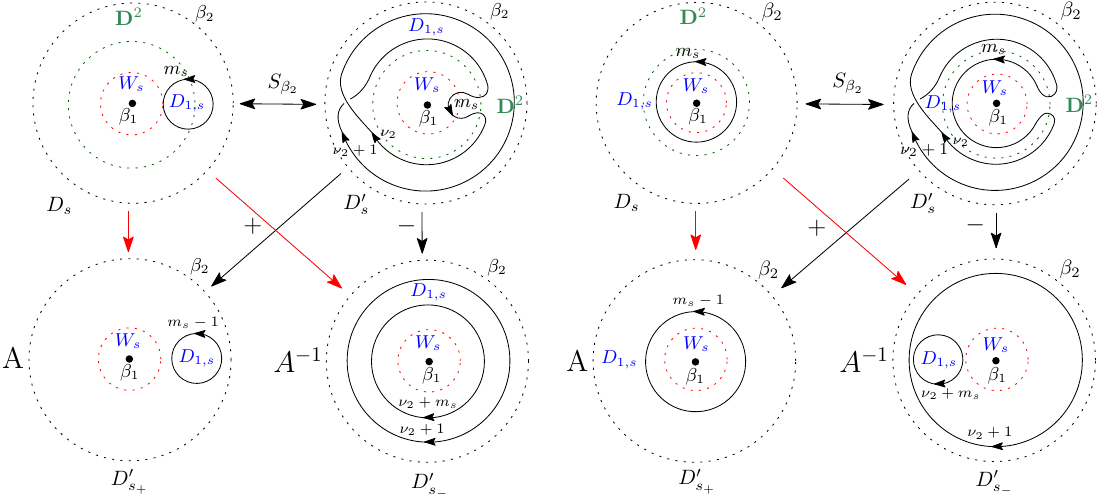}
\caption{$D_{s}$ and $D'_{s}$ related by an $S_{\beta_{2}}$-move on $\hat{\bf S}^{2}$}
\label{fig:CasesDiagramsBySBetaMoveOnHatS2}
\end{figure}
Therefore,
\begin{equation*}
\llangle D-D' \rrangle = \sum_{s\in \mathcal{K}(D)}A^{p(s)-n(s)}(\langle D_{s}\rangle - A\langle D'_{s+}\rangle -A^{-1}\langle D'_{s-}\rangle). 
\end{equation*}
For $D_{1,s}$ and $W_{s}$ in Figure~\ref{fig:CasesDiagramsBySBetaMoveOnHatS2}, let
\begin{equation*}
\langle D_{1,s} \rangle_{r} = \sum_{i=0}^{n_{s}}r_{s,i}^{(1)}\lambda^{i}\,\,\text{and}\,\,\llangle \llangle W_{s} \rrangle \rrangle_{\Gamma} =  \sum_{j = 0}^{k_{s}}r_{s,j}^{(2)} w_{j}(s).   
\end{equation*}
Thus, for the arrow diagrams on the left of Figure~\ref{fig:CasesDiagramsBySBetaMoveOnHatS2}
\begin{eqnarray*}
&&\llangle \langle D_{s}\rangle - A\langle D'_{s_{+}}\rangle-A^{-1}\langle D'_{s_{-}}\rangle \rrangle_{\Gamma}\\
&=&\sum_{i = 0}^{n_{s}}\sum_{j = 0}^{k_{s}} r_{s,i}^{(1)}r_{s,j}^{(2)} w_{j}(s)( P_{m_{s},i} - AP_{m_{s}-1,i}-A^{-1}x_{-\nu_{2}-m_{s}}\lambda^{i}x_{-\nu_{2}-1})
\end{eqnarray*}
and for the arrow diagrams on the right of Figure~\ref{fig:CasesDiagramsBySBetaMoveOnHatS2}
\begin{eqnarray*}
&&\llangle \langle D_{s}\rangle - A\langle D'_{s_{+}}\rangle-A^{-1}\langle D'_{s_{-}}\rangle \rrangle_{\Gamma}\\
&=&\sum_{i = 0}^{n_{s}}\sum_{j = 0}^{k_{s}} r_{s,i}^{(1)}r_{s,j}^{(2)} w_{j}(s)(x_{m_{s}}\lambda^{i} - Ax_{m_{s}-1}\lambda^{i}-A^{-1}P_{-\nu_{2}-m_{s},i}x_{-\nu_{2}-1}).
\end{eqnarray*}
Since for each $j = 0,1,\ldots,k_{s}$,
\begin{equation*}
\llangle w_{j}(s) \rrangle_{\Sigma^{\prime}_{\nu_{1}}} = \sum_{\varepsilon \in \{0,1\}}\sum_{k=0}^{l_{s,j}}r_{s,j,\varepsilon,k}^{(3)}(x_{\nu_{1}})^{\varepsilon}\lambda^{k}.
\end{equation*}
Therefore, for the arrow diagrams on the left of Figure~\ref{fig:CasesDiagramsBySBetaMoveOnHatS2},
\begin{eqnarray*}
&&\llangle \llangle \langle D_{s}\rangle - A\langle D'_{s_{+}}\rangle-A^{-1}\langle D'_{s_{-}}\rangle \rrangle_{\Gamma} \rrangle_{\Sigma^{\prime}_{\nu_{1}}}\\
&=& \sum_{i = 0}^{n_{s}}\sum_{j = 0}^{k_{s}}\sum_{\varepsilon \in \{0,1\}}\sum_{k=0}^{l_{s,j}} r_{s,i}^{(1)}r_{s,j}^{(2)}r_{s,j,\varepsilon,k}^{(3)}\llangle (x_{\nu_{1}})^{\varepsilon}\lambda^{k}( P_{m_{s},i} - AP_{m_{s}-1,i}-A^{-1}x_{-\nu_{2}-m_{s}}\lambda^{i}x_{-\nu_{2}-1})\rrangle_{\Sigma^{\prime}_{\nu_{1}}}\\
\end{eqnarray*}
and for the arrow diagrams on the right of Figure~\ref{fig:CasesDiagramsBySBetaMoveOnHatS2},
\begin{eqnarray*}
&&\llangle \llangle \langle D_{s}\rangle - A\langle D'_{s_{+}}\rangle-A^{-1}\langle D'_{s_{-}}\rangle \rrangle_{\Gamma} \rrangle_{\Sigma^{\prime}_{\nu_{1}}}\\
&=& \sum_{i = 0}^{n_{s}}\sum_{j = 0}^{k_{s}}\sum_{\varepsilon \in \{0,1\}}\sum_{k=0}^{l_{s,j}} r_{s,i}^{(1)}r_{s,j}^{(2)}r_{s,j,\varepsilon,k}^{(3)}\llangle (x_{\nu_{1}})^{\varepsilon}\lambda^{k}(x_{m_{s}}\lambda^{i} - Ax_{m_{s}-1}\lambda^{i}-A^{-1}P_{-\nu_{2}-m_{s},i}x_{-\nu_{2}-1})\rrangle_{\Sigma^{\prime}_{\nu_{1}}}.    
\end{eqnarray*}
Since
\begin{eqnarray*}
\phi_{\nu_{1},\nu_{2}}(D-D')=\llangle \llangle \langle D_{s}\rangle - A\langle D'_{s_{+}}\rangle-A^{-1}\langle D'_{s_{-}}\rangle \rrangle_{\Gamma} \rrangle_{\star\star} = \langle \llangle \llangle \langle D_{s}\rangle - A\langle D'_{s_{+}}\rangle-A^{-1}\langle D'_{s_{-}}\rangle\rrangle_{\Gamma}\rrangle_{\Sigma^{\prime}_{\nu_{1}}}\rangle_{\star\star},
\end{eqnarray*}
it suffices to show that
\begin{eqnarray*}
&&\llangle (x_{\nu_{1}})^{\varepsilon}\lambda^{k}( P_{m_{s},i} - AP_{m_{s}-1,i}-A^{-1}x_{-\nu_{2}-m_{s}}\lambda^{i}x_{-\nu_{2}-1})\rrangle_{\star\star} = 0 \quad \text{and} \\
&&\llangle (x_{\nu_{1}})^{\varepsilon}\lambda^{k}(x_{m_{s}}\lambda^{i} - Ax_{m_{s}-1}\lambda^{i}-A^{-1}P_{-\nu_{2}-m_{s},i}x_{-\nu_{2}-1})\rrangle_{\star\star} = 0.
\end{eqnarray*}
However, the above identities follow from Lemma~\ref{lem:Relations_Bracket_Two_Fibers_I} and Lemma~\ref{lem:Relations_Bracket_Two_Fibers_II}, respectively.
\end{proof}

We summarize our results from this subsection as Theorem~\ref{thm:KBSMForM2Beta1Beta2}.

\begin{theorem}
\label{thm:KBSMForM2Beta1Beta2}
For $\beta_{1}+\beta_{2} \neq 0$ the KBSM of $M_{2}(\beta_{1},\beta_{2})$ is a free $R$-module of rank $|\beta_{1}+\beta_{2}|+1$ and its basis consists of equivalence classes of generic framed links in $M_{2}(\beta_{1},\beta_{2})$ whose arrow diagrams are in $\Sigma''_{\nu_{1},\nu_{2}}$, i.e.,
\begin{equation*}
S_{2,\infty}(M_{2}(\beta_{1},\beta_{2});R,A)) \cong R\Sigma''_{\nu_{1},\nu_{2}}.
\end{equation*}
\end{theorem}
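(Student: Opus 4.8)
The plan is to follow verbatim the argument of the proof of Theorem~\ref{thm:lens_space}, replacing $\langle\cdot\rangle_{\star}$ by $\langle\cdot\rangle_{\star\star}$ and the move $\Omega_{\infty}$ by the moves $S_{\beta_{1}}$ and $S_{\beta_{2}}$. First I would work with the map $\phi_{\nu_{1},\nu_{2}}(D)=\langle\phi_{\beta_{1}}(D)\rangle_{\star\star}$ defined above. Its values lie in $R\Sigma''_{\nu_{1},\nu_{2}}$ by Lemma~\ref{lem:lens_space_two_fiber_reduce}, and by Lemma~\ref{lem:lemma_for_h6} it is constant on arrow diagrams related by $\Omega_{1}-\Omega_{5}$, $S_{\beta_{1}}$, and $S_{\beta_{2}}$ moves; since by Theorem~\ref{thm:AmbientIsotopiesInMBeta1Beta2}(ii) these moves realise exactly ambient isotopy of generic framed links in $M_{2}(\beta_{1},\beta_{2})$, the map $\phi_{\nu_{1},\nu_{2}}$ descends to a homomorphism of free $R$-modules $R\mathcal{D}(\hat{\bf S}^{2})\to R\Sigma''_{\nu_{1},\nu_{2}}$ that is well defined on ambient isotopy classes of arrow diagrams.

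Next I would observe, exactly as in the $M_{2}(\beta_{1})$ case, that each of the brackets $\llangle\cdot\rrangle$, $\llangle\cdot\rrangle_{\Gamma}$, $\llangle\cdot\rrangle_{\Sigma^{\prime}_{\nu_{1}}}$, and $\langle\cdot\rangle_{\star\star}$ respects the Kauffman bracket skein relations, so that $\phi_{\nu_{1},\nu_{2}}(D_{+}-AD_{0}-A^{-1}D_{\infty})=0$ and $\phi_{\nu_{1},\nu_{2}}(D\sqcup T_{1}+(A^{2}+A^{-2})D)=0$ for skein triples of arrow diagrams in $\hat{\bf S}^{2}$. Hence $\phi_{\nu_{1},\nu_{2}}$ induces a surjective homomorphism $\hat{\phi}_{\nu_{1},\nu_{2}}\colon S\mathcal{D}_{\nu_{1},\nu_{2}}\to R\Sigma''_{\nu_{1},\nu_{2}}$. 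In the opposite direction I would set $\varphi\colon R\Sigma''_{\nu_{1},\nu_{2}}\to S\mathcal{D}_{\nu_{1},\nu_{2}}$, $\varphi(w)=[w]$. Because the rewriting rules (b2), (b3), (c2), (c3) defining $\langle\cdot\rangle_{\star\star}$ are assembled, via Lemma~\ref{lem:rel_F_and_FR_TwoFibers} together with \eqref{eqn:kbsm_xm} and \eqref{eqn:kbsm_xvxm}, from identities that already hold in $S\mathcal{D}_{\nu_{1},\nu_{2}}$, one obtains $[D]=[\phi_{\nu_{1},\nu_{2}}(D)]$ for every arrow diagram $D$; applying this to the generators of $S\mathcal{D}_{\nu_{1},\nu_{2}}$ shows that $\varphi$ is surjective.

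To pin down the isomorphism I would check that $\hat{\phi}_{\nu_{1},\nu_{2}}\circ\varphi=\mathrm{Id}_{R\Sigma''_{\nu_{1},\nu_{2}}}$: by $R$-linearity it suffices to verify this on each $w\in\Sigma''_{\nu_{1},\nu_{2}}$, and for such $w$ (a crossingless arrow diagram already belonging to $\Sigma^{\prime}_{\nu_{1}}$) we have $\phi_{\beta_{1}}(w)=w$ and $\langle w\rangle_{\star\star}=w$ by part (b1) (resp. (c1)) of the definition of $\langle\cdot\rangle_{\star\star}$, so $\hat{\phi}_{\nu_{1},\nu_{2}}(\varphi(w))=w$. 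Thus $\varphi$ is also injective, hence $\varphi$ and $\hat{\phi}_{\nu_{1},\nu_{2}}$ are mutually inverse isomorphisms, and combining with Theorem~\ref{thm:AmbientIsotopiesInMBeta1Beta2}(ii) gives $S_{2,\infty}(M_{2}(\beta_{1},\beta_{2});R,A)\cong S\mathcal{D}_{\nu_{1},\nu_{2}}\cong R\Sigma''_{\nu_{1},\nu_{2}}$, a free $R$-module. For the rank, $\gcd(2,\beta_{i})=1$ forces each $\beta_{i}$ to be odd, so $\beta_{i}=2\nu_{i}+1$ and $\beta_{1}+\beta_{2}=2(\nu_{0}+1)$; a direct count of elements gives $|\Sigma''_{\nu_{1},\nu_{2}}|=2(\nu_{0}+1)+1$ when $\nu_{0}\ge 0$ and $|\Sigma''_{\nu_{1},\nu_{2}}|=2(-\nu_{0}-1)+1$ when $\nu_{0}\le -2$, i.e.\ $|\beta_{1}+\beta_{2}|+1$ in either case.

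The only step I expect to require genuine care is the identity $[D]=[\phi_{\nu_{1},\nu_{2}}(D)]$ underlying the surjectivity of $\varphi$: one must confirm that every substitution performed by $\langle\cdot\rangle_{\star\star}$ corresponds to a relation valid in $S\mathcal{D}_{\nu_{1},\nu_{2}}$, which is precisely the content of Lemma~\ref{lem:rel_F_and_FR_TwoFibers} (and, for $\phi_{\beta_{1}}$, of \eqref{eqn:Compositionh4h2h1} and the earlier results of \cite{DW2025}); everything else is bookkeeping parallel to the proof of Theorem~\ref{thm:lens_space}.
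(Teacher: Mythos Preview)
Your proposal is correct and follows essentially the same approach as the paper: the paper's own proof simply says that the argument is analogous to that of Theorem~\ref{thm:lens_space}, invoking Lemma~\ref{lem:lemma_for_h6} to show $\phi_{\nu_{1},\nu_{2}}$ descends to an isomorphism $\hat{\phi}_{\nu_{1},\nu_{2}}\colon S\mathcal{D}_{\nu_{1},\nu_{2}}\to R\Sigma''_{\nu_{1},\nu_{2}}$ and then applying Theorem~\ref{thm:AmbientIsotopiesInMBeta1Beta2}(ii). Your write-up spells out the details (the inverse $\varphi$, the identity $\hat{\phi}_{\nu_{1},\nu_{2}}\circ\varphi=\mathrm{Id}$, and the rank count via $\beta_{i}=2\nu_{i}+1$) more explicitly than the paper does, but the logic is identical.
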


\begin{proof}
The statement follows by arguments analogous to those in our proof of Theorem~\ref{thm:lens_space}. Specifically, by Lemma~\ref{lem:lemma_for_h6}, the homomorphism of $R$-modules
\begin{equation*}
\phi_{\nu_{1},\nu_{2}}: R\mathcal{D}(\hat{\bf S}^{2}) \to  R\Sigma''_{\nu_{1},\nu_{2}},\,\,\phi_{\nu_{1},\nu_{2}}(D) = \llangle \llangle \llangle D \rrangle \rrangle_{\Gamma} \rrangle_{\star\star} = \langle \phi_{\beta_{1}}(D)\rangle_{\star\star}
\end{equation*}
descends to an isomorphism of free $R$-modules
\begin{equation*}
\hat{\phi}_{\nu_{1},\nu_{2}} : S\mathcal{D}_{\nu_{1},\nu_{2}} \to  R\Sigma''_{\nu_{1},\nu_{2}},\,\,\hat{\phi}_{\nu_{1},\nu_{2}}(D) = \phi_{\nu_{1},\nu_{2}}(D)
\end{equation*}
and then we apply Theorem~\ref{thm:AmbientIsotopiesInMBeta1Beta2}.
\end{proof}

\subsection{KBSM of \texorpdfstring{$M_{2}(\beta_{1},\beta_{2})$ with $\nu_{0} = -1$}{M\unichar{8322}(\unichar{0946}\unichar{8321}, \unichar{0946}\unichar{8322}) with \unichar{0957}\unichar{8320} = -1}}
\label{s:CaseWithTwoFibers_NuEqualMinus_1}

In this section, we find a new generating set for the KBSM of $L(0,1) = {\bf S}^{2}\times S^{1}$. It was proved in \cite{HP1993}  (see Theorem~4) that
\begin{equation}
\label{eqn:kbsm_S2S1}
\mathcal{S}_{2,\infty}(L(0,1);R,A) \cong R\oplus \bigoplus_{i = 1}^{\infty}\frac{R}{(1-A^{2i+4})}.
\end{equation}
A different proof of this result was given in \cite{M2011b} (see Theorem~3). Our proof of \eqref{eqn:kbsm_S2S1} differs from those in \cite{HP1993} and \cite{M2011b} since, in particular, we use $M_{2}(\beta_{1},\beta_{2})$ with $\beta_{1} + \beta_{2} = 0$ as our model for $L(0,1)$. 

\medskip

As noted in \cite{DW2025}, ambient isotopy classes of generic framed links in  $(\beta_{1},2)$-fibered torus $V(\beta_{1},2)$ are in bijection with equivalence classes $\mathcal{D}({\bf D}^{2}_{\beta_{1}})$ of arrow diagrams (including the empty diagram) on a $2$-disk ${\bf D}^{2}_{\beta_{1}}$ with marked point $\beta_{1}$, modulo $\Omega_{1}-\Omega_{5}$ and $S_{\beta_{1}}$-moves. Since an embedding
\begin{equation*}
i: V(\beta_{1},2) \to M_{2}(\beta_{1},\beta_{2}),\,i(L) = L,
\end{equation*}
induces corresponding epimorphism of $R$-modules
\begin{equation*}
i_{*}: S\mathcal{D}({\bf D}^{2}_{\beta_{1}})\to S\mathcal{D}_{\nu_{1},\nu_{2}},\,i_{*}([D]) = [\![D]\!],
\end{equation*}
it follows that
\begin{equation*}
S\mathcal{D}({\bf D}^{2}_{\beta_{1}})/\ker(i_{*})\cong S\mathcal{D}_{\nu_{1},\nu_{2}}.
\end{equation*}
As it was shown in \cite{DW2025}, $S\mathcal{D}({\bf D}^{2}_{\beta_{1}}) \cong R\Sigma^{\prime}_{\nu_{1}}$ and, using arguments as in Lemma~\ref{lem:lemma_for_h6}, we see that $\ker(i_{*})$ is generated by:
\begin{eqnarray*}
&&(x_{\nu_{1}})^{\varepsilon} \lambda^{n_{1}} P_{m,n_{2}} - A (x_{\nu_{1}})^{\varepsilon} \lambda^{n_{1}} P_{m-1,n_{2}} - A^{-1} (x_{\nu_{1}})^{\varepsilon} \lambda^{n_{1}} x_{-m-\nu_{2}} \lambda^{n_{2}} x_{-\nu_{2}-1}\quad \text{and}\notag\\
&&(x_{\nu_{1}})^{\varepsilon} \lambda^{n_{1}} x_{m} \lambda^{n_{2}} - A (x_{\nu_{1}})^{\varepsilon} \lambda^{n_{1}} x_{m-1} \lambda^{n_{2}} - A^{-1} (x_{\nu_{1}})^{\varepsilon} \lambda^{n_{1}} P_{-m-\nu_{2},n_{2}} x_{-\nu_{2}-1},
\end{eqnarray*}
where $\varepsilon \in\{0,1\}$, $n_{1},n_{2}\geq 0$, and $m\in \mathbb{Z}$. 

Let $S_{\nu_{2}}({\bf D}^{2}_{\beta_{1}})$ denote the $R$-submodule of $S\mathcal{D}({\bf D}^{2}_{\beta_{1}})$ generated by
\begin{equation*}
F_{m}x_{-\nu_{2}} - x_{\nu_{1}} F_{-1-m} \,\, \text{and} \,\,
x_{\nu_{1}}F_{m}x_{-\nu_{2}} - R_{m+1},
\end{equation*}
for $m \in \mathbb{Z}$ (see Lemma~\ref{lem:rel_F_and_FR_TwoFibers}). We start by showing that 
\begin{equation*}
\ker(i_{*}) = S_{\nu_{2}}({\bf D}^{2}_{\beta_{1}})
\end{equation*}
and then we compute $S\mathcal{D}({\bf D}^{2}_{\beta_{1}})/S_{\nu_{2}}({\bf D}^{2}_{\beta_{1}})$. 

\begin{lemma}
\label{lem:Relations_Bracket_Two_Fibers_nu2}
For any $\varepsilon \in \{0,1\}$ and $m \in \mathbb{Z}$,
\begin{equation*}
(x_{\nu_{1}})^{\varepsilon} F_{m} x_{-\nu_{2}-1} + A^{3} (x_{\nu_{1}})^{\varepsilon} F_{m} x_{-\nu_{2}} \in S_{\nu_{2}}({\bf D}^{2}_{\beta_{1}}).
\end{equation*}
In particular, for any $\varepsilon \in \{0,1\}$ and $n \geq 0$,
\begin{equation*}
(x_{\nu_{1}})^{\varepsilon} \lambda^{n} x_{-\nu_{2}-1} + A^{3} (x_{\nu_{1}})^{\varepsilon} \lambda^{n} x_{-\nu_{2}} \in S_{\nu_{2}}({\bf D}^{2}_{\beta_{1}}).
\end{equation*}
\end{lemma}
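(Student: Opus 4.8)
The plan is to reduce the two assertions to one and then run, directly in $S\mathcal{D}({\bf D}^{2}_{\beta_{1}})$ modulo $S_{\nu_{2}}({\bf D}^{2}_{\beta_{1}})$, the same $\Omega_{5}$-induction that drives the proofs of Lemmas~\ref{lem:Relations_Bracket_Two_Fibers_Case_1a}--\ref{lem:Relations_Bracket_Two_Fibers_Case_2}. For the reduction: by Remark~\ref{rem:DegAndCoeffOf_F} the polynomials $F_{0},F_{1},\dots,F_{n}$ have degrees $0,1,\dots,n$ with unit leading coefficients, so $\lambda^{n}$ is an $R$-linear combination of $F_{0},\dots,F_{n}$; since $w\mapsto(x_{\nu_{1}})^{\varepsilon}w\,x_{-\nu_{2}-1}+A^{3}(x_{\nu_{1}})^{\varepsilon}w\,x_{-\nu_{2}}$ is $R$-linear in $w\in R[\lambda]$, the ``in particular'' statement follows at once from the first. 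It therefore suffices to prove the first statement for all $\varepsilon\in\{0,1\}$ and $m\in\mathbb{Z}$.

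Next I would fix $\nu_{0}=\nu_{1}+\nu_{2}=-1$, so that the generators of $S_{\nu_{2}}({\bf D}^{2}_{\beta_{1}})$ read $g_{1,j}=F_{j}x_{-\nu_{2}}-x_{\nu_{1}}F_{-1-j}$ and $g_{2,j}=x_{\nu_{1}}F_{j}x_{-\nu_{2}}-R_{j+1}$ ($j\in\mathbb{Z}$), and record what is available in $S\mathcal{D}({\bf D}^{2}_{\beta_{1}})$: $t_{m}=P_{m}$ and $x_{\nu_{1}}x_{\nu_{1}}=R_{0}$ (Lemma~\ref{lem:kbsm_xm_xvxm} and \eqref{eqn:kbsm_xvxm}); $x_{-\nu_{2}}=x_{\nu_{1}}F_{-1}=-A^{3}x_{\nu_{1}}$ and $x_{-\nu_{2}-1}=x_{\nu_{1}}F_{0}=x_{\nu_{1}}$ (\eqref{eqn:kbsm_xm}); and $F_{-1}=-A^{3}$, $F_{0}=1$, $F_{-2}=-A^{2}-A^{4}\lambda$. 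The small cases $m\in\{-1,0,1\}$ for each $\varepsilon$ I would settle by hand, exhibiting the element explicitly as an $R$-combination of finitely many $g_{1,\cdot}$, $g_{2,\cdot}$; for instance, for $\varepsilon=0,\ m=0$ it equals $x_{-\nu_{2}-1}+A^{3}x_{-\nu_{2}}=(1-A^{6})x_{\nu_{1}}=-g_{1,-1}$.

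For the inductive step, given $m$ with $|m|\ge 2$ I would invoke the relation $\phi_{\beta_{1}}(D)=\phi_{\beta_{1}}(D')$ of \eqref{eqn:Compositionh4h2h1} for arrow diagrams $D,D'$ on $\hat{\bf S}^{2}$ of the shape used in the proofs of Lemmas~\ref{lem:Relations_Bracket_Two_Fibers_Case_1a} and \ref{lem:Relations_Bracket_Two_Fibers_Case_2a} (Figures~\ref{fig:Omega_5ForTwoFibers}, \ref{fig:Omega_5ForTwoFibersNegativeNu}). In $S\mathcal{D}({\bf D}^{2}_{\beta_{1}})$ this gives an identity linking $P_{m}x_{-\nu_{2}-1}$, $P_{m-1}x_{-\nu_{2}}$ and the two ``overflow'' curves $x_{-m-\nu_{2}-1}$, $x_{-m-\nu_{2}+1}$ (with a leading $x_{\nu_{1}}$ throughout when $\varepsilon=1$, where one additionally uses $x_{\nu_{1}}x_{\nu_{1}}=R_{0}$). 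Rewriting the overflow curves through \eqref{eqn:kbsm_xm} and \eqref{eqn:rel_F_to_P} as $R$-combinations of terms $F_{\cdot}x_{-\nu_{2}}$ (resp. $x_{\nu_{1}}F_{\cdot}x_{-\nu_{2}}$) and reducing these modulo $g_{1,\cdot}$ (resp. $g_{2,\cdot}$), the identity collapses to a recursion expressing the $m$-th element as an $R[\lambda]$-combination of the $(m-1)$-st and $(m+1)$-st ones plus an element of $S_{\nu_{2}}({\bf D}^{2}_{\beta_{1}})$; running it downward and upward from the base cases closes the induction. (As in the previous section, \eqref{eqn:rel_F_to_P} is precisely what lets one pass freely between the $P$-form, natural for the $\Omega_{5}$-relation, and the $F$-form of the statement.)

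The \emph{hard part} is this bookkeeping: one must check that the overflow terms produced by the $\Omega_{5}$-relation are exactly absorbed by the chosen generators of $S_{\nu_{2}}({\bf D}^{2}_{\beta_{1}})$, with no residue --- the same delicate cancellation underlying Lemmas~\ref{lem:Relations_Bracket_Two_Fibers_Case_1a}--\ref{lem:Relations_Bracket_Two_Fibers_Case_2}. A secondary point is that $\nu_{0}=-1$ is the common boundary of the regimes $\nu_{0}\ge 0$ and $\nu_{0}\le -2$ of Section~\ref{s:CaseWithTwoFibers_Nu0DifferntThenMinus_1}, so one should check that the specializations $F_{\nu_{0}-m}=F_{-1-m}$, $R_{m-\nu_{0}}=R_{m+1}$ and the relevant degree bounds are the limiting ones, and handle the four sub-cases ($\varepsilon\in\{0,1\}$, and within each $m\ge 0$ versus $m\le -1$, reducing to $P_{m}$ and $P_{-m}$) separately, exactly as in the proofs of Lemmas~\ref{lem:Relations_Bracket_Two_Fibers_Case_1a} and \ref{lem:Relations_Bracket_Two_Fibers_Case_2a}.
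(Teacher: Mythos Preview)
Your proposal is correct and follows essentially the same route as the paper: an $\Omega_{5}$-based recursion in $S\mathcal{D}({\bf D}^{2}_{\beta_{1}})$ together with explicit base cases, the overflow terms being absorbed by the generators $g_{1,\cdot}$, $g_{2,\cdot}$ of $S_{\nu_{2}}({\bf D}^{2}_{\beta_{1}})$. The paper's version is slightly more streamlined --- it rearranges the $\Omega_{5}$ identity into a clean \emph{two}-term recursion
\[
A^{-3}\big[(x_{\nu_{1}})^{\varepsilon}F_{m-2}x_{-\nu_{2}-1}+A^{3}(x_{\nu_{1}})^{\varepsilon}F_{m-2}x_{-\nu_{2}}\big]-A^{-4}\big[(x_{\nu_{1}})^{\varepsilon}F_{m-1}x_{-\nu_{2}-1}+A^{3}(x_{\nu_{1}})^{\varepsilon}F_{m-1}x_{-\nu_{2}}\big]\in S_{\nu_{2}}({\bf D}^{2}_{\beta_{1}}),
\]
so a single base case $m=0$ suffices and no $m\ge 0$ vs.\ $m\le -1$ split is needed --- but your three-term recursion with base cases $m\in\{-1,0,1\}$ works just as well. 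One small phrasing issue: you invoke $\phi_{\beta_{1}}(D)=\phi_{\beta_{1}}(D')$, but since $\Omega_{5}$ is already among the moves defining $\mathcal{D}({\bf D}^{2}_{\beta_{1}})$, the identity holds directly in $S\mathcal{D}({\bf D}^{2}_{\beta_{1}})$ without passing through $\phi_{\beta_{1}}$.
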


\begin{proof}
Applying Kauffman bracket skein relation to arrow diagrams in Figure~\ref{fig:Omega_5ForTwoFibersNegativeNu} we see that
\begin{equation*}
P_{-m}x_{-\nu_{2}} = A^{-2}P_{-m+1}x_{-\nu_{2}-1} + x_{m-\nu_{2}-2} - A^{-2}x_{m-\nu_{2}}.
\end{equation*}
Furthermore, using \eqref{eqn:rel_F_to_P} and \eqref{eqn:kbsm_xm}, we see that
\begin{equation*}
(A^{-1}F_{m-1}-A^{-2}F_{m})x_{-\nu_{2}} = A^{-2}(A^{-1}F_{m-2}-A^{-2}F_{m-1})x_{-\nu_{2}-1} + x_{\nu_{1}}F_{-m+1} - A^{-2}x_{\nu_{1}}F_{-m-1}
\end{equation*}
or equivalently
\begin{eqnarray*}
& & A^{-3}(F_{m-2}x_{-\nu_{2}-1} + A^{3}F_{m-2}x_{-\nu_{2}}) - A^{-4}(F_{m-1}x_{-\nu_{2}-1} + A^{3}F_{m-1}x_{-\nu_{2}}) \\
&=& (F_{m-2}x_{-\nu_{2}} - x_{\nu_{1}}F_{-m+1}) - A^{-2}(F_{m}x_{-\nu_{2}} - x_{\nu_{1}}F_{-m-1}) \in S_{\nu_{2}}({\bf D}^{2}_{\beta_{1}}).
\end{eqnarray*}
Since $\nu_{0} = -1$, $F_{0} = 1$ and $F_{-1} = -A^{3}$, one can see that
\begin{equation*}
F_{0}x_{-\nu_{2}-1} + A^{3}F_{0}x_{-\nu_{2}} = x_{\nu_{1}} + A^{3}x_{-\nu_{2}} = -(F_{-1}x_{-\nu_{2}} - x_{\nu_{1}}F_{0}) \in S_{\nu_{2}}({\bf D}^{2}_{\beta_{1}}).
\end{equation*}
Therefore, by induction on $m$, we conclude that
\begin{equation*}
F_{m}x_{-\nu_{2}-1} + A^{3}F_{m}x_{-\nu_{2}} \in S_{\nu_{2}}({\bf D}^{2}_{\beta_{1}})
\end{equation*}
for any $m \in \mathbb{Z}$, which proves the case $\varepsilon = 0$.

Applying Kauffman bracket skein relation to arrow diagrams in Figure~\ref{fig:Omega_5ForTwoFibersNegativeNu} we see that
\begin{equation*}
x_{\nu_{1}}P_{-m}x_{-\nu_{2}} = A^{-2}x_{\nu_{1}}P_{-m+1}x_{-\nu_{2}-1} + x_{\nu_{1}}x_{m-\nu_{2}-2} - A^{-2}x_{\nu_{1}}x_{m-\nu_{2}}.
\end{equation*}
Therefore, using \eqref{eqn:rel_F_to_P} and \eqref{eqn:kbsm_xvxm} we see that
\begin{equation*}
x_{\nu_{1}}(A^{-1}F_{m-1}-A^{-2}F_{m})x_{-\nu_{2}} = A^{-2}x_{\nu_{1}}(A^{-1}F_{m-2}-A^{-2}F_{m-1})x_{-\nu_{2}-1} + R_{m-1} - A^{-2}R_{m+1}
\end{equation*}
or equivalently
\begin{eqnarray*}
& & A^{-3}x_{\nu_{1}}(F_{m-2}x_{-\nu_{2}-1} + A^{3}F_{m-2}x_{-\nu_{2}}) - A^{-4}x_{\nu_{1}}(F_{m-1}x_{-\nu_{2}-1} + A^{3}F_{m-1}x_{-\nu_{2}}) \\
&=& (x_{\nu_{1}}F_{m-2}x_{-\nu_{2}} - R_{m-1}) - A^{-2}(x_{\nu_{1}}F_{m}x_{-\nu_{2}} - R_{m+1}) \in S_{\nu_{2}}({\bf D}^{2}_{\beta_{1}}).
\end{eqnarray*}
Since $\nu_{0} = -1$, $F_{0} = 1$, $F_{-1} = -A^{3}$, and $x_{\nu_{1}}x_{\nu_{1}} = R_{0}$ by \eqref{eqn:kbsm_xvxm},
one sees that
\begin{equation*}
x_{\nu_{1}}F_{0}x_{-\nu_{2}-1} + A^{3}x_{\nu_{1}}F_{0}x_{-\nu_{2}} = x_{\nu_{1}}x_{\nu_{1}} + A^{3}x_{\nu_{1}}x_{-\nu_{2}} = -(x_{\nu_{1}}F_{-1}x_{-\nu_{2}} -R_{0}) \in S_{\nu_{2}}({\bf D}^{2}_{\beta_{1}}).
\end{equation*}
Therefore, by induction on $m$, we see that
\begin{equation*}
x_{\nu_{1}}F_{m}x_{-\nu_{2}-1} + A^{3}x_{\nu_{1}}F_{m}x_{-\nu_{2}} \in S_{\nu_{2}}({\bf D}^{2}_{\beta_{1}})
\end{equation*}
for any $m \in \mathbb{Z}$, which proves the case $\varepsilon = 1$.
\end{proof}

\begin{lemma}
\label{lem:Tm}
Let $T_{m}(n_{1},n_{2})$ be a family of elements of $S\mathcal{D}({\bf D}^{2}_{\beta_{1}})$, $m\in \mathbb{Z}$, $n_{1},n_{2}\geq 0$. Assume that $T_{m}(n_{1},n_{2})$ satisfies conditions:
\begin{equation*}
T_{m}(n_{1}+1,n_{2}) = A^{-1}T_{m-1}(n_{1},n_{2}) + AT_{m+1}(n_{1},n_{2}),
\end{equation*}
\begin{equation*}
T_{m}(n_{1},n_{2}+1) = AT_{m-1}(n_{1},n_{2}) + A^{-1}T_{m+1}(n_{1},n_{2}),
\end{equation*}
and $T_{m}(0,0) \in S_{\nu_{2}}({\bf D}^{2}_{\beta_{1}})$ for all $m \in \mathbb{Z}$. Then $
T_{m}(n_{1},n_{2}) \in S_{\nu_{2}}({\bf D}^{2}_{\beta_{1}})$ for all $m \in \mathbb{Z}$ and $n_{1},n_{2} \geq 0$.
\end{lemma}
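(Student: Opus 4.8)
The plan is to prove the statement by a straightforward double induction, using only the fact that $S_{\nu_{2}}({\bf D}^{2}_{\beta_{1}})$ is by construction an $R$-submodule of $S\mathcal{D}({\bf D}^{2}_{\beta_{1}})$, hence closed under $R$-linear combinations; all of the genuine content has already been packaged into the two recurrences and the base hypothesis $T_{m}(0,0)\in S_{\nu_{2}}({\bf D}^{2}_{\beta_{1}})$ for all $m$.

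First I would handle the slice $n_{2}=0$ by induction on $n_{1}$. The case $n_{1}=0$ is the hypothesis. For the inductive step, assume $T_{m}(n_{1},0)\in S_{\nu_{2}}({\bf D}^{2}_{\beta_{1}})$ for every $m\in\mathbb{Z}$; applying the first recurrence with second argument $0$,
\[
T_{m}(n_{1}+1,0) = A^{-1}T_{m-1}(n_{1},0) + A\,T_{m+1}(n_{1},0)
\]
exhibits $T_{m}(n_{1}+1,0)$ as an $R$-linear combination of elements of $S_{\nu_{2}}({\bf D}^{2}_{\beta_{1}})$, hence it lies in $S_{\nu_{2}}({\bf D}^{2}_{\beta_{1}})$. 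Since this holds for all $m$, the induction closes and $T_{m}(n_{1},0)\in S_{\nu_{2}}({\bf D}^{2}_{\beta_{1}})$ for all $m\in\mathbb{Z}$ and all $n_{1}\geq 0$.

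Next, for each fixed $n_{1}\geq 0$ I would induct on $n_{2}$. The base case $n_{2}=0$ was just established. For the step, assuming $T_{m}(n_{1},n_{2})\in S_{\nu_{2}}({\bf D}^{2}_{\beta_{1}})$ for all $m\in\mathbb{Z}$, the second recurrence
\[
T_{m}(n_{1},n_{2}+1) = A\,T_{m-1}(n_{1},n_{2}) + A^{-1}T_{m+1}(n_{1},n_{2})
\]
again expresses $T_{m}(n_{1},n_{2}+1)$ as an $R$-combination of members of $S_{\nu_{2}}({\bf D}^{2}_{\beta_{1}})$, for every $m$. This yields $T_{m}(n_{1},n_{2})\in S_{\nu_{2}}({\bf D}^{2}_{\beta_{1}})$ for all $m\in\mathbb{Z}$ and all $n_{1},n_{2}\geq 0$.

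There is no substantial obstacle here; the only point to keep in mind is that each recurrence shifts the first index $m$ by $\pm 1$, so the induction on $n_{1}$ (resp.\ on $n_{2}$) must be run for all $m\in\mathbb{Z}$ simultaneously, but this causes no difficulty because membership in the submodule $S_{\nu_{2}}({\bf D}^{2}_{\beta_{1}})$ is tested uniformly in $m$. (Equivalently, one may induct on $n_{1}+n_{2}$ in a single step.) I would also remark that the identical argument applies verbatim with $\ker(i_{*})$ in place of $S_{\nu_{2}}({\bf D}^{2}_{\beta_{1}})$, which is the form in which the lemma will be applied.
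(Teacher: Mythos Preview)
Your proof is correct and is essentially the same as the paper's: the paper writes out the unrolled closed-form
\[
T_{m}(n_{1},n_{2})=\sum_{i=0}^{n_{1}}\sum_{j=0}^{n_{2}}A^{\,n_{1}-2i+n_{2}-2j}\binom{n_{1}}{i}\binom{n_{2}}{j}\,T_{m+n_{1}-2i-n_{2}+2j}(0,0),
\]
which is just the explicit result of your double induction, and then invokes closure of $S_{\nu_{2}}({\bf D}^{2}_{\beta_{1}})$ under $R$-linear combinations. Your inductive phrasing is marginally cleaner since it avoids having to verify the binomial identity.
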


\begin{proof}
As one may show 
\begin{eqnarray*}
T_{m}(n_{1},n_{2}) 
&=& \sum_{i=0}^{n_{1}} A^{n_{1}-2i} \binom{n_{1}}{i} T_{m+n_{1}-2i}(0,n_{2}) \\
&=& \sum_{i=0}^{n_{1}} \sum_{j=0}^{n_{2}} A^{n_{1}-2i+n_{2}-2j} \binom{n_{1}}{i} \binom{n_{2}}{j} T_{m+n_{1}-2i-n_{2}+2j}(0,0).
\end{eqnarray*}    
Since $T_{m}(0,0) \in S_{\nu_{2}}({\bf D}^{2}_{\beta_{1}})$, for all $m \in \mathbb{Z}$, our statement follows.
\end{proof}

\begin{lemma}
\label{lem:Relations_Bracket_Two_Fibers_v0Minus1_I}
For any $\varepsilon \in \{0,1\}$, $m \in \mathbb{Z}$, and $n_{1},n_{2} \geq 0$,
\begin{equation*}
(x_{\nu_{1}})^{\varepsilon} \lambda^{n_{1}} P_{m,n_{2}} - A (x_{\nu_{1}})^{\varepsilon} \lambda^{n_{1}} P_{m-1,n_{2}} - A^{-1} (x_{\nu_{1}})^{\varepsilon} \lambda^{n_{1}} x_{-m-\nu_{2}} \lambda^{n_{2}} x_{-\nu_{2}-1} \in S_{\nu_{2}}({\bf D}^{2}_{\beta_{1}}).
\end{equation*}
\end{lemma}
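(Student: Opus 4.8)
The plan is to apply Lemma~\ref{lem:Tm} to the family
\[
T_m(n_1,n_2)=(x_{\nu_1})^{\varepsilon}\lambda^{n_1}P_{m,n_2}-A\,(x_{\nu_1})^{\varepsilon}\lambda^{n_1}P_{m-1,n_2}-A^{-1}(x_{\nu_1})^{\varepsilon}\lambda^{n_1}x_{-m-\nu_2}\lambda^{n_2}x_{-\nu_2-1}\in S\mathcal{D}({\bf D}^{2}_{\beta_{1}}),
\]
whose members are exactly the elements in the statement. So the work splits into: (i) verifying the two recurrence relations of Lemma~\ref{lem:Tm} for this $T_m$, and (ii) verifying the base case $T_m(0,0)\in S_{\nu_2}({\bf D}^{2}_{\beta_{1}})$ for all $m\in\mathbb{Z}$ and $\varepsilon\in\{0,1\}$.

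For (i): $T_m(n_1,n_2)$ is the image in $S\mathcal{D}({\bf D}^{2}_{\beta_{1}})$ of the diagram-level skein difference $\langle D\rangle-A\langle D'_{+}\rangle-A^{-1}\langle D'_{-}\rangle$ associated with the arrow diagrams of Figure~\ref{fig:SBeta2MovesOnTCurveTwoFibersLemma}, after evaluating the closed $t$-component via $t_{m,n}=P_{m,n}$. I would check
\[
T_m(n_1+1,n_2)=A^{-1}T_{m-1}(n_1,n_2)+A\,T_{m+1}(n_1,n_2),\qquad T_m(n_1,n_2+1)=A\,T_{m-1}(n_1,n_2)+A^{-1}T_{m+1}(n_1,n_2)
\]
by a direct computation: on the $P$-part these come from the two defining recursions of $\{P_{m,n}\}$ — namely $P_{m,n_2+1}=AP_{m+1,n_2}+A^{-1}P_{m-1,n_2}$ and the inherited Chebyshev-type relation $P_{m,n}=A\lambda P_{m-1,n}-A^2P_{m-2,n}$ — and the leftover terms are exactly matched by the Kauffman-skein relations for the $x$-curve words in $S\mathcal{D}({\bf D}^{2}_{\beta_{1}})$ of the type obtained in Lemma~\ref{lem:IdentityTwoFiberCase} (shifting an $x$-index at the cost of a $P$-polynomial correction), together with the relation for a $\lambda$-curve adjacent to an $x$-curve. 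This is routine but requires care with the exponents of $A$, since the $P$-parts alone do not close up.

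For (ii): using $\nu_0=\nu_1+\nu_2=-1$ together with the special values $F_0=1$, $F_{-1}=-A^3$, I would reduce $T_m(0,0)$ modulo $S_{\nu_2}({\bf D}^{2}_{\beta_{1}})$ exactly as in the $n=0$ cases of Lemma~\ref{lem:Relations_Bracket_Two_Fibers_I}, but working in the module rather than through $\llangle\cdot\rrangle_{\star\star}$. For $\varepsilon=0$: by \eqref{eqn:kbsm_xm}, $x_{-m-\nu_2}x_{-\nu_2-1}=x_{\nu_1}F_{m-1}x_{-\nu_2-1}$; Lemma~\ref{lem:Relations_Bracket_Two_Fibers_nu2} replaces this by $-A^3x_{\nu_1}F_{m-1}x_{-\nu_2}$ modulo $S_{\nu_2}({\bf D}^{2}_{\beta_{1}})$, and the generator $x_{\nu_1}F_{m-1}x_{-\nu_2}-R_m$ of $S_{\nu_2}({\bf D}^{2}_{\beta_{1}})$ then gives $x_{-m-\nu_2}x_{-\nu_2-1}\equiv -A^3R_m$; substituting and using $R_m=A^{-1}P_{m-1}-A^{-2}P_m$ collapses $T_m(0,0)$ to $0$. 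For $\varepsilon=1$: by \eqref{eqn:kbsm_xvxm}, $x_{\nu_1}x_{-m-\nu_2}x_{-\nu_2-1}=R_{-m+1}x_{-\nu_2-1}$; writing $R_{-m+1}$ in terms of $F$ via \eqref{eqn:rel_F_to_P}, applying Lemma~\ref{lem:Relations_Bracket_Two_Fibers_nu2} to trade $x_{-\nu_2-1}$ for $x_{-\nu_2}$, and then using the generators $F_jx_{-\nu_2}-x_{\nu_1}F_{-1-j}$ of $S_{\nu_2}({\bf D}^{2}_{\beta_{1}})$, one again arrives at an $R$-combination of $P_m$ and $P_{m-1}$ that vanishes identically by \eqref{eqn:rel_F_to_P}.

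The main obstacle is the bookkeeping, concentrated in step (i): the recurrences are not pure polynomial identities in the $P_{m,n}$, and one must combine the $\{P_{m,n}\}$-recursions with the curve relations so that the residual $P$-terms and the residual $x$-word terms cancel; getting the signs and $A$-powers right is the delicate point. Step (ii) is essentially a transcription of the $n=0$ computations already carried out in bracket form in Lemma~\ref{lem:Relations_Bracket_Two_Fibers_I}, anchored as there by the values $F_0=1$ and $F_{-1}=-A^3$ forced by $\nu_0=-1$.
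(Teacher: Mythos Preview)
Your plan is correct and essentially identical to the paper's proof: the paper also defines exactly this family, verifies $T_m(0,0)\in S_{\nu_2}({\bf D}^{2}_{\beta_{1}})$ for $\varepsilon=0,1$ by the computations you outline, and then invokes Lemma~\ref{lem:Tm}. Two minor corrections: first, the recurrences come out with the roles of $n_1$ and $n_2$ swapped relative to how you stated them (the paper handles this by writing $T_m(n_2,n_1)$ instead of $T_m(n_1,n_2)$); second, the recurrence verification is cleaner than you fear---the $P$-parts and the $x$-parts each close up separately using only $\lambda P_{m,k}=A^{-1}P_{m+1,k}+AP_{m-1,k}$, $\lambda x_m=A^{-1}x_{m-1}+Ax_{m+1}$, and $x_m\lambda=Ax_{m-1}+A^{-1}x_{m+1}$ (all consequences of Lemma~\ref{lem:annulus_for_any_kn_bracket_2_beta0}), so Lemma~\ref{lem:IdentityTwoFiberCase} is not needed here and there are no residual cross-terms to cancel.
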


\begin{proof}
For $\varepsilon = 0$ with $n_{1} = n_{2} = 0$:
\begin{eqnarray*}
& & P_{m} - A P_{m-1} - A^{-1} x_{-m-\nu_{2}}x_{-\nu_{2}-1} = P_{m} - A P_{m-1} - A^{-1} x_{\nu_{1}}F_{m-1}x_{-\nu_{2}-1} \\
&=& A^{2} (x_{\nu_{1}}F_{m-1}x_{-\nu_{2}} - R_{m}) - A^{-1}(x_{\nu_{1}}F_{m-1}x_{-\nu_{2}-1} + A^{3}x_{\nu_{1}}F_{m-1}x_{-\nu_{2}}) \in S_{\nu_{2}}({\bf D}^{2}_{\beta_{1}})
\end{eqnarray*}
by \eqref{eqn:kbsm_xm} and Lemma~\ref{lem:Relations_Bracket_Two_Fibers_nu2}.

For $\varepsilon = 1$  with $n_{1} = n_{2} = 0$:
\begin{eqnarray*}
& & x_{\nu_{1}}P_{m} - A x_{\nu_{1}}P_{m-1} - A^{-1} x_{\nu_{1}}x_{-m-\nu_{2}}x_{-\nu_{2}-1} \\
&=& x_{\nu_{1}}P_{m} - A x_{\nu_{1}}P_{m-1} - A^{-1} R_{-m+1}x_{-\nu_{2}-1} \\
&=& x_{\nu_{1}}P_{m} - A x_{\nu_{1}}P_{m-1} + A^{2} (A^{-1}P_{-m} - A^{-2}P_{-m+1})x_{-\nu_{2}} - A^{-1} (R_{-m+1}x_{-\nu_{2}-1} + A^{3}R_{-m+1}x_{-\nu_{2}}) \\
&=& x_{\nu_{1}}(A^{-1}F_{-m-1} - A^{-2}F_{-m}) - A x_{\nu_{1}}(-A^{-2}F_{-m+1} + A^{-1}F_{-m}) \\
&+& A^{2} (-A^{-3}F_{m} + A^{-2}F_{m-1} + A^{-4}F_{m-1} - A^{-3}F_{m-2})x_{-\nu_{2}} - A^{-1} (R_{-m+1}x_{-\nu_{2}-1} + A^{3}R_{-m+1}x_{-\nu_{2}}) \\
&=& - A^{-1}(F_{m}x_{-\nu_{2}} - x_{\nu_{1}}F_{-m-1}) - A^{-1}(F_{m-2}x_{-\nu_{2}} - x_{\nu_{1}}F_{-m+1}) + (F_{m-1}x_{-\nu_{2}} - x_{\nu_{1}}F_{-m}) \\
&+& A^{-2}(F_{m-1}x_{-\nu_{2}} - x_{\nu_{1}}F_{-m}) - A^{-1} (R_{-m+1}x_{-\nu_{2}-1} + A^{3}R_{-m+1}x_{-\nu_{2}}) \in S_{\nu_{2}}({\bf D}^{2}_{\beta_{1}})
\end{eqnarray*}
by \eqref{eqn:kbsm_xvxm}, \eqref{eqn:rel_F_to_P}, and Lemma~\ref{lem:Relations_Bracket_Two_Fibers_nu2}. Let
\begin{equation*}
T_{m}(n_{2},n_{1}) = (x_{\nu_{1}})^{\varepsilon} \lambda^{n_{1}} P_{m,n_{2}} - A (x_{\nu_{1}})^{\varepsilon} \lambda^{n_{1}} P_{m-1,n_{2}} - A^{-1} (x_{\nu_{1}})^{\varepsilon} \lambda^{n_{1}} x_{-m-\nu_{2}} \lambda^{n_{2}} x_{-\nu_{2}-1}.
\end{equation*}
Since by definition of $P_{m}$ and $P_{m,k}$, and Lemma~\ref{lem:annulus_for_any_kn_bracket_2_beta0}, 
\begin{eqnarray*}
P_{m,k} &=& A P_{m+1,k-1} + A^{-1} P_{m-1,k-1},\\
\lambda P_{m} &=& A^{-1} P_{m+1}  + A P_{m-1},\\
\lambda x_{m} &=& A^{-1} x_{m-1}  + A x_{m+1},\\
x_{m} \lambda &=& A x_{m-1}  + A^{-1} x_{m+1},
\end{eqnarray*}
as one may verify:
\begin{equation*}
T_{m}(n_{2}+1,n_{1}) = A^{-1}T_{m-1}(n_{2},n_{1}) + AT_{m+1}(n_{2},n_{1}),
\end{equation*}
\begin{equation*}
T_{m}(n_{2},n_{1}+1) = AT_{m-1}(n_{2},n_{1}) + A^{-1}T_{m+1}(n_{2},n_{1}),
\end{equation*}
and as we showed $T_{m}(0,0) \in S_{\nu_{2}}({\bf D}^{2}_{\beta_{1}})$. Therefore, statement of Lemma~\ref{lem:Relations_Bracket_Two_Fibers_v0Minus1_I} follows by Lemma~\ref{lem:Tm}.
\end{proof}

\begin{lemma}
\label{lem:Relations_Bracket_Two_Fibers_v0Minus1_II}
For any $\varepsilon \in \{0,1\}$, $m \in \mathbb{Z}$, and $n_{1},n_{2} \geq 0$,
\begin{equation*}
(x_{\nu_{1}})^{\varepsilon} \lambda^{n_{1}} x_{m} \lambda^{n_{2}} - A (x_{\nu_{1}})^{\varepsilon} \lambda^{n_{1}} x_{m-1} \lambda^{n_{2}} - A^{-1} (x_{\nu_{1}})^{\varepsilon} \lambda^{n_{1}} P_{-m-\nu_{2},n_{2}} x_{-\nu_{2}-1} \in S_{\nu_{2}}({\bf D}^{2}_{\beta_{1}}).
\end{equation*}
\end{lemma}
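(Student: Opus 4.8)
The plan is to follow the proof of Lemma~\ref{lem:Relations_Bracket_Two_Fibers_v0Minus1_I}. Fix $\varepsilon \in \{0,1\}$ and set
\begin{equation*}
T_{m}(n_{1},n_{2}) = (x_{\nu_{1}})^{\varepsilon} \lambda^{n_{1}} x_{m} \lambda^{n_{2}} - A (x_{\nu_{1}})^{\varepsilon} \lambda^{n_{1}} x_{m-1} \lambda^{n_{2}} - A^{-1} (x_{\nu_{1}})^{\varepsilon} \lambda^{n_{1}} P_{-m-\nu_{2},n_{2}} x_{-\nu_{2}-1} \in S\mathcal{D}({\bf D}^{2}_{\beta_{1}}).
\end{equation*}
Using the skein relations $\lambda x_{m} = A^{-1}x_{m-1} + Ax_{m+1}$ and $x_{m}\lambda = Ax_{m-1} + A^{-1}x_{m+1}$, the defining recurrence $P_{m,k+1} = AP_{m+1,k} + A^{-1}P_{m-1,k}$, and the identity $\lambda P_{m,k} = A^{-1}P_{m+1,k} + AP_{m-1,k}$ (which follows by expanding $P_{m,k} = \sum_{j}\binom{k}{j}A^{k-2j}P_{m+k-2j}$ and applying $\lambda P_{j} = A^{-1}P_{j+1} + AP_{j-1}$), one verifies $T_{m}(n_{1}+1,n_{2}) = A^{-1}T_{m-1}(n_{1},n_{2}) + AT_{m+1}(n_{1},n_{2})$ and $T_{m}(n_{1},n_{2}+1) = AT_{m-1}(n_{1},n_{2}) + A^{-1}T_{m+1}(n_{1},n_{2})$, i.e., the hypotheses of Lemma~\ref{lem:Tm}. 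Hence it suffices to prove $T_{m}(0,0) \in S_{\nu_{2}}({\bf D}^{2}_{\beta_{1}})$ for all $m \in \mathbb{Z}$ and then invoke Lemma~\ref{lem:Tm}.

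For the base case I would work modulo $S_{\nu_{2}}({\bf D}^{2}_{\beta_{1}})$, keeping in mind that here $\nu_{0} = \nu_{1} + \nu_{2} = -1$, so the two families of generators of $S_{\nu_{2}}({\bf D}^{2}_{\beta_{1}})$ give $F_{j}x_{-\nu_{2}} \equiv x_{\nu_{1}}F_{-1-j}$ and $x_{\nu_{1}}F_{j}x_{-\nu_{2}} \equiv R_{j+1}$, while Lemma~\ref{lem:Relations_Bracket_Two_Fibers_nu2} supplies $(x_{\nu_{1}})^{\varepsilon}F_{j}x_{-\nu_{2}-1} \equiv -A^{3}(x_{\nu_{1}})^{\varepsilon}F_{j}x_{-\nu_{2}}$. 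For $\varepsilon = 0$, rewrite $x_{m} = x_{\nu_{1}}F_{\nu_{1}-m}$ and $x_{m-1} = x_{\nu_{1}}F_{\nu_{1}-m+1}$ via \eqref{eqn:kbsm_xm}, and $P_{-m-\nu_{2}} = -A^{-2}F_{m+\nu_{2}} + A^{-1}F_{m+\nu_{2}-1}$ via \eqref{eqn:rel_F_to_P}; applying the congruences above to $F_{m+\nu_{2}}x_{-\nu_{2}-1}$ and $F_{m+\nu_{2}-1}x_{-\nu_{2}-1}$ and using $-1-(m+\nu_{2}) = \nu_{1}-m$ gives $A^{-1}P_{-m-\nu_{2}}x_{-\nu_{2}-1} \equiv x_{\nu_{1}}F_{\nu_{1}-m} - Ax_{\nu_{1}}F_{\nu_{1}-m+1} = x_{m} - Ax_{m-1}$, so $T_{m}(0,0) \equiv 0$. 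For $\varepsilon = 1$, use $x_{\nu_{1}}x_{m} = R_{m-\nu_{1}}$ and $x_{\nu_{1}}x_{m-1} = R_{m-1-\nu_{1}}$ from \eqref{eqn:kbsm_xvxm}, expand $P_{-m-\nu_{2}}$ via \eqref{eqn:rel_F_to_P}, and combine Lemma~\ref{lem:Relations_Bracket_Two_Fibers_nu2} with $x_{\nu_{1}}F_{j}x_{-\nu_{2}} \equiv R_{j+1}$ to get $x_{\nu_{1}}F_{m+\nu_{2}}x_{-\nu_{2}-1} \equiv -A^{3}R_{m+\nu_{2}+1}$; since $m+\nu_{2}+1 = m-\nu_{1}$ this yields $A^{-1}x_{\nu_{1}}P_{-m-\nu_{2}}x_{-\nu_{2}-1} \equiv R_{m-\nu_{1}} - AR_{m-1-\nu_{1}} = x_{\nu_{1}}x_{m} - Ax_{\nu_{1}}x_{m-1}$, so again $T_{m}(0,0) \equiv 0$. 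In both cases, retaining the explicit generators of $S_{\nu_{2}}({\bf D}^{2}_{\beta_{1}})$ used (rather than only the congruences) exhibits $T_{m}(0,0)$ as an explicit $R$-linear combination of those generators, exactly as in the corresponding step of Lemma~\ref{lem:Relations_Bracket_Two_Fibers_v0Minus1_I}.

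No new ingredients are needed beyond \eqref{eqn:kbsm_xm}, \eqref{eqn:kbsm_xvxm}, \eqref{eqn:rel_F_to_P}, Lemma~\ref{lem:Relations_Bracket_Two_Fibers_nu2}, Lemma~\ref{lem:Tm}, and the standard polynomial and skein recurrences. I expect the only mildly delicate point to be the bookkeeping in the base case — tracking the substitutions $\nu_{1} = -1-\nu_{2}$ and $R_{m-\nu_{0}} = R_{m+1}$, and converting the congruences into honest membership statements in $S_{\nu_{2}}({\bf D}^{2}_{\beta_{1}})$; the verification of the $T_{m}$-recurrences, while required, is purely mechanical.
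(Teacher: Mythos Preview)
Your proposal is correct and follows essentially the same approach as the paper: reduce to the base case $n_{1}=n_{2}=0$ via the $T_{m}$-recurrences and Lemma~\ref{lem:Tm}, then handle $\varepsilon=0$ using \eqref{eqn:kbsm_xm}, \eqref{eqn:rel_F_to_P}, and Lemma~\ref{lem:Relations_Bracket_Two_Fibers_nu2}, and $\varepsilon=1$ using \eqref{eqn:kbsm_xvxm}, \eqref{eqn:rel_F_to_P}, and Lemma~\ref{lem:Relations_Bracket_Two_Fibers_nu2}. The only cosmetic difference is that the paper treats the base cases first and displays the explicit $S_{\nu_{2}}({\bf D}^{2}_{\beta_{1}})$-decomposition of $T_{m}(0,0)$, whereas you work modulo $S_{\nu_{2}}({\bf D}^{2}_{\beta_{1}})$ and note that unwinding the congruences yields the same explicit decomposition.
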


\begin{proof}
For $\varepsilon = 0$:
\begin{eqnarray*}
& & x_{m} - A x_{m-1} - A^{-1} P_{-m-\nu_{2}}x_{-\nu_{2}-1} \\
&=& x_{\nu_{1}}F_{\nu_{1}-m} - A x_{\nu_{1}}F_{\nu_{1}-m+1} + A^{2} (A^{-1}F_{m+\nu_{2}-1} - A^{-2}F_{m+\nu_{2}})x_{-\nu_{2}} \\
&-& A^{-1}(P_{-m-\nu_{2}}x_{-\nu_{2}-1} + A^{3}P_{-m-\nu_{2}}x_{-\nu_{2}}) \\
&=& -(F_{m+\nu_{2}}x_{-\nu_{2}} - x_{\nu_{1}}F_{\nu_{1}-m}) + A (F_{m+\nu_{2}-1}x_{-\nu_{2}} - x_{\nu_{1}}F_{\nu_{1}-m+1}) \\
&-& A^{-1}(P_{-m-\nu_{2}}x_{-\nu_{2}-1} + A^{3}P_{-m-\nu_{2}}x_{-\nu_{2}}) \in S_{\nu_{2}}({\bf D}^{2}_{\beta_{1}})
\end{eqnarray*}
by \eqref{eqn:kbsm_xm}, \eqref{eqn:rel_F_to_P}, and Lemma~\ref{lem:Relations_Bracket_Two_Fibers_nu2}.

For $\varepsilon = 1$:
\begin{eqnarray*}
& & x_{\nu_{1}}x_{m} - A x_{\nu_{1}}x_{m-1} - A^{-1} x_{\nu_{1}}P_{-m-\nu_{2}}x_{-\nu_{2}-1} \\
&=& R_{m-\nu_{1}} - A R_{m-1-\nu_{1}} + A^{2} x_{\nu_{1}}(A^{-1}F_{m+\nu_{2}-1} - A^{-2}F_{m+\nu_{2}})x_{-\nu_{2}} \\
&-& A^{-1} (x_{\nu_{1}}P_{-m-\nu_{2}}x_{-\nu_{2}-1} + A^{3}x_{\nu_{1}}P_{-m-\nu_{2}}x_{-\nu_{2}}) \\
&=& - (x_{\nu_{1}}F_{m+\nu_{2}}x_{-\nu_{2}} - R_{m-\nu_{1}}) + A (x_{\nu_{1}}F_{m+\nu_{2}-1}x_{-\nu_{2}} - R_{m-1-\nu_{1}}) \\
&-& A^{-1} (x_{\nu_{1}}P_{-m-\nu_{2}}x_{-\nu_{2}-1} + A^{3}x_{\nu_{1}}P_{-m-\nu_{2}}x_{-\nu_{2}}) \in S_{\nu_{2}}({\bf D}^{2}_{\beta_{1}})
\end{eqnarray*}
by \eqref{eqn:kbsm_xvxm}, \eqref{eqn:rel_F_to_P}, and Lemma~\ref{lem:Relations_Bracket_Two_Fibers_nu2}. Furthermore, taking
\begin{equation*}
T_{m}(n_{1},n_{2}) = (x_{\nu_{1}})^{\varepsilon} \lambda^{n_{1}} x_{m} \lambda^{n_{2}} - A (x_{\nu_{1}})^{\varepsilon} \lambda^{n_{1}} x_{m-1} \lambda^{n_{2}} - A^{-1} (x_{\nu_{1}})^{\varepsilon} \lambda^{n_{1}} P_{-m-\nu_{2},n_{2}} x_{-\nu_{2}-1},
\end{equation*}
as in our proof of Lemma~\ref{lem:Relations_Bracket_Two_Fibers_v0Minus1_I} using the definition of $P_{m}$, $P_{m,k}$, and Lemma~\ref{lem:annulus_for_any_kn_bracket_2_beta0}, one verifies that
\begin{equation*}
T_{m}(n_{1}+1,n_{2}) = A^{-1}T_{m-1}(n_{1},n_{2}) + AT_{m+1}(n_{1},n_{2}),
\end{equation*}
\begin{equation*}
T_{m}(n_{1},n_{2}+1) = AT_{m-1}(n_{1},n_{2}) + A^{-1}T_{m+1}(n_{1},n_{2}).
\end{equation*}
Furthermore, as we showed $T_{m}(0,0) \in S_{\nu_{2}}({\bf D}^{2}_{\beta_{1}})$, so the statement of Lemma~\ref{lem:Relations_Bracket_Two_Fibers_v0Minus1_II} follows by Lemma~\ref{lem:Tm}.
\end{proof}

\begin{corollary}
\label{cor:SubmodulesOfSD}
$\ker(i_{*}) = S_{\nu_{2}}({\bf D}^{2}_{\beta_{1}})$.
\end{corollary}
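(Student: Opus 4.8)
The plan is to prove the two inclusions $\ker(i_{*}) \subseteq S_{\nu_{2}}({\bf D}^{2}_{\beta_{1}})$ and $S_{\nu_{2}}({\bf D}^{2}_{\beta_{1}}) \subseteq \ker(i_{*})$ separately. For the first inclusion, recall that we already identified an explicit generating set for $\ker(i_{*})$, namely the elements
\begin{equation*}
(x_{\nu_{1}})^{\varepsilon} \lambda^{n_{1}} P_{m,n_{2}} - A (x_{\nu_{1}})^{\varepsilon} \lambda^{n_{1}} P_{m-1,n_{2}} - A^{-1} (x_{\nu_{1}})^{\varepsilon} \lambda^{n_{1}} x_{-m-\nu_{2}} \lambda^{n_{2}} x_{-\nu_{2}-1}
\end{equation*}
and
\begin{equation*}
(x_{\nu_{1}})^{\varepsilon} \lambda^{n_{1}} x_{m} \lambda^{n_{2}} - A (x_{\nu_{1}})^{\varepsilon} \lambda^{n_{1}} x_{m-1} \lambda^{n_{2}} - A^{-1} (x_{\nu_{1}})^{\varepsilon} \lambda^{n_{1}} P_{-m-\nu_{2},n_{2}} x_{-\nu_{2}-1}
\end{equation*}
for $\varepsilon \in \{0,1\}$, $n_{1},n_{2}\geq 0$, $m\in\mathbb{Z}$. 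Lemma~\ref{lem:Relations_Bracket_Two_Fibers_v0Minus1_I} and Lemma~\ref{lem:Relations_Bracket_Two_Fibers_v0Minus1_II} assert precisely that each of these generators lies in $S_{\nu_{2}}({\bf D}^{2}_{\beta_{1}})$. Since $S_{\nu_{2}}({\bf D}^{2}_{\beta_{1}})$ is an $R$-submodule, it contains the submodule generated by these elements, which is $\ker(i_{*})$; hence $\ker(i_{*}) \subseteq S_{\nu_{2}}({\bf D}^{2}_{\beta_{1}})$.

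For the reverse inclusion, it suffices to check that each of the two families of generators of $S_{\nu_{2}}({\bf D}^{2}_{\beta_{1}})$, namely $F_{m}x_{-\nu_{2}} - x_{\nu_{1}}F_{-1-m}$ and $x_{\nu_{1}}F_{m}x_{-\nu_{2}} - R_{m+1}$ for $m\in\mathbb{Z}$, maps to zero in $S\mathcal{D}_{\nu_{1},\nu_{2}}$ under $i_{*}$. These relations are exactly the identities of Lemma~\ref{lem:rel_F_and_FR_TwoFibers} specialized to $\nu_{0} = -1$ (so that $\nu_{0} - m = -1 - m$ and $m - \nu_{0} = m+1$): in $S\mathcal{D}_{\nu_{1},\nu_{2}}$ we have $F_{m}x_{-\nu_{2}} = x_{\nu_{1}}F_{\nu_{0}-m}$ and $x_{\nu_{1}}F_{m}x_{-\nu_{2}} = R_{m-\nu_{0}}$. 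Since $i_{*}$ sends the class of an arrow diagram in ${\bf D}^{2}_{\beta_{1}}$ to its class in $S\mathcal{D}_{\nu_{1},\nu_{2}}$, each generator of $S_{\nu_{2}}({\bf D}^{2}_{\beta_{1}})$ is carried to $0$, and therefore $S_{\nu_{2}}({\bf D}^{2}_{\beta_{1}}) \subseteq \ker(i_{*})$.

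The main obstacle — and the reason the preceding four lemmas were developed — is entirely contained in the first inclusion. The base cases $n_{1} = n_{2} = 0$ of Lemma~\ref{lem:Relations_Bracket_Two_Fibers_v0Minus1_I} and Lemma~\ref{lem:Relations_Bracket_Two_Fibers_v0Minus1_II} require careful bookkeeping: one rewrites the $x$-terms via \eqref{eqn:kbsm_xm} and \eqref{eqn:kbsm_xvxm}, uses \eqref{eqn:rel_F_to_P} to convert between the $P$- and $F$-families, and then recognizes the resulting expression as an $R$-linear combination of the defining generators of $S_{\nu_{2}}({\bf D}^{2}_{\beta_{1}})$ together with the auxiliary elements $(x_{\nu_{1}})^{\varepsilon}F_{m}x_{-\nu_{2}-1} + A^{3}(x_{\nu_{1}})^{\varepsilon}F_{m}x_{-\nu_{2}}$ supplied by Lemma~\ref{lem:Relations_Bracket_Two_Fibers_nu2}. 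The passage from the base cases to arbitrary $n_{1},n_{2}$ is then purely formal: one verifies that the families $T_{m}(n_{1},n_{2})$ satisfy the two recursions of Lemma~\ref{lem:Tm} (using only the defining recursion of $P_{m,k}$ and the elementary relations $\lambda P_{m} = A^{-1}P_{m+1} + AP_{m-1}$, $\lambda x_{m} = A^{-1}x_{m-1} + Ax_{m+1}$, $x_{m}\lambda = Ax_{m-1} + A^{-1}x_{m+1}$ from Lemma~\ref{lem:annulus_for_any_kn_bracket_2_beta0}), and Lemma~\ref{lem:Tm} then propagates membership in $S_{\nu_{2}}({\bf D}^{2}_{\beta_{1}})$ from the $(0,0)$-level to all levels. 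With both lemmas in hand, Corollary~\ref{cor:SubmodulesOfSD} follows immediately as the conjunction of the two inclusions above.
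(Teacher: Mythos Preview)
Your proof is correct and follows essentially the same approach as the paper: both inclusions are argued exactly as you describe, with $\ker(i_*)\subseteq S_{\nu_2}(\mathbf{D}^2_{\beta_1})$ coming from Lemmas~\ref{lem:Relations_Bracket_Two_Fibers_v0Minus1_I} and~\ref{lem:Relations_Bracket_Two_Fibers_v0Minus1_II}, and the reverse inclusion from Lemma~\ref{lem:rel_F_and_FR_TwoFibers} specialized to $\nu_0=-1$. Your additional paragraph recapitulating the structure of those lemmas is accurate but not needed for the corollary itself.
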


\begin{proof}
It follows from Lemma~\ref{lem:Relations_Bracket_Two_Fibers_v0Minus1_I} and Lemma~\ref{lem:Relations_Bracket_Two_Fibers_v0Minus1_II} that $\ker(i_{*}) \subseteq S_{\nu_{2}}({\bf D}^{2}_{\beta_{1}})$. As we showed in Lemma~\ref{lem:rel_F_and_FR_TwoFibers} that $F_{m}x_{-\nu_{2}} - x_{\nu_{1}} F_{-1-m} = 0$ and $x_{\nu_{1}}F_{m}x_{-\nu_{2}} - R_{m+1} = 0$ in $S\mathcal{D}_{\nu_{1},\nu_{2}} = S\mathcal{D}({\bf D}^{2}_{\beta_{1}})/\ker(i_{*})$, hence
\begin{equation*}
F_{m}x_{-\nu_{2}} - x_{\nu_{1}} F_{-1-m}, \,\, x_{\nu_{1}}F_{m}x_{-\nu_{2}} - R_{m+1} \in \ker(i_{*}).    
\end{equation*}
It follows that $S_{\nu_{2}}({\bf D}^{2}_{\beta_{1}})\subseteq \ker(i_{*})$.
\end{proof}

Since
\begin{equation*}
S\mathcal{D}({\bf D}^{2}_{\beta_{1}}) \cong R\Sigma^{\prime}_{\nu_{1}} \cong RX_{0}\oplus RX_{1},
\end{equation*}
where $X_{0} = \{\lambda^{n}\mid n\geq 0\}$ and $X_{1} = \{x_{\nu_{1}}\lambda^{n}\mid n\geq 0\}$, to compute $S\mathcal{D}({\bf D}^{2}_{\beta_{1}})/S_{\nu_{2}}({\bf D}^{2}_{\beta_{1}})$, we start by changing the basis of $RX_{0}\oplus RX_{1}$ and then we represent generators of $\Sigma^{\prime}_{\nu_{1}}$ in terms of this basis. 

For $m \geq 0$, let
\begin{equation*}
\varphi_{m} = Q_{m+1} - 2Q_{m} + 2Q_{m-1} - \cdots + 2(-1)^{m-1}Q_{2} + (-1)^{m}Q_{1}
\end{equation*}
and
\begin{equation*}
\psi_{m} = x_{\nu_{1}}(Q_{m+1} - Q_{m} + \cdots + (-1)^{m-1}Q_{2} + (-1)^{m}Q_{1}).
\end{equation*}
It is easy to check 
\begin{equation*}
RX_{0} = R\{\varphi_{m}\mid m\geq 0\} \quad \text{and} \quad RX_{1} = R\{ \psi_{m} \mid m\geq 0\}.    
\end{equation*}
Therefore,
\begin{equation*}
S\mathcal{D}({\bf D}^{2}_{\beta_{1}}) \cong R\Sigma^{\prime}_{\nu_{1}} \cong R\{\varphi_{m}\}_{m\geq 0}\oplus R\{\psi_{m}\}_{m\geq 0}.    
\end{equation*}

Let $q_{k} = A^{-k} - A^{k}$ and define $\{\Phi_{m}\}_{m \in \mathbb{Z}}$ and $\{\Psi_{m}\}_{m \in \mathbb{Z}}$ as follows:
\begin{equation*}
\Phi_{m} = q_{2m+2}\varphi_{m} \quad \text{and} \quad \Psi_{m} = q_{2m+1}\psi_{m-1} 
\end{equation*}
when $m \geq 1$, $\Phi_{0} = \Phi_{-1} = 0 = \Psi_{0} = \Psi_{-1}$, and
\begin{equation*}
\Phi_{m} = -\Phi_{-m-2} \quad \text{and} \quad \Psi_{m} = \Psi_{-m-1}
\end{equation*}
for $m \leq -2$. Let 
\begin{equation*}
S_{2}(\Phi\oplus\Psi) = R\{\Phi_{m}\}_{m\geq 1}\oplus R\{\Psi_{m}\}_{m\geq 1}.
\end{equation*}
be a free $R$-submodule of $R\Sigma^{\prime}_{\nu_{1}} \cong R\{\varphi_{m}\}_{m\geq 0}\oplus R\{\psi_{m}\}_{m\geq 0}$ with basis $\{\Phi_{m}\oplus\Psi_{k}\mid m,k\geq 1\}$.

\begin{lemma}
\label{lem:torsion_relations}
Suppose that $(u_{m})_{m\in \mathbb{Z}}$ is a sequence in $R$ which for all $m\in \mathbb{Z}$ satisfies the relation,
\begin{equation*}
u_{m+1} = zu_{m} - u_{m-1},
\end{equation*}
where $z = A^{-2} + A^{2}$. Let $(B_{m})_{m\in \mathbb{Z}}$ be a sequence in $S\mathcal{D}({\bf D}^{2}_{\beta_{1}})$ and for any $m > 0$, let
\begin{equation*}
S_{m} = u_{m+1}\sum_{i = 0}^{m-1}(-1)^{i}B_{m-i} 
\end{equation*}
and for $m \leq 0$, let
\begin{equation*}
S_{m} = u_{m+1}\sum_{i = 0}^{-m-1}(-1)^{i}B_{m+i+1}.
\end{equation*}
Then
\begin{equation}
\label{eqn:uB_to_S}
u_{m+1}B_{m} + u_{m-1}B_{m-1} = S_{m} + zS_{m-1} + S_{m-2}    
\end{equation}
for any $m \in \mathbb{Z}$. 
\end{lemma}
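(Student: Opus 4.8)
The plan is to collapse the two-case definition of $(S_m)_{m\in\mathbb Z}$ into a single formula and then to deduce \eqref{eqn:uB_to_S} from an elementary cancellation together with the three-term recurrence satisfied by $(u_m)$. First I would introduce, for every $m\in\mathbb Z$, the element
\[
V_m=\begin{cases}
\displaystyle\sum_{i=0}^{m-1}(-1)^{i}B_{m-i}, & m\ge 0,\\
\displaystyle\sum_{i=0}^{-m-1}(-1)^{i}B_{m+1+i}, & m\le -1,
\end{cases}
\]
with the convention that an empty sum (the case $m=0$) equals $0$. Comparing this with the definition of $S_m$ in the statement, one sees at once that $S_m=u_{m+1}V_m$ for all $m\in\mathbb Z$: for $m>0$ this is the given formula verbatim, while for $m\le 0$ the alternating sum appearing in $S_m$ is precisely the one defining $V_m$, and $V_0=0$ matches $S_0=0$.

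The only point that requires an argument is the identity
\[
V_m+V_{m-1}=B_m\qquad\text{for every }m\in\mathbb Z .
\]
Splitting off the $i=0$ term of the alternating sum and reindexing the rest shows that $V_m=B_m-V_{m-1}$ for $m\ge 1$ and $V_m=B_{m+1}-V_{m+1}$ for $m\le -1$; the second relation, written with $k=m+1$, reads $V_k+V_{k-1}=B_k$ for every $k\le 0$. Since the two ranges together exhaust $\mathbb Z$, the displayed identity holds for all $m$ (in particular $V_1=B_1$ and $V_{-1}=B_0$, consistently with $V_0=0$).

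Granting this, the lemma is a one-line computation. The hypothesis $u_{m+1}=zu_m-u_{m-1}$ rearranges to $zu_m=u_{m+1}+u_{m-1}$, hence for any $m\in\mathbb Z$,
\begin{align*}
S_m+zS_{m-1}+S_{m-2}
&=u_{m+1}V_m+zu_mV_{m-1}+u_{m-1}V_{m-2}\\
&=u_{m+1}V_m+(u_{m+1}+u_{m-1})V_{m-1}+u_{m-1}V_{m-2}\\
&=u_{m+1}(V_m+V_{m-1})+u_{m-1}(V_{m-1}+V_{m-2})\\
&=u_{m+1}B_m+u_{m-1}B_{m-1},
\end{align*}
which is exactly \eqref{eqn:uB_to_S}. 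I do not anticipate a genuine obstacle here: once $(S_m)$ has been rewritten through $(V_m)$ everything is formal manipulation of the linear recurrence, and the only place where care is needed is the bookkeeping around the boundary indices $m=-1,0,1$, which is precisely what the parenthetical check in the second paragraph handles.
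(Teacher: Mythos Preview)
Your argument is correct. It is in spirit the same computation as the paper's proof---both ultimately rely on the recurrence $zu_m=u_{m+1}+u_{m-1}$ and on the telescoping of the alternating sums---but your packaging is cleaner. The paper treats the ranges $m=1$, $m\ge 2$, and $m\le 0$ separately, in each case writing $u_{m+1}B_m$ and $u_{m-1}B_{m-1}$ in terms of $S_m$ plus a remainder and then checking that the remainders combine to $zS_{m-1}+S_{m-2}$. By introducing $V_m$ with $S_m=u_{m+1}V_m$ and isolating the single identity $V_m+V_{m-1}=B_m$, you collapse those case analyses into one uniform four-line computation; the only casework that survives is the verification of $V_m+V_{m-1}=B_m$ at the boundary indices, which you handle. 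Nothing is lost and the exposition gains in transparency.
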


\begin{proof}
It is clear that \eqref{eqn:uB_to_S} holds for $m = 1$. For $m \geq 2$, we see that
\begin{equation*}
u_{m+1}B_{m} = S_{m} - u_{m+1}\sum_{i = 1}^{m-1}(-1)^{i}B_{m-i} = S_{m} - (zu_{m}-u_{m-1})\sum_{i = 1}^{m-1}(-1)^{i}B_{m-i}
\end{equation*}
and
\begin{equation*}
u_{m-1}B_{m-1} = u_{m-1}\sum_{i = 2}^{m-1}(-1)^{i}B_{m-i} - u_{m-1}\sum_{i = 1}^{m-1}(-1)^{i}B_{m-i}.
\end{equation*}
Therefore,
\begin{eqnarray*}
u_{m+1}B_{m} + u_{m-1}B_{m-1} &=& S_{m} + zu_{m} \sum_{i = 0}^{m-2}(-1)^{i}B_{m-1-i} + u_{m-1} \sum_{i = 0}^{m-3}(-1)^{i}B_{m-2-i} \\
&=& S_{m} + zS_{m-1} + S_{m-2}.
\end{eqnarray*}

Furthermore, for $m \leq 0$ we see that
\begin{equation*}
u_{m-1}B_{m-1} = S_{m-2} - u_{m-1}\sum_{i = 1}^{-m+1}(-1)^{i}B_{m-1+i} = S_{m-2} - (zu_{m}-u_{m+1})\sum_{i = 1}^{-m+1}(-1)^{i}B_{m-1+i}
\end{equation*}
and
\begin{equation*}
u_{m+1}B_{m} = u_{m+1}\sum_{i = 2}^{-m+1}(-1)^{i}B_{m-1+i} - u_{m+1}\sum_{i = 1}^{-m+1}(-1)^{i}B_{m-1+i}.
\end{equation*}
Therefore,
\begin{eqnarray*}
u_{m+1}B_{m} + u_{m-1}B_{m-1} &=& S_{m-2} + zu_{m} \sum_{i = 0}^{-m}(-1)^{i}B_{m+i} + u_{m+1} \sum_{i = 0}^{-m-1}(-1)^{i}B_{m+1+i} \\
&=& S_{m} + zS_{m-1} + S_{m-2}.
\end{eqnarray*}
Consequently, \eqref{eqn:uB_to_S} holds for any $m \in \mathbb{Z}$.
\end{proof}

\begin{lemma}
\label{lem:Relation_rel_xFx_R_to_Phi_TwoFibers}
In $S\mathcal{D}({\bf D}^{2}_{\beta_{1}})$, 
for all $m\in \mathbb{Z}$,
\begin{equation*}
x_{\nu_{1}} F_{m} x_{-\nu_{2}} - R_{m+1} = -A^{-m-1}(\Phi_{m} + (A^{-2}+A^{2})\Phi_{m-1} + \Phi_{m-2}).
\end{equation*}
\end{lemma}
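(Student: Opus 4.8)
The plan is to evaluate both sides of the identity inside $S\mathcal{D}({\bf D}^{2}_{\beta_{1}}) \cong R\Sigma'_{\nu_{1}}$ as explicit $R$-combinations of the polynomials $\delta_{k} := Q_{k+1}-Q_{k}$ and then match them, the two halves being linked through Lemma~\ref{lem:torsion_relations}. To reduce the left-hand side, note that in the present case $\nu_{0}=\nu_{1}+\nu_{2}=-1$, so relation \eqref{eqn:kbsm_xm} (valid in $S\mathcal{D}({\bf D}^{2}_{\beta_{1}})$ by Remark~\ref{rem:Prop_Of_Bracket_Sigma}), applied with $w_{x}=x_{-\nu_{2}}$, gives
\[
x_{\nu_{1}}F_{m}x_{-\nu_{2}} = x_{\nu_{1}-m}x_{-\nu_{2}}.
\]
Now apply Lemma~\ref{lem:IdentityTwoFiberCase}: equation \eqref{eqn:IdentityTwoFiberCase1} with $k=m$ when $m\geq 0$, equation \eqref{eqn:IdentityTwoFiberCase2} with $k=-m$ when $m\leq 0$, so that the two $x$-indices become $\nu_{1}$ and $-\nu_{2}-m$; then \eqref{eqn:kbsm_xvxm} together with $-\nu_{2}-\nu_{1}=-\nu_{0}=1$ turns $x_{\nu_{1}}x_{-\nu_{2}-m}$ into $R_{1-m}$. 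In both ranges one obtains $x_{\nu_{1}}F_{m}x_{-\nu_{2}} = A^{-2m}R_{1-m}+\Sigma_{m}$, where $\Sigma_{m}$ is the accompanying sum of terms $P_{j}-A^{-2}P_{j+2}$ (resp. $P_{j+2}-A^{2}P_{j}$). Using \eqref{eqn:rel_Pn} and $Q_{k+2}=\lambda Q_{k+1}-Q_{k}$, each such term equals $A^{j+2}(\delta_{j+1}+\delta_{j+2})-A^{j-2}(\delta_{j-1}+\delta_{j})$, and the monomial prefactors $A^{\mp 2i}$ are exactly what is needed to make $\Sigma_{m}$ telescope; it collapses to $A^{m+1}(\delta_{m}+\delta_{m+1})-A^{1-3m}(\delta_{m-1}+\delta_{m-2})$. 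Since $R_{m}=A^{m}\delta_{m}-A^{m-4}\delta_{m-2}$ and, from $Q_{-k}=-Q_{k}$, $\delta_{-k}=\delta_{k-1}$ (so $\delta_{0}=\delta_{-1}=1$), one rewrites $A^{-2m}R_{1-m}=A^{1-3m}\delta_{m-2}-A^{-3m-3}\delta_{m}$; adding and collecting,
\[
x_{\nu_{1}}F_{m}x_{-\nu_{2}}-R_{m+1} = (A^{m+1}-A^{-3m-3})\delta_{m}+(A^{m-3}-A^{1-3m})\delta_{m-1}.
\]

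For the right-hand side I would invoke Lemma~\ref{lem:torsion_relations} with $u_{m}=q_{2m}=A^{-2m}-A^{2m}$ (which satisfies $u_{m+1}=zu_{m}-u_{m-1}$, $z=A^{2}+A^{-2}$) and $B_{m}=\delta_{m}$. One first checks $S_{m}=\Phi_{m}$ for every $m\in\mathbb{Z}$: for $m\geq 1$ this is exactly the identity $\varphi_{m}=\sum_{i=0}^{m-1}(-1)^{i}\delta_{m-i}$, which is immediate from the definition of $\varphi_{m}$; for $m\leq 0$ it follows from $\Phi_{m}=-\Phi_{-m-2}$, $q_{2m+2}=-q_{-2m-2}$ and $\delta_{-k}=\delta_{k-1}$, the two surplus terms in the reflected sum cancelling because $\delta_{0}=\delta_{-1}$. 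Then Lemma~\ref{lem:torsion_relations} yields $\Phi_{m}+z\Phi_{m-1}+\Phi_{m-2}=q_{2m+2}\delta_{m}+q_{2m-2}\delta_{m-1}$, whence
\[
-A^{-m-1}\bigl(\Phi_{m}+(A^{-2}+A^{2})\Phi_{m-1}+\Phi_{m-2}\bigr) = (A^{m+1}-A^{-3m-3})\delta_{m}+(A^{m-3}-A^{1-3m})\delta_{m-1},
\]
which is precisely the expression found for $x_{\nu_{1}}F_{m}x_{-\nu_{2}}-R_{m+1}$. This proves the lemma.

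I expect the main obstacle to be the telescoping step: turning the sum $\Sigma_{m}$ produced by Lemma~\ref{lem:IdentityTwoFiberCase} into a genuine telescoping sum while tracking precisely the powers of $A$ and the reflection identities $Q_{-k}=-Q_{k}$ and $\delta_{-k}=\delta_{k-1}$ (including the degenerate equality $\delta_{0}=\delta_{-1}$), and carrying this out uniformly for $m\geq 0$ and $m\leq 0$ even though \eqref{eqn:IdentityTwoFiberCase1} and \eqref{eqn:IdentityTwoFiberCase2} are used in the two ranges — fortunately both specializations collapse to the same closed form. The identification $S_{m}=\Phi_{m}$ needed before applying Lemma~\ref{lem:torsion_relations} is then routine once the reflection symmetry of $\delta$ is in hand.
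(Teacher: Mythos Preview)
Your proposal is correct and follows essentially the same route as the paper's proof: reduce $x_{\nu_{1}}F_{m}x_{-\nu_{2}}$ to $x_{\nu_{1}-m}x_{-\nu_{2}}$ via \eqref{eqn:kbsm_xm}, apply Lemma~\ref{lem:IdentityTwoFiberCase} and \eqref{eqn:kbsm_xvxm} to obtain $A^{-2m}R_{1-m}$ plus a telescoping $P$-sum, collapse everything via \eqref{eqn:rel_Pn} to the closed form $-A^{-m-1}(q_{2m+2}\delta_{m}+q_{2m-2}\delta_{m-1})$, and then invoke Lemma~\ref{lem:torsion_relations} with $u_{m}=q_{2m}$, $B_{m}=\delta_{m}$ after verifying $S_{m}=\Phi_{m}$ (the surplus-term cancellation $\delta_{0}=\delta_{-1}$ being exactly what the paper uses). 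The only cosmetic difference is your introduction of the shorthand $\delta_{k}=Q_{k+1}-Q_{k}$ and the explicit identity $R_{m}=A^{m}\delta_{m}-A^{m-4}\delta_{m-2}$, which streamlines the bookkeeping but does not change the argument.
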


\begin{proof}
We first show that 
\begin{equation}
\label{eqn:pf_Relation_rel_xFx_R_to_Phi_TwoFibers_0}
x_{\nu_{1}} F_{m} x_{-\nu_{2}} - R_{m+1} = -A^{-m-1}(q_{2m+2}(Q_{m+1} - Q_{m})+q_{2m-2}(Q_{m} - Q_{m-1}))
\end{equation}
for all $m \in \mathbb{Z}$. For $m = 0$, since $F_{0} = Q_{1} = 1$ and 
\begin{equation*}
x_{\nu_{1}} F_{m} x_{-\nu_{2}} = x_{\nu_{1}} F_{0} x_{-\nu_{2}} = R_{-\nu_{2}-\nu_{1}} =  R_{1},
\end{equation*}
it follows that
\begin{equation*}
x_{\nu_{1}} F_{m} x_{-\nu_{2}} - R_{m+1} = x_{\nu_{1}} F_{0} x_{-\nu_{2}} - R_{1} =  0.
\end{equation*}
Moreover, the right hand side of \eqref{eqn:pf_Relation_rel_xFx_R_to_Phi_TwoFibers_0} when $m = 0$ is
\begin{equation*}
-A^{-1}(q_{2}(Q_{1} - Q_{0})+q_{-2}(Q_{0} - Q_{-1})) = -A^{-1}(q_{2}+q_{-2}) = 0,
\end{equation*}
so \eqref{eqn:pf_Relation_rel_xFx_R_to_Phi_TwoFibers_0} holds for $m = 0$.

Assume that $m \geq 1$. Using \eqref{eqn:kbsm_xm}, \eqref{eqn:IdentityTwoFiberCase1}, and \eqref{eqn:kbsm_xvxm}, we see that
\begin{eqnarray}
x_{\nu_{1}} F_{m} x_{-\nu_{2}} &=& x_{\nu_{1}-m}x_{-\nu_{2}} = A^{-2m}x_{\nu_{1}}x_{-\nu_{2}-m} + \sum_{i=0}^{m-1}A^{-2i}(P_{-\nu_{0}+ m-2-2i}-A^{-2}P_{-\nu_{0} + m-2i}) \notag\\
&=& A^{-2m}R_{- m + 1} + \sum_{i = 0}^{m-1}A^{-2i}P_{m-1-2i}- \sum_{i = 0}^{m-1}A^{-2i-2}P_{m+1-2i}. \label{eqn:pf_Relation_rel_xFx_R_to_Phi_TwoFibers_1}
\end{eqnarray}
Since $P_{i} = -A^{i+2}Q_{i+1}+A^{i-2}Q_{i-1}$ (see \eqref{eqn:rel_Pn}), it follows that
\begin{eqnarray}
\sum_{i=0}^{m-1} A^{-2i}P_{m-1-2i} &=& - \sum_{i=0}^{m-1} A^{m+1-4i} Q_{m-2i} + \sum_{i=0}^{m-1} A^{m-3-4i} Q_{m-2-2i} \notag\\
&=& - A^{m+1} Q_{m} + A^{-3m+1} Q_{-m} \label{eqn:pf_Relation_rel_xFx_R_to_Phi_TwoFibers_2}
\end{eqnarray}
and consequently,
\begin{equation}
- \sum_{i=1}^{m} A^{-2i-2}P_{m+1-2i} = A^{m-3} Q_{m} - A^{-3m-3} Q_{-m}. \label{eqn:pf_Relation_rel_xFx_R_to_Phi_TwoFibers_3}
\end{equation}
Moreover, since by the definition $R_{j} = A^{-1}P_{j-1}-A^{-2}P_{j}$, it follows that
\begin{equation}
\label{eqn:pf_Relation_rel_xFx_R_to_Phi_TwoFibers_4}
A^{-2m}R_{-m+1} + A^{-2m-2}P_{-m+1} = A^{-2m-1}P_{-m} = -A^{-3m+1}Q_{-m+1} + A^{-3m-3}Q_{-m-1}
\end{equation}
and
\begin{equation}
\label{eqn:pf_Relation_rel_xFx_R_to_Phi_TwoFibers_5}
- R_{m+1} -A^{-2}P_{m+1} = -A^{-1}P_{m} = A^{m+1}Q_{m+1} - A^{m-3}Q_{m-1}.
\end{equation}
Therefore, by adding equations \eqref{eqn:pf_Relation_rel_xFx_R_to_Phi_TwoFibers_1}--\eqref{eqn:pf_Relation_rel_xFx_R_to_Phi_TwoFibers_5}, 
\begin{eqnarray*}
x_{\nu_{1}} F_{m} x_{-\nu_{2}} - R_{m+1} 
&=& -(A^{-3m-3} - A^{m+1}) (Q_{m+1} - Q_{m}) - (A^{-3m+1} - A^{m-3}) (Q_{m} - Q_{m-1}) \\
&=& -A^{-m-1}(q_{2m+2}(Q_{m+1} - Q_{m})+q_{2m-2}(Q_{m} - Q_{m-1})),
\end{eqnarray*}
which proves \eqref{eqn:pf_Relation_rel_xFx_R_to_Phi_TwoFibers_0} when $m \geq 1$.

Assume that $m \leq -1$.  Using \eqref{eqn:kbsm_xm}, \eqref{eqn:IdentityTwoFiberCase2}, and \eqref{eqn:kbsm_xvxm}, we see that
\begin{eqnarray}
x_{\nu_{1}}F_{m}x_{-\nu_{2}} &=& x_{\nu_{1}-m}x_{-\nu_{2}}
= A^{-2m}x_{\nu_{1}}x_{-\nu_{2}-m} + \sum_{i=0}^{-m-1}A^{2i}(P_{-\nu_{0}+m+2+2i}-A^{2}P_{-\nu_{0}+m+2i}) \notag\\
&=& A^{-2m}R_{-m+1} + \sum_{i=0}^{-m-1} A^{2i}P_{m+3+2i} - \sum_{i=0}^{-m-1} A^{2i+2}P_{m+1+2i}. \label{eqn:pf_Relation_rel_xFx_R_to_Phi_TwoFibers_6}
\end{eqnarray}
Since $P_{i} = -A^{i+2}Q_{i+1}+A^{i-2}Q_{i-1}$ (see \eqref{eqn:rel_Pn}), it follows that
\begin{eqnarray}
\sum_{i=-1}^{-m-2} A^{2i}P_{m+3+2i} &=& - \sum_{i=-1}^{-m-2} A^{m+5+4i} Q_{m+4+2i} + \sum_{i=-1}^{-m-2} A^{m+1+4i} Q_{m+2+2i} \notag\\
&=& - A^{-3m-3} Q_{-m} + A^{m-3} Q_{m} \label{eqn:pf_Relation_rel_xFx_R_to_Phi_TwoFibers_7}
\end{eqnarray}
and consequently,
\begin{equation}
\label{eqn:pf_Relation_rel_xFx_R_to_Phi_TwoFibers_8}
- \sum_{i=0}^{-m-1} A^{2i+2}P_{m+1+2i} = A^{-3m+1} Q_{-m} - A^{m+1} Q_{m}.
\end{equation}
Moreover, as it could easily be seen, \eqref{eqn:pf_Relation_rel_xFx_R_to_Phi_TwoFibers_4} and \eqref{eqn:pf_Relation_rel_xFx_R_to_Phi_TwoFibers_5} also hold for the case $m \leq -1$. Therefore, by adding equations \eqref{eqn:pf_Relation_rel_xFx_R_to_Phi_TwoFibers_4}--\eqref{eqn:pf_Relation_rel_xFx_R_to_Phi_TwoFibers_8}, 
\begin{eqnarray*}
x_{\nu_{1}} F_{m} x_{-\nu_{2}} - R_{m+1} 
&=& -(A^{-3m-3} - A^{m+1}) (Q_{m+1} - Q_{m}) - (A^{-3m+1} - A^{m-3}) (Q_{m} - Q_{m-1}) \\
&=& -A^{-m-1}(q_{2m+2}(Q_{m+1} - Q_{m})+q_{2m-2}(Q_{m} - Q_{m-1})),
\end{eqnarray*}
which proves \eqref{eqn:pf_Relation_rel_xFx_R_to_Phi_TwoFibers_0} when $m \leq -1$.

We showed that \eqref{eqn:pf_Relation_rel_xFx_R_to_Phi_TwoFibers_0} holds for all $m \in \mathbb{Z}$. Now let $u_{m} = q_{2m}$ and $B_{m} = Q_{m+1} - Q_{m}$, then one can easily check that
\begin{equation*}
u_{-m} = q_{-2m} = -q_{2m} = -u_{m}, \quad B_{-m} = Q_{-m+1} - Q_{-m} = -Q_{m-1} + Q_{m} = B_{m-1},
\end{equation*}
and
\begin{equation*}
u_{m+1} = (A^{-2}+A^{2})u_{m} - u_{m-1}.
\end{equation*}
Furthermore, $S_{m}$ defined in Lemma~\ref{lem:torsion_relations} becomes
\begin{equation*}
S_{m} = u_{m+1} \sum_{i=0}^{m-1}(-1)^{i}B_{m-i} = q_{2m+2}\varphi_{m} = \Phi_{m}
\end{equation*}
for $m \geq 1$, $S_{0} = 0 = \Phi_{0}$, $S_{-1} = u_{0}B_{0} = 0 = \Phi_{-1}$, and 
\begin{eqnarray*}
S_{m} &=& u_{m+1} \sum_{i=0}^{-m-1}(-1)^{i}B_{m+i+1} = -u_{-m-1}\sum_{i=0}^{-m-1}(-1)^{i}B_{-m-i-2} \\
&=& -S_{-m-2} - u_{-m-1}(-1)^{-m-2}(B_{0} - B_{-1}) = -S_{-m-2} = - \Phi_{-m-2} = \Phi_{m}
\end{eqnarray*}
for $m \leq -2$. It follows that $S_{m} = \Phi_{m}$ for all $m \in \mathbb{Z}$. Therefore, by \eqref{eqn:pf_Relation_rel_xFx_R_to_Phi_TwoFibers_0} and Lemma~\ref{lem:torsion_relations}
\begin{eqnarray*}
x_{\nu_{1}} F_{m} x_{-\nu_{2}} - R_{m+1} &=& -A^{-m-1}(q_{2m+2}(Q_{m+1} - Q_{m})+q_{2m-2}(Q_{m} - Q_{m-1})) \\
&=& -A^{-m-1}(u_{m+1}B_{m} + u_{m-1}B_{m-1}) \\
&=& -A^{-m-1}(\Phi_{m} + (A^{-2}+A^{2})\Phi_{m-1} + \Phi_{m-2}).   
\end{eqnarray*}
\end{proof}

\begin{lemma}
\label{lem:Relation_rel_xF_Fx_to_Psi_TwoFibers}
In $S\mathcal{D}({\bf D}^{2}_{\beta_{1}})$, 
for all $m\in \mathbb{Z}$,
\begin{equation*}
A^{m-2}(F_{m}x_{-\nu_{2}} - x_{\nu_{1}}F_{-1-m}) - A^{m-3}(F_{m-1}x_{-\nu_{2}} - x_{\nu_{1}}F_{-m}) = \Psi_{m} + (A^{-2}+A^{2})\Psi_{m-1} + \Psi_{m-2}.
\end{equation*}
\end{lemma}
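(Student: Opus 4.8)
The plan is to mirror the proof of Lemma~\ref{lem:Relation_rel_xFx_R_to_Phi_TwoFibers} almost verbatim. Two ingredients are needed: (I) a closed-form identity writing the left-hand side as a two-term expression $u_{m+1}B_{m}+u_{m-1}B_{m-1}$ for a suitable sequence $B_{m}$ in $S\mathcal{D}({\bf D}^{2}_{\beta_{1}})$ and scalars $u_{m}\in R$ obeying the three-term recursion of Lemma~\ref{lem:torsion_relations}; and (II) the identification, for the right choice of $u_{m}$ and $B_{m}$, of the ``summed'' elements $S_{m}$ of Lemma~\ref{lem:torsion_relations} with the $\Psi_{m}$. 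I use throughout that $\nu_{0}=-1$, so $-\nu_{2}=\nu_{1}+1$ and, by \eqref{eqn:kbsm_xm}, $x_{\nu_{1}+s}=x_{\nu_{1}}F_{-s}$ for every $s\in\mathbb{Z}$; in particular $x_{-\nu_{2}}=x_{\nu_{1}}F_{-1}=-A^{3}x_{\nu_{1}}$ and $x_{-\nu_{2}-1}=x_{\nu_{1}}$.

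\emph{Step (I).} First I would establish that, in $S\mathcal{D}({\bf D}^{2}_{\beta_{1}})$ and for all $m\in\mathbb{Z}$,
\[
A^{m-2}\bigl(F_{m}x_{-\nu_{2}}-x_{\nu_{1}}F_{-1-m}\bigr)-A^{m-3}\bigl(F_{m-1}x_{-\nu_{2}}-x_{\nu_{1}}F_{-m}\bigr)=q_{2m+1}\,x_{\nu_{1}}Q_{m}+q_{2m-3}\,x_{\nu_{1}}Q_{m-1}.
\]
The cases $m=0,1$ follow by direct computation from $F_{0}=Q_{1}=1$, $F_{-1}=-A^{3}$, $F_{1}=A^{-1}\lambda+A$, $F_{-2}=-A^{2}-A^{4}\lambda$, after resolving the $\lambda$--$x$ crossings in $\lambda\,x_{-\nu_{2}}$ via $\lambda x_{a}=A^{-1}x_{a-1}+Ax_{a+1}$ and rewriting the resulting curves by \eqref{eqn:kbsm_xm}. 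For general $m$ I would reduce $F_{m}x_{-\nu_{2}}$ to a combination of the curves $x_{\nu_{1}}Q_{k}$ by the mechanism that produced \eqref{eqn:pf_Relation_rel_xFx_R_to_Phi_TwoFibers_0}, namely the $\Omega_{5}$-relation underlying Figure~\ref{fig:Omega_5ForTwoFibersNegativeNu} (used also in the proof of Lemma~\ref{lem:Relations_Bracket_Two_Fibers_nu2}),
\[
P_{-m}x_{-\nu_{2}}=A^{-2}P_{-m+1}x_{-\nu_{2}-1}+x_{m-\nu_{2}-2}-A^{-2}x_{m-\nu_{2}},
\]
together with \eqref{eqn:rel_F_to_P}, \eqref{eqn:kbsm_xm}, \eqref{eqn:kbsm_xvxm}, \eqref{eqn:IdentityTwoFiberCase1}, \eqref{eqn:IdentityTwoFiberCase2} and \eqref{eqn:rel_Pn}; as in Lemma~\ref{lem:Relation_rel_xFx_R_to_Phi_TwoFibers} the ranges $m\ge 1$ and $m\le -1$ call for \eqref{eqn:IdentityTwoFiberCase1} and \eqref{eqn:IdentityTwoFiberCase2} respectively, and the alternating $Q$-sums that appear telescope into the stated two-term form.

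\emph{Step (II).} Set $u_{m}=q_{2m-1}$ and $B_{m}=x_{\nu_{1}}Q_{m}$. Then $u_{m+1}=q_{2m+1}=(A^{-2}+A^{2})q_{2m-1}-q_{2m-3}=(A^{-2}+A^{2})u_{m}-u_{m-1}$, so Lemma~\ref{lem:torsion_relations} applies; moreover $u_{-m}=q_{-2m-1}=-q_{2m+1}=-u_{m+1}$ and $B_{-m}=-B_{m}$ since $Q_{-m}=-Q_{m}$. With these data the elements $S_{m}$ of Lemma~\ref{lem:torsion_relations} satisfy $S_{0}=S_{-1}=0$ (using $Q_{0}=0$) and, for $m\ge 1$,
\[
S_{m}=u_{m+1}\sum_{i=0}^{m-1}(-1)^{i}B_{m-i}=q_{2m+1}\,x_{\nu_{1}}\bigl(Q_{m}-Q_{m-1}+\cdots+(-1)^{m-1}Q_{1}\bigr)=q_{2m+1}\psi_{m-1}=\Psi_{m};
\]
for $m\le -2$ write $m=-k-1$ with $k\ge 1$, so that $u_{-k}=-u_{k+1}$ and $B_{i-k}=-B_{k-i}$ for $0\le i\le k-1$ give $S_{-k-1}=u_{-k}\sum_{i=0}^{k-1}(-1)^{i}B_{i-k}=u_{k+1}\sum_{i=0}^{k-1}(-1)^{i}B_{k-i}=S_{k}=\Psi_{k}=\Psi_{-k-1}$. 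Hence $S_{m}=\Psi_{m}$ for every $m$, and combining Step (I) with \eqref{eqn:uB_to_S} gives
\[
A^{m-2}\bigl(F_{m}x_{-\nu_{2}}-x_{\nu_{1}}F_{-1-m}\bigr)-A^{m-3}\bigl(F_{m-1}x_{-\nu_{2}}-x_{\nu_{1}}F_{-m}\bigr)=u_{m+1}B_{m}+u_{m-1}B_{m-1}=S_{m}+(A^{-2}+A^{2})S_{m-1}+S_{m-2}=\Psi_{m}+(A^{-2}+A^{2})\Psi_{m-1}+\Psi_{m-2},
\]
which is the claim.

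The structural part (Step (II)) is a painless transcription of the $\Phi$-case, so the real work is the closed form of Step (I). The obstacle there is that $F_{m}x_{-\nu_{2}}$ carries the polynomial $F_{m}$ to the \emph{left} of a single $x$-curve and hence, unlike the quantity $x_{\nu_{1}}F_{m}x_{-\nu_{2}}=x_{\nu_{1}-m}x_{-\nu_{2}}$ handled in Lemma~\ref{lem:Relation_rel_xFx_R_to_Phi_TwoFibers}, cannot be collapsed by one application of \eqref{eqn:kbsm_xm}: the $\lambda$--$x$ crossings must be resolved first, with careful index bookkeeping, and one must then reconcile the antisymmetric family $q_{k}$ with the family $Q_{k}$ across the positive, zero, and negative ranges of $m$ to see that the telescoping yields exactly $q_{2m+1}x_{\nu_{1}}Q_{m}+q_{2m-3}x_{\nu_{1}}Q_{m-1}$.
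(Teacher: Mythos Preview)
Your overall strategy coincides with the paper's: first prove the closed form
\[
A^{m-2}(F_{m}x_{-\nu_{2}}-x_{\nu_{1}}F_{-1-m})-A^{m-3}(F_{m-1}x_{-\nu_{2}}-x_{\nu_{1}}F_{-m})=q_{2m+1}\,x_{\nu_{1}}Q_{m}+q_{2m-3}\,x_{\nu_{1}}Q_{m-1},
\]
then feed $u_{m}=q_{2m-1}$, $B_{m}=x_{\nu_{1}}Q_{m}$ into Lemma~\ref{lem:torsion_relations} and identify $S_{m}=\Psi_{m}$. Your Step~(II) is correct and word-for-word what the paper does, including the symmetry checks $u_{-m}=-u_{m+1}$, $B_{-m}=-B_{m}$ and the verification $S_{m}=\Psi_{m}$ for $m\le -2$.

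Where your write-up drifts from the paper is the toolkit for Step~(I). You invoke the $\Omega_{5}$-identity $P_{-m}x_{-\nu_{2}}=A^{-2}P_{-m+1}x_{-\nu_{2}-1}+x_{m-\nu_{2}-2}-A^{-2}x_{m-\nu_{2}}$ together with \eqref{eqn:IdentityTwoFiberCase1}, \eqref{eqn:IdentityTwoFiberCase2} ``as in Lemma~\ref{lem:Relation_rel_xFx_R_to_Phi_TwoFibers}.'' But \eqref{eqn:IdentityTwoFiberCase1} and \eqref{eqn:IdentityTwoFiberCase2} are identities for a \emph{product} $x_{m}x_{n}$ of two $x$-curves; they were applicable in Lemma~\ref{lem:Relation_rel_xFx_R_to_Phi_TwoFibers} precisely because $x_{\nu_{1}}F_{m}x_{-\nu_{2}}=x_{\nu_{1}-m}x_{-\nu_{2}}$ is such a product, whereas $F_{m}x_{-\nu_{2}}$ is a polynomial acting on a single $x$-curve, so those identities do not apply here. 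Likewise, the displayed $\Omega_{5}$-relation replaces $P_{-m}x_{-\nu_{2}}$ by $P_{-m+1}x_{-\nu_{2}-1}=P_{-m+1}x_{\nu_{1}}$, which is again a polynomial on the \emph{left} of a single $x$-curve, so by itself it does not advance the reduction. The paper's actual mechanism is the one you allude to only in your last paragraph: it iterates the version of \eqref{eqn:rel_xm_to_Qxk} in $S\mathcal{D}({\bf D}^{2}_{\beta_{1}})$, written as $Q_{n}x_{k}=A^{-1}Q_{n-1}x_{k-1}+A^{n-1}x_{n+k-1}$ (and its companion $Q_{n}x_{k}=AQ_{n+1}x_{k+1}-A^{n+1}x_{n+k+1}$ for $m\le -1$), to unwind $Q_{m}x_{-\nu_{2}}$ into a sum of single curves $x_{\nu_{1}+s}$, which are then rewritten via \eqref{eqn:kbsm_xm} as $x_{\nu_{1}}F_{-s}$; expanding each $F$ into $Q$'s and telescoping four paired sums yields the two-term formula. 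If you replace the references to \eqref{eqn:IdentityTwoFiberCase1}, \eqref{eqn:IdentityTwoFiberCase2} by this iterated use of \eqref{eqn:rel_xm_to_Qxk}, your plan becomes exactly the paper's proof.
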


\begin{proof}
We first show that
\begin{equation}
\label{eqn:pf_Relation_rel_xF_Fx_to_Psi_TwoFibers_0}
A^{m-2}(F_{m}x_{-\nu_{2}} - x_{\nu_{1}}F_{-1-m}) - A^{m-3}(F_{m-1}x_{-\nu_{2}} - x_{\nu_{1}}F_{-m}) = q_{2m+1}x_{\nu_{1}}Q_{m}+q_{2m-3}x_{\nu_{1}}Q_{m-1}
\end{equation}
for all $m \in \mathbb{Z}$. When $m = 0$, since $F_{0} = 1$ and $F_{-1} = -A^{3}$, it follows from \eqref{eqn:kbsm_xm} that
\begin{equation*}
F_{m}x_{-\nu_{2}} - x_{\nu_{1}}F_{-m-1} = F_{0}x_{-\nu_{2}} - x_{\nu_{1}}F_{-1} = x_{-\nu_{2}} + A^{3} x_{\nu_{1}} = x_{\nu_{1}+1} + A^{3} x_{\nu_{1}} = x_{\nu_{1}}F_{-1} + A^{3} x_{\nu_{1}} = 0
\end{equation*}
and 
\begin{eqnarray*}
F_{m-1}x_{-\nu_{2}} - x_{\nu_{1}}F_{-m} &=& F_{-1}x_{-\nu_{2}} - x_{\nu_{1}}F_{0} = -A^{3}x_{-\nu_{2}} - x_{\nu_{1}} = -A^{3}x_{\nu_{1}+1} - x_{\nu_{1}}  \\
&=& -A^{3}x_{\nu_{1}}F_{-1} - x_{\nu_{1}} = A^{3}q_{-3}x_{\nu_{1}},
\end{eqnarray*}
and consequently
\begin{equation*}
A^{m-2}(F_{m}x_{-\nu_{2}} - x_{\nu_{1}}F_{-1-m}) - A^{m-3}(F_{m-1}x_{-\nu_{2}} - x_{\nu_{1}}F_{-m}) 
= -q_{-3}x_{\nu_{1}},
\end{equation*}
so equation \eqref{eqn:pf_Relation_rel_xF_Fx_to_Psi_TwoFibers_0} holds for $m=0$.

Using a version of \eqref{eqn:rel_xm_to_Qxk} in $S\mathcal{D}({\bf D}^{2}_{\beta_{1}})$, we see that
\begin{equation*}
Q_{n}x_{k} = A^{-1}Q_{n-1}x_{k-1} + A^{n-1}x_{n+k-1},
\end{equation*}
for any $n,k \in \mathbb{Z}$ and by \eqref{eqn:kbsm_xm}, for $m \geq 1$,
\begin{equation*}
Q_{m}x_{-\nu_{2}} = \sum_{i = 0}^{m-1} A^{m-1-2i}x_{m-\nu_{2}-1-2i} = \sum_{i=0}^{m-1} A^{m-1-2i}x_{\nu_{1}}F_{-m+2i}.
\end{equation*}
Therefore,
\begin{eqnarray*}
F_{m}x_{-\nu_{2}} = (A^{-m}Q_{m+1} + A^{-m+2}Q_{m})x_{-\nu_{2}} = \sum_{i=0}^{m} A^{-2i}x_{\nu_{1}}F_{-m-1+2i} + \sum_{i=0}^{m-1} A^{1-2i}x_{\nu_{1}}F_{-m+2i}
\end{eqnarray*}
and consequently
\begin{eqnarray}
A^{m-2}(F_{m}x_{-\nu_{2}} - x_{\nu_{1}}F_{-1-m}) &=&
\sum_{i=1}^{m} A^{m-2-2i}x_{\nu_{1}}F_{-m-1+2i} + \sum_{i=0}^{m-1} A^{m-1-2i}x_{\nu_{1}}F_{-m+2i} \label{eqn:pf_Relation_rel_xF_Fx_to_Psi_TwoFibers_1} \\
&=& \sum_{i=1}^{m} A^{m-2-2i}x_{\nu_{1}}F_{-m-1+2i} + \sum_{i=1}^{m} A^{m+1-2i}x_{\nu_{1}}F_{-m-2+2i} \notag.
\end{eqnarray}
Replacing $m$ with $m-1$, we see that
\begin{equation}
\label{eqn:pf_Relation_rel_xF_Fx_to_Psi_TwoFibers_2}
-A^{m-3}(F_{m-1}x_{-\nu_{2}} - x_{\nu_{1}}F_{-m}) = -\sum_{i=1}^{m-1} A^{m-3-2i}x_{\nu_{1}}F_{-m+2i} - \sum_{i=1}^{m-1} A^{m-2i}x_{\nu_{1}}F_{-m-1+2i}.
\end{equation}
Notice that
\begin{equation}
\label{eqn:pf_Relation_rel_xF_Fx_to_Psi_TwoFibers_3}
\sum_{i=1}^{m} A^{m-2-2i}x_{\nu_{1}}F_{-m-1+2i} = \sum_{i=1}^{m} A^{2m-1-4i} x_{\nu_{1}}Q_{-m+2i} + \sum_{i=1}^{m} A^{2m+1-4i}x_{\nu_{1}}Q_{-m-1+2i},
\end{equation}
\begin{equation}
\label{eqn:pf_Relation_rel_xF_Fx_to_Psi_TwoFibers_4}
\sum_{i=0}^{m-1} A^{m-1-2i} x_{\nu_{1}}F_{-m+2i} = \sum_{i=0}^{m-1} A^{2m-1-4i}x_{\nu_{1}}Q_{-m+1+2i} + \sum_{i=0}^{m-1} A^{2m+1-4i}x_{\nu_{1}}Q_{-m+2i},
\end{equation}
\begin{eqnarray}
-\sum_{i=1}^{m-1} A^{m-3-2i}x_{\nu_{1}}F_{-m+2i} &=& - \sum_{i=1}^{m-1} A^{2m-3-4i}x_{\nu_{1}}Q_{-m+1+2i} - \sum_{i=1}^{m-1} A^{2m-1-4i}x_{\nu_{1}}Q_{-m+2i} \notag\\
&=& - \sum_{i=2}^{m} A^{2m+1-4i}x_{\nu_{1}}Q_{-m-1+2i} - \sum_{i=1}^{m-1} A^{2m-1-4i}x_{\nu_{1}}Q_{-m+2i} \label{eqn:pf_Relation_rel_xF_Fx_to_Psi_TwoFibers_5},
\end{eqnarray}
and
\begin{eqnarray}
-\sum_{i=1}^{m-1} A^{m-2i}x_{\nu_{1}}F_{-m-1+2i} &=& -\sum_{i=1}^{m-1} A^{2m+1-4i}x_{\nu_{1}}Q_{-m+2i}-\sum_{i=1}^{m-1} A^{2m+3-4i}x_{\nu_{1}}Q_{-m-1+2i} \notag\\
&=& -\sum_{i=1}^{m-1} A^{2m+1-4i}x_{\nu_{1}}Q_{-m+2i}-\sum_{i=0}^{m-2} A^{2m-1-4i}x_{\nu_{1}}Q_{-m+1+2i}. \label{eqn:pf_Relation_rel_xF_Fx_to_Psi_TwoFibers_6}
\end{eqnarray}
Using \eqref{eqn:pf_Relation_rel_xF_Fx_to_Psi_TwoFibers_1}--\eqref{eqn:pf_Relation_rel_xF_Fx_to_Psi_TwoFibers_6}, we see that
\begin{eqnarray*}
A^{m-2}(F_{m}x_{-\nu_{2}} - x_{\nu_{1}}F_{-1-m}) - A^{m-3}(F_{m-1}x_{-\nu_{2}} - x_{\nu_{1}}F_{-m}) = q_{2m+1}x_{\nu_{1}}Q_{m}+q_{2m-3}x_{\nu_{1}}Q_{m-1},
\end{eqnarray*}
which proves \eqref{eqn:pf_Relation_rel_xF_Fx_to_Psi_TwoFibers_0} for $m \geq 1$.

For $m \leq -1$, using a version of \eqref{eqn:rel_xm_to_Qxk} in $S\mathcal{D}({\bf D}^{2}_{\beta_{1}})$, we see that
\begin{equation*}
Q_{n}x_{k} = AQ_{n+1}x_{k+1} - A^{n+1}x_{n+k+1},
\end{equation*}
for any $n,k \in \mathbb{Z}$ and by \eqref{eqn:kbsm_xm},
\begin{equation*}
Q_{m}x_{-\nu_{2}} = - \sum_{i = 0}^{-m-1} A^{m+2i+1}x_{m-\nu_{2}+2i+1} = - \sum_{i = 0}^{-m-1} A^{m+2i+1}x_{\nu_{1}}F_{-m-2-2i}.
\end{equation*}
Therefore,
\begin{eqnarray*}
F_{m}x_{-\nu_{2}} = (A^{-m}Q_{m+1} + A^{-m+2}Q_{m})x_{-\nu_{2}} = -\sum_{i=0}^{-m-2} A^{2i+2}x_{\nu_{1}}F_{-m-3-2i} - \sum_{i=0}^{-m-1} A^{2i+3}x_{\nu_{1}}F_{-m-2-2i}
\end{eqnarray*}
and consequently
\begin{eqnarray}
A^{m-2}(F_{m}x_{-\nu_{2}} - x_{\nu_{1}}F_{-1-m}) &=&
-\sum_{i=-1}^{-m-2} A^{m+2i}x_{\nu_{1}}F_{-m-3-2i} - \sum_{i=0}^{-m-1} A^{m+2i+1}x_{\nu_{1}}F_{-m-2-2i} \notag \\
&=& -\sum_{i=0}^{-m-1} A^{m+2i-2}x_{\nu_{1}}F_{-m-1-2i} - \sum_{i=1}^{-m} A^{m+2i-1}x_{\nu_{1}}F_{-m-2i} \label{eqn:pf_Relation_rel_xF_Fx_to_Psi_TwoFibers_7}.
\end{eqnarray}
Replacing $m$ with $m-1$, we see that
\begin{eqnarray}
\label{eqn:pf_Relation_rel_xF_Fx_to_Psi_TwoFibers_8}
-A^{m-3}(F_{m-1}x_{-\nu_{2}} - x_{\nu_{1}}F_{-m}) &=& \sum_{i=0}^{-m} A^{m+2i-3}x_{\nu_{1}}F_{-m-2i} + \sum_{i=1}^{-m+1} A^{m+2i-2}x_{\nu_{1}}F_{-m+1-2i} \notag \\
&=& \sum_{i=0}^{-m} A^{m+2i-3}x_{\nu_{1}}F_{-m-2i} + \sum_{i=0}^{-m} A^{m+2i}x_{\nu_{1}}F_{-m-1-2i}.
\end{eqnarray}
Notice that
\begin{eqnarray}
-\sum_{i=0}^{-m-1} A^{m+2i-2}x_{\nu_{1}}F_{-m-1-2i} &=& -\sum_{i=0}^{-m-1} A^{2m+4i-1}x_{\nu_{1}}Q_{-m-2i} - \sum_{i=0}^{-m-1} A^{2m+4i+1}x_{\nu_{1}}Q_{-m-1-2i} \notag \\
&=& -\sum_{i=0}^{-m-1} A^{2m+4i-1}x_{\nu_{1}}Q_{-m-2i} - \sum_{i=1}^{-m} A^{2m+4i-3}x_{\nu_{1}}Q_{-m+1-2i}, \label{eqn:pf_Relation_rel_xF_Fx_to_Psi_TwoFibers_9}
\end{eqnarray}
\begin{eqnarray}
- \sum_{i=1}^{-m} A^{m+2i-1}x_{\nu_{1}}F_{-m-2i} = - \sum_{i=1}^{-m} A^{2m+4i-1}x_{\nu_{1}}Q_{-m-2i+1} - \sum_{i=1}^{-m} A^{2m+4i+1}x_{\nu_{1}}Q_{-m-2i} \label{eqn:pf_Relation_rel_xF_Fx_to_Psi_TwoFibers_10},
\end{eqnarray}
\begin{eqnarray}
\sum_{i=0}^{-m} A^{m+2i-3}x_{\nu_{1}}F_{-m-2i} = \sum_{i=0}^{-m} A^{2m+4i-3} x_{\nu_{1}}Q_{-m-2i+1} + \sum_{i=0}^{-m} A^{2m+4i-1}x_{\nu_{1}}Q_{-m-2i}  \label{eqn:pf_Relation_rel_xF_Fx_to_Psi_TwoFibers_11},
\end{eqnarray}
and
\begin{eqnarray}
\sum_{i=0}^{-m} A^{m+2i}x_{\nu_{1}}F_{-m-1-2i} &=& \sum_{i=0}^{-m} A^{2m+4i+1}x_{\nu_{1}}Q_{-m-2i} + \sum_{i=0}^{-m} A^{2m+4i+3}x_{\nu_{1}}Q_{-m-1-2i} \notag \\
&=& \sum_{i=0}^{-m} A^{2m+4i+1}x_{\nu_{1}}Q_{-m-2i} + \sum_{i=1}^{-m+1} A^{2m+4i-1}x_{\nu_{1}}Q_{-m+1-2i} \label{eqn:pf_Relation_rel_xF_Fx_to_Psi_TwoFibers_12}.
\end{eqnarray}
Using \eqref{eqn:pf_Relation_rel_xF_Fx_to_Psi_TwoFibers_7}--\eqref{eqn:pf_Relation_rel_xF_Fx_to_Psi_TwoFibers_12}, we see that
\begin{eqnarray*}
A^{m-2}(F_{m}x_{-\nu_{2}} - x_{\nu_{1}}F_{-1-m}) - A^{m-3}(F_{m-1}x_{-\nu_{2}} - x_{\nu_{1}}F_{-m}) = q_{2m+1}x_{\nu_{1}}Q_{m}+q_{2m-3}x_{\nu_{1}}Q_{m-1},
\end{eqnarray*}
which proves \eqref{eqn:pf_Relation_rel_xF_Fx_to_Psi_TwoFibers_0} for $m \leq -1$.

We showed that \eqref{eqn:pf_Relation_rel_xF_Fx_to_Psi_TwoFibers_0} holds for all $m \in \mathbb{Z}$. Now, let $u_{m} = q_{2m-1}$ and $B_{m} = x_{\nu_{1}}Q_{m}$, then one can check
\begin{equation*}
u_{-m} = q_{-2m-1} = -q_{2m+1} = -u_{m+1}, \quad B_{-m} = x_{\nu_{1}}Q_{-m} = -x_{\nu_{1}}Q_{m} = -B_{m},
\end{equation*}
and
\begin{equation*}
u_{m+1} = (A^{-2}+A^{2})u_{m} - u_{m-1}.
\end{equation*}
Furthermore, $S_{m}$ defined in Lemma~\ref{lem:torsion_relations} becomes
\begin{equation*}
S_{m} = u_{m+1} \sum_{i=0}^{m-1}(-1)^{i}B_{m-i} = q_{2m+1}\psi_{m-1} = \Psi_{m}
\end{equation*}
for $m \geq 1$, $S_{0} = 0 = \Psi_{0}$, $S_{-1} = u_{0}B_{0} = 0 = \Psi_{-1}$, and 
\begin{eqnarray*}
S_{m} &=& u_{m+1} \sum_{i=0}^{-m-1}(-1)^{i}B_{m+i+1} = u_{-m}\sum_{i=0}^{-m-1}(-1)^{i}B_{-m-i-1} \\
&=& S_{-m-1} + u_{-m}(-1)^{-m-1}B_{0} = S_{-m-1} = \Psi_{-m-1} = \Psi_{m}
\end{eqnarray*}
for $m \leq -2$. It follows that $S_{m} = \Psi_{m}$ for all $m \in \mathbb{Z}$. Therefore, by \eqref{eqn:pf_Relation_rel_xF_Fx_to_Psi_TwoFibers_0} and Lemma~\ref{lem:torsion_relations}
\begin{eqnarray*}
A^{m-2}(F_{m}x_{-\nu_{2}} - x_{\nu_{1}}F_{-m-1}) - A^{m-3}(F_{m-1}x_{-\nu_{2}} - x_{\nu_{1}}F_{-m}) &=& u_{m+1}B_{m} + u_{m-1}B_{m-1} \\
&=& \Psi_{m} + (A^{-2}+A^{2})\Psi_{m-1} + \Psi_{m-2}
\end{eqnarray*}
for any $m \in \mathbb{Z}$.
\end{proof}

\begin{corollary}
\label{cor:Submodules_1}
$S_{\nu_{2}}({\bf D}^{2}_{\beta_{1}}) = S_{2}(\Phi\oplus\Psi)$.
\end{corollary}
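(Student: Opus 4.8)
The plan is to read off the equality directly from Lemma~\ref{lem:Relation_rel_xFx_R_to_Phi_TwoFibers} and Lemma~\ref{lem:Relation_rel_xF_Fx_to_Psi_TwoFibers}, which already express the two families of generators of $S_{\nu_{2}}({\bf D}^{2}_{\beta_{1}})$ in terms of the $\Phi_{m}$'s and $\Psi_{m}$'s; essentially all that remains is index bookkeeping and one short induction. First I would record the elementary observation that, because $\Phi_{0}=\Phi_{-1}=0$ and $\Phi_{m}=-\Phi_{-m-2}$ for $m\le -2$ (so in particular $\Phi_{-2}=-\Phi_{0}=0$), one has $R\{\Phi_{m}\mid m\in\mathbb{Z}\}=R\{\Phi_{m}\mid m\ge 1\}$, and likewise $R\{\Psi_{m}\mid m\in\mathbb{Z}\}=R\{\Psi_{m}\mid m\ge 1\}$ using $\Psi_{0}=\Psi_{-1}=0$ and $\Psi_{m}=\Psi_{-m-1}$.

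For the inclusion $S_{\nu_{2}}({\bf D}^{2}_{\beta_{1}})\subseteq S_{2}(\Phi\oplus\Psi)$ I would handle the two families of generators separately. By Lemma~\ref{lem:Relation_rel_xFx_R_to_Phi_TwoFibers}, each generator $x_{\nu_{1}}F_{m}x_{-\nu_{2}}-R_{m+1}$ equals $-A^{-m-1}(\Phi_{m}+(A^{-2}+A^{2})\Phi_{m-1}+\Phi_{m-2})$, hence lies in $R\{\Phi_{m}\mid m\in\mathbb{Z}\}=R\{\Phi_{m}\mid m\ge 1\}\subseteq S_{2}(\Phi\oplus\Psi)$. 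For the second family, put $G_{m}=F_{m}x_{-\nu_{2}}-x_{\nu_{1}}F_{-1-m}$; since $x_{-\nu_{2}}=x_{\nu_{1}}F_{\nu_{0}}=x_{\nu_{1}}F_{-1}$ by \eqref{eqn:kbsm_xm} (and $F_{0}=1$) we get the base case $G_{0}=x_{\nu_{1}}F_{-1}-x_{\nu_{1}}F_{-1}=0$, and Lemma~\ref{lem:Relation_rel_xF_Fx_to_Psi_TwoFibers} rewrites as $G_{m}=A^{-1}G_{m-1}+A^{2-m}(\Psi_{m}+(A^{-2}+A^{2})\Psi_{m-1}+\Psi_{m-2})$. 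Induction on $m\ge 1$ starting from $G_{0}=0$, together with the analogous downward induction on $m\le -1$, then yields $G_{m}\in R\{\Psi_{m}\mid m\ge 1\}\subseteq S_{2}(\Phi\oplus\Psi)$ for every $m\in\mathbb{Z}$.

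For the reverse inclusion $S_{2}(\Phi\oplus\Psi)\subseteq S_{\nu_{2}}({\bf D}^{2}_{\beta_{1}})$ I would show by induction on $m\ge 1$ that $\Phi_{m}\in S_{\nu_{2}}({\bf D}^{2}_{\beta_{1}})$ and $\Psi_{m}\in S_{\nu_{2}}({\bf D}^{2}_{\beta_{1}})$. Rearranging Lemma~\ref{lem:Relation_rel_xFx_R_to_Phi_TwoFibers} gives $\Phi_{m}+(A^{-2}+A^{2})\Phi_{m-1}+\Phi_{m-2}=-A^{m+1}(x_{\nu_{1}}F_{m}x_{-\nu_{2}}-R_{m+1})\in S_{\nu_{2}}({\bf D}^{2}_{\beta_{1}})$, and with $\Phi_{0}=\Phi_{-1}=0$ this yields $\Phi_{1}\in S_{\nu_{2}}({\bf D}^{2}_{\beta_{1}})$ and then $\Phi_{m}\in S_{\nu_{2}}({\bf D}^{2}_{\beta_{1}})$ for all $m\ge 1$ by induction. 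The same argument applied to Lemma~\ref{lem:Relation_rel_xF_Fx_to_Psi_TwoFibers}, whose right-hand side $A^{m-2}G_{m}-A^{m-3}G_{m-1}$ is an $R$-linear combination of the generators $G_{m},G_{m-1}$ of $S_{\nu_{2}}({\bf D}^{2}_{\beta_{1}})$, together with $\Psi_{0}=\Psi_{-1}=0$, gives $\Psi_{m}\in S_{\nu_{2}}({\bf D}^{2}_{\beta_{1}})$ for all $m\ge 1$. Since $S_{2}(\Phi\oplus\Psi)=R\{\Phi_{m}\}_{m\ge 1}\oplus R\{\Psi_{m}\}_{m\ge 1}$, the two inclusions give the claim.

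Since the substantive computations are already contained in Lemmas~\ref{lem:Relation_rel_xFx_R_to_Phi_TwoFibers} and~\ref{lem:Relation_rel_xF_Fx_to_Psi_TwoFibers}, there is no genuine obstacle here; the only points requiring care are the negative-index bookkeeping (checking that the $\mathbb{Z}$-indexed spans of $\Phi$ and of $\Psi$ coincide with their $m\ge 1$ parts) and pinning down the base case $G_{0}=0$ that makes the second-difference recursion for $F_{m}x_{-\nu_{2}}-x_{\nu_{1}}F_{-1-m}$ solvable.
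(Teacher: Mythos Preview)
Your proposal is correct and follows essentially the same approach as the paper: both directions use Lemmas~\ref{lem:Relation_rel_xFx_R_to_Phi_TwoFibers} and~\ref{lem:Relation_rel_xF_Fx_to_Psi_TwoFibers} together with the base case $F_{0}x_{-\nu_{2}}-x_{\nu_{1}}F_{-1}=0$ and induction on $m$. Your explicit observation that the $\mathbb{Z}$-indexed spans of the $\Phi_{m}$ and $\Psi_{m}$ collapse to their $m\ge 1$ parts is a helpful clarification the paper leaves implicit, but the argument is otherwise the same.
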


\begin{proof}
For any $m\in \mathbb{Z}$, by Lemma~\ref{lem:Relation_rel_xFx_R_to_Phi_TwoFibers} and the definition of $\Phi_{m}$, 
\begin{equation*}
x_{\nu_{1}} F_{m} x_{-\nu_{2}} - R_{m+1} \in S_{2}(\Phi\oplus\Psi)
\end{equation*}
and, by Lemma~\ref{lem:Relation_rel_xF_Fx_to_Psi_TwoFibers} and the definition of $\Psi_{m}$,
\begin{equation*}
A^{m-2}(F_ {m}x_{-\nu_{2}} - x_{\nu_{1}}F_{-1-m}) - A^{m-3}(F_{m-1}x_{-\nu_{2}} - x_{\nu_{1}}F_{-m}) \in S_{2}(\Phi\oplus\Psi).    
\end{equation*}
Since $F_{0}x_{-\nu_{2}} - x_{\nu_{1}}F_{-1} = 0$, it follows that
\begin{equation*}
F_{0}x_{-\nu_{2}} - x_{\nu_{1}}F_{-1} \in S_{2}(\Phi\oplus\Psi)
\end{equation*}
and consequently
\begin{equation*}
F_{m}x_{-\nu_{2}} - x_{\nu_{1}}F_{-m-1} \in S_{2}(\Phi\oplus\Psi)
\end{equation*}
for any $m \in \mathbb{Z}$. Therefore, 
\begin{equation*}
S_{\nu_{2}}({\bf D}^{2}_{\beta_{1}}) \subseteq S_{2}(\Phi\oplus\Psi).
\end{equation*}

By the definition, $\Phi_{0} = \Phi_{-1} = \Psi_{0} = \Psi_{-1} = 0$, so $\Phi_{0}, \Phi_{-1}, \Psi_{0}, \Psi_{-1} \in S_{\nu_{2}}({\bf D}^{2}_{\beta_{1}})$. So using Lemma~\ref{lem:Relation_rel_xFx_R_to_Phi_TwoFibers} and Lemma~\ref{lem:Relation_rel_xF_Fx_to_Psi_TwoFibers}, and induction on $m$, one can show that $\Phi_{m},\Psi_{m} \in S_{\nu_{2}}({\bf D}^{2}_{\beta_{1}})$ for any $m \geq 1$. Consequently, 
\begin{equation*}
S_{2}(\Phi\oplus\Psi) \subseteq S_{\nu_{2}}({\bf D}^{2}_{\beta_{1}}).    
\end{equation*}
\end{proof}

\begin{theorem}
\label{thm:final_thm_two_fibers}
For $\beta_{1}+\beta_{2} = 0$ the KBSM of $M_{2}(\beta_{1},\beta_{2}) = L(0,1)$ is generated by generic frame links with arrow diagrams in $\{\varphi_{m},\, \psi_{m} \mid m\geq 0\}$ and 
\begin{eqnarray*}
\mathcal{S}_{2,\infty}(L(0,1);R,A) &\cong&  R\{\varphi_{0}\} \oplus \bigoplus_{i = 1}^{\infty}\frac{R\{\varphi_{i}\}}{R\{q_{2i+2}\varphi_{i}\}}\oplus \bigoplus_{i = 1}^{\infty}\frac{R\{\psi_{i-1}\}}{R\{q_{2i+1}\psi_{i-1}\}}\\
&\cong&     
R\oplus \bigoplus_{i = 1}^{\infty}\frac{R}{(1-A^{2i+4})}.
\end{eqnarray*}
\end{theorem}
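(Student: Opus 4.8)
The plan is to assemble the facts established above. Throughout, note that $\gcd(2,\beta_{i}) = 1$ forces each $\beta_{i}$ odd, so $\beta_{1}+\beta_{2} = 0$ gives $\nu_{0} = \nu_{1}+\nu_{2} = -1$; hence we are exactly in the situation of this subsection and, by Theorem~\ref{thm:AmbientIsotopiesInMBeta1Beta2}(ii) together with the opening discussion, $\mathcal{S}_{2,\infty}(L(0,1);R,A) \cong S\mathcal{D}_{\nu_{1},\nu_{2}} \cong S\mathcal{D}({\bf D}^{2}_{\beta_{1}})/\ker(i_{*})$. By Corollary~\ref{cor:SubmodulesOfSD} and Corollary~\ref{cor:Submodules_1}, $\ker(i_{*}) = S_{\nu_{2}}({\bf D}^{2}_{\beta_{1}}) = S_{2}(\Phi\oplus\Psi)$. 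Since $S\mathcal{D}({\bf D}^{2}_{\beta_{1}}) \cong R\Sigma'_{\nu_{1}} \cong R\{\varphi_{m}\}_{m\geq 0}\oplus R\{\psi_{m}\}_{m\geq 0}$ and each $\lambda^{n}$, $x_{\nu_{1}}\lambda^{n}$ is the class of a generic framed link, the elements $\varphi_{m}$, $\psi_{m}$ (each an $R$-linear combination of such classes) generate the KBSM.

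The key structural observation is that the submodule $S_{2}(\Phi\oplus\Psi)$ is diagonal with respect to the cyclic decomposition of $R\{\varphi_{m}\}_{m\geq 0}\oplus R\{\psi_{m}\}_{m\geq 0}$. Indeed, by the formulas recorded just before the statement, $\Phi_{m} = q_{2m+2}\varphi_{m}$ and $\Psi_{m} = q_{2m+1}\psi_{m-1}$ for $m\geq 1$, while $\Phi_{0} = \Phi_{-1} = \Psi_{0} = \Psi_{-1} = 0$; hence $S_{2}(\Phi\oplus\Psi) = R\{\Phi_{m}\}_{m\geq 1}\oplus R\{\Psi_{m}\}_{m\geq 1}$ satisfies $R\{\Phi_{m}\}\subseteq R\{\varphi_{m}\}$ and $R\{\Psi_{m+1}\}\subseteq R\{\psi_{m}\}$. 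Since $\{\varphi_{m}\}_{m\geq 0}\cup\{\psi_{m}\}_{m\geq 0}$ is an $R$-basis of a free module, the quotient splits one cyclic summand at a time, and I obtain
\[
S\mathcal{D}({\bf D}^{2}_{\beta_{1}})/S_{2}(\Phi\oplus\Psi)\ \cong\ R\{\varphi_{0}\}\ \oplus\ \bigoplus_{m\geq 1}\frac{R\{\varphi_{m}\}}{R\{q_{2m+2}\varphi_{m}\}}\ \oplus\ \bigoplus_{m\geq 1}\frac{R\{\psi_{m-1}\}}{R\{q_{2m+1}\psi_{m-1}\}},
\]
which is the first displayed isomorphism.

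It remains to rewrite each cyclic factor. Under the rank-one identification $R\xrightarrow{\sim}R\{\varphi_{m}\}$ the submodule $R\{q_{2m+2}\varphi_{m}\}$ corresponds to the ideal $(q_{2m+2})$, and similarly for $\psi$. Since $A$ is a unit in $R = \mathbb{Z}[A^{\pm 1}]$, we have $q_{2k} = A^{-2k}-A^{2k} = A^{-2k}(1-A^{4k})$, so $(q_{2m+2}) = (1-A^{4m+4})$, and $q_{2m+1} = A^{-2m-1}(1-A^{4m+2})$, so $(q_{2m+1}) = (1-A^{4m+2})$. The exponents that occur are therefore $\{4m+4 : m\geq 1\}\cup\{4m+2 : m\geq 1\} = \{8,12,16,\dots\}\cup\{6,10,14,\dots\}$, i.e.\ exactly the even integers $\geq 6$, which reindex bijectively as $\{2i+4 : i\geq 1\}$; combining with $R\{\varphi_{0}\}\cong R$ gives
\[
\mathcal{S}_{2,\infty}(L(0,1);R,A)\ \cong\ R\ \oplus\ \bigoplus_{i=1}^{\infty}\frac{R}{(1-A^{2i+4})}.
\]

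There is no substantial obstacle: the hard work — identifying $S\mathcal{D}_{\nu_{1},\nu_{2}}$ with a quotient of the fibered-torus module, computing $\ker(i_{*}) = S_{2}(\Phi\oplus\Psi)$, the change of basis to $\{\varphi_{m}\}\cup\{\psi_{m}\}$, and the explicit formulas $\Phi_{m} = q_{2m+2}\varphi_{m}$, $\Psi_{m} = q_{2m+1}\psi_{m-1}$ — is already done in the preceding lemmas and corollaries. The only points that need a careful line are checking that $S_{2}(\Phi\oplus\Psi)$ respects the cyclic decomposition (so that the quotient genuinely splits as a direct sum) and the unit-clearing reindexing that turns $(q_{2k})$ into $(1-A^{4k})$ and matches the two torsion families against the single family $\{2i+4 : i\geq 1\}$.
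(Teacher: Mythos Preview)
Your proof is correct and follows essentially the same route as the paper: both identify $\mathcal{S}_{2,\infty}(L(0,1);R,A)$ with $S\mathcal{D}({\bf D}^{2}_{\beta_{1}})/\ker(i_{*})$, invoke Corollary~\ref{cor:SubmodulesOfSD} and Corollary~\ref{cor:Submodules_1} to compute $\ker(i_{*}) = S_{2}(\Phi\oplus\Psi)$, and then use the diagonal form $\Phi_{m} = q_{2m+2}\varphi_{m}$, $\Psi_{m} = q_{2m+1}\psi_{m-1}$ to split the quotient into cyclic summands. Your write-up is in fact slightly more explicit than the paper's, as you spell out the reindexing step showing that $\{4m+4:m\geq 1\}\cup\{4m+2:m\geq 1\} = \{2i+4:i\geq 1\}$.
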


\begin{proof}
As we noted before,
\begin{equation*}
S\mathcal{D}({\bf D}^{2}_{\beta_{1}}) \cong R\Sigma^{\prime}_{\nu_{1}} \cong R\{\varphi_{m}\}_{m\geq 0}\oplus R\{\psi_{m}\}_{m\geq 0}.
\end{equation*}
Since
\begin{equation*}
S\mathcal{D}_{\nu_{1},\nu_{2}}\cong S\mathcal{D}({\bf D}^{2}_{\beta_{1}})/\ker(i_{*}),
\end{equation*}
and by Corollary~\ref{cor:SubmodulesOfSD} and Corollary~\ref{cor:Submodules_1}, 
\begin{equation*}
\ker(i_{*}) = S_{\nu_{2}}({\bf D}^{2}_{\beta_{1}}) = S_{2}(\Phi\oplus\Psi),    
\end{equation*}
it follows that
\begin{eqnarray*}
S\mathcal{D}_{\nu_{1},\nu_{2}} &\cong& (R\{\varphi_{m}\}_{m\geq 0}\oplus R\{\psi_{m}\}_{m\geq 0})/S_{2}(\Phi\oplus\Psi) \\
&=& (R\{\varphi_{m}\}_{m\geq 0}\oplus R\{\psi_{m}\}_{m\geq 0})/(R\{\Phi_{m}\}_{m\geq 1}\oplus R\{\Psi_{m}\}_{m\geq 1}).  
\end{eqnarray*}
Furthermore, $\Phi_{m} = q_{2m+2}\varphi_{m} = A^{-2m-2}(1-A^{4m+4})\varphi_{m}$ and $\Psi_{m} = q_{2m+1}\psi_{m-1} = A^{-2m-1}(1-A^{4m+2})\psi_{m-1}$, thus 
\begin{equation*}
S\mathcal{D}_{\nu_{1},\nu_{2}} \cong R\{\varphi_{0}\} \oplus \bigoplus_{i = 1}^{\infty}\frac{R\{\varphi_{i}\}}{R\{q_{2i+2}\varphi_{i}\}}\oplus \bigoplus_{i = 1}^{\infty}\frac{R\{\psi_{i-1}\}}{R\{q_{2i+1}\psi_{i-1}\}} \cong R\oplus \bigoplus_{i = 1}^{\infty}\frac{R}{(1-A^{2i+4})}.   
\end{equation*}
\end{proof}

\section*{Acknowledgement} Authors would like to thank Professor J\'{o}zef H. Przytycki for all valuable discussions and suggestions.

\bibliography{mybibfile}

\end{document}